\title{{\bf Multiple cover formula of generalized DT 
invariants II: Jacobian localizations}}
\date{}
\author{Yukinobu Toda}
\DeclareFontFamily{U}{rsfs}{%
\skewchar\font127}
\DeclareFontShape{U}{rsfs}{m}{n}{%
<-6>rsfs5<6-8.5>rsfs7<8.5->rsfs10}{}
\DeclareSymbolFont{rsfs}{U}{rsfs}{m}{n}
\DeclareRobustCommand*\rsfs{%
\@fontswitch\relax\mathrsfs}
\theoremstyle{plain}
\newtheorem{thm}{Theorem}[section]
\newtheorem{prop}[thm]{Proposition}
\newtheorem{lem}[thm]{Lemma}
\newtheorem{defi}[thm]{Definition}
\newtheorem{rmk}[thm]{Remark}
\newtheorem{cor}[thm]{Corollary}
\newtheorem{prop-defi}[thm]{Proposition-Definition}
\newtheorem{thm-defi}[thm]{Theorem-Definition}
\newtheorem{lem-defi}[thm]{Lemma-Definition}
\newtheorem{conj}[thm]{Conjecture}
\newtheorem{exam}[thm]{Example}
\newdimen\argwidth
\def\db[#1\db]{
 \setbox0=\hbox{$#1$}\argwidth=\wd0
 \setbox0=\hbox{$\left[\box0\right]$}
  \advance\argwidth by -\wd0
 \left[\kern.3\argwidth\box0 \kern.3\argwidth\right]}
\newcommand{\cC}{\mathcal{C}}
\newcommand{\eE}{\mathcal{E}}
\newcommand{\fF}{\mathcal{F}}
\newcommand{\hH}{\mathcal{H}}
\newcommand{\lL}{\mathcal{L}}
\newcommand{\mM}{\mathcal{M}}
\newcommand{\nN}{\mathcal{N}}
\newcommand{\oO}{\mathcal{O}}
\newcommand{\qQ}{\mathcal{Q}}
\newcommand{\xX}{\mathcal{X}}
\newcommand{\Supp}{\mathop{\rm Supp}\nolimits}
\newcommand{\Hom}{\mathop{\rm Hom}\nolimits}
\newcommand{\dL}{\mathbf{L}}
\newcommand{\Hilb}{\mathop{\rm Hilb}\nolimits}
\newcommand{\Pic}{\mathop{\rm Pic}\nolimits}
\newcommand{\id}{\textrm{id}}
\newcommand{\ch}{\mathop{\rm ch}\nolimits}
\newcommand{\td}{\mathop{\rm td}\nolimits}
\newcommand{\Ext}{\mathop{\rm Ext}\nolimits}
\newcommand{\Coh}{\mathop{\rm Coh}\nolimits}
\newcommand{\cneq}{\mathrel{\raise.095ex\hbox{:}\mkern-4.2mu=}}
\newcommand{\eqcn}{\mathrel{=\mkern-4.5mu\raise.095ex\hbox{:}}}
\newcommand{\Aut}{\mathop{\rm Aut}\nolimits}
\newcommand{\SL}{\mathop{\rm SL}\nolimits}
\newcommand{\DT}{\mathop{\rm DT}\nolimits}
\newcommand{\End}{\mathop{\rm End}\nolimits}
\newcommand{\GL}{\mathop{\rm GL}\nolimits}
\begin{document}
\maketitle

\begin{abstract}
The generalized Donaldson-Thomas invariants
counting one dimensional semistable sheaves on 
Calabi-Yau 3-folds are 
conjectured to satisfy a certain multiple cover formula.  
This conjecture is equivalent to Pandharipande-Thomas's
strong rationality conjecture on the generating series of 
stable pair invariants, 
 and its local version
is enough to prove. In this paper, using 
Jacobian localizations and 
 parabolic stable pair invariants introduced
in the previous paper, 
we reduce the conjectural multiple cover formula 
for local curves with at worst nodal singularities
 to the case of local trees of smooth rational curves. 
\end{abstract}
\section{Introduction}
This paper is a sequel of the author's previous paper~\cite{Todpara}, 
and we study the conjectural multiple cover formula of 
generalized Donaldson-Thomas (DT) invariants 
counting one dimensional semistable sheaves on 
Calabi-Yau 3-folds. Our main result is to 
reduce the multiple cover formula 
for local curves with at worst nodal singularities
to that for local trees of $\mathbb{P}^1$. The latter case is easier to study, 
and we actually prove the multiple cover formula 
in some cases using our main result. 
The idea consists of 
twofold: using the notion of parabolic stable pairs
introduced in~\cite{Todpara}, and the localizations with 
respect to the actions of  
Jacobian groups on the moduli spaces of parabolic stable pairs. 

\subsection{Conjectural multiple cover formula}
Let $X$ be a smooth 
projective Calabi-Yau 3-fold over $\mathbb{C}$, i.e. 
\begin{align*}
\bigwedge^3 T_X^{\vee} \cong \oO_X, \quad 
H^1(X, \oO_X)=0. 
\end{align*}
Given data, 
\begin{align*}
n\in \mathbb{Z}, \quad \beta \in H_2(X, \mathbb{Z}),
\end{align*} 
the \textit{generalized DT invariant} is 
introduced by Joyce-Song~\cite{JS}, 
Kontsevich-Soibelman~\cite{K-S}, 
\begin{align}\label{intro:Nnb}
N_{n, \beta} \in \mathbb{Q}. 
\end{align}
The invariant (\ref{intro:Nnb}) 
counts one 
dimensional semistable sheaves $F$ on $X$
satisfying 
\begin{align*}
\chi(F)=n, \quad [F]=\beta. 
\end{align*}
(cf.~Subsection~\ref{subsec:Genera}.)
The above invariant is 
expected to satisfy the following 
multiple cover conjecture: 
\begin{conj}
{\bf\cite[Conjecture~6.20]{JS}, 
\cite[Conjecture~6.3]{Tsurvey}}\label{conj:mult}
We have the following formula, 
\begin{align}\notag
N_{n, \beta}=\sum_{k\ge 1, k|(n, \beta)}
\frac{1}{k^2}N_{1, \beta/k}. 
\end{align}
\end{conj}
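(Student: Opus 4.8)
The plan is to pass from the global Calabi--Yau statement to a purely local problem and then exploit a torus-like symmetry of the relevant moduli spaces. First I would invoke the equivalence, recalled in the abstract, between Conjecture~\ref{conj:mult} and Pandharipande--Thomas's strong rationality conjecture for the stable pair invariants, together with the fact that the coefficients $N_{n,\beta}$ and the $\PT$-invariants decompose according to the (reduced, connected) curves supporting the class $\beta$. This lets me reduce to the case of a \emph{local curve}: a connected projective curve $C$ with at worst nodal singularities, sitting in a Calabi--Yau neighbourhood, so that it suffices to verify the multiple cover formula for the contribution $N_{n,\beta}(C)$ of such a $C$.

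Second, I would replace the moduli space of one-dimensional semistable sheaves (or of stable pairs) on $C$ by the moduli space of \emph{parabolic stable pairs} of~\cite{Todpara}. The gain is a group action: parabolic stable pairs whose sheaf lies in the linear system of $C$ can be twisted by degree-zero line bundles on the ambient curves, so the relative Picard group $\Pic^0$ --- the ``Jacobian group'' --- acts on the moduli space, and by construction this action is compatible with the symmetric obstruction theory. Hence the generalized/parabolic invariants can be computed as a Behrend-function-weighted integral over the quotient, i.e.\ by localization with respect to the Jacobian action.

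Third --- the technical core --- I would analyse this Jacobian localization. For a nodal curve the relative compactified Jacobian is an extension of an abelian variety, coming from the normalization, by a torus coming from the node-smoothing directions; the locus fixed by the degree-zero Jacobian action consists of sheaves/pairs pushed forward from partial normalizations of $C$, and iterating the normalization one arrives at the full normalization $\widetilde{C}$, a disjoint union of smooth curves glued along the node preimages. Keeping track of how parabolic structures are induced at the node preimages, and separating the contribution of the toric directions (which produce the combinatorial weights attached to the nodes) from that of the abelian directions, one obtains a formula expressing $N_{n,\beta}(C)$ through the analogous invariants of the associated configuration, which --- since $C$ is connected --- is a \emph{local tree of} $\mathbb{P}^1$'s. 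This reduces Conjecture~\ref{conj:mult} for nodal local curves to the tree case, and I would then finish by verifying the tree case directly in favourable situations (low genus, small $\beta$), as the paper goes on to do in some cases.

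I expect the main obstacle to be this last step, precisely because compactified Jacobians of singular curves are themselves singular and carry nontrivial Behrend functions: the ``localization'' is not a clean fixed-point formula but a motivic/Behrend-weighted integration over the quotient stack, and one must prove that the gluing combinatorics at the nodes reproduces exactly the arithmetic of $\sum_{k\ge 1,\,k\mid(n,\beta)} k^{-2} N_{1,\beta/k}$. A related subtlety, and the reason the parabolic framework of~\cite{Todpara} is essential rather than a convenience, is that the Jacobian action has nontrivial stabilizers along the sheaves that fail to be locally free on $C$; the parabolic rigidification kills these stabilizers and makes the quotient together with its obstruction theory well behaved, so the localization must be carried out on the moduli space of parabolic stable pairs rather than on the naive sheaf moduli.
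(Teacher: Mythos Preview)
The statement you are addressing is a \emph{conjecture}, and the paper does not prove it; there is therefore no ``paper's own proof'' to compare against. What the paper actually establishes is the conditional reduction Theorem~\ref{thm:main:cov}: the local formula~(\ref{form:mult:loc}) for a nodal rational curve follows \emph{provided} every cyclic neighborhood with tree-of-$\mathbb{P}^1$ central curve satisfies both Conjecture~\ref{conj:crit} (a critical-locus hypothesis on the moduli stack) and the multiple cover formula on that neighborhood. Neither hypothesis is verified in general, so Conjecture~\ref{conj:mult} remains open. Your closing clause ``verify the tree case directly in favourable situations'' quietly concedes this, but the proposal reads as though a proof is being outlined; you should be explicit that the output is a reduction under unproved hypotheses, not a proof.

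On the strategy itself, your sketch is in the right spirit but misidentifies the geometric mechanism. You describe the $\Pic^0$-fixed locus as consisting of sheaves pushed forward from \emph{partial normalizations}, with the Jacobian splitting into an abelian-variety part from the normalization and a toric part from the nodes. The paper's picture is different in two ways. First, by Lemma~\ref{lem:higher} one reduces immediately to the case where every component of $C$ is rational, so $\Pic^0(C)\cong(\mathbb{C}^\ast)^{g(C)}$ is a torus with no abelian-variety part; there is no ``abelian direction'' to separate off. Second, and more importantly, the $\mathbb{Z}/m\mathbb{Z}$-fixed parabolic stable pairs are identified (Proposition~\ref{prop:isom:para}) with parabolic stable pairs on a finite \emph{cyclic \'etale cover} $\widetilde{U}\to U$, not on a partial normalization. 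A partial normalization separates a node and drops the genus by one; a cyclic cover unwinds the loop and produces a curve with many more components. The induction in Theorem~\ref{thm:main:cov} proceeds by repeatedly passing to such cyclic covers until the support becomes a tree, tracking the invariants via the comparison formulas (\ref{formula1}) and (\ref{formula2}). Your normalization picture would not produce the correct fixed locus and would not yield these formulas. Finally, you do not mention the obstacle that forces the paper to assume Conjecture~\ref{conj:crit}: on the cyclic neighborhoods $U'$ one no longer knows that the moduli stack is locally a critical locus, which is needed to transport the equivalence of Proposition~\ref{prop:translate} to $U'$.
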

The motivation of the above 
conjecture is that it is equivalent to 
Pandharipande-Thomas's (PT) strong rationality 
conjecture~\cite{PT}. 
(See~\cite[Theorem~6.4]{Tsurvey}.)
The PT strong rationality conjecture 
claims the product expansion formula
(called \textit{Gopakumar-Vafa form}) of 
the generating series of
rank one DT type invariants, which should be true if we believe
GW/DT correspondence~\cite{MNOP}. 

There is also a local version of the invariant (\ref{intro:Nnb}) 
and its conjectural multiple cover formula. 
Namely for a one cycle
$\gamma$ on $X$, 
we can associate the invariant, 
\begin{align*}
N_{n, \gamma} \in \mathbb{Q}, 
\end{align*}
which counts one dimensional semistable sheaves 
$F$ on $X$ satisfying 
\begin{align*}
\chi(F)=n, \quad [F]=\gamma, 
\end{align*}
where the second equality is an equality 
as a one cycle. 
The above local invariant is also 
expected to satisfy the multiple cover formula, 
\begin{align}\label{N:mult}
N_{n, \gamma}=\sum_{k\ge 1, k|(n, \gamma)} \frac{1}{k^2}
N_{1, \gamma/k}. 
\end{align}
The local version (\ref{N:mult}) is enough to prove 
Conjecture~\ref{conj:mult}. 
(cf.~\cite[Proposition~4.17]{Todpara}.)
The purpose of this paper is to study 
the conjectural formula (\ref{N:mult})
via Jacobian localization technique. 

\subsection{Main result}
Let $X$ be as before, $\gamma$ a one cycle 
on $X$ and $C \subset X$ the support of $\gamma$. 
The invariant $N_{n, \gamma}$ can be shown to 
be zero if there is an irreducible component 
of $C$ whose geometric genus is 
bigger than or equal to one. 
(cf.~Lemma~\ref{lem:higher}.)
Therefore
in discussing the formula (\ref{N:mult}), 
 we may assume that
$C$ is a rational curve, i.e.  
the normalization of $C$
is a disjoint union of $\mathbb{P}^1$. 
The simple cases are $C=\mathbb{P}^1$, or 
$C$ is a tree of $\mathbb{P}^1$. 
The main result of this paper is to show that, 
when $C$ has at worst  
nodal singularities, 
then
the formula (\ref{N:mult}) 
follows from the same formula for local 
trees of $\mathbb{P}^1$. 
More precisely, 
suppose that $C$ is a rational curve
with at worst nodal singularities, 
and
\begin{align}\label{intro:CUX}
C\subset U \subset X
\end{align} 
a sufficiently small analytic neighborhood 
of $C$ in $X$. 
We consider data, 
\begin{align*}
(C' \subset U') \stackrel{\sigma'}{\to} (C\subset X),
\end{align*}
where $C'$ is a reduced curve, 
$U'$ is a three dimensional complex manifold
and $\sigma'$ is a local immersion. 
The above data is called a \textit{cyclic neighborhood}
if it is given as a composition of cyclic coverings
of $U$. (See Definition~\ref{def:cyclic} for more 
precise definition.)
For any one cycle $\gamma'$ on $U'$
supported on $C'$, we
can similarly construct
the invariant 
\begin{align*}
N_{n, \gamma'}(U') \in \mathbb{Q}.
\end{align*}
(cf.~Subsection~\ref{moduli:cyclic}.)
Our main result is as follows: 
\begin{thm}{\bf [Theorem~\ref{thm:main:cov}]}
\label{thm:main}
Let $X$ be a smooth projective Calabi-Yau 3-fold 
over $\mathbb{C}$, $C \subset X$ 
a reduced rational curve with at worst nodal singularities, 
and $\gamma$ a one cycle on $X$ supported on $C$. 
Suppose that for any cyclic neighborhood $(C' \subset U')
\stackrel{\sigma'}{\to} (C\subset X)$
with $C'$ a tree of $\mathbb{P}^1$, 
the following conditions hold: 
\begin{itemize}
\item The moduli stack of one dimensional semistable 
sheaves on $U'$ is locally written as a
critical locus of some holomorphic function on a complex 
manifold up to some group action. (cf.~Conjecture~\ref{conj:crit}.) 
\item For any one cycle $\gamma'$ on $U'$ with 
$\sigma'_{\ast}\gamma'=\gamma$, 
the invariant $N_{n, \gamma'}(U')$ satisfies the formula 
\begin{align*}
N_{n, \gamma'}(U')=\sum_{k\ge 1, k|(n, \gamma')}
\frac{1}{k^2} N_{1, \gamma'/k}(U'). 
\end{align*}
\end{itemize}
Then the invariant $N_{n, \gamma}$ satisfies the formula (\ref{N:mult}). 
\end{thm}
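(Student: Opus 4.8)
The plan is to pass to an analytic neighborhood $U$ of $C$, rewrite the statement in terms of the parabolic stable pair invariants of~\cite{Todpara}, and then run a localization with respect to the action of the Jacobian of $C$ together with an induction on the arithmetic genus $p_a(C)$. As a first step, since $N_{n,\gamma}$ vanishes when $C$ has an irreducible component of positive geometric genus (Lemma~\ref{lem:higher}) we may assume $C$ rational, and since $N_{n,\gamma}$ depends only on an analytic neighborhood of $\Supp\gamma$ it suffices to prove~(\ref{N:mult}) for the local invariant $N_{n,\gamma}(U)$ with $C\subset U\subset X$ as in~(\ref{intro:CUX}). By the results of~\cite{Todpara} relating generalized DT invariants to parabolic stable pair invariants, this multiple cover formula for $N_{n,\gamma}(U)$ follows in turn from the corresponding multiple cover / rationality statement for the parabolic stable pair invariants of $U$, so it is enough to prove the latter. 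Here the first bullet of the hypothesis --- the critical-locus description of Conjecture~\ref{conj:crit} --- is what makes the weighted parabolic invariants, and the localization below, behave well with respect to Behrend functions; for the intermediate cyclic neighborhoods that occur below it is inherited from the hypothesis through the \'etale-local nature of the construction.

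Next I would bring in the Jacobian action. Since $C$ is a nodal rational curve, $\Pic^0(C)\cong(\mathbb{C}^{\ast})^{g}$ with $g=p_a(C)$ the first Betti number of the dual graph of $C$, and this torus $T$ acts on the moduli space $\mathcal{M}$ of parabolic stable pairs on $U$ by tensoring the underlying sheaf with degree-zero line bundles; one has to check that the relevant line bundles extend over $U$, so that the action is well defined, and that $T$ acts on $\mathcal{M}$ by automorphisms and hence preserves the Behrend function. Since the parabolic invariant is a Behrend-function-weighted Euler characteristic and a non-trivial $\mathbb{C}^{\ast}$-orbit has vanishing topological Euler characteristic, the invariant of $\mathcal{M}$ equals that of the fixed locus $\mathcal{M}^{T}$.

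The heart of the argument is the analysis of $\mathcal{M}^{T}$. A $T$-fixed parabolic stable pair on $U$ is --- up to data supported at the nodes of $C$ and up to non-reduced thickenings --- pulled back from a curve $C'$ separating the branches of $C$ at its non-separating nodes; more precisely, the $T$-fixed data is equivalent to parabolic stable pair data on cyclic neighborhoods $(C'\subset U')\stackrel{\sigma'}{\to}(C\subset X)$ in which $C'$ has strictly smaller arithmetic genus than $C$. One then inducts on $p_a(C)$. The base case $p_a(C)=0$ is precisely the case of a tree of $\mathbb{P}^1$: there the trivial cyclic neighborhood $(C\subset U)\stackrel{\mathrm{id}}{\to}(C\subset X)$ satisfies the hypothesis, which gives~(\ref{N:mult}) directly. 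In the inductive step, every contribution to the invariant of $\mathcal{M}^{T}$ is governed by the parabolic invariants of cyclic neighborhoods $U'$ with $p_a(C')<p_a(C)$; any tree cyclic neighborhood of such a $U'$ is again a tree cyclic neighborhood of $C$, so the theorem's hypothesis applies to it, and hence by the inductive hypothesis the multiple cover formula holds for $U'$. Tracking carefully the relation between the one-cycles $\gamma$ and $\gamma'$, their divisor classes, and the multiplicities $k$, one recovers the multiple cover formula for the parabolic invariants of $U$, and therefore~(\ref{N:mult}) for $N_{n,\gamma}$.

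The step I expect to be the main obstacle is this third one: showing that $\mathcal{M}^{T}$ is, scheme-theoretically and compatibly with Behrend functions and parabolic structures, assembled out of moduli of parabolic stable pairs on cyclic neighborhoods of strictly smaller arithmetic genus --- in particular controlling the local contributions at the separated nodes and the non-reduced supports so that the precise $\sum 1/k^{2}$ shape of~(\ref{N:mult}) is reproduced exactly, not merely up to lower-order terms --- together with the verification that the critical-locus hypothesis propagates up the entire tower of cyclic coverings so that the localization is legitimate at every stage.
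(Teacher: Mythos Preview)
Your overall strategy --- pass to parabolic stable pair invariants, localize with respect to the Jacobian action, and induct --- is exactly the paper's. Two points, however, separate your sketch from a working proof, and they are precisely where the paper's argument is delicate.

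\textbf{The critical-locus hypothesis does not propagate.} You assert that Conjecture~\ref{conj:crit} ``is inherited from the hypothesis through the \'etale-local nature of the construction'' for the intermediate cyclic neighborhoods. This is not justified: the hypothesis only gives you the critical-locus description when $C'$ is a \emph{tree}, and there is no mechanism by which it would descend (or ascend) along a cyclic cover to a non-tree $C_{(i)}$. The paper circumvents this entirely. It runs the induction on the parabolic-side invariants $\widehat{\DT}^{\rm par}_{n,\gamma_{(i)}}(U_{(i)})$ and the formula~(\ref{DThatU'}); these are defined unconditionally, and the comparison under a cyclic cover (Proposition~\ref{prop:reduce:cov}, via Corollary~\ref{cor:formula} and the Behrend-function identities of Lemma~\ref{lem:identity}) requires no critical-locus input. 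Conjecture~\ref{conj:crit} is invoked only at the two ends of the tower: at the top ($i=R$, the tree case) to translate the assumed multiple cover formula for $N$ into~(\ref{DThatU'}) via Proposition~\ref{prop:similar:trans}, and at the bottom ($U\subset X$, where it holds by~\cite[Theorem~5.3]{JS}) to translate back. So the fix is not to propagate the hypothesis but to change which invariant carries the induction.

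\textbf{The induction variable.} Pure induction on $p_a(C)$ does not obviously terminate. The paper localizes along a \emph{single} $\mathbb{C}^{\ast}$ (one loop $\alpha$) at a time and identifies the $\mathbb{Z}/m\mathbb{Z}$-fixed locus with parabolic pairs on one $m$-fold cover (Proposition~\ref{prop:isom:para}). For each $\widetilde{\gamma}$ with connected support mapping to $\gamma$, one shows that either the number of irreducible components $l(\widetilde{\gamma})$ strictly exceeds $l(\gamma)$, or $l$ is unchanged and the genus strictly drops. Since $l(\gamma_{(i)})\le d(\gamma)=\sum a_i$ is bounded, the process terminates at a tree. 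Nothing guarantees that $g(C_{\widetilde{\gamma}})<g(C)$ at every step, so your genus-only induction needs this refinement.

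Finally, your phrase ``tracking carefully the relation between the one-cycles'' hides the content of Corollary~\ref{cor:formula} and Proposition~\ref{prop:reduce:cov}: one needs the explicit sign $(-1)^{\gamma\cdot H-\widetilde{\gamma}\cdot\widetilde{H}}$ in~(\ref{formula1}), the factor $1/m$ in~(\ref{formula2}), and the averaging over the $\mathbb{Z}/m\mathbb{Z}$-orbit of $\widetilde{\gamma}$ to recover $(\gamma\cdot H)N_{1,\gamma/k}$ from $(\widetilde{\gamma}\cdot\widetilde{H})N_{1,\widetilde{\gamma}/k}(\widetilde{U})$. These are what produce the exact $\sum 1/k^2$ shape, and they come from the precise identification of the fixed locus in Proposition~\ref{prop:isom:para} and Proposition~\ref{prop:sigma:stable}, not from a soft localization statement.
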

There are several situations 
in which 
the cyclic neighborhood $C'\subset U'$ satisfies
the assumptions in 
Theorem~\ref{thm:main}, 
e.g. $C'$ is a chain of super rigid 
rational curves in $U'$. 
Roughly speaking, 
we will give the following 
applications in Section~\ref{sec:apply}:
\begin{itemize}
\item If $\gamma$ is supported on 
an irreducible rational curve with one node, 
or a circle of $\mathbb{P}^1$, 
we explicitly compute the invariant $N_{n, \gamma}$. 
(cf.~Theorem~\ref{thm:typeI}.)
\item We prove the local multiple cover formula 
of $N_{n, \gamma}$ 
if $\gamma=p[C]$ for an irreducible 
rational curve $C$ with at worst nodal 
singularities, and $p$ is a prime number. 
(cf.~Theorem~\ref{prop:prime}.)
\item We give some evidence of the 
conjecture in~\cite[Conjecture~1.3]{TodK3}
on the Euler characteristic invariants of 
local K3 surfaces. 
(cf.~Theorem~\ref{thm:K3}.)
\end{itemize}
The first and the second 
applications will be given under
a certain assumption 
on an analytic neighborhood of a 
one cycle $\gamma$. 
(cf.~Definition~\ref{def:rigid:surface}.)

\subsection{Idea for a local curve with one node}
Here we explain the idea of the proof of Theorem~\ref{thm:main}
in a simple example. 
Let 
\begin{align*}
C\subset X
\end{align*} be an irreducible 
rational curve with one node $x\in C$. 
Suppose that a one cycle $\gamma$ on $X$ is supported on $C$. 
Then for any analytic neighborhood $U$
as in (\ref{intro:CUX}), 
the Jacobian group 
$\Pic^0(U)$ acts on the moduli space which 
defines $N_{n, \gamma}$. 
If we take $U$ to be homotopically equivalent to $C$, 
then $\Pic^0(C) \cong \mathbb{C}^{\ast}$
is considered to be a subgroup of 
$\Pic^0(U)$. 
So we would like to apply $\Pic^0(C)$-localization 
on the invariant $N_{n, \gamma}$. 
In order to 
see this, we need to find $\Pic^0(C)$-fixed 
semistable sheaves on $U$ supported on $C$.  

If we take $U$ as above, then we have 
\begin{align*}
\pi_1(C) \cong \pi_1(U) \cong \mathbb{Z}. 
\end{align*}
Hence if we take the universal covering space of $U$, 
\begin{align}\label{univ:U}
f_U \colon 
\widetilde{U} \to U, 
\end{align}
then $\widetilde{U}$ admits a $\mathbb{Z}$-action, 
and it contains the universal cover 
of $C$ denoted by $\widetilde{C}$. 
A key observation is that 
a stable sheaf on $U$ supported on $C$ is 
$\Pic^0(C)$-fixed if and only if 
it is 
a push-forward of some sheaf on $\widetilde{U}$
supported on $\widetilde{C}$, 
which is unique up to $\mathbb{Z}$-action on $\widetilde{U}$. 

The universal
cover $\widetilde{C} \to C$ is described in the 
following way. Let 
\begin{align*}
\mathbb{P}^1 \cong 
C^{\dag} \to C
\end{align*}
be the normalization 
and $x_1, x_2 \in C^{\dag}$
the preimage at the node $x\in C$. 
We take an infinite number of copies of 
$\{C^{\dag}, x_1, x_2\}$, denoted by 
\begin{align*}
\{C_i, x_{1, i}, x_{2, i}\}, \quad i\in \mathbb{Z}. 
\end{align*}
Then $\widetilde{C}$ is an infinite chain of smooth rational curves, 
\begin{align*}
\widetilde{C} = \cdots \cup C_{-1} \cup C_{0} \cup C_1\cdots
\cup C_{i}^{} \cup C_{i+1}^{} \cup \cdots, 
\end{align*}
where $C_{i}$ and $C_{i+1}$
are attached along $x_{2, i}$ and $x_{1, i+1}$. 
(See Figure~\ref{fig:one}.)

For instance, let us look at the invariant 
$N_{0, 2C}$. 
By the above argument, we may expect the formula, 
\begin{align}\label{expect}
N_{n, 2C} =\sum_{i\ge 0} N_{n, C_{0} +C_i}(\widetilde{U}).  
\end{align}
Now by the assumptions in Theorem~\ref{thm:main}, we obtain 
\begin{align*}
N_{0, 2C_0}(\widetilde{U}) &=N_{1, 2C_0}(\widetilde{U})+
\frac{1}{4}N_{1, C_0}(\widetilde{U}), \\
N_{0, C_0 +C_1}(\widetilde{U}) &=N_{1, C_0+C_1}(\widetilde{U}). 
\end{align*}
The above localization argument also implies 
$N_{1, C_0}(\widetilde{U})=N_{1, C}$, and 
it is also easy to see $N_{n, C_0+C_i}(\widetilde{U})=0$ for $i\ge 2$. 
Thus we obtain 
\begin{align*}
N_{0, 2C}=N_{1, 2C}+ \frac{1}{4}N_{1, C}, 
\end{align*}
which is nothing but the desired formula (\ref{N:mult})
for $\gamma=2C$. 
 This picture is quite similar to 
the multiple cover formula for genus zero 
Gromov-Witten invariants of a local nodal curve with one 
node~\cite{BKL}. 

\begin{figure}[htbp]
 \begin{center}
  \includegraphics[width=140mm]{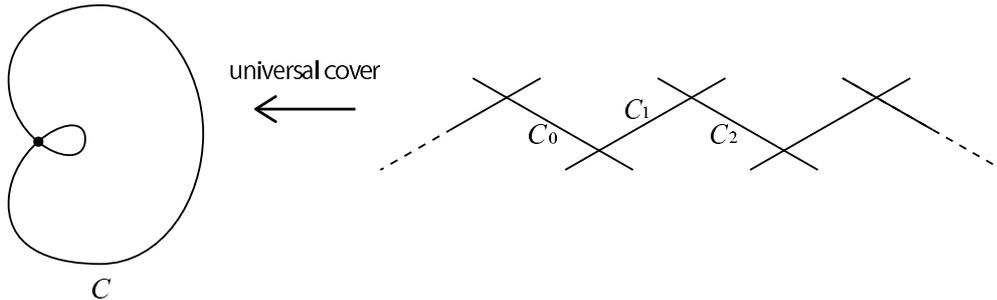}
 \end{center}
 \caption{Universal cover $C \leftarrow \widetilde{C}$}
 \label{fig:one}
\end{figure}

\subsection{Parabolic stable pairs}
In the previous subsection, we 
explained the idea of the multiple cover formula 
in a simple example.  
However it is not
obvious to realize the story 
there directly,  
especially the formula (\ref{expect}) 
seems to be hard to deduce.   
The issue is that, since the definition of $N_{n, \gamma}$
involves Joyce's log stack function~\cite{Joy4},
denoted by $\epsilon_{n, \gamma}$ 
in Subsection~\ref{subsec:Genera}, 
the above localization argument seems to be very hard to apply. 
Namely, we 
have to compare the contribution of 
$\epsilon_{n, \gamma}$ on 
the $\mathbb{C}^{\ast}$-fixed points
with that on the universal cover.
But
to do this, we also have to `localize'
the product structure on the Hall algebra, 
which seems to require a new technique. 
In order to overcome this technical difficulty, 
we use the idea of \textit{parabolic stable pairs}, 
introduced in the previous paper~\cite{Todpara}. 
By definition, a parabolic stable pair 
consists of a pair, 
\begin{align*}
(F, s), \quad s \in F \otimes \oO_H, 
\end{align*}
where 
$F$ is a one dimensional semistable sheaf on $X$, 
$H$ is a fixed divisor in $X$, satisfying 
a certain stability condition. 
(cf.~Definition~\ref{defi:para}.)
In~\cite{Todpara}, we constructed invariants counting 
parabolic stable pairs, and showed that Conjecture~\ref{conj:mult}
is equivalent to a certain product expansion formula 
of the generating series of parabolic stable pair invariants. 
The moduli space
of parabolic stable pairs is a scheme, (not a stack,)
 and 
$\Pic^0(C)$ also acts on the moduli space 
of (local) parabolic stable pairs. 
There is no technical difficulty in 
applying 
$\Pic^0(C)$-localizations
to parabolic stable pair invariants, and 
the arguments similar to the previous
subsection work for parabolic stable pairs. 
 
When the one cycle $\gamma$ on $X$ is 
supported on a nodal curve which has 
more than one nodes, then 
its universal covering space
is much more complicated. Instead of taking 
the universal cover, we take 
cyclic neighborhoods and proceed the 
induction argument. 
Combining the above ideas, 
(Jacobian localizations, parabolic stable pairs, 
induction via cyclic neighborhoods,)
we are able to prove Theorem~\ref{thm:main}.

\subsection{Acknowledgement}
The author is grateful to Richard Thomas, 
Jacopo Stoppa 
 for valuable discussions on 
the subject of this paper, 
and Kentaro Nagao for pointing out the 
reference~\cite{Bergh}. 
The author would like to thank the 
Isaac Newton Institute and its program 
`Moduli Spaces', 
during which a part of this work was done. 
This work is supported by World Premier 
International Research Center Initiative
(WPI initiative), MEXT, Japan. This work is also supported by Grant-in Aid
for Scientific Research grant (22684002), 
and partly (S-19104002),
from the Ministry of Education, Culture,
Sports, Science and Technology, Japan.

\section{Multiple cover formula of generalized DT invariants}
In this section, we recall
(generalized) DT invariants on Calabi-Yau 3-folds 
and 
 the
conjectural multiple cover formula. 
In what follows, $X$ is a smooth projective Calabi-Yau 
3-fold over $\mathbb{C}$, i.e. 
\begin{align*}
\bigwedge^{3}T_{X}^{\vee} \cong \oO_X, \quad
H^1(X, \oO_X)=0. 
\end{align*}
We fix an ample line bundle $\oO_X(1)$
and set $\omega=c_1(\oO_X(1))$. 
Below we say a coherent sheaf $F$ on $X$
\textit{$d$-dimensional}
if the support of $F$ is $d$-dimensional. 
\subsection{Semistable sheaves}
Let us recall the notion of one dimensional
 $\omega$-semistable 
sheaves on $X$. 
They are defined by the notion of slope:
for a one dimensional coherent sheaf $F$, its slope is defined by 
\begin{align*}
\mu_{\omega}(F) \cneq \frac{\chi(F)}{[F] \cdot \omega}. 
\end{align*}
Here $\chi(F)$ is the holomorphic Euler characteristic 
of $F$ and $[F]$ is the fundamental one cycle associated to 
$F$, defined by 
\begin{align}\label{onecycle}
[F] = \sum_{\eta} (\mathrm{length}_{\oO_{X, \eta}}
F) \overline{\{\eta \}}. 
\end{align}
In the above sum, $\eta$ runs all the 
 codimension 
two points in $X$.
\begin{defi}\label{defi:semi}
A one dimensional coherent sheaf $F$ on $X$ is 
$\omega$-(semi)stable if for any 
subsheaf $0\neq F' \subsetneq F$, we have the inequality, 
\begin{align*}
\mu_{\omega}(F') <(\le) \mu_{\omega}(F). 
\end{align*}
\end{defi}
Note that any 
one dimensional $\omega$-semistable sheaf $F$ is 
pure, i.e. there is no zero dimensional 
subsheaf in $F$. 
Also we say that $F$ is \textit{strictly $\omega$-semistable} 
if $F$ is $\omega$-semistable but not $\omega$-stable. 
For the detail of (semi)stable sheaves, see~\cite{Hu}.

\subsection{DT invariants}
Let us take data, 
\begin{align}\label{data:bn}
n \in \mathbb{Z}, \quad 
\beta \in H_2(X, \mathbb{Z}).
\end{align}
The (generalized) DT invariant is the $\mathbb{Q}$-valued
invariant, 
\begin{align}\label{Nnb}
N_{n, \beta} \in \mathbb{Q}, 
\end{align}
counting one dimensional $\omega$-semistable sheaves $F$ on $X$
satisfying
\begin{align}\label{Fbn}
[F]=\beta, \quad \chi(F)=n. 
\end{align}
Here by an abuse of notation, we denote by 
$[F]$ the homology class of the one cycle (\ref{onecycle}).

The invariant (\ref{Nnb}) is defined in the following way. 
Let 
\begin{align}\label{moduli:M}
M_n(X, \beta)
\end{align}
be the coarse moduli space of 
one dimensional $\omega$-semistable 
sheaves $F$ on $X$ satisfying (\ref{Fbn}). 
There are some criterions for the moduli 
space (\ref{moduli:M}) to be fine. 
For instance suppose that the following condition holds:
\begin{align}\label{gcd}
\mathrm{g.c.d.}(\omega \cdot \beta, n)=1, 
\end{align}  
e.g. $n=1$. Then  
there is no strictly $\omega$-semistable 
sheaf on $X$ satisfying (\ref{Fbn}),
and (\ref{moduli:M}) is a fine 
projective scheme over $\mathbb{C}$.
In this case, the moduli space (\ref{moduli:M})
carries a symmetric perfect obstruction theory, 
hence the zero dimensional virtual cycle~\cite{Thom}. 
\begin{defi}
If the condition (\ref{gcd}) holds, then 
we define $N_{n, \beta}$ to be 
\begin{align*}
N_{n, \beta}=\int_{[M_n(X, \beta)]^{\rm{vir}}} 1 \in \mathbb{Z}. 
\end{align*}
\end{defi}
Another way to define $N_{n, \beta}$ is to use 
Behrend's constructible function~\cite{Beh}. 
Recall that for any $\mathbb{C}$-scheme $M$, 
Behrend constructs a canonical constructible function, 
\begin{align*}
\nu \colon M \to \mathbb{Z}, 
\end{align*}
such that if $M$ carries a symmetric perfect obstruction theory, 
then we have 
\begin{align*}
\int_{[M]^{\rm{vir}}} 1 &= \int_{M} \nu d\chi, \\
&=\sum_{m \in \mathbb{Z}} m \cdot \chi(\nu^{-1}(m)). 
\end{align*}
Hence by using the Behrend function 
$\nu$ on $M_n(X, \beta)$, the 
invariant (\ref{Nnb}) can also be 
also expressed as 
\begin{align}\label{intM}
N_{n, \beta}=\int_{M_n(X, \beta)} \nu d\chi. 
\end{align}
\subsection{Generalized DT invariants}\label{subsec:Genera}
In a general choice of (\ref{data:bn}), 
 the condition (\ref{gcd}) 
may not hold, and there may be strictly $\omega$-semistable
sheaves $F$ satisfying (\ref{Fbn}). 
 In this case, the invariant (\ref{Nnb}) 
is one of
\textit{generalized DT invariants}
introduced by Joyce-Song~\cite{JS}
and Kontsevich-Soibelman~\cite{K-S}. 
It requires sophisticated techniques on Hall algebras
of coherent sheaves
to define them, and we need
some more preparations for this.  
Since we will not need the detail of the 
definition of (\ref{Nnb}) in a general case, we 
just give a rough explanation. 

A strictly $\omega$-semistable sheaf has non-trivial 
automorphisms, and we need to involve the contributions of 
the automorphism groups with the invariant (\ref{Nnb}). 
For this purpose, we need to work with the moduli stack, 
\begin{align}\label{stack:M}
\mM_n(X, \beta), 
\end{align}
which parameterizes $\omega$-semistable one dimensional 
sheaves $F$ satisfying (\ref{Fbn}). 
The stack (\ref{stack:M}) is known to be an Artin 
stack of finite type over $\mathbb{C}$. 

The Behrend functions on $\mathbb{C}$-schemes 
naturally extend to constructible 
functions on Artin stacks of finite type
over $\mathbb{C}$. (cf.~\cite[Proposition~4.4]{JS}.)
However the stack (\ref{stack:M})
may have stabilizer groups whose Euler characteristic 
are zero, e.g. $\GL(2, \mathbb{C})$. 
Hence the integration of the Behrend function 
(\ref{intM}), replacing $M_n(X, \beta)$
by $\mM_n(X, \beta)$, does not make sense. 
The idea of the definition of generalized
DT invariant is that, instead of working with
the stack (\ref{stack:M}), we should work 
with the `logarithm' of (\ref{stack:M}) 
in the Hall algebra of coherent sheaves, 
denoted by $H(X)$. 

The algebra $H(X)$ is, as a $\mathbb{Q}$-vector 
space, spanned by the isomorphism classes of 
symbols, 
\begin{align*}
[ \rho \colon \xX \to \cC oh(X)]. 
\end{align*}
Here $\xX$ is an Artin stack of finite type 
with affine geometric stabilizers, and 
$\cC oh(X)$ is the stack of all 
the coherent sheaves on $X$. 
There is an associative $\ast$-product 
on $H(X)$ based on Ringel Hall algebras. 
For the detail, see~\cite[Theorem~5.2]{Joy2}. 

The stack (\ref{stack:M}) is considered to be 
an element of $H(X)$, by 
regarding it as an open substack of $\cC oh(X)$, 
\begin{align*}
\delta_{n, \beta} \cneq \left[\mM_n(X, \beta) \hookrightarrow
 \cC oh(X)  \right]
 \in H(X).  
\end{align*}
The `logarithm' of $\delta_{n, \beta}$, 
denoted by $\epsilon_{n, \beta} \in H(X)$, is defined by the rule, 
\begin{align*}
\sum_{n/\omega \cdot \beta =\mu}
\epsilon_{n, \beta}
= \log \left( 1+ \sum_{n/\omega \cdot \beta=\mu}
\delta_{n, \beta}   \right),
\end{align*}
for any $\mu \in \mathbb{Q}$
in a certain completion of the algebra $(H(X), \ast)$.
In other words, $\epsilon_{n, \beta}$ is given by 
\begin{align*}
\epsilon_{n, \beta} =
\sum_{l\ge 1}\frac{(-1)^{l-1}}{l} 
\sum_{\begin{subarray}{c}
\beta_1, \cdots, \beta_l \in H_2(X, \mathbb{Z}), \\
n_1, \cdots, n_l \in \mathbb{Z}, \\
n_i/\omega \cdot \beta_i=n/\omega \cdot \beta
\end{subarray}}
\delta_{n_1, \beta_1} \ast \cdots \ast \delta_{n_l, \beta_l}. 
\end{align*}
The above sum is easily shown to be a finite sum. 

The important fact is that $\epsilon_{n, \beta}$ is 
supported on `virtual indecomposable sheaves'. 
Roughly speaking this implies that,
modulo some 
relations in $H(X)$,
the element $\epsilon_{n, \beta}$ is written as 
\begin{align*}
\epsilon_{n, \beta}=\sum_{i} a_i
[\rho_i \colon [M_i/\mathbb{C}^{\ast}] \to \cC oh(X)],
\end{align*}
where $a_i \in \mathbb{Q}$, 
 $M_i$ are quasi-projective varieties on which 
$\mathbb{C}^{\ast}$ act trivially. 
The invariant (\ref{Nnb}) is then defined by 
the weighted Euler characteristics of $M_i$, 
weighted by the Behrend function $\nu$ on 
$\cC oh(X)$ pulled back by $\rho_i$. Namely, 
$N_{n, \beta}$ is defined by 
\begin{align}\label{skip}
N_{n, \beta} \cneq 
-\sum_{i} a_i \int_{M_i} \rho_i^{\ast} \nu d\chi. 
\end{align}
Here we need to change the sign due to the appearance
of the trivial $\mathbb{C}^{\ast}$-action.

We have skipped lots of details in the above
definition of (\ref{Nnb}). For more detail, we refer~\cite{JS}. 
Also see~\cite[Section~4]{Tsurvey} for a more direct explanation.   
\begin{rmk}\label{rmk:omega}
In priori, we need to choose an ample divisor $\omega$
to define $N_{n, \beta}$. However
it can be shown that $N_{n, \beta}$ does not depend on 
a choice of $\omega$. 
(cf.~\cite[Theorem~6.16]{JS}.)
\end{rmk}

\subsection{Local generalized DT invariants}
There is also a local version of 
(generalized) DT invariant, 
which we explain below.  
Let us fix a reduced curve $C$ in $X$, 
 \begin{align*}
i \colon C \hookrightarrow X, 
\end{align*}
with irreducible components $C_1, \cdots, C_N$. 
Then a one cycle $\gamma$ on $X$ supported on $C$
 is identified with an element of $H_2(C, \mathbb{Z})$,  
\begin{align*}
\gamma \in H_2(C, \mathbb{Z}) 
\cong \bigoplus_{i=1}^{N} \mathbb{Z}[C_i]. 
\end{align*}
Suppose that $\beta=i_{\ast}\gamma$
and $n\in \mathbb{Z}$ satisfies the condition (\ref{gcd}). 
Then we have the fine moduli space (\ref{moduli:M}), and 
the closed subscheme, 
\begin{align}\label{closed:M}
M_n(C, \gamma) \subset M_n(X, \beta), 
\end{align}
corresponding to $\omega$-stable sheaves $F$
satisfying 
\begin{align}\label{Fgn}
[F]=\gamma, \quad \chi(F)=n. 
\end{align}
Here $[F]=\gamma$ is an equality as a one cycle on $X$.
Then the local DT invariant is defined by 
\begin{align}\label{loc:DT:Nng}
N_{n, \gamma} \cneq \int_{M_n(C, \gamma)} \nu d\chi. 
\end{align}
Here $\nu$ is the Behrend function on $M_n(X, \beta)$
restricted to $M_n(C, \gamma)$. We remark that 
$\nu$ may not coincide with the Behrend function on $M_n(C, \gamma)$. 

Even if $(n, \beta)$ does not satisfy the condition (\ref{gcd}), 
we can similarly define the local generalized DT invariant, 
\begin{align}\label{Nng}
N_{n, \gamma} \in \mathbb{Q}, 
\end{align}
counting one dimensional $\omega$-semistable sheaves $F$
satisfying (\ref{Fgn}). 
Instead of using the stack (\ref{stack:M}), 
we use the substack,
\begin{align}\label{M(Cg)}
\mM_n(C, \gamma) \subset \mM_n(X, \beta), 
\end{align}
parameterizing one dimensional $\omega$-semistable 
sheaves $F$ on $X$ satisfying (\ref{Fgn}).
We can similarly take the logarithm of the substack (\ref{M(Cg)})
in the Hall algebra $H(X)$, and the invariant (\ref{Nng})
is defined by integrating  
the Behrend function on $\cC oh(X)$ over it.
See~\cite[Subsection~4.4]{Todpara} for some more detail. 
Similarly to $N_{n, \beta}$, the local invariant 
$N_{n, \gamma}$ also does not depend on $\omega$. 
(cf.~Remark~\ref{rmk:omega}.)

\subsection{Multiple cover formula}\label{subsec:mult}
As we discussed in the previous 
subsections, the invariant (\ref{Nnb}) 
is an integer if the condition (\ref{gcd})
is satisfied. 
In particular, 
for $\beta \in H_2(X, \mathbb{Z})$, 
we have the $\mathbb{Z}$-valued 
invariant, 
\begin{align*}
N_{1, \beta} \in \mathbb{Z}. 
\end{align*}
The above invariant is introduced by Katz~\cite{Katz}
as a sheaf theoretic definition of 
genus zero Gopakumar-Vafa invariant. 
On the other hand if
$(n, \beta)$ does not satisfy the condition (\ref{gcd}), 
then $N_{n, \beta}$ may not be an integer and 
hence does not coincide with $N_{1, \beta}$. 
However the invariants $N_{n, \beta}$ for 
$n \neq 1$ are conjectured to be 
related to $N_{1, \beta}$
via the multiple cover formula: 
\begin{conj}
{\bf\cite[Conjecture~6.20]{JS}, 
\cite[Conjecture~6.3]{Tsurvey}}\label{conj:mult2}
We have the following formula, 
\begin{align}\label{form:mult}
N_{n, \beta}=\sum_{k\ge 1, k|(n, \beta)}
\frac{1}{k^2}N_{1, \beta/k}. 
\end{align}
\end{conj}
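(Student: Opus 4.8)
The plan is to establish the formula (\ref{form:mult}) by reducing it, in stages, to the case of local trees of $\mathbb{P}^1$. First I would pass from the global Calabi-Yau 3-fold to a purely local problem: by \cite[Proposition~4.17]{Todpara} it suffices to prove the local multiple cover formula (\ref{N:mult}) for the invariants $N_{n,\gamma}$ attached to a one cycle $\gamma$ on $X$, and since $N_{n,\gamma}$ depends only on a small analytic neighborhood $C\subset U\subset X$ of the support $C$ of $\gamma$, the entire argument takes place inside $U$. Next, by Lemma~\ref{lem:higher} the invariant $N_{n,\gamma}$ vanishes whenever some irreducible component of $C$ has positive geometric genus, so I may assume $C$ is a reduced rational curve with at worst nodal singularities; if $C$ is already a tree of $\mathbb{P}^1$ there is nothing to prove, so the real content is to trade the nodes of $C$ for a tree.

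The mechanism is Jacobian localization, but carried out on \emph{parabolic stable pairs} rather than on $N_{n,\gamma}$ directly. The obstruction to localizing $N_{n,\gamma}$ itself is that it is built from Joyce's logarithm $\epsilon_{n,\gamma}$ in the Hall algebra $H(X)$, and the $\ast$-product resists any torus action; instead I would use the parabolic stable pair invariants of \cite{Todpara}, whose moduli space is an honest scheme and whose generating series determines $N_{n,\gamma}$ through a product expansion, so that the claim reduces to the analogous multiplicativity for parabolic stable pairs. On the moduli scheme of parabolic stable pairs on $U$ supported on the nodal curve $C$, the Jacobian group $\Pic^0(U)\cong\Pic^0(C)\cong(\mathbb{C}^\ast)^{b_1(C)}$ acts by twisting the underlying sheaf, and I would run $\mathbb{C}^\ast$-localization with respect to one of these $\mathbb{C}^\ast$-factors. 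The geometric heart is the identification of the fixed locus: a parabolic stable pair supported on $C$ is fixed by the relevant $\mathbb{C}^\ast\subset\Pic^0(C)$ exactly when its underlying sheaf is pushed forward from a suitable cyclic covering of $U$, uniquely up to the deck action --- for a single node this covering is the universal cover, realized as the infinite chain $\widetilde{C}=\cdots\cup C_i\cup C_{i+1}\cup\cdots$ of copies of the normalized curve. This presents the invariant on $U$ as a sum (finite, after the obvious vanishing) of invariants on a cyclic neighborhood whose base curve has strictly smaller first Betti number; iterating through cyclic neighborhoods in the sense of Definition~\ref{def:cyclic}, after finitely many steps the base curve becomes a tree of $\mathbb{P}^1$, and the assumed formula there, together with the inductive hypothesis, yields (\ref{N:mult}) for $\gamma$, hence (\ref{form:mult}) for $\beta$.

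The main obstacle I expect is precisely the step that forces the introduction of parabolic stable pairs: making the localization rigorous and compatible with the definition of the invariant --- setting up the moduli of parabolic stable pairs on the cyclic neighborhoods, verifying the $\Pic^0$-action and computing its fixed loci, and matching the Behrend-weighted Euler characteristics on $U$ with those on the cover, since the Behrend function does not simply pull back and this comparison relies on the critical-locus hypothesis of Conjecture~\ref{conj:crit}. Beyond this there is genuine bookkeeping: one must check that the resulting sum over one-cycle classes on the cyclic cover is finite --- equivalently that invariants such as $N_{n,C_0+C_i}(\widetilde{U})$ vanish for $i$ large --- so that the induction terminates and the two product expansions can be compared term by term. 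Once these points are settled, the reduction to the tree case, i.e. to the hypotheses of Theorem~\ref{thm:main}, goes through and completes the proof.
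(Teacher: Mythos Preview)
The statement you are trying to prove is Conjecture~\ref{conj:mult2}, and it remains a conjecture in the paper: the paper does \emph{not} prove it. What the paper proves is the reduction theorem (Theorem~\ref{thm:main:cov}), which says that the local formula (\ref{form:mult:loc}) for a nodal rational curve $C$ follows \emph{provided} two hypotheses hold on every cyclic neighborhood $(C'\subset U')$ with $C'$ a tree of $\mathbb{P}^1$: (a) the critical-locus condition of Conjecture~\ref{conj:crit}, and (b) the multiple cover formula (\ref{mult:U'}) for $N_{n,\gamma'}(U')$. Your outline correctly reproduces the reduction mechanism of the paper (parabolic stable pairs, Jacobian localization, induction through cyclic neighborhoods), but then asserts that ``if $C$ is already a tree of $\mathbb{P}^1$ there is nothing to prove'' and that the reduction ``completes the proof''. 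Both are false: the tree-of-$\mathbb{P}^1$ case is precisely the unresolved base case, and the paper only verifies it in special situations (chains of $(-1,-1)$-curves, ADE configurations of surface type; see Proposition~\ref{prop:compN}), not in general.

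There is a second gap earlier in your reduction. After invoking Lemma~\ref{lem:higher} you write ``so I may assume $C$ is a reduced rational curve with at worst nodal singularities''. Lemma~\ref{lem:higher} only forces each irreducible component to have geometric genus zero; it says nothing about the singularities of $C$. The entire machinery of the paper (cyclic covers via $\Pic^0(C)\cong(\mathbb{C}^\ast)^{g(C)}$, the description of $\widetilde{C}$ in (\ref{Cxi}), the induction on $g(C_\gamma)$) is built for nodal curves, and the paper explicitly restricts to that case from the outset of Section~3. For a one-cycle whose support has worse-than-nodal singularities, the argument as written does not apply. So your proposal is a faithful sketch of the paper's reduction strategy, but it does not, and cannot as stated, yield a proof of the conjecture.
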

In~\cite[Theorem~6.4]{Tsurvey},
it is shown that the above conjecture 
is equivalent to Pandharipande-Thomas's
strong rationality conjecture~\cite[Conjecture~3.14]{PT}. 
We refer~\cite[Section~6]{Tsurvey} 
for discussions on strong rationality conjecture 
and its relation to Conjecture~\ref{conj:mult2}. 

For a 
reduced curve $C\subset X$, $n\in \mathbb{Z}$
 and 
$\gamma \in H_2(C, \mathbb{Z})$, 
we have the
local (generalized) DT invariants 
as in (\ref{Nng}). 
The local version of the above conjecture is 
also similarly formulated:
\begin{conj}{\bf\cite[Conjecture~4.13]{Todpara}}
\label{conj:mult:loc}
For $n\in \mathbb{Z}$ and $\gamma \in H_2(C, \mathbb{Z})$, 
 we have the formula, 
\begin{align}\label{form:mult:loc}
N_{n, \gamma}=\sum_{k\ge 1, k|(n, \gamma)}
\frac{1}{k^2}N_{1, \gamma/k}. 
\end{align}
\end{conj}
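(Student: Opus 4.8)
The plan is to deduce the formula~(\ref{form:mult:loc}) by combining the reduction result Theorem~\ref{thm:main} with a direct analysis of the simplest geometries, and throughout to replace the generalized DT invariants by the parabolic stable pair invariants of~\cite{Todpara}, whose moduli spaces are schemes rather than stacks. First I would dispose of the cases carrying no information: by Lemma~\ref{lem:higher}, $N_{n,\gamma}=0$ (so both sides of~(\ref{form:mult:loc}) vanish) whenever some component of the support $C$ of $\gamma$ has geometric genus $\geq 1$, so we may assume $C$ is a rational curve. When moreover $C$ has at worst nodal singularities, Theorem~\ref{thm:main} reduces~(\ref{form:mult:loc}) to its two hypotheses, verified on cyclic neighborhoods of $C$ whose total curve is a tree of $\mathbb{P}^1$: the local critical-chart description of the sheaf moduli stack (Conjecture~\ref{conj:crit}), and the multiple cover formula for those tree geometries. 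So the real content is the tree case, together with extending the argument beyond nodal singularities.

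For the reduction itself, the engine is a Jacobian localization. Choose a small analytic neighborhood $U \supset C$ homotopy equivalent to $C$; then $\Pic^0(C)$ sits inside $\Pic^0(U)$ and acts on the moduli scheme of (parabolic) stable pairs supported on $C$. The structural input I would establish is that a stable object is $\Pic^0(C)$-fixed exactly when it is the pushforward of an object on an appropriate cyclic cover, or partial normalization, of $U$ along $C$ --- for a single node this is the infinite chain $\widetilde{C}$ of Figure~\ref{fig:one}, on which $\mathbb{Z}$ acts with the fixed object unique up to that action. Running an induction on the number of nodes of $C$, at each step I would pass to a cyclic neighborhood $(C'\subset U')\to(C\subset X)$ whose total curve has one fewer loop in its dual graph, express $N_{n,\gamma}$ as the finite sum of the $N_{n,\gamma'}(U')$ over the $\gamma'$ with $\sigma'_{\ast}\gamma'=\gamma$ (the analogue of~(\ref{expect})), and invoke the inductive hypothesis. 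Because the parabolic moduli space is a scheme, this localization is unobstructed --- whereas for $N_{n,\gamma}$ directly one would have to ``localize'' the $\ast$-product on the Hall algebra in which $\epsilon_{n,\gamma}$ lives, which is exactly the difficulty the parabolic reformulation circumvents; one then uses the equivalence, proved in~\cite{Todpara}, between the multiple cover formula and a product expansion of the parabolic generating series.

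This leaves the base case: a tree --- indeed it suffices to take a chain --- of smooth, and one may assume super-rigid, rational curves, where the local model is of resolved-conifold type and the moduli of parabolic stable pairs should admit a combinatorial, torus-fixed-point description making $N_{n,\gamma'}(U')$ directly computable. I expect \emph{this} to be the main obstacle, and an essentially open one: one can establish the multiple cover formula only for special configurations, so Conjecture~\ref{conj:mult:loc} is obtained only in those cases --- an irreducible rational curve with one node, a circle of $\mathbb{P}^1$, or $\gamma=p[C]$ with $p$ prime, where only $k\in\{1,p\}$ occur in~(\ref{form:mult:loc}) (cf.~Theorems~\ref{thm:typeI} and~\ref{prop:prime}) --- not in full generality. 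A secondary, more technical obstacle is to carry out the localization rigorously: identifying the $\Pic^0(C)$-fixed locus with the image of the cyclic-cover moduli, tracking the Behrend weights under pushforward along the non-proper cover, and ensuring the cyclic neighborhoods can always be chosen so that the induction on loops terminates at a tree. Finally, rational curves with worse-than-nodal singularities fall outside the scope of Theorem~\ref{thm:main}, and would require extending the cyclic-neighborhood formalism to such singularities before this strategy applies.
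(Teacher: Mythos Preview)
The statement you are asked to prove is a \emph{conjecture}: the paper does not prove it, and indeed the paper's purpose is to reduce it to simpler geometries (Theorem~\ref{thm:main:cov}) and then verify those only under extra hypotheses (Section~\ref{sec:apply}). You recognize this yourself --- your proposal says the tree case is ``essentially open'' and that the formula is obtained ``not in full generality.'' So there is no proof in the paper to compare against; what you have written is an accurate summary of the paper's strategy for attacking the conjecture, and in that sense it matches the paper closely.

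Two refinements worth noting. First, the paper's induction does not pass to the universal cover or a partial normalization directly; it takes a finite cyclic cover of sufficiently large odd order $m$, so that the $\mathbb{Z}/m\mathbb{Z}$-fixed locus of the parabolic moduli coincides with the $\mathbb{C}^{\ast}$-fixed locus (equations~(\ref{Cast:loc}), (\ref{Cast:loc2})), and the induction runs on the pair $(l(\gamma), g(C_\gamma))$ rather than on the number of nodes (see the dichotomy (\ref{oneof1})--(\ref{oneof2}) in the proof of Theorem~\ref{thm:main:cov}). Second, the Behrend-weight compatibility you flag as a ``secondary obstacle'' is exactly what Section~\ref{subsec:Behrend} addresses: since the parabolic moduli has no known symmetric obstruction theory, the paper passes through an auxiliary relative Quot scheme over the moduli of simple sheaves (Lemma~\ref{lem:etale}) which does carry one, and compares Behrend functions there. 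These are implementations of precisely the plan you outline.
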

As shown in~\cite[Corollary~4.18]{Todpara}, the local 
multiple cover formula 
 is enough to show the global
multiple cover formula: 
\begin{lem}{\bf\cite[Corollary~4.18]{Todpara}}\label{lem:glo:loc}
For $n\in \mathbb{Z}$ and $\beta \in H_2(X, \mathbb{Z})$, 
suppose that the 
formula (\ref{form:mult:loc}) holds 
for any reduced curve $i \colon C \hookrightarrow X$
and $\gamma \in H_2(C, \mathbb{Z})$
with $\beta=i_{\ast} \gamma$. 
Then $N_{n, \beta}$ satisfies the formula (\ref{form:mult}). 
\end{lem}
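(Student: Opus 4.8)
The plan is to reduce the global formula to the local one by organizing the count $N_{n,\beta}$ according to the support one-cycles of the semistable sheaves being counted. For a one-dimensional $\omega$-semistable sheaf $F$ with $[F]=\beta$ in homology, the associated fundamental one-cycle $[F]$ in the sense of (\ref{onecycle}) is an effective one-cycle $\gamma$ on $X$, supported on a reduced curve $C_\gamma \subset X$, with $i_*\gamma = \beta$. Since the assignment $F \mapsto [F]$ is additive on short exact sequences, the resulting decomposition of the stack $\mM_n(X,\beta)$ into the substacks $\mM_n(C_\gamma,\gamma)$ is compatible with the $\ast$-product of the Hall algebra $H(X)$: the $\ast$-product of classes supported on cycles $\gamma_1$ and $\gamma_2$ is supported on $\gamma_1+\gamma_2$. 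Because the defining logarithm only mixes terms of a fixed slope $n/\omega\cdot\beta$ and preserves $\chi=n$, taking logarithms respects this cycle grading, so $\epsilon_{n,\beta}$ decomposes as a sum of the local elements $\epsilon_{n,\gamma}$ over one-cycles $\gamma$ with $i_*\gamma=\beta$; applying the Behrend-weighted integration (\ref{skip}) then yields the \emph{support decomposition}
\begin{align*}
N_{n,\beta} = \sum_{i_*\gamma = \beta} N_{n,\gamma}.
\end{align*}

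I expect the support decomposition to be the main obstacle, because the index set of effective one-cycles $\gamma$ with $i_*\gamma=\beta$ is infinite as soon as a component of the support moves in a positive-dimensional family. To make sense of the identity one works over the Chow variety $\Chow_\beta(X)$ of effective one-cycles in class $\beta$: the support-cycle morphism $\mM_n(X,\beta)\to \Chow_\beta(X)$ lets one stratify $\mM_n(X,\beta)$ by constructible families of cycles, over which the relevant Behrend-weighted Euler characteristics behave additively, so that the displayed ``sum'' is really an integration against $d\chi$ over $\Chow_\beta(X)$. The content to be verified here is a relative form of the multiple cover formula over $\Chow_\beta(X)$: since the hypothesis supplies the formula for every individual reduced curve $C$ and one-cycle $\gamma$, it holds fiberwise, and one propagates it to the total space using additivity of Behrend-weighted Euler characteristics in families. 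Once this is settled the remainder is formal.

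Granting the support decomposition, I would finish as follows. Apply the hypothesis, the local formula (\ref{form:mult:loc}), to each summand and interchange the order of summation:
\begin{align*}
N_{n,\beta}
= \sum_{i_*\gamma = \beta}\ \sum_{k\ge 1,\, k|(n,\gamma)} \frac{1}{k^2}\, N_{1,\gamma/k}
= \sum_{k\ge 1,\, k|n}\frac{1}{k^2} \sum_{\substack{i_*\gamma=\beta \\ k|\gamma}} N_{1,\gamma/k}.
\end{align*}
For fixed $k$ the substitution $\gamma'=\gamma/k$ identifies the inner sum with $\sum_{i_*\gamma'=\beta/k} N_{1,\gamma'}$, which is empty (and the term discarded) unless $k|\beta$, so in effect $k|(n,\beta)$. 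Finally, apply the support decomposition once more, now with data $(1,\beta/k)$, to recognize $\sum_{i_*\gamma'=\beta/k} N_{1,\gamma'} = N_{1,\beta/k}$. This gives exactly
\begin{align*}
N_{n,\beta} = \sum_{k\ge 1,\, k|(n,\beta)} \frac{1}{k^2}\, N_{1,\beta/k},
\end{align*}
which is the formula (\ref{form:mult}).
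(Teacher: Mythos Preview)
The paper does not prove this lemma; it imports the result from~\cite[Corollary~4.18]{Todpara} without argument. Your reconstruction is the natural one and is essentially correct: the support decomposition $N_{n,\beta}=\int_{\Chow_\beta(X)} N_{n,\gamma}\,d\chi$ (your ``sum'' over one-cycles) is exactly the mechanism that passes from local to global, and your identification of why it works---additivity of the fundamental cycle on short exact sequences, hence compatibility of the cycle grading with the Hall-algebra $\ast$-product and with the logarithm---is the right reason. The formal manipulation at the end is clean.

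One remark on context: the cited reference~\cite{Todpara} is organized around parabolic stable pairs, and its Corollary~4.18 is phrased in terms of the equivalence between the local multiple cover formula~(\ref{form:mult:loc}) and the logarithmic formula~(\ref{log:form}) for $\widehat{\DT}^{\rm par}_{n,\gamma}$. So the argument there passes through the parabolic side (where the support decomposition is a genuine finite sum over the Chow variety, since the parabolic moduli spaces are schemes and the Behrend-weighted Euler characteristic is literally additive over a constructible stratification). Your direct approach on the sheaf side is equivalent but requires the Hall-algebra/stack version of the support decomposition, which is slightly more delicate to state precisely; the parabolic route trades that subtlety for the input of~\cite[Theorem~3.16]{Todpara} relating $\DT^{\rm par}$ to $N$. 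Either way the substantive step is the same Chow-variety stratification you describe.
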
 

As we discussed in the Introduction, 
our purpose is to study Conjecture~\ref{conj:mult:loc}
in terms of Jacobian localizations and parabolic stable 
pair invariants, which we recall in the next subsection. 
\subsection{(Local) parabolic stable pair theory}
The notion of parabolic stable pairs is
introduced in~\cite{Todpara}. It is determined 
by fixing a divisor, 
\begin{align*}
H \in \lvert \oO_X(h) \rvert, 
\end{align*}
for some $h>0$. 
In what follows, we say a one cycle $\gamma$
on $X$ intersects with 
$H$ transversally if it
satisfies $\dim H \cap \gamma=0$. 
Equivalently, any irreducible component in $\gamma$ is not 
contained in $H$. 
\begin{defi}\label{defi:para}
For a fixed divisor $H$ on $X$ as above, 
a parabolic stable pair is defined to be a pair 
\begin{align}\label{para:pair}
(F, s), \quad s \in F \otimes \oO_{H},
\end{align}
such that the following conditions are satisfied. 
\begin{itemize}
\item The sheaf $F$ is a one dimensional 
$\omega$-semistable sheaf on $X$. 
\item The one cycle $[F]$ intersects with $H$ transversally. 
\item For any surjection $F \stackrel{\pi}{\twoheadrightarrow} F'$
with $\mu_{\omega}(F)=\mu_{\omega}(F')$, we have 
\begin{align*}
(\pi \otimes \oO_{H})(s) \neq 0. 
\end{align*}
\end{itemize}
\end{defi} 
The moduli space of parabolic 
stable pairs $(F, s)$ satisfying 
$[F]=\beta$, $\chi(F)=n$ 
is denoted by 
\begin{align}\label{moduli:para}
M_n^{\rm{par}}(X, \beta). 
\end{align}
By~\cite[Theorem~2.10]{Todpara}, 
if $H$ satisfies an additional condition 
given in~\cite[Lemma~2.9]{Todpara}, 
then the moduli space (\ref{moduli:para})
is a projective scheme
even if $(n, \beta)$ does not satisfy the condition (\ref{gcd}). 
In the case that 
$H$ does not satisfy the condition in~\cite[Lemma~2.9]{Todpara}, 
the moduli space (\ref{moduli:para})
is at least a quasi-projective variety. 
(cf.~\cite[Remark~2.13]{Todpara}.) 

Suppose that a reduced one dimensional subscheme 
$i\colon C\hookrightarrow X$ satisfies $\dim H\cap C=0$.
Then for any $\gamma \in H_2(C, \mathbb{Z})$
with $\beta=i_{\ast}\gamma$, we have the subscheme, 
\begin{align}\label{MnCg}
M_n^{\rm{par}}(C, \gamma) \subset M_n^{\rm{par}}(X, \beta), 
\end{align}
corresponding to parabolic stable pairs $(F, s)$
with $F$ supported on $C$, 
$[F]=\gamma$ as a one cycle on $X$ and $\chi(F)=n$. 

Let 
\begin{align*}
\nu_{M} \colon M_n^{\rm{par}}(X, \beta) \to \mathbb{Z},
\end{align*}
be the Behrend's constructible
 function~\cite{Beh} on $M_n^{\rm{par}}(X, \beta)$. 
The local parabolic stable pair invariant is 
defined in the following way. 
\begin{defi}\label{defi:para:DT}
For 
$\gamma \in H_2(C, \mathbb{Z})$, 
we define
 $\DT_{n, \gamma}^{\rm{par}} \in \mathbb{Z}$ to be
\begin{align}\label{DTpar:loc}
\DT_{n, \gamma}^{\rm{par}} \cneq 
\int_{M_n^{\rm{par}}(C, \gamma)} \nu_M d\chi. 
\end{align}
\end{defi}
Here as in the local DT theory, we use the Behrend function on 
$M_n^{\rm{par}}(X, \beta)$, not on $M_n^{\rm{par}}(C, \gamma)$, to define
the local invariant. 

\subsection{Multiple cover formula via parabolic stable pairs}\label{subsec:multiple:via}
In~\cite{Todpara}, we established a relationship
between (local) parabolic stable pair invariants and 
(local) generalized DT invariants. As a result, 
conjectures in Subsection~\ref{subsec:mult} can be 
translated into a formula relating
(local)
parabolic stable pair invariants 
and (local) DT invariants, which are both 
integer valued. 

Let $C \subset X$ be a reduced curve,
with irreducible components $C_1, \cdots, C_N$, 
which intersects with $H$ transversally. 
As in Definition~\ref{defi:para:DT}, we have 
the local parabolic stable pair invariants w.r.t. $H$. 
 For each $\mu \in \mathbb{Q}$, we set 
the generating series $\DT^{\rm{par}}(\mu, C)$
to be 
\begin{align*}
\mathrm{DT}^{\rm{par}}(\mu, C)
\cneq 1+ \sum_{\begin{subarray}{c}
n\in \mathbb{Z}, \ 
\gamma \in H_2(C, \mathbb{Z})_{>0}, \\
n/\omega \cdot \gamma=\mu
\end{subarray}}
\mathrm{DT}_{n, \gamma}^{\rm{par}}
q^n t^{\gamma}.
\end{align*}
Here $H_2(C, \mathbb{Z})_{>0} \subset H_2(C, \mathbb{Z})$
is defined by 
\begin{align*}
H_2(C, \mathbb{Z})_{>0}
\cneq \left\{ \sum_{i=1}^{N} a_i [C_i] :
a_i \ge 0 \right\} \setminus \{0\} 
\subset H_2(C, \mathbb{Z}). 
\end{align*}
The statement of Conjecture~\ref{conj:mult:loc} can 
be translated into a  product expansion 
formula (\ref{par:prod}) of $\DT^{\rm{par}}(\mu, C)$ below:
\begin{prop}{\bf\cite[Proposition~4.5]{Todpara}}\label{prop:translate}
We have the formula (\ref{form:mult:loc}) for any 
$(n, \gamma) \in \mathbb{Z} \oplus H_2(C, \mathbb{Z})_{>0}$
with $n/\omega \cdot \gamma=\mu$
if and only if the following formula holds, 
\begin{align}\label{par:prod}
\mathrm{DT}^{\rm{par}}(\mu, C)
=\prod_{\begin{subarray}{c}
\gamma \in H_2(C, \mathbb{Z})_{>0}, \\
n/\omega \cdot \gamma=\mu
\end{subarray}}
\left(1-(-1)^{\gamma \cdot H}
q^n t^{\gamma}  \right)^{(\gamma \cdot H)N_{1, \gamma}}.
\end{align}
\end{prop}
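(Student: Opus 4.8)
The plan is to deduce the product expansion formula (\ref{par:prod}) for $\mathrm{DT}^{\rm{par}}(\mu, C)$ from the local multiple cover formula (\ref{form:mult:loc}), and conversely, by taking logarithms of both sides and matching coefficients, exploiting the structure of the $\log$/$\exp$ dictionary already used in Subsection~\ref{subsec:Genera} to pass between $\delta_{n,\beta}$ and $\epsilon_{n,\beta}$. First I would recall from~\cite[Section~4]{Todpara} the precise wall-crossing type identity expressing $\mathrm{DT}_{n,\gamma}^{\rm{par}}$ in terms of the elements $\epsilon_{m,\gamma'}$ in the Hall algebra, evaluated against the Behrend function on $\cC oh(X)$; the upshot of that identity (see~\cite[Proposition~4.5]{Todpara} and the surrounding material) is a clean formula of the shape
\begin{align*}
1 + \sum_{n/\omega\cdot\gamma=\mu} \mathrm{DT}_{n,\gamma}^{\rm{par}} q^n t^{\gamma}
= \exp\left( \sum_{n/\omega\cdot\gamma=\mu} (\gamma\cdot H)\, \overline{N}_{n,\gamma}\, q^n t^{\gamma} \right),
\end{align*}
where $\overline{N}_{n,\gamma}$ denotes the generalized DT invariant $N_{n,\gamma}$ up to the sign and normalization conventions fixed in that paper. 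So the whole statement becomes an identity purely among the rational numbers $N_{n,\gamma}$ and $N_{1,\gamma/k}$, with no further reference to Hall algebras.

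Next I would take the logarithm of the right-hand side of (\ref{par:prod}). Using $-\log(1 - x) = \sum_{k\ge 1} x^k/k$ with $x = (-1)^{\gamma\cdot H} q^n t^{\gamma}$, the right-hand side of (\ref{par:prod}) has logarithm
\begin{align*}
\sum_{n/\omega\cdot\gamma=\mu} (\gamma\cdot H) N_{1,\gamma}
\sum_{k\ge 1} \frac{(-1)^{k(\gamma\cdot H)+ (\gamma\cdot H)}}{k}\, q^{kn} t^{k\gamma},
\end{align*}
and after the substitution $\gamma \mapsto \gamma/k$, $n\mapsto n/k$ (summing only over those $(n,\gamma)$ divisible by $k$) and using $(\gamma\cdot H)/k = (\gamma/k)\cdot H$ only in the exponent bookkeeping, this reorganizes into $\sum_{n/\omega\cdot\gamma=\mu}(\gamma\cdot H)\bigl(\sum_{k\mid(n,\gamma)} \tfrac{1}{k^2} N_{1,\gamma/k}\bigr) q^n t^{\gamma}$ — the sign factor $(-1)^{(\gamma\cdot H)}$ being exactly what is needed to make the parity of $(-1)^{k(\gamma\cdot H)}$ in terms of $\gamma$ rather than $\gamma/k$ come out right, which I should check carefully. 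Comparing with the logarithm of the exponential form of $\mathrm{DT}^{\rm{par}}(\mu,C)$ displayed above, the two series agree, coefficient by coefficient in $q^n t^{\gamma}$, if and only if $N_{n,\gamma} = \sum_{k\mid(n,\gamma)} \tfrac{1}{k^2} N_{1,\gamma/k}$, i.e.\ exactly (\ref{form:mult:loc}). Since $\gamma\cdot H > 0$ for every $\gamma\in H_2(C,\mathbb{Z})_{>0}$ (transversality of $C$ with $H$), the common factor $(\gamma\cdot H)$ can be cancelled, giving the equivalence in both directions.

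The main obstacle I expect is keeping the sign and normalization conventions consistent across the translation: the factor $(-1)^{\gamma\cdot H}$ in (\ref{par:prod}), the sign $-1$ in the definition (\ref{skip}) of generalized DT invariants, and whatever sign appears in the Hall-algebra identity of~\cite[Section~4]{Todpara} relating $\mathrm{DT}_{n,\gamma}^{\rm{par}}$ to $\epsilon_{n,\gamma}$. In particular the step where $(-1)^{k(\gamma\cdot H)}$ must be rewritten as a function of $\gamma/k$ requires noting that $(-1)^{k m} = (-1)^m$ when $k$ is odd and $=1$ when $k$ is even, and reconciling this with the fact that the target exponent in (\ref{par:prod}) is $(\gamma\cdot H)N_{1,\gamma}$ indexed by the \emph{un}divided class; this is precisely the kind of parity juggling that the $(-1)^{\gamma\cdot H}$ prefactor was designed to absorb, and verifying it is the one genuinely nonformal point. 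Everything else is the standard $\log$–$\exp$ manipulation of formal power series in $q$ and $t^\gamma$, with the only convergence issue being that for fixed $\mu$ the relevant sums over $\gamma$ with $n/\omega\cdot\gamma = \mu$ are over a finitely generated monoid, so all series live in a well-defined completed ring and the manipulations are legitimate.
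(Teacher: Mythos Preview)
Your approach is essentially the one the paper follows (deferring to~\cite[Proposition~4.5]{Todpara}): the key input is the identity $\widehat{\DT}^{\rm{par}}_{n,\gamma} = (-1)^{\gamma\cdot H - 1}(\gamma\cdot H) N_{n,\gamma}$ (this is~\cite[Equation~(95)]{Todpara}, referenced later in the paper), after which the equivalence is exactly the $\log$/$\exp$ comparison you describe, as laid out in formulas~(\ref{DT:hat}) and~(\ref{log:form}). One clarification on the sign step you flag as delicate: there is no parity juggling needed, since if $\gamma = k\gamma_0$ then $k(\gamma_0\cdot H) = \gamma\cdot H$ exactly, so $(-1)^{k(\gamma_0\cdot H)} = (-1)^{\gamma\cdot H}$ on the nose; the extra factor of $1/k$ from $(\gamma_0\cdot H) = (\gamma\cdot H)/k$ combines with the $1/k$ from the logarithm to give the $1/k^2$ in~(\ref{form:mult:loc}).
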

If we are interested in the formula (\ref{form:mult:loc})
for a specified $(n, \gamma)$, then it is enough to check 
the formula (\ref{log:form}) below: 
let us take the logarithm of $\DT^{\rm{par}}(\mu, C)$
and write
\begin{align*}
\log 
\DT^{\rm{par}}(\mu, C)=
\sum_{\begin{subarray}{c}
\gamma \in H_2(C, \mathbb{Z})_{>0}, \\
n/\omega \cdot \gamma=\mu
\end{subarray}}
\widehat{\DT}_{n, \gamma}^{\rm{par}}q^n t^{\gamma}. 
\end{align*} 
Note that $\widehat{\DT}^{\rm{par}}_{n, \gamma}$ 
is written as 
\begin{align}\label{DT:hat}
\widehat{\DT}^{\rm{par}}_{n, \gamma}
=\sum_{l\ge 1} \frac{(-1)^{l-1}}{l}
\sum_{\begin{subarray}{c}
\gamma_1+ \cdots +\gamma_l=\gamma, 
\gamma_i \in H_2(C, \mathbb{Z})_{>0}\\
n_1 + \cdots +n_l=n, n_i \in \mathbb{Z}, \\
n_i/\omega \cdot \gamma_i=n/\omega \cdot \gamma
\end{subarray}}
\prod_{i=1}^{l}\DT_{n_i, \gamma_i}^{\rm{par}}. 
\end{align}
Then we should have the formula, 
\begin{align}\label{log:form}
\widehat{\DT}_{n, \gamma}^{\rm{par}}
=\sum_{k\ge 1, k|(n, \gamma)} \frac{(-1)^{\gamma \cdot H -1}}{k^2}
(\gamma \cdot H) N_{1, \gamma/k}. 
\end{align}
Here the RHS of (\ref{log:form})
is $q^n t^{\gamma}$-coefficient of the RHS of (\ref{par:prod}).
Note that (\ref{log:form}) is a relationship 
between $\mathbb{Z}$-valued invariants.  
(cf.~\cite[Corollary~4.18]{Todpara}.)

\subsection{Jacobian actions on the moduli space of parabolic 
stable pairs}\label{subsec:Jact}
In this subsection, we discuss 
Jacobian actions on the moduil space of
parabolic stable pairs. 

Let $i \colon C \hookrightarrow X$ be a reduced 
curve, and $H \subset X$ a divisor which 
intersects with $C$ transversally. 
Let us take
\begin{align*}
n \in \mathbb{Z}, \
\gamma \in H_2(C, \mathbb{Z}),
\end{align*}
and set $\beta=i_{\ast} \gamma \in H_2(X, \mathbb{Z})$. 
Let $U$ be
a complex analytic neighborhood of $C$ in $X$, 
\begin{align*}
C \subset U \subset X. 
\end{align*}
Then we have the analytic open subset of
the moduli space (\ref{moduli:M}), 
\begin{align*}
M_n(U, \beta) \subset M_n(X, \beta), 
\end{align*}
corresponding to $\omega$-semistable one dimensional 
sheaves $F$ with $\Supp(F) \subset U$. 

Let $\Pic^{0}(U)$ be the group of 
line bundles on $U$, whose restriction to
any projective curve in $U$ has degree zero. 
Then we have the action of $\Pic^0(U)$
on $M_n(U, \beta)$ via 
\begin{align*}
L \cdot F =F\otimes L,
\end{align*}
for $L \in \Pic^0(U)$ and 
$F \in M_n(U, \beta)$. 
The $\Pic^0(U)$-action preserves the closed subscheme, 
\begin{align*}
M_n(C, \gamma) \subset M_n(U, \beta),
\end{align*}
where the LHS is given in (\ref{closed:M}). 

Let us consider parabolic stable pairs 
w.r.t. the divisor $H$ as above. 
Similarly, we have the analytic open subspace, 
\begin{align*}
M_n^{\rm{par}}(U, \beta) \subset M_n^{\rm{par}}(X, \beta), 
\end{align*}
corresponding to parabolic stable pairs 
$(F, s)$ with $\Supp(F) \subset U$. 
Let $\widehat{\Pic^0}(U)$ be the group defined by 
\begin{align}\label{hat:pic}
\widehat{\Pic^0}(U) \cneq 
\{ (L, \phi) : 
L \in \Pic^0(U), \lambda \colon \oO_{H \cap U} \stackrel{\cong}{\to}
 \oO_{H\cap U} \otimes L \}. 
\end{align}
Note that the forgetting 
map $\widehat{\Pic^0}(U) \ni (L, \phi) \mapsto L \in \Pic^{0}(U)$
is surjective if $U$ is a sufficiently small analytic 
neighborhood of $C$. 
The group $\widehat{\Pic^0}(U)$ acts on 
$M_n^{\rm{par}}(U, \beta)$ via 
\begin{align}\label{LpFs}
(L, \lambda) \cdot (F, s)=
(F\otimes L, s'), 
\end{align}
where $s'$ is the image of $s$ by the isomorphism, 
\begin{align*}
\id_F \otimes \lambda \colon 
F\otimes \oO_H \stackrel{\cong}{\to}
F \otimes L \otimes \oO_H.
\end{align*}
The above isomorphism makes sense 
since $F$ is supported on $U$. 
Obviously the action (\ref{LpFs}) preserves the 
closed subspace, 
\begin{align}\label{sub:par}
M_n^{\rm{par}}(C, \gamma) \subset M_n^{\rm{par}}(U, \beta), 
\end{align}
where the LHS is given by the LHS of (\ref{MnCg}). 
Also the action (\ref{LpFs}) is compatible
with the $\Pic^{0}(U)$-action on $M_n(U, \beta)$ 
and the forgetting morphisms, 
\begin{align*}
M_n^{\rm{par}}(U, \beta) \ni 
(F, s) &\mapsto F \in  M_n(U, \beta), \\
\widehat{\Pic^0}(U) \ni (L, \lambda) 
&\mapsto L \in \Pic^0(U). 
\end{align*}

\begin{rmk}
By Chow's theorem, the complex analytic 
spaces $M_n(U, \beta)$, $M_n^{\rm{par}}(U, \beta)$
are regarded as the moduli spaces of
$\omega$-semistable sheaves, parabolic stable pairs 
on $U$ in an analytic sense respectively. Hence the above 
$\Pic^0(U)$, $\widehat{\Pic^0}(U)$-actions make sense. 
\end{rmk}

\subsection{Local multiple cover formula in simple cases}
Finally in this section, we 
discuss some situations in which 
the formula (\ref{form:mult:loc}) is
easily proved. 
Let $C \subset U \subset X$ be as in the previous subsection.
In the following lemma, 
which is partially obtained in~\cite[Proposition~6.19]{JS}, 
we reduce the problem 
to the case that $C$ has only rational irreducible 
components. 
\begin{lem}\label{lem:higher}
Let $C_1, \cdots, C_N$ be the irreducible 
components of $C$, and 
take 
\begin{align*}
\gamma=\sum_{i=1}^{N} a_i[C_i]
\in H_2(C, \mathbb{Z})_{>0}.
\end{align*}
Suppose that there is $1\le i \le N$
such that $a_i>0$ and 
the geometric genus of $C_i$ is 
is bigger than or equal to one. 
Then for any $n\in \mathbb{Z}$, we have 
$N_{n, \gamma}=0$. 
In particular, the formula (\ref{form:mult:loc}) holds. 
\end{lem}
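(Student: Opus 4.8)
The plan is to show that the local generalized DT invariant $N_{n,\gamma}$ vanishes whenever $\gamma$ has positive multiplicity along an irreducible component $C_i$ of geometric genus $\ge 1$. The key point is that an $\omega$-semistable one dimensional sheaf $F$ on $X$ with $[F]=\gamma$ has a support cycle that necessarily covers $C_i$ with positive multiplicity, and the positivity of the genus of $C_i$ forces the Behrend-function-weighted Euler characteristic of the relevant moduli to vanish. Concretely, I would first reduce to the case where the moduli stack $\mM_n(C,\gamma)$ is nonempty; otherwise there is nothing to prove. Given an $\omega$-semistable $F$ with $[F]=\gamma$, let $\nu\colon C_i^{\dag}\to C_i$ be the normalization, so $g(C_i^{\dag})=g\ge 1$. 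The restriction of $F$ to the generic point of $C_i$ gives, after pulling back along the normalization and twisting, a nonzero torsion-free rank $a_i$ sheaf on $C_i^{\dag}$, and hence the Jacobian $\operatorname{Pic}^0(C_i^{\dag})$, an abelian variety of dimension $g\ge 1$, acts on the data. This is the structural input I want to exploit.

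The second step is to run a free-action argument. Choose an analytic neighborhood $C\subset U\subset X$ and recall from Subsection~\ref{subsec:Jact} that $\operatorname{Pic}^0(U)$ acts on $M_n(U,\beta)$ preserving $M_n(C,\gamma)$, and analogously for the stack-theoretic (generalized) setup. Since $U$ can be taken homotopy equivalent to $C$, and $C$ contains $C_i$ with $g(C_i^{\dag})\ge 1$, there is a nontrivial abelian variety $A\subset\operatorname{Pic}^0(U)$ (the image of $\operatorname{Pic}^0(C_i^{\dag})$, or of the Jacobian of a partial normalization) acting on the moduli. The crucial claim is that this action is \emph{free modulo the automorphisms/$\mathbb{C}^{\ast}$-scaling}: if $F\otimes L\cong F$ for $L\in A$, then, restricting to the component $C_i$ and using purity of $F$ together with $a_i>0$, one deduces $L$ is trivial on $C_i^{\dag}$, hence $L\in A$ is trivial. (One should be slightly careful here: $F$ may be supported on more than $C_i$, but the restriction to the generic point of $C_i$ is a faithful enough invariant; this is exactly the kind of argument used in \cite[Proposition~6.19]{JS}.) Thus $A$ acts freely on the coarse space / on the support of $\epsilon_{n,\gamma}$ modulo the trivial $\mathbb{C}^{\ast}$.

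The third step is to conclude via Euler characteristics. A variety carrying a free action of a positive-dimensional abelian variety $A$ has vanishing Euler characteristic, and moreover the Behrend function is constant along the $A$-orbits since $A$ acts by automorphisms of $X$ (it is the subtorus of $\operatorname{Pic}^0$, and the Behrend function is invariant under the automorphisms induced by tensoring with line bundles, as these are compatible with the symmetric obstruction theory). Therefore $\int\nu\,d\chi=0$ over each stratum $M_i$ appearing in the expression \eqref{skip} for $N_{n,\gamma}$, whence $N_{n,\gamma}=0$. Since every term on the right-hand side of \eqref{form:mult:loc} is of the form $N_{1,\gamma/k}$ with $\gamma/k$ still having positive multiplicity along $C_i$ (as $k\mid\gamma$ forces $k\mid a_i$ and $a_i/k\ge 1$), each of those also vanishes by the same argument, so \eqref{form:mult:loc} holds trivially as $0=0$.

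The main obstacle is the freeness claim in the second step: one must argue carefully that tensoring by a nontrivial $L\in A$ genuinely moves every $\omega$-semistable $F$ with $[F]=\gamma$, including when $F$ is a nonsplit extension involving sheaves on other components, and that this persists at the level of the Hall-algebra element $\epsilon_{n,\gamma}$ (i.e.\ the $A$-action is free on all the strata $M_i$, not merely generically). I expect this to follow from purity of $F$ and a short argument on the restriction to the generic point of $C_i$, essentially reproducing the reasoning of \cite[Proposition~6.19]{JS}, but it is the step that needs the most care; the Euler characteristic vanishing and the $\omega$-independence (Remark~\ref{rmk:omega}) are then formal.
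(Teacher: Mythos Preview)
Your overall strategy---use the Jacobian of the normalization $C_i^{\dag}$ to act on the moduli and force the weighted Euler characteristic to vanish---is exactly the approach of the paper, and your reduction of the final statement (\ref{form:mult:loc}) to $0=0$ is correct. However, the freeness claim in your second step contains a genuine gap.

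You assert that if $F\otimes L\cong F$ with $L\in A\subset\Pic^0(C_i^{\dag})$, then $L$ is trivial. But restricting to $C_i$, pulling back to $C_i^{\dag}$, and taking determinants only yields $(f^{\ast}L)^{\otimes k}\cong\oO_{C_i^{\dag}}$ where $k$ is the generic rank of $f^{\ast}(F|_{C_i})$ (bounded in terms of $a_i$). Thus $L$ is merely $k$-torsion, not trivial. Concretely, if $a_i\ge 2$ there can exist semistable $F$ fixed by nontrivial torsion line bundles (e.g.\ pushforwards from cyclic \'etale covers of $C_i^{\dag}$), so the full abelian variety does \emph{not} act freely. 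The paper handles exactly this point: since $\gamma$ bounds the possible torsion orders $k_1,\dots,k_l$, one chooses a one-parameter subgroup $S^1\subset\Pic^0(C_i^{\dag})$ avoiding all $k_j$-torsion points, lifts it to $\Pic^0(U)$, and obtains a \emph{free} $S^1$-action on $M_n(C,\gamma)$. The vanishing then follows from the $S^1$-localization argument of \cite[Proposition~6.19]{JS}. Your proof becomes correct once you replace ``the abelian variety $A$ acts freely'' by this torsion-avoiding $S^1$.
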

\begin{proof}
Let us consider the $\Pic^{0}(U)$ action on 
$M_n(U, \gamma)$ as in the previous subsection. 
Note that any point $p \in M_n(U, \gamma)$
is represented by an $\omega$-semistable 
sheaf $F$ which is a direct sum of $\omega$-stable sheaves. 
If $p$ is fixed by 
the action of $\lL \in \Pic^{0}(U)$, 
then we have $F \otimes \lL \cong F$. 
For the normalization 
$f \colon C_i^{\dag} \to C_i$, we have 
\begin{align*}
f^{\ast}(F|_{C_i}) \cong f^{\ast}(F|_{C_i}) \otimes f^{\ast}(\lL|_{C_i}). 
\end{align*}
Taking the determinant of the both sides, we have
\begin{align*}
f^{\ast}(\lL|_{C_i})^{\otimes k} \cong \oO_{C_i^{\dag}}, 
\end{align*}
for some $k \in \mathbb{Z}_{\ge 1}$. 
Given $\gamma$, there is only a finite 
number of possibilities for the above $k$, 
say $k_1, \cdots, k_l$. 
Since $\Pic^0(C_i^{\dag})$ is a complex
torus of positive dimension, 
we can find a subgroup 
\begin{align}\label{sub:S1}
S^1 \subset \Pic^0(C_i^{\dag}),
\end{align}
which does not pass through any $k_i$-torsion 
points for $1\le i\le l$. 
On the other hand, we have the composition of the
 pull-backs
\begin{align}\label{Pic:rest}
\Pic^{0}(U) \to \Pic^0(C_i) \to \Pic^{0}(C_i^{\dag}). 
\end{align}
Since $U$ is a sufficiently small analytic neighborhood of $C$, 
an argument similar to Subsection~\ref{subsec:JacC} below
shows that 
both of the arrows in (\ref{Pic:rest}) are surjective. 
Furthermore, the same argument
also easily shows that
 there is a
subgroup $S^1 \subset \Pic^0(U)$
which restricts to the 
subgroup (\ref{sub:S1})
under the restriction (\ref{Pic:rest}). 
Then the action of $\Pic^0(U)$ on $M_n(C, \gamma)$
restricted to $S^1 \subset \Pic^0(U)$ is free, 
hence the same localization argument of~\cite[Proposition~6.19]{JS} shows 
the vanishing $N_{n, \gamma}=0$. 
\end{proof}
Next we discuss the case that 
the class $\gamma \in H_2(C, \mathbb{Z})$ is primitive, 
i.e. $\gamma$ is not a multiple of some other element
of $H_2(C, \mathbb{Z})$.  
\begin{lem}\label{N:primitive}
Suppose that $\gamma \in H_2(C, \mathbb{Z})$ is 
primitive. Then $N_{n, \gamma}$
does not depend on $n$. 
In particular, the formula (\ref{form:mult:loc}) holds. 
\end{lem}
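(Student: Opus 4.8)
The plan is to show that when $\gamma$ is primitive, every $\omega$-semistable sheaf $F$ with $[F]=\gamma$ is automatically $\omega$-stable, so that the generalized DT invariant $N_{n,\gamma}$ is computed by an honest moduli space rather than via the Hall-algebra logarithm $\epsilon_{n,\gamma}$. First I would observe that if $0 \neq F' \subsetneq F$ had $\mu_\omega(F')=\mu_\omega(F)$, then writing $\gamma'=[F'] \in H_2(C,\mathbb{Z})$ and $n'=\chi(F')$ we would have $n'/(\omega\cdot\gamma')=n/(\omega\cdot\gamma)$ with $0 < \omega\cdot\gamma' < \omega\cdot\gamma$; one then checks that this forces $\gamma'$ to be a rational multiple $a\gamma$ with $0<a<1$, contradicting primitivity of $\gamma$ in $H_2(C,\mathbb{Z})$ (the point being that $\gamma'$ is an honest lattice element, and $a\gamma$ is integral only if $a\in\mathbb{Z}$). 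Hence $M_n(C,\gamma)$ consists entirely of stable sheaves and coincides (as a substack) with an open substack of the fine moduli space, so $N_{n,\gamma}=\int_{M_n(C,\gamma)}\nu\,d\chi$ with no correction terms.

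Next I would produce an isomorphism of the relevant moduli spaces for different values of $n$. The natural move is tensoring by a line bundle: if $C$ (or the ambient $U$) carries a line bundle $\lL$ of the right degree, then $F \mapsto F\otimes\lL$ shifts $\chi(F)$ while fixing $[F]=\gamma$, and it is an isomorphism $M_n(C,\gamma)\xrightarrow{\sim}M_{n'}(C,\gamma)$ compatible with the Behrend functions (since Behrend's function is intrinsic and the map is induced by an automorphism of an analytic neighborhood). The subtlety is that one can only shift $\chi$ by multiples of $\omega\cdot\gamma$ this way, or more precisely by $\sum_i a_i \deg(\lL|_{C_i})$, and $\gcd$ of these degrees as $\lL$ ranges over $\Pic(U)$ need not be $1$; so a direct line-bundle twist will generally only show $N_{n,\gamma}$ is periodic in $n$, not constant. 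To upgrade periodicity to independence, I would instead argue that for primitive $\gamma$ the invariant $N_{n,\gamma}$ is a deformation-invariant count and appeal to wall-crossing / the fact that $N_{n,\gamma}$ is independent of the stability parameter $\omega$ (Remark \ref{rmk:omega}): varying $\omega$ slightly changes which $n$ satisfy the gcd condition relative to a given slope, and combining this with the twisting isomorphisms for all available line bundles pins down the value. Alternatively, and perhaps more cleanly, one notes $(n,\gamma)$ with $\gamma$ primitive automatically satisfies $\gcd(\omega\cdot\gamma, n)$ dividing $\omega\cdot\gamma$, and a suitable choice of polarization makes $\gcd(\omega\cdot\beta,n)=1$, reducing to the genuinely stable case for every $n$; then the standard argument identifying $N_{1,\gamma}$-type invariants across $n$ via Katz's setup applies.

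The main obstacle I anticipate is precisely this gap between \emph{periodicity} (trivial from line-bundle twists) and \emph{full independence of $n$}. Closing it requires knowing that tensoring is not the only symmetry — one needs either the polarization-independence of $N_{n,\gamma}$ (already available in the excerpt as Remark \ref{rmk:omega}, extended to the local setting) to move between slope chambers, or a more hands-on comparison of the moduli spaces $M_n(C,\gamma)$ for $n$ and $n+1$ directly. I would expect the author's proof to invoke polarization-independence together with the observation that for primitive $\gamma$ one can always choose $\omega$ so that $\gcd(\omega\cdot\gamma,n)=1$, thereby reducing every case to a fine moduli space and then transporting the count by a line bundle of degree one on some component; the bookkeeping of which line bundles exist on the analytic neighborhood $U$ (addressed in Subsection~\ref{subsec:JacC}, referenced but not yet shown here) is the only genuinely technical input.
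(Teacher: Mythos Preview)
Your first step is wrong: primitivity of $\gamma$ does \emph{not} force every $\omega$-semistable sheaf with $[F]=\gamma$ to be stable. The argument you sketch --- that equal slopes $n'/(\omega\cdot\gamma')=n/(\omega\cdot\gamma)$ force $\gamma'=a\gamma$ for some rational $0<a<1$ --- is simply false. Equal slopes only relate the \emph{numbers} $n',\,\omega\cdot\gamma'$ to $n,\,\omega\cdot\gamma$; they say nothing about the cycle class $\gamma'$ itself. For a concrete counterexample, take $C=C_1\cup C_2$ with $\omega\cdot C_1=\omega\cdot C_2=1$ and $\gamma=[C_1]+[C_2]$, which is primitive. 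The sheaf $F=\oO_{C_1}\oplus\oO_{C_2}$ (with $\chi(\oO_{C_i})=1$, say) is strictly semistable, since $F'=\oO_{C_1}$ has the same slope but $[F']=[C_1]$ is not a multiple of $\gamma$. So $N_{n,\gamma}$ is genuinely a generalized invariant here, and you cannot bypass the Hall-algebra machinery.

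You correctly identify the real obstacle --- line-bundle twists only give periodicity in $n$, not independence --- but your proposed fixes do not close the gap. The paper's actual argument is different and cleaner: it introduces \emph{twisted} stability $\mu_{B,\omega}(F)=(\chi(F)-[F]\cdot B)/([F]\cdot\omega)$ for $B+i\omega\in H^2(U,\mathbb{C})$, and uses (the local analogue of) the $\omega$-independence result to show the associated invariant is also independent of $B$. Now primitivity of $\gamma=\sum a_i[C_i]$ means $\gcd(a_i)=1$, so one can choose integers $m_i$ with $\sum m_i a_i=1$; in a small analytic neighborhood $U$ there exist divisors $D_i$ with $D_i\cdot C_j=\delta_{ij}$, and $D=\sum m_i D_i$ satisfies $D\cdot\gamma=1$. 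Tensoring by $\oO_U(D)$ gives an isomorphism of moduli stacks
\[
\mM_n(C,\gamma,B+i\omega)\ \stackrel{\cong}{\longrightarrow}\ \mM_{n+1}(C,\gamma,B-D+i\omega),
\]
shifting $n$ by exactly $1$ at the cost of shifting $B$. Since the invariant does not depend on $B$, this yields $N_{n,\gamma}=N_{n+1,\gamma}$ for all $n$. The point you were missing is that the extra freedom in $B$ (not in $\omega$) is what absorbs the twist.
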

\begin{proof}
Let $\Coh_C(X)$
be the category of coherent sheaves on $X$
supported on $C$. 
We first generalize $\mu_{\omega}$-stability 
to twisted stability on $\Coh_C(X)$. 
Let $C \subset U \subset X$ be a sufficiently small 
analytic neighborhood, and 
take an element 
\begin{align*}
B+i\omega \in H^2(U, \mathbb{C}), 
\end{align*}
such that $\omega|_{C}$ is ample. 
For a one dimensional sheaf
$F \in \Coh_C(X)$, 
we set $\mu_{B, \omega}(F) \in \mathbb{Q}$ to be
\begin{align*}
\mu_{B, \omega}(F) \cneq \frac{\chi(F)-[F] \cdot B}{[F] \cdot \omega}. 
\end{align*}
Similarly to Definition~\ref{defi:semi}, we have the 
notion of $\mu_{B, \omega}$-stability on $\Coh_{C}(X)$,  
called \textit{twisted stability}. 
As in the case of $\mu_{\omega}$-stability,  
we can construct the moduli stack 
$\mM_n(C, \gamma, B+i\omega)$ parameterizing 
$\mu_{B, \omega}$-semistable objects $F \in \Coh_{C}(X)$
with $[F]=\gamma$ and $\chi(F)=n$, 
and the generalized DT invariant defined by the above moduli stack.  
The same argument of~\cite[Theorem~6.16]{JS} shows that the 
resulting invariant does not depend on a choice 
of $B$ and $\omega$, thus coincides with $N_{n, \gamma}$. 

Let $C_1, \cdots, C_N$
be the irreducible components of $C$, 
and set $\gamma=\sum_{i=1}^{N} a_i[C_i]$.  
Since $\gamma$ is primitive, 
we have $\mathrm{g.c.d.}(a_1, \cdots, a_N)=1$. 
Hence we can find $m_1, \cdots, m_N \in \mathbb{Z}$ 
such that 
$\sum_{i=1}^{N} m_i a_i=1$. 
Let us take divisors $D_1, \cdots, D_N$ on 
$U$ such that $D_i \cdot C_j =\delta_{ij}$, 
and set $D=\sum_{i=1}^{N} m_i D_i$. 
(This is possible since $U$ is taken to be a
sufficiently small analytic neighborhood
of $C$ in $X$.)
Then we have the isomorphism of stacks, 
\begin{align*}
\mM_n(C, \gamma, B+i\omega) \stackrel{\cong}{\to}
\mM_{n+1}(C, \gamma, B-D+i\omega), 
\end{align*}
given by $F \mapsto F\otimes \oO_U(D)$. 
Since the generalized DT invariants do not depend on 
$B$ and $\omega$, the above isomorphism of stacks 
immediately implies $N_{n, \gamma}=N_{n+1, \gamma}$
for all $n\in \mathbb{Z}$. 
\end{proof}

\section{Cyclic covers of nodal rational curves}
Let $X$ be a smooth projective Calabi-Yau 3-fold 
over $\mathbb{C}$. 
In what follows, we fix a connected
reduced curve $C$
and an embedding,  
\begin{align*}
i\colon C \hookrightarrow X, 
\end{align*}
satisfying the following conditions. 
\begin{itemize}
\item Any irreducible component of $C$ has geometric genus zero.
(We call such a curve as a \textit{rational curve}.)
\item The curve $C$ has at worst nodal singularities. 
\end{itemize}
Note that for our purpose, 
we can always assume the 
first condition by Lemma~\ref{lem:higher}.
The geometric genus of $C$ is defined by 
\begin{align*}
g(C) \cneq \dim H^1(C, \oO_C). 
\end{align*}
When $g(C)=0$, 
each irreducible component of $C$
is $\mathbb{P}^1$, and 
the dual graph of $C$ is 
simply connected. 
(See Subsection~\ref{subsec:Jac:C} below.)
In this case, we say $C$ is a \textit{tree of} $\mathbb{P}^1$. 
Below we assume that $g(C)>0$. 

We also fix an ample divisor $H$ in $X$ which 
is smooth, connected, and 
intersects with $C$ transversally
at non-singular points of $C$. 

\subsection{Jacobian group of $C$}\label{subsec:Jac:C}
We first recall the description of the 
Jacobian group of
a nodal curve $C$. 
Suppose that $C$ has
$\delta_n$-nodes and has $\delta_c$-irreducible components. 
Let us take the normalization of $C$, 
\begin{align*}
f \colon C^{\dag} \to C. 
\end{align*}
We have the exact sequence of sheaves, 
\begin{align*}
0 \to \oO_C \to f_{\ast} \oO_{C^{\dag}} \to \mathbb{C}^{\oplus \delta_n} \to 0. \end{align*}
By the long exact sequence of cohomologies, we obtain the isomorphism, 
\begin{align*}
\mathbb{C}^{\delta_n -\delta_c +1} \cong H^1(C, \oO_C).
\end{align*}
In particular the arithmetic genus $g(C)$
satisfies 
\begin{align*}
g(C) =\delta_n -\delta_c +1. 
\end{align*}
Combining the above argument with
the standard exact sequence, 
\begin{align*}
0 \to \mathbb{Z} \to \oO_C \to \oO_C^{\ast} \to 1, 
\end{align*}
we can easily see that $H^1(C, \mathbb{Z})$
generates $H^1(C, \oO_C)$
as a $\mathbb{C}$-vector space, 
\begin{align}\label{gen:H1}
H^1(C, \mathbb{Z}) \otimes_{\mathbb{Z}} \mathbb{C}
\cong H^1(C, \oO_C). 
\end{align}
Hence we have the isomorphisms, 
\begin{align}\notag
(\mathbb{C}^{\ast})^{g(C)} &\cong
H^1(C, \oO_C)/H^1(C, \mathbb{Z}) \\
\label{isom:Pic}
& \cong \Pic^0(C).
\end{align}
We can interpret the above isomorphism 
in terms of the dual graph $\Gamma_C$
associated to $C$, determined
in the following way:  
\begin{itemize}
\item The vertices and edges of $\Gamma_C$
correspond to 
irreducible components of $C$ and 
nodal points of $C$ respectively. 
\item For an edge $e$ corresponding to a nodal 
point $x \in C$, it connects vertices $v_1$, $v_2$ if 
the corresponding 
irreducible components $C_1$, $C_2$ satisfies 
$p \in C_1 \cap C_2$. 
(Note that the case of $v_1=v_2$
corresponds to a self node.)
\end{itemize}
Then $\Gamma_C$ is a connected graph 
satisfying $b_1(\Gamma_C)=g(C)$. 
We can interpret (\ref{isom:Pic}) as 
the isomorphism, 
\begin{align}\label{isom:pic}
H^1(\Gamma_C, \mathbb{Z}) \otimes_{\mathbb{Z}}\mathbb{C}^{\ast}
\cong \Pic^0(C). 
\end{align}
The isomorphism (\ref{isom:pic}) can be constructed in
the following way. For an oriented loop
$\alpha$ in $\Gamma_C$, we choose 
an edge $e \subset \alpha$ so that $\Gamma_C \setminus \{ e\}$
is still connected. Let $x \in C$ 
be a nodal point corresponding to the edge $e$. 
Let 
$v_1, v_2$ be vertices in $\Gamma_C$
connected by $e$
so that $e$ starts from $v_1$ and ends at $v_2$.
Let $C_1$, $C_2$ be the irreducible components of $C$
which correspond to $v_1$, $v_2$ respectively. 
We partially normalize $C$ at $x$, and obtain 
$C^{\dag}_x$, 
\begin{align}\label{part:norm}
f_x \colon 
C^{\dag}_x \to C. 
\end{align}
For $i=1, 2$, let $C^{\dag}_i$
be the irreducible component 
of $C^{\dag}_x$ which is mapped to 
$C_i$ by $f_x$. 
The preimage of $x$ by $f_x$ is denoted by 
\begin{align*}
x_i \in C^{\dag}_i, \quad i=1, 2. 
\end{align*}
(In case of $v_1=v_2$, 
i.e. $x\in C$ is a self node, 
we need to 
fix a correspondence between an orientation of $e$ and 
a numbering of two points $f_x^{-1}(x)$.)
For $z \in \mathbb{C}^{\ast}$, 
we glue the trivial line bundle on $C_x^{\dag}$
by the isomorphism at $x_i$, 
\begin{align*}
 \oO_{C^{\dag}_x} \otimes k(x_1) 
\ni a \mapsto za \in 
\oO_{C^{\dag}_x} \otimes k(x_2). 
\end{align*}
The 
above gluing procedure produces a line bundle 
on $C$. The 
resulting line bundle is 
independent of a choice of $e$, and 
denoted by 
$L_{\alpha, z}$. 
The isomorphism (\ref{isom:pic}) is given by 
sending $\alpha \otimes z$ to $L_{\alpha, z}$.

\subsection{Jacobian group of an analytic neighborhood of $C$}
\label{subsec:JacC}
Let $C \subset X$ be as in the previous subsection. 
We take an analytic 
open neighborhood $U$ of $C$ in $X$, 
\begin{align*}
C \subset U \subset X. 
\end{align*}
If we take $U$ sufficiently small so that 
it is homotopically equivalent to $C$, we have the 
commutative diagram of exact sequences,  
\begin{align*}
\xymatrix{
0 \ar[r] & H^1(U, \mathbb{Z}) \ar[r]\ar[d]^{\cong} 
& H^1(U, \oO_U) \ar[r]\ar[d] & \Pic^0(U) \ar[r]\ar[d] & 1 \\
0 \ar[r] & H^1(C, \mathbb{Z}) \ar[r] & H^1(C, \oO_C) \ar[r] & 
\Pic^0(C) \ar[r] & 1. 
}
\end{align*}
Here all the vertical morphisms are pull-backs 
with respect to the inclusion $C \hookrightarrow U$. 
Let $W$ be the sub $\mathbb{C}$-vector space 
of $H^1(U, \oO_U)$ generated by 
$H^1(U, \mathbb{Z})$. 
Then the above commutative diagram and the isomorphism (\ref{gen:H1}) 
implies that
\begin{align}\label{emb:pic}
\Pic^0(C) \cong W/H^1(U, \mathbb{Z}) \subset \Pic^0(U). 
\end{align}
By composing the isomorphism (\ref{isom:pic}) and the 
embedding (\ref{emb:pic}), 
we obtain the embedding, 
\begin{align}\label{emb:G}
H^1(\Gamma_C, \mathbb{Z}) \otimes_{\mathbb{Z}} \mathbb{C}^{\ast}
\hookrightarrow \Pic^0(U). 
\end{align}
The embedding (\ref{emb:G}) 
can be described in the following way. 
For each nodal point $x\in C^{\rm{sing}}$, let us 
fix a norm $\lVert \ast \rVert$ on an analytic neighborhood of $x$ 
in $X$, and set 
\begin{align}\label{open:V}
V_{x}(\epsilon) =\{ x' \in X : \lVert x' -x \rVert <\epsilon \},
\end{align}
for $\epsilon>0$. 
We construct the following open subsets in $U$, 
\begin{align*}
U_{x}(\epsilon) &\cneq U \cap V_{x}(\epsilon), \\
U_{x}'(\epsilon) &\cneq U \setminus \overline{V}_{x}(\epsilon). 
\end{align*}
Then for $0<\epsilon'<\epsilon$,
the collection  
$\{U_{x}(\epsilon), U_{x}'(\epsilon')\}$
is an open cover of $U$. 

Suppose that $x\in C$ is contained in 
two irreducible components $C_1$, $C_2$. 
Then we have 
\begin{align*}
U_{x}(\epsilon) \cap U_x'(\epsilon') \cap C
= \coprod_{j=1}^{2}
(U_x(\epsilon) \cap U_x'(\epsilon') \cap C_j), 
\end{align*}
and each $U_x(\epsilon) \cap U_x'(\epsilon) \cap C_j$ is homeomorphic to an 
open annulus in $\mathbb{C}$.
 Hence if $U$ is chosen to be sufficiently small, 
we have the decomposition, 
\begin{align}\label{decomp}
U_x(\epsilon) \cap U_x'(\epsilon') =W_1 \coprod W_2, 
\end{align}
such that we have 
\begin{align*}
U_x(\epsilon) \cap U_x'(\epsilon') \cap C_j \subset W_j. 
\end{align*}

Let $\alpha$ be an oriented loop in $\Gamma_C$
and take $z \in \mathbb{C}^{\ast}$. 
As in the previous subsection, we 
take an edge $e \subset \alpha$
such that $\Gamma_C \setminus \{e\}$ is connected.
If $e$ corresponds to the node $x\in C$, 
we construct the line bundle 
on $U$
by gluing the trivial line bundles on 
$U_{x}(\epsilon)$ and $U_{x}'(\epsilon')$ by the 
isomorphism, 
\begin{align*}
\oO_{W_1} \oplus \oO_{W_2} &\ni (a_1, a_2) \\
&\mapsto (z a_1, a_2) \in \oO_{W_1} \oplus \oO_{W_2}. 
\end{align*}
Here we have used the identification by (\ref{decomp}),
\begin{align*}
\oO_{U_{x}(\epsilon) \cap U_x'(\epsilon')}=
\oO_{W_1} \oplus \oO_{W_2}. 
\end{align*}
The resulting line bundle is independent of $e$, 
and denoted by $\lL_{\alpha, z}$. 
Note that $\lL_{\alpha, z}$ restricts to
a line bundle $L_{\alpha, z}$ on $C$, 
constructed 
in the previous subsection. 
When $x$ is a self node, $\lL_{\alpha, z}$ can 
be similarly constructed by replacing 
$C_1$, $C_2$ by analytic branches of $C$ near $x$. 
The embedding (\ref{emb:G}) is given by sending 
$\alpha \otimes z$ to $\lL_{\alpha, z}$. 

If we fix an oriented loop $\alpha$ in 
$\Gamma_{C}$, we have the complex subtorus, 
\begin{align}\label{ctorus}
\mathbb{C}^{\ast} \subset \Pic^{0}(U), 
\end{align}
given by the embedding 
$z \mapsto \lL_{\alpha, z}$. 
Here recall that, in the first part of 
this section, we took a divisor $H \subset X$
so that it intersects with $C$ at non-singular 
points on $C$. 
Therefore if we furthermore fix an edge 
$e \subset \alpha$, the 
above construction of $\lL_{\alpha, z}$
yields a canonical isomorphism, 
\begin{align*}
\phi_{\alpha, e, z} \colon 
\oO_{H \cap U} 
\stackrel{\cong}{\to}
\oO_{H\cap U} \otimes 
\lL_{\alpha, z}.
\end{align*}
This implies that the embedding (\ref{ctorus})
lifts to a group homomorphism, 
\begin{align}\label{lift}
\mathbb{C}^{\ast} \hookrightarrow
 \widehat{\Pic^0}(U), 
\end{align}
given by 
\begin{align*}
\alpha \otimes z \mapsto (\lL_{\alpha, z}, \phi_{\alpha, e, z}),
\end{align*}
which is an embedding. Here 
$\widehat{\Pic^0}(U)$ 
is defined by (\ref{hat:pic}). 
Combined with the argument
in Subsection~\ref{subsec:Jact}, we have the action 
of the subtorus (\ref{lift})
on the moduli space of parabolic
stable pairs $M_n^{\rm{par}}(U, \beta)$, 
which restricts to the action on 
the subspace (\ref{sub:par}).

\subsection{Cyclic covers of $U$}\label{subsec:cyclic}
Let us fix a loop $\alpha \subset \Gamma_{C}$ and 
consider the subtorus (\ref{ctorus}), 
$z \mapsto \lL_{\alpha, z}$. 
The root of unity $z=e^{2\pi i/m}$ corresponds 
to the line bundle, 
\begin{align*}
\lL_{\alpha, m} \cneq \lL_{\alpha, e^{2\pi i/m}}
\in \Pic^0(U), 
\end{align*}
which is an $m$-torsion element, i.e. 
there is an isomorphism of line bundles, 
\begin{align}\label{iso:bun}
\psi_{\alpha, m} \colon 
\oO_{U} \stackrel{\cong}{\to} \lL_{\alpha, m}^{\otimes m}.
\end{align}
Given an isomorphism $\psi_{\alpha, m}$ as above, we can construct 
the complex manifold $\widetilde{U}_{\alpha, m}$ as follows: 
\begin{align*}
\widetilde{U}_{\alpha, m} \cneq \{ y \in \lL_{\alpha, m} : 
y^{\otimes m} =\psi_{\alpha, m}(1) \}. 
\end{align*} 
Here we have regarded the line bundle 
$\lL_{\alpha, m}$ as its total space on $U$. 
The projection $\lL_{\alpha, m} \to U$
induces the morphism, 
 \begin{align}\label{sigma:U}
\sigma_{\alpha, m} \colon \widetilde{U}_{\alpha, m} \to U,
\end{align}
which is a covering map of covering degree $m$. 
By taking the pull-back of $C \subset U$ by 
$\sigma_{\alpha, m}$, we obtain the 
$m$-fold \'{e}tale cover of $C$, 
\begin{align}\label{etale:C}
\sigma_{\alpha, m}|_{\widetilde{C}_{\alpha, m}} \colon 
\widetilde{C}_{\alpha, m} \to C. 
\end{align}
Note that $\widetilde{U}_{\alpha, m}$ is a complex manifold containing 
$\widetilde{C}_{\alpha, m}$, 
and satisfies
\begin{align*}
\bigwedge^{3}T_{\widetilde{U}_{\alpha, m}}^{\vee} 
\cong \oO_{\widetilde{U}_{\alpha, m}}.
\end{align*}

The $m$-fold cover (\ref{etale:C})
is determined by the isomorphism (\ref{iso:bun}) 
restricted to $C$, which is described in the following way. 
Let us choose an edge $e \subset \alpha$ corresponding 
to the node $x\in C$, and take a partial normalization
$C_x^{\dag}$ as in (\ref{part:norm}). 
Let $x_1, x_2 \in C_x^{\dag}$ be the preimages
of $x$. 
We take the $m$-copies of
$\{C_x^{\dag}, x_1, x_2\}$, 
\begin{align*}
\{ C_{x, i}, x_{1, i}, x_{2, i}\}, \quad i \in \mathbb{Z}/m\mathbb{Z}. 
\end{align*}
Then $\widetilde{C}_{\alpha, m}$ is given by 
\begin{align}\label{Cxi}
\widetilde{C}_{\alpha, m} =\bigcup_{i \in \mathbb{Z}/m\mathbb{Z}}
C_{x, i}^{}, 
\end{align}
where $C_{x, i}$ and $C_{x, i+1}$ are 
glued at $x_{1, i}$ and $x_{2, i+1}$. 
(See Figure~\ref{fig:two}.)

\begin{figure}[htbp]
 \begin{center}
  \includegraphics[width=100mm]{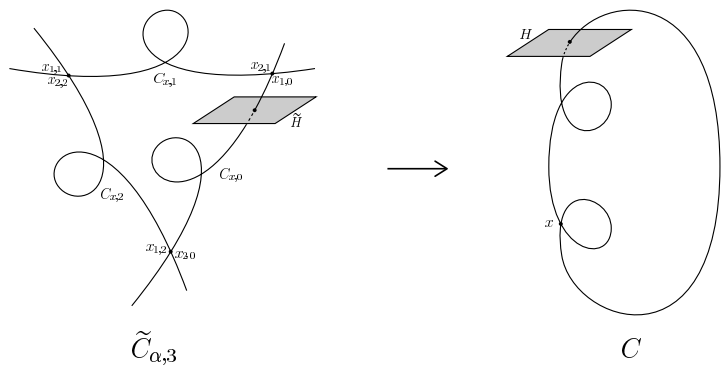}
 \end{center}
 \caption{3-fold cover of a rational curve with two nodes}
 \label{fig:two}
\end{figure}

\subsection{Coherent sheaves on $\widetilde{U}$}\label{subsec:Coh}
Let us consider the $m$-fold cover (\ref{sigma:U})
constructed in the previous subsection. 
In this subsection, we 
investigate coherent sheaves on the covering 
(\ref{sigma:U}). 

In what follows, we fix $\alpha$ and $m$, so we omit 
these symbols in the notation, e.g. we write 
$\widetilde{U}_{\alpha, m}$, $\widetilde{C}_{\alpha, m}$
as $\widetilde{U}$, $\widetilde{C}$, etc.
We first discuss the category of coherent sheaves on 
the covering $\widetilde{U}$. By the construction, 
the $\oO_U$-algebra $\sigma_{\ast}\oO_{\widetilde{U}}$ is given by 
\begin{align*}
\sigma_{\ast}\oO_{\widetilde{U}}
\cong 
\oO_{U} \oplus \lL^{-1} \oplus \cdots \oplus \lL^{-m+1}. 
\end{align*}
The algebra structure on the RHS is given by, 
for $(a, b) \in \lL^{-i} \times \lL^{-j}$,  
\begin{align*}
(a, b)
\mapsto \left\{ \begin{array}{cc}
a\otimes b, & i+j<m, \\
\psi (a\otimes b), & i+j\ge m. 
\end{array}   \right. 
\end{align*}
Here $\psi=\psi_{\alpha, m}$ is the isomorphism given in (\ref{iso:bun}). 
By the above isomorphism of $\oO_{U}$-algebras, 
it is easy to show the following lemma: 
\begin{lem}\label{lem:forward}
The push-forward functor $\sigma_{\ast}$ identifies
the category $\Coh(\widetilde{U})$ with the category of pairs, 
\begin{align*}
(F, \phi_F), \quad \phi_F \colon F \to F \otimes \lL, 
\end{align*}
where $F \in \Coh(U)$ and $\phi_F$ is a morphism in 
$\Coh(U)$ satisfying 
\begin{align}\label{cond:cyc}
\overbrace{\phi_F \circ \cdots \circ \phi_F}^{m}
=\mathrm{id}_F \otimes \psi, 
\end{align}
as morphisms $F \to F \otimes \lL^{\otimes m}$. 
\end{lem}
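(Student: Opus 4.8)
The plan is to realize $\widetilde{U}$ as the relative spectrum over $U$ of the sheaf of $\oO_U$-algebras $\aA \cneq \sigma_{\ast}\oO_{\widetilde{U}}$ described just before the statement, and then to unwind what a coherent module over $\aA$ is. Since $\sigma \colon \widetilde{U}\to U$ is a finite, hence affine, morphism, the push-forward $\sigma_{\ast}$ induces an equivalence between $\Coh(\widetilde{U})$ and the category of coherent $\aA$-modules, i.e.\ coherent $\oO_U$-modules $F$ together with an $\oO_U$-algebra action $\aA\otimes_{\oO_U}F \to F$; the quasi-inverse is $G\mapsto \sigma^{\ast}G$ with its tautological $\aA$-structure. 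This is the standard affine-morphism case of relative $\Spec$, and I would simply cite it.

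Next I would use the explicit description $\aA\cong \oO_U\oplus \lL^{-1}\oplus\cdots\oplus \lL^{-m+1}$ together with its multiplication rule to observe that $\aA$ is generated as an $\oO_U$-algebra by its weight-one summand $\lL^{-1}$: the product maps $(\lL^{-1})^{\otimes i}$ isomorphically onto $\lL^{-i}$ for $0\le i\le m-1$, and maps $(\lL^{-1})^{\otimes m}$ onto $\oO_U$ via $\psi_{\alpha,m}^{-1}$, with $\psi=\psi_{\alpha,m}$ the isomorphism of (\ref{iso:bun}). Hence giving an $\aA$-module structure on a coherent $F\in\Coh(U)$ is the same as giving an $\oO_U$-linear map $\lL^{-1}\otimes_{\oO_U}F\to F$, equivalently, after tensoring with $\lL$, a morphism $\phi_F\colon F\to F\otimes\lL$, subject only to the relation coming from the identity $(\lL^{-1})^{\otimes m}=\oO_U$ inside $\aA$. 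Tracing this relation through the grading shows it is exactly the cyclic condition (\ref{cond:cyc}), namely $\phi_F\circ\cdots\circ\phi_F=\id_F\otimes\psi$; commutativity and associativity of the action impose nothing further because $\lL^{-1}$ is invertible of rank one. Conversely, any $\phi_F$ satisfying (\ref{cond:cyc}) assembles, via $\phi_F^{\circ i}$ acting on the $\lL^{-i}$-summand, into a well-defined associative $\aA$-action on $F$.

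I would then check the statement on morphisms: a homomorphism of $\aA$-modules is precisely an $\oO_U$-linear map $g\colon F\to F'$ compatible with the generator action, i.e.\ $\phi_{F'}\circ g=(g\otimes\id_{\lL})\circ\phi_F$, which is exactly a morphism of pairs $(F,\phi_F)\to(F',\phi_{F'})$. Coherence is preserved in both directions since $\aA$ is a coherent $\oO_U$-algebra, so $\sigma_{\ast}$ restricts to the claimed equivalence. Finally I would note that the identification is compatible with the geometric construction $\widetilde{U}=\{y\in\lL_{\alpha,m} : y^{\otimes m}=\psi(1)\}$: for $G\in\Coh(\widetilde{U})$ the underlying $\oO_U$-module of $\sigma_{\ast}G$ decomposes into weight pieces under the natural $\oO_U$-grading, and the endomorphism ``multiplication by the tautological section $y$'' supplies the map $\phi$.

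The main obstacle is the bookkeeping in the second step: verifying that the single relation (\ref{cond:cyc}) is equivalent to $F$ being an $\aA$-module, with no spurious extra relations and none missing. This reduces to checking that the $\mathbb{Z}/m\mathbb{Z}$-grading on $\aA$ makes all multiplication maps $\lL^{-i}\otimes\lL^{-j}\to\lL^{-(i+j\bmod m)}$ determined by the weight-one piece and by $\psi$ — the latter entering precisely when $i+j\ge m$. This is routine but must be carried out with care about where $\psi$ appears.
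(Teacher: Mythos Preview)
Your proposal is correct and is precisely the natural expansion of the argument the paper has in mind: the paper does not give a proof at all, merely observing that the lemma follows from the explicit description of the $\oO_U$-algebra $\sigma_{\ast}\oO_{\widetilde{U}}\cong\oO_U\oplus\lL^{-1}\oplus\cdots\oplus\lL^{-m+1}$ given just before the statement. Your steps (affine morphism $\Rightarrow$ equivalence with $\aA$-modules, $\aA$ generated by $\lL^{-1}$ with the single relation coming from $\psi$, translation into pairs $(F,\phi_F)$) are exactly what ``it is easy to show'' unpacks to.
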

Note that 
a choice of a loop $\alpha \subset \Gamma_C$
and an edge $e \subset \alpha$
 yields a 
lift of (\ref{ctorus}) by (\ref{lift}), 
which sends $z \in \mathbb{C}^{\ast}$
to $(\lL_{\alpha, z}, \phi_{\alpha, e, z})$ in the notation of 
Subsection~\ref{subsec:JacC}. 
In particular, a cyclic subgroup of order $m$ in  
$\Pic^0(U)$ generated by the 
 line bundle $\lL=\lL_{\alpha, e^{2\pi i/m}}$
lifts to an embedding into 
$\widehat{\Pic^0}(U)$, 
\begin{align}\label{mPic2}
\mathbb{Z}/m\mathbb{Z} \subset \mathbb{C}^{\ast} \subset
\widehat{\Pic^0}(U). 
\end{align}
In other words, the structure sheaf $\oO_{H\cap U}$ of the divisor $H\cap U$
in $U$
is equipped with an isomorphism, 
\begin{align}\label{lambda}
\lambda_H \colon \oO_{H} \stackrel{\cong}{\to}
\oO_{H} \otimes \lL, 
\end{align}
such that 
$\phi_{\oO_H} \cneq \lambda_H$
 satisfies the condition (\ref{cond:cyc}). 
Here we have denoted $H\cap U$ just by $H$ for simplicity. 
Hence $\lambda_H$ determines a lift of 
$\oO_{H}$ to a coherent sheaf on $\widetilde{U}$. 
This lift is a structure sheaf of some divisor 
in $\widetilde{U}$, 
\begin{align}\label{data:HU}
\widetilde{H} \subset \widetilde{U}, 
\end{align}
which intersects with $\widetilde{C}$ transversally. 
(cf.~Figure~\ref{fig:two}.)
Later we will need the following lemma 
on the compatibility of $\psi$ with $\lambda_H$. 
\begin{lem}\label{lem:i+ii+iii}
(i) The isomorphism $\psi$ in (\ref{iso:bun}) induces the 
isomorphism, 
\begin{align}\label{wpsi}
\widetilde{\psi}
 \colon \oO_{\widetilde{U}} \stackrel{\cong}{\to} \sigma^{\ast} \lL. 
\end{align}

(ii) The isomorphism $\lambda_H$ in (\ref{lambda}) induces 
the isomorphism, 
\begin{align}\label{lambdaH}
\widetilde{\lambda}_{H} \colon 
\bigoplus_{g \in \mathbb{Z}/m\mathbb{Z}}
g_{\ast}\oO_{\widetilde{H}} \stackrel{\cong}{\to}
\sigma^{\ast}\oO_H. 
\end{align}
Here $\mathbb{Z}/m\mathbb{Z}$ acts on $\widetilde{U}$ as
a deck transformation of the covering $\sigma \colon 
\widetilde{U} \to U$. 

(iii) The compositions, 
\begin{align}\label{comp1}
&(\widetilde{\lambda}_{H} \otimes \mathrm{id}_{\sigma^{\ast}\lL})^{-1} \circ 
(\widetilde{\psi} \otimes \mathrm{id}_{\sigma^{\ast}\oO_H})
 \circ \widetilde{\lambda}_H, \\ 
\label{comp2}
&(\widetilde{\lambda}_H \otimes \mathrm{id}_{\sigma^{\ast}\lL})^{-1}
\circ \sigma^{\ast}\lambda_H \circ \widetilde{\lambda}_H,  
\end{align}
determine two isomorphisms, 
\begin{align}\label{two:isom}
\bigoplus_{g \in \mathbb{Z}/m\mathbb{Z}}
g_{\ast}\oO_{\widetilde{H}}
\to \bigoplus_{g \in \mathbb{Z}/m\mathbb{Z}}
g_{\ast}\oO_{\widetilde{H}} \otimes \sigma^{\ast}\lL,
\end{align}
Both of (\ref{comp1}) and (\ref{comp2}) preserve
direct summands of both sides of (\ref{two:isom}), 
and they are related by 
\begin{align*}
(\ref{comp1})|_{g_{\ast}\oO_{\widetilde{H}}}
= e^{2\pi gi/m}  \cdot (\ref{comp2})|_{g_{\ast}\oO_{\widetilde{H}}}. 
\end{align*}
\end{lem}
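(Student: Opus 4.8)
The plan is to unwind all three assertions from the explicit algebra description $\sigma_{\ast}\oO_{\widetilde{U}} \cong \oO_U \oplus \lL^{-1} \oplus \cdots \oplus \lL^{-m+1}$ and its companion for $\sigma_{\ast}\oO_{\widetilde{H}}$, together with the projection formula. For (i), I would observe that $\sigma^{\ast}\lL$ is, by Lemma~\ref{lem:forward} applied to the sheaf $\lL$ on $U$, nothing but $\lL$ equipped with the identity morphism $\id\colon \lL \to \lL \otimes \lL \cdot \lL^{-1}$; more concretely, since $\sigma^{\ast}\lL$ is a line bundle on $\widetilde{U}$ and $\widetilde{U} \subset \lL$ is cut out by $y^{\otimes m} = \psi(1)$, the tautological section $y$ of $\sigma^{\ast}\lL$ is nowhere vanishing on $\widetilde{U}$ and hence trivializes $\sigma^{\ast}\lL$. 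This trivialization $\widetilde{\psi}\colon \oO_{\widetilde{U}} \xrightarrow{\cong} \sigma^{\ast}\lL$, $1 \mapsto y$, is the required isomorphism, and by construction $\widetilde{\psi}^{\otimes m}$ is identified with $\sigma^{\ast}\psi$ under $\sigma^{\ast}(\lL^{\otimes m}) \cong (\sigma^{\ast}\lL)^{\otimes m}$.

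For (ii), the point is that $\widetilde{H} \subset \widetilde{U}$ is, by the paragraph around \eqref{data:HU}, the divisor whose structure sheaf corresponds under Lemma~\ref{lem:forward} to $(\oO_H, \lambda_H)$; since $\sigma$ restricted over $H$ is an \'etale degree $m$ cover (because $H$ meets $C$, and hence the branch locus, transversally at smooth points away from the nodes), $\sigma^{\ast}\oO_H = \sigma_H{}_{\ast}\sigma_H^{\ast}\oO_{\widetilde{H}}$ has $\sigma_H{}_{\ast}$ applied giving $\bigoplus_{g} g_{\ast}\oO_{\widetilde{H}}$ — equivalently, $\sigma^{-1}(H\cap U) = \coprod_{g \in \mathbb{Z}/m\mathbb{Z}} g(\widetilde{H})$ as a disjoint union of $m$ translates of $\widetilde{H}$ under the deck group. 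Writing out $\sigma_{\ast}\oO_{\widetilde{H}} \cong \oO_H \oplus \lL^{-1}|_H \oplus \cdots$ and comparing with $\sigma_{\ast}\sigma^{\ast}\oO_H \cong \oO_H \otimes \sigma_{\ast}\oO_{\widetilde{U}}$ via the projection formula yields the canonical decomposition, and $\widetilde{\lambda}_H$ is the induced identification; the $g$-th summand $g_{\ast}\oO_{\widetilde{H}}$ is pinned down by the character $g \mapsto e^{2\pi gi/m}$ through which the deck group acts on the eigenspace decomposition corresponding to $\lL^{-g}|_H$.

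For (iii), both composites \eqref{comp1} and \eqref{comp2} are automorphisms-twisted-by-$\sigma^{\ast}\lL$ of $\bigoplus_g g_{\ast}\oO_{\widetilde{H}}$; that they preserve the summands follows because $\widetilde{\psi}$, $\widetilde{\lambda}_H$ and $\sigma^{\ast}\lambda_H$ are all built from the eigen-decomposition and the deck action is by scalars on each eigenspace. To get the precise ratio $e^{2\pi gi/m}$, I would track what $\widetilde{\psi}\otimes \id$ versus $\sigma^{\ast}\lambda_H$ does on the summand $g_{\ast}\oO_{\widetilde{H}}$: the former multiplies by the tautological section $y$, which the deck generator scales by $e^{2\pi i/m}$, so on the $g$-th eigenspace it contributes $e^{2\pi gi/m}$ relative to $\sigma^{\ast}\lambda_H$, which by definition of $\widetilde{H}$ acts "diagonally" with ratio $1$. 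Composing with $(\widetilde{\lambda}_H \otimes \id)^{-1}$ on both sides does not change the ratio, giving the stated relation $\eqref{comp1}|_{g_{\ast}\oO_{\widetilde{H}}} = e^{2\pi gi/m}\cdot \eqref{comp2}|_{g_{\ast}\oO_{\widetilde{H}}}$.

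The main obstacle I anticipate is bookkeeping in (iii): keeping straight the several identifications $\sigma^{\ast}(\lL^{\otimes m}) \cong (\sigma^{\ast}\lL)^{\otimes m}$, the projection-formula isomorphism, and the deck-group eigenspace labelling simultaneously, so that the scalar comes out as $e^{2\pi gi/m}$ rather than its inverse or $e^{2\pi i(m-g)/m}$. I would fix conventions once and for all — orient $\alpha$, fix the edge $e$, fix the generator of $\mathbb{Z}/m\mathbb{Z}$ and the primitive root $e^{2\pi i/m}$, and fix $\psi_{\alpha,m}$ — and then do the computation on a single trivializing chart near a point of $H\cap C$, where everything reduces to finite-dimensional linear algebra over $\oO_H$; the general statement follows by the naturality of all the isomorphisms involved.
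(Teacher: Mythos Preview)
Your approach is correct and close in spirit to the paper's, but the execution differs in a useful way. The paper works entirely on $U$ via $\sigma_{\ast}$: using Lemma~\ref{lem:forward} it writes $\sigma_{\ast}\widetilde{\psi}$ as the explicit cyclic shift
\[
(x_0,\ldots,x_{m-1})\in \oO_U\oplus\cdots\oplus\lL^{-m+1}\;\longmapsto\;(\psi(x_{m-1}),x_0,\ldots,x_{m-2})\in \lL\oplus\cdots\oplus\lL^{-m+2},
\]
and writes $\sigma_{\ast}\widetilde{\lambda}_H$ restricted to $\sigma_{\ast}g_{\ast}\oO_{\widetilde{H}}\cong\oO_H$ as
\[
x\;\longmapsto\;\bigl(x,\,e^{-2\pi gi/m}\lambda_H^{-1}(x),\,\ldots,\,e^{-2\pi(m-1)gi/m}\lambda_H^{-m+1}(x)\bigr).
\]
Part~(iii) is then a one-line comparison of these two formulas. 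You instead work upstairs on $\widetilde{U}$ via the tautological section $y$ of $\sigma^{\ast}\lL$ and the geometric splitting $\sigma^{-1}(H)=\coprod_g g(\widetilde{H})$; this is the dual picture and is perfectly valid.

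What the paper's route buys is that the scalar in (iii) falls out mechanically with no convention-chasing: you literally apply the shift formula to the explicit vector and read off $e^{2\pi gi/m}$. What your route buys is a cleaner conceptual explanation of \emph{why} the scalar is a root of unity (deck action on $y$). The ``main obstacle'' you flag --- getting $e^{2\pi gi/m}$ rather than its inverse --- is exactly the place where the paper's explicit coordinates are safer; if you want to keep your geometric argument, the cleanest fix is to pin down once that $y|_{\widetilde{H}}=\lambda_H(1)$ (this is what the identification of $\oO_{\widetilde{H}}$ with the pair $(\oO_H,\lambda_H)$ under Lemma~\ref{lem:forward} says), after which $y|_{g(\widetilde{H})}=e^{2\pi gi/m}\lambda_H(1)$ follows from the deck action and (iii) is immediate.
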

\begin{proof}
(i) By Lemma~\ref{lem:forward}, it is enough to construct 
$\sigma_{\ast}\oO_{\widetilde{U}}$-module 
isomorphism $\sigma_{\ast}\widetilde{\psi}$
between $\sigma_{\ast}\oO_{\widetilde{U}}$
and $\sigma_{\ast}\sigma^{\ast}\lL \cong \lL 
\otimes \sigma_{\ast}\oO_{\widetilde{U}}$. 
It is constructed as 
\begin{align}\notag
&(x_0, x_1, \cdots, x_{m-1}) \in \oO_U \oplus \lL \oplus \cdots \oplus 
\lL^{-m+1} \\
\label{sigma:ast}
& \quad \mapsto (\psi(x_{m-1}), x_0, \cdots, x_{m-2})
\in \lL \oplus \oO_U \oplus \cdots \oplus \lL^{-m+2}.
\end{align}

(ii) As in (i), it is enough to construct
$\sigma_{\ast}\widetilde{\lambda}_{H}$. 
Note that $\sigma_{\ast}g_{\ast}\oO_{\widetilde{H}}$
is isomorphic to $\oO_H$, whose 
$\sigma_{\ast}\oO_{\widetilde{U}}$-module structure 
is given by $e^{2\pi gi/m} \cdot \lambda_H$. 
The morphism $\sigma_{\ast}\widetilde{\lambda}_{H}$
restricted to $\sigma_{\ast}g_{\ast}\oO_{\widetilde{H}}$
is constructed to be
\begin{align}\label{restrict}
x\in \oO_H  \mapsto & (x, e^{-2\pi gi/m}\lambda_H^{-1}(x), 
\cdots, e^{-2\pi (m-1)gi/m} \lambda_H^{-m+1}(x)) \\
\notag
&\in \oO_H \oplus \lL|_{H}^{-1} \oplus \cdots \oplus 
\lL|_{H}^{-m+1} \cong \sigma_{\ast}\sigma^{\ast}\oO_H. 
\end{align}
It is easy to check that the $\sigma_{\ast}\widetilde{\lambda}_H$
constructed as above is an isomorphism of 
$\sigma_{\ast}\oO_{\widetilde{U}}$-modules. 

(iii) The statement of (iii) easily follows by 
comparing (\ref{sigma:ast}) and (\ref{restrict}). 
\end{proof}

\subsection{Cyclic neighborhoods}\label{subsec:Cyclic}
In this subsection, we generalize the 
construction in the previous subsections 
and define the notion of cyclic neighborhoods. 
This will be used for the induction argument 
in the proof of Theorem~\ref{thm:main:cov} below. 
\begin{defi}\label{def:cyclic}
Let $X$, $C \subset U \subset X$ be as before. 
Let $C'$ be a connected nodal curve 
which 
is embedded into a 
three dimensional complex manifold $U'$. 
We say $C' \subset U'$ is a cyclic 
neighborhood of $C\subset X$ if
there is a sequence of local immersions,  
\begin{align}\label{unrami}
\sigma' \colon 
U'=U_{(R)} \stackrel{\sigma'_{(R)}}{\to}
 U_{(R-1)} \to \cdots \to U_{(1)} \stackrel{\sigma'_{(1)}}{\to} U_{(0)}=U, 
\end{align}
and connected nodal curves $C_{(i)} \subset U_{(i)}$ 
such that the following conditions hold: 
\begin{itemize}
\item For each $i$, 
$U_{(i)}$ is a small analytic neighborhood of 
$C_{(i)}$, and $C_{(R)}=C'$, $C_{(0)}=C$. 
\item For each $i$, the map $\sigma'_{(i)} \colon U_{(i)} \to U_{(i-1)}$
factorizes as 
\begin{align*}
\sigma_{(i)}' \colon 
U_{(i)} \subset \widetilde{U}_{(i-1)} \stackrel{\sigma_{(i)}}{\to} U_{(i-1)}, 
\end{align*}
where $\sigma_{(i)}$ is a cyclic covering with respect to
some non-trivial loop in the graph $\Gamma_{C_{(i-1)}}$, and
$U_{(i)} \subset \widetilde{U}_{(i-1)}$ is an open 
immersion. 
\item The curves $C_{(i)}$ satisfy 
$\sigma_{(i)}(C_{(i)}) \subset C_{(i-1)}$. 
\end{itemize}
\end{defi} 
Below, a cyclic 
neighborhood $C' \subset U'$ of $C\subset X$ will be written as 
\begin{align*}
(C'\subset U') \stackrel{\sigma'}{\to} (C\subset X),
\end{align*}	 
where $\sigma'$ is a composition of local immersions (\ref{unrami}). 
In order to discuss parabolic stable pair invariants on 
cyclic neighborhoods, we 
define the notion of a lift of $H\subset X$ as follows:
\begin{defi}
For a cyclic neighborhood 
$C'\subset U'$ of $C\subset X$,
a divisor $H' \subset U'$ is called a lift
of $H\subset X$
if the following conditions hold:
\begin{itemize}
\item There is a sequence (\ref{unrami})
together with divisors $H_{(i)} \subset U_{(i)}$
such that $H_{(i)}$ is obtained by lifting 
$H_{(i-1)}$ to $\widetilde{U}_{(i-1)} \to U_{(i-1)}$
as in (\ref{data:HU}) and restricting to 
$U_{(i)} \subset \widetilde{U}_{(i-1)}$. 
\item We have $H\cap U=H_{(0)}$ and $H'=H_{(R)}$. 
\end{itemize}
\end{defi}
Given a cyclic neighborhood 
$(C'\subset U') \stackrel{\sigma'}{\to} (C\subset X)$
with a lift $H' \subset U'$ of $H\subset X$, 
we can 
similarly define the notion of parabolic stable pairs, 
\begin{align}\label{para:tilde}
(F', s') \quad s'
 \in F'\otimes \oO_{H'}, 
\end{align}
with $F'$ one dimensional $\sigma^{'\ast}\omega$-semistable 
coherent sheaf on $U'$,
 satisfying the same axiom as in Definition~\ref{defi:para}. 
Here we have to assume that the support of $F'$ is 
compact in order to define $\sigma^{'\ast}\omega$-semistability. 
In what follows, we always assume that $\sigma^{'\ast}\omega$-semistable
sheaves on $U'$ have compact supports. 

\subsection{Moduli spaces and counting invariants on cyclic neighborhoods}\label{moduli:cyclic}
Let 
$(C'\subset U') \stackrel{\sigma'}{\to} (C\subset X)$
be a cyclic neighborhood and $H' \subset U'$ a lift of 
$H\subset X$. 
Since $U'$ is just a complex manifold, we need to work 
with an analytic category in discussing 
moduli spaces of semistable sheaves or parabolic stable pairs. 
A general moduli theory of sheaves on complex analytic 
spaces seems to be not yet established. However, 
since a cyclic neighborhood admits a sequence (\ref{unrami}), 
we can inductively show the existence of 
analytic moduli spaces on cyclic neighborhoods. 
The result is formulated as follows: 
\begin{lem}\label{lem:stack}
In the above situation, take $n\in \mathbb{Z}$
and $\beta' \in H_2(U', \mathbb{Z})$. 

(i) There is an analytic stack of finite type  
 $\mM_n(U', \beta')$, 
which parameterizes $\sigma^{'\ast}\omega$-semistable 
one dimensional sheaves $F' \in \Coh(U')$
satisfying 
\begin{align}\label{Fprime}
[F']=\beta', \quad \chi(F')=n'. 
\end{align} 

(ii) There is an analytic space of finite type
$M_n^{\rm{par}}(U', \beta')$, which 
represents a functor of families of 
parabolic stable pairs $(F', s')$
satisfying (\ref{Fprime}). 
\end{lem}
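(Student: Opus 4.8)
The plan is to argue by induction on the length $R$ of the sequence~(\ref{unrami}) defining the cyclic neighborhood $C' \subset U'$. The base case $R=0$ is $U'=U$, an analytic open subset of the projective Calabi-Yau $X$; here the required analytic stack $\mM_n(U,\beta)$ and the analytic space $M_n^{\rm{par}}(U,\beta)$ are simply the analytic open substacks (resp.\ open subspaces) of $\mM_n(X,i_\ast\beta)$ and $M_n^{\rm{par}}(X,i_\ast\beta)$ cut out by the condition that the support lie in $U$, which exist by the algebraic constructions recalled in Subsection~\ref{subsec:Genera} and in~\cite[Theorem~2.10]{Todpara} together with Chow's theorem (as in the Remark after Subsection~\ref{subsec:Jact}). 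For the inductive step we may therefore assume the two analytic moduli objects exist on $U_{(R-1)}$ and must produce them on $U_{(R)} \subset \widetilde{U}_{(R-1)}$, where $\widetilde{U}_{(R-1)} \to U_{(R-1)}$ is a cyclic $m$-fold covering attached to a loop $\alpha$ in $\Gamma_{C_{(R-1)}}$.

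The key device is Lemma~\ref{lem:forward}: pushing forward along $\sigma = \sigma_{(R)}$ identifies $\Coh(\widetilde{U}_{(R-1)})$ with the category of pairs $(F,\phi_F)$, $\phi_F\colon F \to F\otimes \lL$, satisfying the cyclicity relation~(\ref{cond:cyc}). First I would observe that a one dimensional sheaf $F'$ on $\widetilde{U}_{(R-1)}$ with compact support is $\sigma^{'\ast}\omega$-semistable if and only if its push-forward $\sigma_\ast F'$ is $\omega$-semistable on $U_{(R-1)}$ with the same slope; one direction is the projection formula plus the fact that $\sigma$ is finite étale over a neighborhood of the relevant curve, and the other uses that $\sigma_\ast$ is exact and faithful, so a destabilizing subsheaf upstairs pushes to a destabilizing subsheaf downstairs and conversely a $\phi_F$-invariant destabilizing subsheaf downstairs comes from upstairs. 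Hence $\mM_n(\widetilde{U}_{(R-1)},\beta')$ is realized as the stack of pairs $(F,\phi_F)$ with $[F]=\sigma_\ast\beta'$, $\chi(F)=n$, $F$ a point of $\mM_n(U_{(R-1)},\sigma_\ast\beta')$, together with a morphism $\phi_F\colon F\to F\otimes\lL$ obeying~(\ref{cond:cyc}). The morphism $\phi_F$ and the relation~(\ref{cond:cyc}) are closed analytic conditions on the total space of the (finite-dimensional, by semicontinuity and finite type of the moduli stack) bundle of $\HOM$-spaces over $\mM_n(U_{(R-1)},\sigma_\ast\beta')$, so $\mM_n(\widetilde{U}_{(R-1)},\beta')$ is again an analytic stack of finite type; restricting to the open substack of sheaves supported in $U_{(R)}$ gives part~(i). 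For part~(ii), the same push-forward dictionary applies to the parabolic datum: using the lift $\widetilde{H}_{(R-1)}$ of $H_{(R-1)}$ together with the isomorphism $\lambda_H$ of~(\ref{lambda}) and the compatibilities recorded in Lemma~\ref{lem:i+ii+iii}, a parabolic stable pair $(F',s')$ on $\widetilde{U}_{(R-1)}$ corresponds to a triple $(F,\phi_F,s)$ where $(F,s)$ is a parabolic stable pair on $U_{(R-1)}$ (for the appropriate slope and the divisor $H_{(R-1)}$) and $\phi_F$ is compatible with $s$ via $\lambda_H$; since $M_n^{\rm{par}}(U_{(R-1)},\sigma_\ast\beta')$ is a genuine analytic space by induction and the extra data $(\phi_F$ satisfying~(\ref{cond:cyc}) and the compatibility with $s)$ cut out a closed analytic subspace of a finite-dimensional vector bundle over it, the resulting moduli functor is represented by an analytic space of finite type; restricting to supports in $U_{(R)}$ gives~(ii).

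The step I expect to be the main obstacle is verifying that the moduli functor of parabolic stable pairs on $U_{(R)}$ is genuinely \emph{represented} — not merely that the set of objects is in bijection with a subspace of a vector bundle over $M_n^{\rm{par}}(U_{(R-1)},\sigma_\ast\beta')$, but that this identification is compatible with arbitrary analytic families, so that one really obtains representability of the families functor as claimed. This requires checking that the push-forward dictionary of Lemma~\ref{lem:forward} and the parabolic refinement of Lemma~\ref{lem:i+ii+iii} work in families over an arbitrary analytic base $S$ — i.e.\ that $\sigma_\ast$ on $\widetilde{U}_{(R-1)}\times S$ still identifies families of sheaves with families of pairs $(F_S,\phi_{F_S})$, flat over $S$, satisfying the cyclicity relation fiberwise — and that the stability and parabolic-stability conditions are open in families (so that the subspace one cuts out is the honest fine moduli space). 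Flatness and base-change for $\sigma_\ast$ follow from $\sigma$ being finite and flat, and openness of (parabolic) semistability in analytic families is standard, but assembling these carefully is where the real work lies; everything else is a routine transcription of the algebraic constructions of~\cite{Todpara} through the finite covering, using that the relevant supports are compact so that GIT-type projectivity is not needed and quasi-projectivity/analytic finiteness suffices.
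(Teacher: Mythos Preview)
Your argument for (i) is essentially the paper's: induction along the sequence~(\ref{unrami}), with the inductive step realizing $\mM_n(\widetilde{U}_{(R-1)},\beta')$ via the push-forward 1-morphism to $\mM_n(U_{(R-1)},\sigma_\ast\beta')$, whose fiber over $[F]$ is, by Lemma~\ref{lem:forward}, the closed analytic subset of the finite-dimensional space $\Hom(F,F\otimes\lL)$ cut out by~(\ref{cond:cyc}). (The paper cites \cite[Lemma~3.2.2]{Hu}, via the decomposition $\sigma^\ast\sigma_\ast F' \cong \bigoplus_g g_\ast F'$, for the preservation of semistability under $\sigma_\ast$.)

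For (ii), however, you take a genuinely different and more circuitous route. The paper does \emph{not} push the parabolic datum down along $\sigma$ at all; instead, having just built $\mM_n(U',\beta')$ in (i), it considers the forgetting 1-morphism
\[
\mM_n^{\rm{par}}(U',\beta') \longrightarrow \mM_n(U',\beta'), \qquad (F',s') \mapsto F',
\]
with the left side only an abstract stack a priori. The fiber over $[F']$ is the open subset of $F'\otimes\oO_{H'}\cong\mathbb{C}^{\beta'\cdot H'}$ consisting of sections giving parabolic-stable structures, so this map is representable and smooth; hence $\mM_n^{\rm{par}}(U',\beta')$ is a finite-type analytic stack. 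Then \cite[Lemma~2.7]{Todpara} (no nontrivial automorphisms of parabolic stable pairs) gives that it is represented by an analytic space. This entirely avoids analysing how parabolic stability behaves under $\sigma_\ast$ --- which is the nontrivial content of the later Lemma~\ref{lem:nat:comp} --- and it also dissolves your ``main obstacle'' about representability in families, since the passage from a stack with trivial stabilizers to a space needs no further family-level verification. Your approach can be made to work, but it effectively front-loads the arguments of Lemma~\ref{lem:nat:comp} and parts of Proposition~\ref{prop:isom:para} (you need both that $\sigma_\ast$ sends parabolic stable pairs to parabolic stable pairs, and conversely that a parabolic stable $(F,s)$ equipped with a compatible $\phi_F$ really arises from a parabolic stable pair upstairs); the paper's route is shorter and keeps those results in their natural place.
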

\begin{proof}
(i) If $(C', U')=(C, U)$, then 
$\mM_n(U', \beta')$ is obtained as an analytic open 
substack of an Artin stack $\mM_n(X, \beta')$. 
Suppose that $U'$ is an open subset of 
an $m$-fold cover $\sigma \colon \widetilde{U} \to U$ 
given by $\lL \in \Pic^0(U)$, 
as in Subsection~\ref{subsec:Coh}
and Subsection~\ref{subsec:Cyclic}. 
As an abstract stack, there is a 1-morphism, 
\begin{align}\label{1-mor}
\mM_{n}(\widetilde{U}, \beta') \to \mM_n(U, \sigma_{\ast}\beta'), 
\end{align}
by sending $F'$ to $\sigma_{\ast}F'$. 
In fact, since we have the decomposition, 
\begin{align*}
\sigma^{\ast}\sigma_{\ast}F' \cong
\bigoplus_{g\in \mathbb{Z}/m\mathbb{Z}}
g_{\ast}F', 
\end{align*}
the sheaf $\sigma_{\ast}F'$ is $\omega$-semistable
by~\cite[Lemma~3.2.2]{Hu}. 
By Lemma~\ref{lem:forward}, the fiber 
of (\ref{1-mor}) at $[F] \in \mM_n(U, \sigma_{\ast}\beta')$
is given by the closed subset of the finite dimensional vector space, 
\begin{align*}
\phi_F \in \Hom(F, F\otimes \lL)
\end{align*}
satisfying (\ref{cond:cyc}). 
Therefore (\ref{1-mor}) is representable, and 
$\mM_n(U', \beta')$ is an analytic stack of finite type. 
A general case is obtained by applying the above argument 
to the sequence (\ref{unrami}). 

(ii)  Let $\mM_n^{\rm{par}}(U', \beta')$
be an abstract stack of families of 
parabolic stable pairs on $U'$. 
We have the forgetting 1-morphism, 
\begin{align}\label{forget}
\mM_n^{\rm{par}}(U', \beta') \to \mM_n(U', \beta'), 
\end{align}
sending $(F', s')$ to $F'$. 
The fiber of (\ref{forget}) at $[F'] \in \mM_n(U', \beta')$
is given by an open subset of 
\begin{align*}
s' \in F'\otimes \oO_{H'} \cong \mathbb{C}^{\beta' \cdot H'},
\end{align*}
 giving parabolic stable pair structures on $F'$. 
Hence (\ref{forget}) is a representable 
smooth morphism, and in particular $\mM_n^{\rm{par}}(U', \beta')$
is an analytic stack of finite type. However, as in~\cite[Lemma~2.7]{Todpara}, 
there are no non-trivial stabilizer groups in $\mM_n^{\rm{par}}(U', \beta')$. 
This implies that $\mM_n^{\rm{par}}(U', \beta')$ is represented by 
an analytic space of finite type, 
$M_n^{\rm{par}}(U', \beta')$. 
\end{proof}
In the above situation, let us take 
\begin{align*}
\gamma' \in H_2(C', \mathbb{Z}), \quad 
i'_{\ast}\gamma'=\beta',
\end{align*}
where $i' \colon C' \hookrightarrow U'$ is the embedding. 
Similarly to (\ref{M(Cg)}), there is the sub analytic stack,
\begin{align}\label{sub:anay:stack}
\mM_n(C', \gamma') \subset \mM_n(U', \beta'),  
\end{align}
parameterizing $\sigma^{'\ast}\omega$-semistable sheaves 
$F'$ with $[F']=\gamma'$ as a one cycle supported on $C'$
and $\chi(F')=n$.  
Also similarly to (\ref{MnCg}), we have the sub analytic space, 
\begin{align*}
M_n^{\rm{par}}(C', \gamma') \subset M_n^{\rm{par}}(U', \beta'),
\end{align*}
parameterizing parabolic stable pairs $(F', s')$ as above. 
It is straightforward to generalize the notion of Hall algebras, 
Behrend functions, to our analytic category on $U'$. 
Consequently, we have the invariants, 
\begin{align}\label{inv:onU'}
N_{n, \gamma'}(U') \in \mathbb{Q}, \quad 
\DT_{n, \gamma'}^{\rm{par}}(U') \in \mathbb{Z}, 
\end{align}
as in (\ref{Nng}), (\ref{DTpar:loc}) respectively. 
By replacing $\DT^{\rm{par}}_{n, \gamma}$
by $\DT^{\rm{par}}_{n, \gamma'}(U')$ in the RHS of (\ref{DT:hat}), 
we can also define the invariant, 
\begin{align}\label{DT:hat2}
\widehat{\DT}^{\rm{par}}_{n, \gamma'}(U') \in \mathbb{Q}. 
\end{align}
In principle, 
as we discussed in Subsection~\ref{subsec:multiple:via}, 
the same arguments in the proof of~\cite[Corollary~4.18]{Todpara}
should show the equivalence between 
the multiple cover formula 
of $N_{n, \gamma'}(U')$
and the formula (\ref{log:form})
 for $\widehat{\DT}^{\rm{par}}_{n, \gamma'}(U')$.
However there is one technical obstruction to do this,
namely we need to show that the moduli stack 
$\mM_n(U', \beta')$
is locally written as a critical locus of some 
holomorphic function. 
(This is used in the proof of~\cite[Theorem~3.16]{Todpara}.)
The moduli stack $\mM_n(U, \beta)$ satisfies this condition, 
due to the fact that $U$ is an open subset of a projective 
Calabi-Yau 3-fold $X$, and the result by 
Joyce-Song~\cite[Theorem~5.3]{JS}. 
Unfortunately we are not able to prove this 
critical locus condition for $\mM_n(U', \beta')$.
The required condition is formulated 
in the following conjecture:
\begin{conj}\label{conj:crit}
Let $X$ be a smooth projective Calabi-Yau 3-fold over $\mathbb{C}$, 
and $C$ a connected nodal curve with $C \subset X$.
Let $(C'\subset U') \stackrel{\sigma'}{\to} (C\subset X)$
be a cyclic neighborhood, and  
for a point $[F'] \in \mM_n(U', \beta')$, 
let $G$ be a maximal reductive subgroup
in $\Aut(F')$. Then there 
exists a $G$-invariant analytic open subset 
$V$ of $0$ in $\Ext^1(F', F')$, 
a $G$-invariant holomorphic function 
$f \colon V\to \mathbb{C}$ with $f(0)=df|_{0}=0$,
and a smooth morphism of complex analytic stacks, 
\begin{align*}
[\{df=0 \}/G] \to \mM_n(U', \beta'), 
\end{align*}
of relative dimension $\dim \Aut(F')-\dim G$. 
\end{conj}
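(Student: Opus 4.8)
The plan is to prove Conjecture~\ref{conj:crit} by induction along the sequence of local immersions (\ref{unrami}) that defines a cyclic neighbourhood. The base case $(C',U')=(C,U)$ holds because $U$ is an analytic open subset of the projective Calabi--Yau 3-fold $X$, so $\mM_n(U,\beta)$ is an analytic open substack of the Artin stack $\mM_n(X,\beta)$ and the local critical chart is supplied (in the analytic topology) by Joyce--Song~\cite[Theorem~5.3]{JS}. For the inductive step it is enough to pass through one cyclic cover: assuming the conjecture for $\mM_n(U_{(i-1)},\cdot)$, one must prove it for $\mM_n(U_{(i)},\beta')$, where $U_{(i)}$ is an analytic open subset of an $m$-fold cyclic cover $\sigma\colon\widetilde U_{(i-1)}\to U_{(i-1)}$ as in Subsection~\ref{subsec:Coh}. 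Since the property in question is local on the moduli stack and inherited by analytic open substacks, we may replace $U_{(i)}$ by $\widetilde U\cneq\widetilde U_{(i-1)}$ and argue near a point $[F']\in\mM_n(\widetilde U,\beta')$, with $F'$ compactly supported and $\sigma^{\ast}\omega$-semistable.

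The core task is to build, on the complex manifold $\widetilde U$ (which again satisfies $\bigwedge^3 T^\vee_{\widetilde U}\cong\oO_{\widetilde U}$), a Calabi--Yau superpotential controlling the deformations of $F'$. Because $F'$ has compact support and $\widetilde U$ carries a holomorphic volume form, Serre duality with supports gives perfect pairings
\begin{align*}
\Ext^{i}_{\widetilde U}(F',F')\times\Ext^{3-i}_{\widetilde U}(F',F')\lr\mathbb{C},
\end{align*}
so a \v{C}ech or Dolbeault model of $\dR\Hom_{\widetilde U}(F',F')$ with compact supports is a cyclic $A_\infty$-algebra of Calabi--Yau dimension $3$. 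Transferring this structure to a minimal cyclic $A_\infty$-structure on $\Ext^{\bullet}_{\widetilde U}(F',F')$ (as in the minimal-model arguments of Behrend--Getzler and Joyce--Song, or in the $(-1)$-shifted symplectic / d-critical language of Pantev--To\"en--Vaqui\'{e}--Vezzosi and Ben-Bassat--Brav--Bussi--Joyce) produces a formal potential whose restriction $W$ to $\Ext^1_{\widetilde U}(F',F')$ has $dW$ cutting out the Maurer--Cartan locus, i.e.\ the deformation germ $\kura(F')$. An analytic Artin-approximation argument then replaces $W$ by a genuine holomorphic $f\colon V\to\mathbb{C}$ on an analytic neighbourhood $V$ of $0$, with $f(0)=df|_0=0$ and $\{df=0\}\cong\kura(F')$ as analytic germs; combined with a Luna-type local presentation $\mM_n(\widetilde U,\beta')\simeq[\kura(F')/\Aut(F')]$ near $[F']$, this yields the required smooth morphism $[\{df=0\}/\Aut(F')]\to\mM_n(\widetilde U,\beta')$.

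It then remains to descend from $\Aut(F')$ to a maximal reductive subgroup $G$ and to identify the relative dimension. Averaging the transferred $A_\infty$-structure (hence $V$ and $f$) over a maximal compact subgroup of $G$ makes everything $G$-invariant, so $f\colon V\to\mathbb{C}$ is a $G$-invariant holomorphic function with $f(0)=df|_0=0$. Writing $\Aut(F')=G\ltimes R_u$ with $R_u$ the unipotent radical, the fibres of $[\{df=0\}/G]\to[\kura(F')/\Aut(F')]\simeq\mM_n(\widetilde U,\beta')$ are torsors under $R_u\cong\Aut(F')/G$, so this morphism is smooth of relative dimension $\dim\Aut(F')-\dim G$, as claimed. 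For a general cyclic neighbourhood one iterates the argument along (\ref{unrami}), keeping track at each stage of the compactly supported Serre pairing via the identification of $\Coh(\widetilde U_{(i-1)})$ with pairs $(F,\phi_F)$ of Lemma~\ref{lem:forward}; as an alternative to working directly on $\widetilde U$, one could try to transfer the chart from $\mM_n(X,\cdot)$ at $[\sigma_{\ast}F']$, but then the extra constraint $\phi_F^m=\mathrm{id}_F\otimes\psi$ must be folded into the critical-locus description, which seems no easier.

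The hard part is the second paragraph: extending the principle ``moduli of sheaves on a Calabi--Yau 3-fold is locally a critical locus'' from the projective, algebraic setting of~\cite[Theorem~5.3]{JS} to the non-proper, analytic manifold $\widetilde U$. Two points are genuinely delicate. First, the $(-1)$-shifted symplectic (equivalently cyclic $A_\infty$) structure for moduli of \emph{compactly supported} objects on a non-proper Calabi--Yau 3-fold is outside the scope of the standard Pantev--To\"en--Vaqui\'{e}--Vezzosi machinery and would have to be constructed by hand, e.g.\ via Serre duality with supports relative to a carefully chosen good cover. Second, one must control the convergence of the formal minimal-model potential in order to obtain an honest holomorphic $f$ rather than merely a formal power series, which is exactly where an analytic Artin approximation theorem is needed. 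These are precisely the difficulties that lead the author to leave the statement as a conjecture.
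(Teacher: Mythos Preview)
The statement you are addressing is labelled as a \emph{Conjecture} in the paper, and the paper does not supply a proof. Immediately after stating it, the author remarks only that it ``is a complex analytic version of~\cite[Theorem~5.3]{JS}, and true if $U'$ is an open subset of a projective Calabi-Yau 3-fold by~\cite[Theorem~5.3]{JS}.'' There is therefore no proof in the paper to compare your proposal against; the base case you describe is exactly the known case the author cites, and the inductive step through a cyclic cover is precisely what the author declines to carry out.

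Your sketch correctly isolates the two genuine obstructions: producing a $(-1)$-shifted symplectic (equivalently, cyclic $A_\infty$) structure for compactly supported sheaves on the non-proper analytic Calabi--Yau $\widetilde{U}$, and passing from the formal minimal-model potential to an honest holomorphic function. You yourself flag these as the hard points and note that they are why the statement is left open. That diagnosis is accurate, but it means your proposal is a strategy rather than a proof: neither step is actually executed, and both lie outside the scope of the cited literature in the analytic, non-proper setting. In particular, the appeal to ``analytic Artin approximation'' for convergence of the superpotential is not a routine matter here, and the construction of the cyclic pairing via Serre duality with compact supports on a non-compact complex manifold, compatibly with the $A_\infty$ structure, would require substantial new work. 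So the honest summary is: there is no proof in the paper, and your write-up is a plausible outline whose decisive steps remain conjectural, as you acknowledge in your final paragraph.
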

The above conjecture is 
a complex analytic version of~\cite[Theorem~5.3]{JS}, 
and true if $U'$ is an open subset of 
a projective Calabi-Yau 3-fold by~\cite[Theorem~5.3]{JS}. 
As an analogy of Proposition~\ref{prop:translate}, 
under the assumption of Conjecture~\ref{conj:crit}, 
we have the following: 
\begin{prop}\label{prop:similar:trans}
Let $C'\subset U'$ be a cyclic neighborhood of $C\subset X$
with a lift $H' \subset U'$ of $H \subset X$. 
Suppose that $C' \subset U'$ satisfies 
the condition of 
Conjecture~\ref{conj:crit}. 
Then we have the formula 
\begin{align}\label{mult:U'}
N_{n, \gamma'}(U')=\sum_{k\ge 1, k|(n, \gamma')}
\frac{1}{k^2} N_{1, \gamma'/k}(U'), 
\end{align} 
if and only if we have the formula, 
\begin{align}\label{DThatU'}
\widehat{\DT}^{\rm{par}}_{n, \gamma'}
(U')=\sum_{k\ge 1, k|(n, \gamma')}
\frac{(-1)^{\gamma' \cdot H' -1}}{k^2}
(\gamma' \cdot H') N_{1, \gamma'/k}
(U'). 
\end{align}
\end{prop}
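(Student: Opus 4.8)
The plan is to run the proofs of \cite[Theorem~3.16]{Todpara} and \cite[Corollary~4.18]{Todpara} over the cyclic neighborhood $U'$ in place of the Calabi--Yau 3-fold $X$, using Conjecture~\ref{conj:crit} wherever the Joyce--Song critical chart theorem \cite[Theorem~5.3]{JS} is invoked. First I would set up the analytic Hall algebra $H(U')$: it is spanned over $\mathbb{Q}$ by symbols $[\rho\colon \xX \to \cC oh(U')]$ with $\xX$ an analytic stack of finite type with affine geometric stabilizers, and carries an associative $\ast$-product, exactly as for $H(X)$ in Subsection~\ref{subsec:Genera}. By Lemma~\ref{lem:stack}(i) the substacks $\mM_n(C',\gamma') \hookrightarrow \cC oh(U')$ give elements $\delta_{n,\gamma'} \in H(U')$; their logarithms $\epsilon_{n,\gamma'}$ are formed by the same rule, and the invariants $N_{n,\gamma'}(U')$ of \eqref{inv:onU'} are the Behrend-weighted Euler characteristics of the virtual-indecomposable pieces of $\epsilon_{n,\gamma'}$. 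The single place where projectivity of $X$ entered in \cite{Todpara} is the local presentation of the moduli stack as a critical locus of a holomorphic function; granting this now for $\mM_n(U',\beta')$ by hypothesis, the no-poles statement and the fact that the Behrend-weighted integration map is a Lie algebra morphism carry over word for word to analytic stacks of finite type with affine geometric stabilizers.

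Second I would reprove the analogue of \cite[Theorem~3.16]{Todpara} over $U'$. The stable-pair-type wall-crossing in $H(U')$ relating the moduli of parabolic stable pairs with respect to the lift $H'$ to the elements $\epsilon_{n,\gamma'}$, pushed through the integration map, yields an \emph{unconditional} identity of generating series whose $q^n t^{\gamma'}$-coefficient reads
\[
\widehat{\DT}^{\rm{par}}_{n,\gamma'}(U') = (-1)^{\gamma'\cdot H' - 1}\,(\gamma'\cdot H')\, N_{n,\gamma'}(U').
\]
Here transversality of $H' \subset U'$ to $C'$, which is part of the definition of a cyclic neighborhood with a lift of $H$, is used precisely where transversality of $H$ is used in \cite{Todpara}; and as in Lemma~\ref{lem:stack}(ii) (cf.\ \cite[Lemma~2.7]{Todpara}) the datum $s' \in F'\otimes\oO_{H'}\cong \mathbb{C}^{\beta'\cdot H'}$ kills all stabilizers, this rank-one contribution being exactly what produces the factor $\gamma'\cdot H'$ and the sign $(-1)^{\gamma'\cdot H'}$.

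Third, the asserted equivalence is then immediate. Since $H'$ meets $C'$ transversally in a nonempty finite set and $\gamma'$ is effective and nonzero, the integer $(-1)^{\gamma'\cdot H'-1}(\gamma'\cdot H')$ is nonzero, so it may be cancelled: substituting the displayed identity into \eqref{DThatU'} and dividing by this factor turns \eqref{DThatU'} into \eqref{mult:U'}, and conversely. This per-class statement is the refinement of Proposition~\ref{prop:translate} recorded in \cite[Corollary~4.18]{Todpara} and around \eqref{log:form}.

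The main obstacle I anticipate is not any of the steps above individually but the bookkeeping needed to certify that \cite[Theorem~3.16]{Todpara} uses the global Calabi--Yau 3-fold $X$ \emph{only} through \cite[Theorem~5.3]{JS}: one must check that Joyce's stack-function and Ringel--Hall formalism \cite{Joy2,Joy4}, the Behrend function identities of \cite{JS}, and the integration map remain equally available for analytic stacks of finite type with affine geometric stabilizers over $U'$. This should go through, because by \eqref{unrami} any such stack is built from an analytic open substack of the Artin stack $\mM_n(X,\beta)$ by the finitely many representable morphisms appearing in the proof of Lemma~\ref{lem:stack}; once this verification is complete, the whole argument is a transcription of \cite{Todpara} with $X$ replaced by $U'$ and \cite[Theorem~5.3]{JS} replaced by Conjecture~\ref{conj:crit}.
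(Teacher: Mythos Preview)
Your proposal is correct and is precisely an expansion of the paper's own one-line proof, which reads in full: ``Since we assume Conjecture~\ref{conj:crit}, the same argument of~\cite[Corollary~4.18]{Todpara} works.'' You have unpacked exactly what that sentence means: one transports the Hall-algebra wall-crossing of \cite[Theorem~3.16]{Todpara} to $U'$, obtains the unconditional identity $\widehat{\DT}^{\rm{par}}_{n,\gamma'}(U') = (-1)^{\gamma'\cdot H'-1}(\gamma'\cdot H')\,N_{n,\gamma'}(U')$ (this is the analogue of \cite[Equation~(95)]{Todpara}, alluded to later in the paper in the proof of Theorem~\ref{thm:main:cov}), and then the equivalence of \eqref{mult:U'} and \eqref{DThatU'} is a division by the nonzero integer $(\gamma'\cdot H')$; your identification of the sole obstacle---that \cite[Theorem~5.3]{JS} is the only place projectivity enters and must be replaced by Conjecture~\ref{conj:crit}---matches the paper's discussion immediately preceding the proposition.
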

\begin{proof}
Since we assume Conjecture~\ref{conj:crit}, 
the same argument of~\cite[Corollary~4.18]{Todpara}
works. 
\end{proof}

\section{Counting invariants under cyclic coverings}
This section is a core of this paper. 
We will compare the invariants (\ref{inv:onU'}), (\ref{DT:hat2})
constructed in the previous section under 
cyclic coverings. Using this, we will 
show our main result which reduces 
the multiple cover formula of $N_{n, \gamma}$ to 
that of $N_{n, \gamma'}(U')$ for all cyclic neighborhood 
$C'\subset U'$ with $C'$ a tree of $\mathbb{P}^1$. 
\subsection{Comparison of moduli spaces of parabolic stable pairs}
\label{subsec:compare:para}
Let $C \subset U \subset X$ be as in the previous section. 
As in Subsection~\ref{subsec:Coh}
and Subsection~\ref{subsec:Cyclic}, 
let 
$\sigma \colon \widetilde{U} \to U$ be a cyclic covering of 
order $m$, and $\widetilde{H} \subset \widetilde{U}$ is a lift of $H$. 
Note that $\widetilde{C} \subset \widetilde{U}$ is a cyclic 
neighborhood of $C \subset X$, so we have the moduli space of  
parabolic stable pairs on $\widetilde{U}$ by  
Lemma~\ref{lem:stack}. 
In this subsection, we compare moduli spaces of 
parabolic stable pairs under the above covering. 

Note that by (\ref{mPic2}) and the argument in Subsection~\ref{subsec:Jact}, 
we have the $\mathbb{C}^{\ast}$-action on 
the moduli spaces in (\ref{sub:par}), 
which restricts to the $\mathbb{Z}/m\mathbb{Z}$-action 
on these moduli spaces. 
We have the following lemma: 
\begin{lem}\label{lem:nat:comp}
For $\widetilde{\beta} \in H_2(\widetilde{U}, \mathbb{Z})$
with $\beta=\sigma_{\ast}\widetilde{\beta}$, there 
is a natural morphism of complex analytic spaces, 
\begin{align}\label{mor:com}
\sigma_{\ast} \colon M_n^{\rm{par}}(\widetilde{U}, \widetilde{\beta})
\to M_n^{\rm{par}}(U, \beta)^{\mathbb{Z}/m\mathbb{Z}}. 
\end{align}
\end{lem}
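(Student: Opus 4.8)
The plan is to construct the morphism (\ref{mor:com}) first on objects and on families of parabolic stable pairs, and then to check that its image lies in the $\mathbb{Z}/m\mathbb{Z}$-fixed locus. Given a parabolic stable pair $(F',s')$ on $(\widetilde{U},\widetilde{H})$ with $[F']=\widetilde{\beta}$ and $\chi(F')=n$, I would associate to it the pair $(\sigma_{\ast}F',s)$, where the section $s$ is obtained from $s'$ as follows. By Lemma~\ref{lem:i+ii+iii}(ii) we have $\sigma^{\ast}\oO_{H}\cong\bigoplus_{g\in\mathbb{Z}/m\mathbb{Z}}g_{\ast}\oO_{\widetilde{H}}$, so $\sigma$ restricts to an isomorphism $\widetilde{H}\stackrel{\cong}{\to}H\cap U$ and $\sigma^{-1}(H\cap U)$ is the disjoint union of the deck-translates $g(\widetilde{H})$; tensoring $F'$ with $\oO_{\sigma^{-1}(H\cap U)}$ and pushing forward then yields a natural decomposition $\sigma_{\ast}F'\otimes\oO_{H}\cong\bigoplus_{g}\sigma_{\ast}(F'|_{g(\widetilde{H})})$, and I take $s$ to be the image of $s'\in\sigma_{\ast}(F'|_{\widetilde{H}})$ under the inclusion of the $g=0$ summand. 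That $\sigma_{\ast}F'$ is a one dimensional $\omega$-semistable sheaf with $[\sigma_{\ast}F']=\sigma_{\ast}\widetilde{\beta}=\beta$ and $\chi(\sigma_{\ast}F')=\chi(F')=n$ follows from the finiteness of $\sigma$, from the decomposition $\sigma^{\ast}\sigma_{\ast}F'\cong\bigoplus_{g}g_{\ast}F'$ together with~\cite[Lemma~3.2.2]{Hu}, exactly as in the proof of Lemma~\ref{lem:stack}(i); and the one cycle $[\sigma_{\ast}F']$ meets $H$ transversally since $[F']$ meets each $g(\widetilde{H})$ transversally and $\sigma$ is unramified.

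The main point, which I expect to be the real obstacle, is to verify that $(\sigma_{\ast}F',s)$ again satisfies the parabolic stability condition of Definition~\ref{defi:para}. Suppose it fails: then there is a surjection $\pi\colon\sigma_{\ast}F'\twoheadrightarrow E$ with $\mu_{\omega}(E)=\mu_{\omega}(\sigma_{\ast}F')$ and $(\pi\otimes\oO_{H})(s)=0$. Pulling back along $\sigma$ (which preserves slopes) one obtains a surjection $\bigoplus_{g}g_{\ast}F'\twoheadrightarrow\sigma^{\ast}E$ of one dimensional sheaves of equal slope, with $E$ and $\sigma^{\ast}E$ semistable. Since each $g_{\ast}F'$ is semistable of that slope, the composite $F'\hookrightarrow\bigoplus_{g}g_{\ast}F'\twoheadrightarrow\sigma^{\ast}E$ of the $g=0$ inclusion with $\sigma^{\ast}\pi$ has nonzero image $E'_{0}$, which is simultaneously a subsheaf of the semistable $\sigma^{\ast}E$ and a quotient of the semistable $F'$, hence is an equal-slope quotient $F'\twoheadrightarrow E'_{0}$. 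Tracing $s$ through the identifications used to define it, the vanishing $(\pi\otimes\oO_{H})(s)=0$ translates into the vanishing of the image of $s'$ in $E'_{0}\otimes\oO_{\widetilde{H}}$; here one uses that $[F']$, hence $[E'_{0}]$ and $[\sigma^{\ast}E]$, are transversal to $\widetilde{H}$, which makes $E'_{0}\otimes\oO_{\widetilde{H}}\hookrightarrow\sigma^{\ast}E\otimes\oO_{\widetilde{H}}$ (the relevant $\mathrm{Tor}$-sheaf vanishing by purity). This contradicts the parabolic stability of $(F',s')$, so $(\sigma_{\ast}F',s)$ is a parabolic stable pair.

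To upgrade this to a morphism of moduli spaces, I would run the same construction in families: since $\sigma$ is finite, $\sigma_{\ast}$ is exact and preserves flatness over any analytic base $S$, so a family of parabolic stable pairs on $\widetilde{U}\times S$ produces one on $U\times S$, and the fiberwise check above shows that this is again a parabolic stable pair family. Hence $\sigma_{\ast}$ induces a morphism of the corresponding moduli functors, and therefore a morphism of complex analytic spaces $M_n^{\rm{par}}(\widetilde{U},\widetilde{\beta})\to M_n^{\rm{par}}(U,\beta)$ by the representability statement in Lemma~\ref{lem:stack}(ii).

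Finally, I would check that this morphism factors through the $\mathbb{Z}/m\mathbb{Z}$-fixed locus, where $\mathbb{Z}/m\mathbb{Z}\subset\widehat{\Pic^0}(U)$ is the subgroup (\ref{mPic2}) generated by $(\lL,\lambda_{H})$ and acting by (\ref{LpFs}). By Lemma~\ref{lem:i+ii+iii}(i) there is an isomorphism $\widetilde{\psi}\colon\oO_{\widetilde{U}}\stackrel{\cong}{\to}\sigma^{\ast}\lL$, which via the projection formula yields $\sigma_{\ast}F'\otimes\lL\cong\sigma_{\ast}(F'\otimes\sigma^{\ast}\lL)\cong\sigma_{\ast}F'$; and Lemma~\ref{lem:i+ii+iii}(iii) — stating that the isomorphism of $\bigoplus_{g}g_{\ast}\oO_{\widetilde{H}}$ induced by $\widetilde{\psi}$ differs on the $g$-th summand from the one induced by $\lambda_{H}$ by the scalar $e^{2\pi gi/m}$ — is precisely what identifies, on the $g=0$ summand where this scalar is $1$, the image of $s$ under the above isomorphism with the section produced by the action (\ref{LpFs}) of $(\lL,\lambda_{H})$. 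Thus $(\sigma_{\ast}F',s)$ is fixed by the generator of $\mathbb{Z}/m\mathbb{Z}$, hence by the whole group, which gives the desired morphism (\ref{mor:com}).
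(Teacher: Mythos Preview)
Your proposal is correct and follows essentially the same approach as the paper: you define $\sigma_{\ast}(F',s')$ via the direct-summand embedding coming from Lemma~\ref{lem:i+ii+iii}(ii), verify parabolic stability by passing to the adjoint map $F'\to\sigma^{\ast}E$ and using that its image is an equal-slope quotient, check $\mathbb{Z}/m\mathbb{Z}$-invariance via $\widetilde{\psi}$ and Lemma~\ref{lem:i+ii+iii}(iii), and then upgrade to families. The only cosmetic difference is that the paper phrases the key map as the adjunction $\widetilde{F}\to\sigma^{!}F'=\sigma^{\ast}F'$ directly (which is also what makes its nonvanishing immediate), whereas you describe it as the $g=0$ inclusion composed with $\sigma^{\ast}\pi$; these are the same map, and your added remark on Tor-vanishing for the injectivity of $E'_{0}\otimes\oO_{\widetilde{H}}\hookrightarrow\sigma^{\ast}E\otimes\oO_{\widetilde{H}}$ makes explicit a point the paper leaves implicit.
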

\begin{proof}
For a point $(\widetilde{F}, \widetilde{s}) \in M_n^{\rm{par}}(\widetilde{U}, \widetilde{\beta})$, we construct the pair
\begin{align*}
(F, s) \cneq \sigma_{\ast}(\widetilde{F}, \widetilde{s}),
\end{align*}
in the following way: 
first we set $F \cneq \sigma_{\ast} \widetilde{F}$, 
which is $\omega$-semistable as in the proof of Lemma~\ref{lem:stack} (i).  
Then we have 
\begin{align*}
F \otimes \oO_{H} 
&\cong \sigma_{\ast}(\widetilde{F} \otimes \sigma^{\ast}\oO_H) \\
&\cong \bigoplus_{g\in \mathbb{Z}/m\mathbb{Z}} \sigma_{\ast}(\widetilde{F} \otimes
g_{\ast}\oO_{\widetilde{H}}), 
\end{align*}
where 
the second isomorphism is induced by (\ref{lambdaH}). 
Then we have the embedding into the direct summand, 
\begin{align}\label{emb:sum}
\sigma_{\ast} \colon \widetilde{F} \otimes \oO_{\widetilde{H}} 
\hookrightarrow F\otimes \oO_H, 
\end{align}
and we set $s \cneq \sigma_{\ast}\widetilde{s}$. 

We would like to see that $(F, s)$ is a
parabolic 
stable pair on $U$. 
Suppose by contradiction
that there is a surjection 
$\pi \colon F\twoheadrightarrow F'$, 
where $F'$ is an $\omega$-semistable sheaf
with $\mu_{\omega}(F)=\mu_{\omega}(F')$, 
satisfying 
\begin{align}\label{piHs}
(\pi \otimes \oO_H) (s) =0. 
\end{align}
By taking the adjunction, we have the non-zero map, 
\begin{align}\label{mor:adj}
\widetilde{F} \to \sigma^{!}F' =\sigma^{\ast}F'. 
\end{align}
By~\cite[Lemma~3.2.2]{Hu}, 
$\sigma^{\ast}F'$ is $\sigma^{\ast}\omega$-semistable 
with $\mu_{\sigma^{\ast}\omega}(\widetilde{F})=
\mu_{\sigma^{\ast}\omega}(\sigma^{\ast}F')$, 
hence the image of the morphism (\ref{mor:adj}), 
denoted by $A$, is also $\sigma^{\ast}\omega$-
semistable with $\mu_{\sigma^{\ast}\omega}(A)
=\mu_{\sigma^{\ast}\omega}(\widetilde{F})$. 
We have the sequence, 
\begin{align*}
\widetilde{F} \otimes \oO_{\widetilde{H}}
\twoheadrightarrow A \otimes \oO_{\widetilde{H}}
\hookrightarrow \sigma^{\ast}F' \otimes \oO_{\widetilde{H}}, 
\end{align*}
which takes $\widetilde{s}$ to zero by 
the construction of $s$ and (\ref{piHs}).
Since the right arrow of the above sequence is 
injective, the surjection 
$\widetilde{F} \twoheadrightarrow A$ violates the 
condition of parabolic stability of $(\widetilde{F}, \widetilde{s})$. 
This is a contradiction, hence $(F, s)$ is 
a parabolic stable pair.  

We check that $(F, s)$ is $\mathbb{Z}/m\mathbb{Z}$-invariant. 
This is equivalent to that the parabolic stable pair 
\begin{align*}
(F\otimes \lL, (\id_F \otimes \lambda_H)(s)), 
\end{align*}
is isomorphic to $(F, s)$,
where $\lambda_H$ is given in (\ref{lambda}).  
Since $F=\sigma_{\ast}\widetilde{F}$, the 
isomorphism (\ref{wpsi}) and the projection 
formula induce the isomorphism
$\psi_F \cneq \sigma_{\ast}(\id_{\sigma_{\ast}\widetilde{F}}
 \otimes \widetilde{\psi})$, 
\begin{align}\label{psi_F}
\psi_{F} \colon \sigma_{\ast}\widetilde{F}
\to \sigma_{\ast}(\widetilde{F} \otimes \sigma^{\ast}\lL)
\cong \sigma_{\ast} \widetilde{F} \otimes \lL. 
\end{align}
We need to check that 
\begin{align*}
(\psi_F \otimes \oO_H)(s) =(\id_F \otimes \lambda_H)(s).
\end{align*}
The above equality follows from that
$s$ comes from the LHS of (\ref{emb:sum})
and Lemma~\ref{lem:i+ii+iii} (iii). 

The above argument shows that we have a set theoretic 
map $\sigma_{\ast}$. 
It is straightforward to generalize the above arguments to 
families of parabolic stable pairs. Namely 
for a complex analytic space $S$, 
let 
\begin{align*}
\Hom(S, M_n^{\rm{par}}(U, \beta)),
\end{align*}
be the 
set of morphisms from $S$ to $M_n^{\rm{par}}(U, \beta)$
as complex analytic spaces. Then, since $M_n^{\rm{par}}(U, \beta)$
is a fine moduli space,  
giving an element in $\Hom(S, M_n^{\rm{par}}(U, \beta))$
is equivalent to giving a flat family of 
parabolic stable pairs over $S$. 
We can easily generalize the 
construction of $\sigma_{\ast}$ to 
a functorial map, 
\begin{align*}
\Hom(S, M_n^{\rm{par}}(\widetilde{U}, \widetilde{\beta})) \to 
\Hom(S, M_n^{\rm{par}}(U, \beta))^{\mathbb{Z}/m\mathbb{Z}}, 
\end{align*}
which gives a morphism (\ref{mor:com}) as complex analytic spaces. 
\end{proof}

We have the following proposition. 
\begin{prop}\label{prop:isom:para}
The morphism (\ref{mor:com})
induces the isomorphism
of 
complex analytic spaces, 
\begin{align}\label{isom:para}
\sigma_{\ast} \colon 
\coprod_{\begin{subarray}{c}
\widetilde{\beta}\in H_2(\widetilde{U}, \mathbb{Z}), \\
\sigma_{\ast} \widetilde{\beta}=\beta
\end{subarray}}
M_n^{\rm{par}}(\widetilde{U}, \widetilde{\beta})
\stackrel{\cong}{\longrightarrow}
M_n^{\rm{par}}(U, \beta)^{\mathbb{Z}/m\mathbb{Z}},
\end{align}
which restricts to the isomorphism, 
\begin{align}\label{isom:para2}
\sigma_{\ast} \colon 
\coprod_{\begin{subarray}{c}
\widetilde{\gamma} \in H_2(\widetilde{C}, \mathbb{Z}), \\
\sigma_{\ast} \widetilde{\gamma}=\gamma
\end{subarray}}
M_n^{\rm{par}}(\widetilde{C}, \widetilde{\gamma})
\stackrel{\cong}{\longrightarrow}
M_n^{\rm{par}}(C, \gamma)^{\mathbb{Z}/m\mathbb{Z}}.
\end{align}
\end{prop}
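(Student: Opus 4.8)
The plan is to produce an explicit inverse to the morphism $\sigma_{\ast}$ of Lemma~\ref{lem:nat:comp}, to check that the two maps are mutually inverse morphisms of complex analytic spaces, and finally to observe that both of them respect the support conditions cutting out $M_n^{\rm par}(\widetilde{C},\widetilde{\gamma})$ inside $M_n^{\rm par}(\widetilde{U},\widetilde{\beta})$ and $M_n^{\rm par}(C,\gamma)$ inside $M_n^{\rm par}(U,\beta)$.

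First I would construct the inverse on points. Let $(F,s)\in M_n^{\rm par}(U,\beta)^{\mathbb{Z}/m\mathbb{Z}}$. The fixed-point condition supplies an isomorphism $\phi_F\colon F\xrightarrow{\ \cong\ }F\otimes\lL$ with $(\phi_F\otimes\oO_H)(s)=(\id_F\otimes\lambda_H)(s)$, where $\lambda_H$ is as in (\ref{lambdaH}). I claim $\phi_F$ is the unique such isomorphism and that it satisfies the cyclic relation (\ref{cond:cyc}). Indeed, a parabolic stable pair has no non-trivial automorphisms (the fact used in the proof of Lemma~\ref{lem:stack}(ii)), so any two isomorphisms $\phi_F,\phi_F'$ as above differ by an automorphism of $(F,s)$, hence coincide; and since $\lambda_H=\phi_{\oO_H}$ itself satisfies (\ref{cond:cyc}), an easy induction using that $\phi_F\otimes\oO_H$ and $\id_F\otimes\lambda_H$ commute shows $(\phi_F\otimes\oO_H)^{\circ m}(s)=(\id_F\otimes(\id_{\oO_H}\otimes\psi))(s)$, so that $(\id_F\otimes\psi^{-1})\circ\phi_F^{\circ m}$ is an automorphism of $(F,s)$, hence equals $\id_F$; that is, $\phi_F^{\circ m}=\id_F\otimes\psi$. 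By Lemma~\ref{lem:forward} the pair $(F,\phi_F)$ is the push-forward of a coherent sheaf $\widetilde{F}$ on $\widetilde{U}$ with $\sigma_{\ast}\widetilde{F}\cong F$, and $\widetilde{F}$ is $\sigma^{\ast}\omega$-semistable since it is a direct summand of $\sigma^{\ast}F$, which is $\sigma^{\ast}\omega$-semistable (as noted in the proof of Lemma~\ref{lem:nat:comp}). Moreover, by Lemma~\ref{lem:i+ii+iii}(iii) the decomposition $F\otimes\oO_H\cong\bigoplus_{g\in\mathbb{Z}/m\mathbb{Z}}\sigma_{\ast}(\widetilde{F}\otimes g_{\ast}\oO_{\widetilde{H}})$ used in Lemma~\ref{lem:nat:comp} diagonalises the operator comparing $\phi_F$ (through (\ref{psi_F})) with $\id_F\otimes\lambda_H$, with eigenvalue $e^{2\pi gi/m}$ on the $g$-th summand; since $s$ is fixed it lies in the $g=0$ summand $\sigma_{\ast}(\widetilde{F}\otimes\oO_{\widetilde{H}})$, so $s=\sigma_{\ast}\widetilde{s}$ for a unique $\widetilde{s}\in\widetilde{F}\otimes\oO_{\widetilde{H}}$. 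Running the parabolic-stability argument of Lemma~\ref{lem:nat:comp} in reverse — a surjection $\widetilde{F}\twoheadrightarrow\widetilde{F}'$ destabilising $(\widetilde{F},\widetilde{s})$ would push forward to one destabilising $(F,s)$ — shows $(\widetilde{F},\widetilde{s})$ is a parabolic stable pair, lying in $M_n^{\rm par}(\widetilde{U},\widetilde{\beta})$ with $\widetilde{\beta}=[\widetilde{F}]$ and $\sigma_{\ast}\widetilde{\beta}=\beta$. By the uniqueness of $\phi_F$ (hence of $\widetilde{F}$ and $\widetilde{s}$), the assignment $(F,s)\mapsto(\widetilde{F},\widetilde{s})$ is a well-defined two-sided inverse, on points, to the map induced by $\sigma_{\ast}$.

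To upgrade this to an isomorphism of analytic spaces I would carry out the same construction in families over $M\cneq M_n^{\rm par}(U,\beta)^{\mathbb{Z}/m\mathbb{Z}}$. The universal parabolic stable pair $(\fF,\sS)$ restricted to $U\times M$ is fibrewise $\mathbb{Z}/m\mathbb{Z}$-fixed; since the $\lL$-action restricts to the identity on $M$, its defining property yields a global isomorphism $\fF|_{U\times M}\xrightarrow{\ \cong\ }(\fF\otimes p_U^{\ast}\lL)|_{U\times M}$ intertwining the sections, and this isomorphism restricts fibrewise to the $\phi_F$ produced above, hence satisfies (\ref{cond:cyc}) by flatness of $\fF$ over $M$. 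Thus by Lemma~\ref{lem:forward} it defines a coherent sheaf $\widetilde{\fF}$ on $\widetilde{U}\times M$, flat over $M$ with $\sigma^{\ast}\omega$-semistable fibres, and extracting the eigenvalue-$1$ part of $\sS$ gives a section $\widetilde{\sS}$ making $(\widetilde{\fF},\widetilde{\sS})$ a family of parabolic stable pairs on $\widetilde{U}$ over $M$. Since $M_n^{\rm par}(\widetilde{U},\widetilde{\beta})$ represents the corresponding moduli functor by Lemma~\ref{lem:stack}(ii), this family classifies a morphism $M\to\coprod_{\widetilde{\beta}}M_n^{\rm par}(\widetilde{U},\widetilde{\beta})$, which by construction and Lemma~\ref{lem:nat:comp} is two-sided inverse to $\sigma_{\ast}$; this proves (\ref{isom:para}).

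Finally, (\ref{isom:para2}) follows by restriction: if $\Supp\widetilde{F}\subset\widetilde{C}$ then $\Supp\sigma_{\ast}\widetilde{F}=\sigma(\Supp\widetilde{F})\subset C$ and $[\sigma_{\ast}\widetilde{F}]=\sigma_{\ast}[\widetilde{F}]$, while conversely $\Supp F\subset C$ forces $\Supp\widetilde{F}\subset\sigma^{-1}(C)=\widetilde{C}$ because $\sigma$ is finite; hence $\sigma_{\ast}$ and its inverse both carry $\coprod_{\widetilde{\gamma}}M_n^{\rm par}(\widetilde{C},\widetilde{\gamma})$ isomorphically onto $M_n^{\rm par}(C,\gamma)^{\mathbb{Z}/m\mathbb{Z}}$ (the $\mathbb{Z}/m\mathbb{Z}$-action here being the one from (\ref{mPic2}) and Subsection~\ref{subsec:Jact}, which preserves $M_n^{\rm par}(C,\gamma)$). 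I expect the main obstacle to be the first step — showing that the isomorphism $\phi_F$ witnessing $\mathbb{Z}/m\mathbb{Z}$-invariance can be normalised so as to satisfy the cyclic relation (\ref{cond:cyc}) — which rests on the rigidity of parabolic stable pairs together with the precise compatibility between $\psi$ and $\lambda_H$ recorded in Lemma~\ref{lem:i+ii+iii}.
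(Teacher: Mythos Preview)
Your proposal is correct and uses the same key inputs as the paper: the rigidity of parabolic stable pairs (\cite[Lemma~2.7]{Todpara}) to force $\phi_F$ to satisfy the cyclic relation (\ref{cond:cyc}), and Lemma~\ref{lem:i+ii+iii}(iii) to locate $s$ in the correct eigenspace of $F\otimes\oO_H$.  The organisation differs slightly: the paper proves injectivity and surjectivity of $\sigma_\ast$ separately (the injectivity step amounting to the check that an isomorphism $\sigma_\ast\widetilde{F}_1\cong\sigma_\ast\widetilde{F}_2$ respecting the sections is automatically $\sigma_\ast\oO_{\widetilde{U}}$-linear), whereas you package this as the uniqueness of $\phi_F$ and build the inverse directly; your route is marginally cleaner and in fact spells out two points the paper leaves implicit, namely the $\sigma^\ast\omega$-semistability of $\widetilde{F}$ and the parabolic stability of $(\widetilde{F},\widetilde{s})$.
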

\begin{proof}
It is enough to show the isomorphism (\ref{isom:para}). 
We first show that the morphism $\sigma_{\ast}$ is 
injective. Suppose that there are two parabolic 
stable pairs, 
\begin{align*}
(\widetilde{F}_i, \widetilde{s}_i) \in M_n^{\rm{par}}(\widetilde{U}, \widetilde{\beta}_i), \ i=1, 2, 
\end{align*}
which are sent to the same point by $\sigma_{\ast}$. 
This implies that there is an isomorphism of sheaves, 
\begin{align*}
\phi \colon 
\sigma_{\ast}\widetilde{F}_1 \stackrel{\cong}{\to}
\sigma_{\ast} \widetilde{F}_2, 
\end{align*}
such that $\phi\otimes \id_{\oO_H}$ sends 
$\sigma_{\ast}\widetilde{s}_1$
to $\sigma_{\ast}\widetilde{s}_2$. 
Let us check that $\phi$ is $\sigma_{\ast}\oO_{\widetilde{U}}$-module
homomorphism. 
This is equivalent to that the following diagram commutes: 
\begin{align}\label{diagm:L}
\xymatrix{
\sigma_{\ast}\widetilde{F}_1 \ar[r]^{\phi} \ar[d]_{\psi_{1}} &
\sigma_{\ast}\widetilde{F}_2 \ar[d]^{\psi_{2}} \\
\sigma_{\ast}\widetilde{F}_1 \otimes \lL \ar[r]^{\phi \otimes \id_{\lL}} &
\sigma_{\ast}\widetilde{F}_2 \otimes \lL.
}
\end{align}
Here $\psi_{i}$ are induced by the projection formula and
the isomorphism (\ref{wpsi})
as in (\ref{psi_F}). We check that 
\begin{align}\label{check}
\left\{(\psi_{2} \circ \phi) \otimes \id_{\oO_H}\right\} (\sigma_{\ast}\widetilde{s}_1)
=\left\{\{(\phi \otimes \id_{\lL}) \circ \psi_{1} \} \otimes \id_{\oO_H}\right\}(\sigma_{\ast}\widetilde{s}_1), 
\end{align}
as elements in $\sigma_{\ast}\widetilde{F}_2  \otimes \lL \otimes \oO_H$. 
In fact if (\ref{check}) holds, then 
$\phi^{-1} \circ \psi_{2}^{-1} \circ
(\phi \otimes \id_{\lL}) \circ \psi_{1}$
is an automorphism of $(\sigma_{\ast}F_1, \sigma_{\ast}s_1)$, 
hence identity by~\cite[Lemma~2.7]{Todpara}, 
i.e. the diagram (\ref{diagm:L}) commutes. 

The equality (\ref{check}) can be checked as follows. 
The LHS of (\ref{check}) is 
\begin{align*}
&(\psi_{2} \otimes \id_{\oO_H}) (\phi \otimes \id_{\oO_H})
(\sigma_{\ast}\widetilde{s}_1) \\
&= (\psi_{2} \otimes \id_{\oO_H}) (\sigma_{\ast}s_2) \\
&= (\id_{\sigma_{\ast}\widetilde{F}_2} \otimes \lambda_{H})(\sigma_{\ast}\widetilde{s}_2), 
\end{align*}
where we have used Lemma~\ref{lem:i+ii+iii} (iii) 
for the second equality. 
Similarly the RHS of (\ref{check}) is 
\begin{align*}
&(\phi \otimes \id_{\lL \otimes \oO_H})(\psi_{1} \otimes \id_{\oO_H})
(\sigma_{\ast}\widetilde{s}_1) \\
&=(\phi \otimes \id_{\lL \otimes \oO_H})(\id_{\sigma_{\ast}\widetilde{F}_1}
\otimes \lambda_H)(\sigma_{\ast}\widetilde{s}_1) \\
&=(\id_{\sigma_{\ast}\widetilde{F}_2} \otimes \lambda_H)(\phi \otimes \id_{\oO_H})(\sigma_{\ast}\widetilde{s}_1) \\
&=(\id_{\sigma_{\ast}\widetilde{F}_2} \otimes \lambda_{H})(\sigma_{\ast}\widetilde{s}_2).
\end{align*}
Therefore the equality (\ref{check}) holds. 

Now since the diagram (\ref{diagm:L}) commutes, 
the isomorphism $\phi$ lifts to an isomorphism 
$\widetilde{\phi}$ between 
$\widetilde{F}_1$ and $\widetilde{F}_2$, 
such that $\widetilde{\phi}\otimes \id_{\oO_{\widetilde{H}}}$
takes $\widetilde{s}_1$ to $\widetilde{s}_2$. 
Hence $(\widetilde{F}_1, \widetilde{s}_1)$
and $(\widetilde{F}_2, \widetilde{s}_2)$ are 
isomorphic,
and $\sigma_{\ast}$ is injective. 

Next we prove that $\sigma_{\ast}$ is surjective. 
Let us take a point 
$(F, s) \in M_n^{\rm{par}}(U, \beta)$, 
which is $\mathbb{Z}/m\mathbb{Z}$-invariant. 
This means that there is an isomorphism of sheaves, 
\begin{align*}
\phi_F \colon F \to F \otimes \lL, 
\end{align*}
which satisfies that 
\begin{align}\label{satisfy:phi}
(\phi_F \otimes \id_{\oO_H})(s)
=(\id_F \otimes \lambda_H)(s). 
\end{align}
We show that $\phi_F$ satisfies 
the condition (\ref{cond:cyc}).
We consider the morphism, 
\begin{align}\label{consider}
(\id_F \otimes \psi)^{-1} \circ
\overbrace{\phi_F \circ \cdots \circ \phi_F}^{m}
\colon F \to F. 
\end{align} 
After applying $\otimes \oO_H$, the above 
morphism takes $s$ to $s$ since (\ref{satisfy:phi}) holds 
and 
$\phi_{\oO_H} =\lambda_H$ satisfies the condition (\ref{cond:cyc}). 
Therefore the morphism (\ref{satisfy:phi})
is an identity by~\cite[Lemma~2.7]{Todpara},
which implies that $\phi_F$ satisfies
(\ref{cond:cyc}).  

Since $\phi_F$ satisfies (\ref{cond:cyc}), 
the sheaf $F$ is written as $\sigma_{\ast} \widetilde{F}$
for some $\widetilde{F} \in \Coh(\widetilde{U})$, 
such that the morphism $\phi_F$ is identified with 
\begin{align}\label{psi_F2}
\psi_{F} \colon \sigma_{\ast}\widetilde{F}
\to \sigma_{\ast}(\widetilde{F} \otimes \sigma^{\ast}\lL)
\cong \sigma_{\ast} \widetilde{F} \otimes \lL,
\end{align}
which is a composition of (\ref{wpsi}) and the projection
formula as in (\ref{psi_F}). 
To show that $\sigma_{\ast}$
is surjective, it is enough to check that $s=\sigma_{\ast}\widetilde{s}$
for some $\widetilde{s} \in \widetilde{F} \otimes \oO_{\widetilde{H}}$. 
This follows from (\ref{satisfy:phi}), the fact that 
$\phi_F$ is identified with (\ref{psi_F2}) and Lemma~\ref{lem:i+ii+iii} (iii). 

Now we have proved that $\sigma_{\ast}$ is a set theoretic
bijection. 
Similarly to Lemma~\ref{lem:nat:comp}, the above arguments
can be easily generalized to families of parabolic 
stable pairs. Namely in the notation of 
the proof of Lemma~\ref{lem:nat:comp}, we have the bijection,
\begin{align*}
\sigma_{\ast} \colon 
\coprod_{\sigma_{\ast} \widetilde{\beta}=\beta}
\Hom(S, M_n^{\rm{par}}(\widetilde{U}, \widetilde{\beta})) 
\stackrel{\cong}{\to} 
\Hom(S, M_n^{\rm{par}}(U, \beta))^{\mathbb{Z}/m\mathbb{Z}}.
\end{align*}
Therefore the morphism $\sigma_{\ast}$
is an isomorphism as complex analytic spaces. 
\end{proof}

\subsection{Comparison of moduli spaces of stable sheaves}
Similarly to the previous subsection, we can 
also compare moduli spaces of 
one dimensional stable sheaves under the covering 
$\sigma \colon \widetilde{U} \to U$. 
Note that the moduli space $M_1(U, \beta)$
consists of one dimensional $\omega$-stable 
sheaves on $U$, and it admits 
$\Pic^0(U)$-action. 
In particular it restricts to 
$\mathbb{C}^{\ast}$-action 
w.r.t. the embedding (\ref{ctorus}), 
and we have the $\mathbb{Z}/m\mathbb{Z}$-action
by the embedding (\ref{mPic2}). 

Let 
\begin{align}\label{M1:sub}
M_1(\widetilde{C}, \widetilde{\gamma}) \subset
M_1(\widetilde{U}, \widetilde{\beta}),
\end{align}
be the coarse moduli spaces of analytic 
stacks (\ref{sub:anay:stack}) for 
$n=1$, $U'=\widetilde{U}$,
$\gamma'=\widetilde{\gamma}$
and $\beta'=\widetilde{\beta}$. 
Similarly to $M_1(U, \beta)$, 
points in the analytic spaces (\ref{M1:sub})
correspond to $\sigma^{\ast} \omega$-stable 
sheaves. 
We have the following proposition. 
\begin{prop}\label{prop:sigma:stable}
We have the morphisms of complex analytic spaces, 
\begin{align}\label{isom:para:st}
\sigma_{\ast} \colon 
\coprod_{\begin{subarray}{c}
\widetilde{\beta}\in H_2(\widetilde{U}, \mathbb{Z}), \\
\sigma_{\ast} \widetilde{\beta}=\beta
\end{subarray}}
M_1^{}(\widetilde{U}, \widetilde{\beta})
\longrightarrow
M_1^{}(U, \beta)^{\mathbb{Z}/m\mathbb{Z}}, \\
\label{isom:para:st2}
\sigma_{\ast} \colon 
\coprod_{\begin{subarray}{c}
\widetilde{\gamma} \in H_2(\widetilde{C}, \mathbb{Z}), \\
\sigma_{\ast} \widetilde{\gamma}=\gamma
\end{subarray}}
M_1^{}(\widetilde{C}, \widetilde{\gamma})
\longrightarrow
M_1^{}(C, \gamma)^{\mathbb{Z}/m\mathbb{Z}}.
\end{align}
If $m\gg 0$, then the 
above morphisms are covering maps of covering degree
$m$. 
\end{prop}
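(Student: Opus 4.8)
The plan is to follow the proof of Proposition~\ref{prop:isom:para} closely, the rôle of the rigidifying section $s$ now being played by the deck group of $\sigma$: dropping the section costs exactly an $m$-to-one ambiguity, and this is the source of the covering degree.

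\textbf{Construction of $\sigma_{\ast}$.} Given $\widetilde F\in M_1(\widetilde U,\widetilde\beta)$, set $F\cneq\sigma_{\ast}\widetilde F$. Since $\sigma$ is finite, $\chi(F)=\chi(\widetilde F)=1$, and by~\cite[Lemma~3.2.2]{Hu} $F$ is $\omega$-semistable; as $\mathrm{g.c.d.}(\omega\cdot\beta,1)=1$ it is in fact $\omega$-stable. The isomorphism $\sigma^{\ast}\lL\cong\oO_{\widetilde U}$ of Lemma~\ref{lem:i+ii+iii}~(i) together with the projection formula gives $F\otimes\lL\cong\sigma_{\ast}(\widetilde F\otimes\sigma^{\ast}\lL)\cong\sigma_{\ast}\widetilde F=F$, so $F\in M_1(U,\beta)^{\mathbb{Z}/m\mathbb{Z}}$. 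Exactly as in Lemma~\ref{lem:nat:comp}, this construction works in families, producing the morphism (\ref{isom:para:st}) of complex analytic spaces; restricting to sheaves supported on $\widetilde C=\sigma^{-1}(C)$ gives (\ref{isom:para:st2}).

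\textbf{Identification with a free quotient.} The deck group $\mathbb{Z}/m\mathbb{Z}$ acts on $\coprod_{\sigma_{\ast}\widetilde\beta=\beta}M_1(\widetilde U,\widetilde\beta)$ by $\widetilde F\mapsto g^{\ast}\widetilde F$, permuting the components, and $\sigma_{\ast}$ is constant on orbits because $\sigma\circ g=\sigma$. I would check the action is free: if $g^{\ast}\widetilde F\cong\widetilde F$ for some $g$ of order $k\mid m$, then rescaling the isomorphism (legitimate since $\widetilde F$ is stable, so $\Aut(\widetilde F)=\mathbb{C}^{\ast}$) makes $\widetilde F$ descend to the intermediate $(m/k)$-fold cover, forcing $k\mid\chi(\widetilde F)=1$, hence $k=1$. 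So every orbit has exactly $m$ elements, all with the same image. That $\sigma_{\ast}$ separates orbits is the analogue of the injectivity step of Proposition~\ref{prop:isom:para}: if $\sigma_{\ast}\widetilde F_1\cong\sigma_{\ast}\widetilde F_2$, adjunction yields a nonzero map $\widetilde F_1\to\sigma^{\ast}\sigma_{\ast}\widetilde F_2\cong\bigoplus_{g}g_{\ast}\widetilde F_2$; since $\widetilde F_1$ and each $g_{\ast}\widetilde F_2$ are stable of the same slope (projection formula: $\mu_{\sigma^{\ast}\omega}(g_{\ast}\widetilde F_2)=\mu_{\omega}(\sigma_{\ast}\widetilde F_2)$) and the same $\chi$, such a nonzero map must be an isomorphism onto one summand, so $\widetilde F_1$ lies in the deck orbit of $\widetilde F_2$. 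Surjectivity onto $M_1(U,\beta)^{\mathbb{Z}/m\mathbb{Z}}$ is the analogue of the surjectivity step: given $F$ with $F\otimes\lL\cong F$, a chosen isomorphism $\phi_F\colon F\to F\otimes\lL$ satisfies $\phi_F^{\circ m}=c\cdot(\mathrm{id}_F\otimes\psi)$ for some $c\in\mathbb{C}^{\ast}$ by stability, and replacing $\phi_F$ by $\zeta\phi_F$ with $\zeta^{m}=c^{-1}$ makes $\phi_F$ satisfy (\ref{cond:cyc}); by Lemma~\ref{lem:forward}, $F=\sigma_{\ast}\widetilde F$ for some $\widetilde F\in\Coh(\widetilde U)$, which one checks is stable (a destabilizing subsheaf of $\widetilde F$ would push down to one of $F$), and the $m$ possible choices of $\zeta$ give precisely the $m$ points of the fibre over $F$.

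\textbf{Covering property.} It remains to see $\sigma_{\ast}$ is a local isomorphism onto the fixed locus. Since $\sigma$ is étale, $\sigma_{\ast}$ is exact and $\sigma^{\ast}\sigma_{\ast}\widetilde F\cong\bigoplus_{g}g_{\ast}\widetilde F$, whence $\Ext^{i}_{U}(\sigma_{\ast}\widetilde F,\sigma_{\ast}\widetilde F)\cong\bigoplus_{g}\Ext^{i}_{\widetilde U}(\widetilde F,g_{\ast}\widetilde F)$, and the induced $\mathbb{Z}/m\mathbb{Z}$-action identifies the invariant subspace with the $g=\mathrm{id}$ summand $\Ext^{i}_{\widetilde U}(\widetilde F,\widetilde F)$. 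Thus $\sigma_{\ast}$ matches the deformation--obstruction data of $M_1(\widetilde U,\widetilde\beta)$ at $\widetilde F$ with that of $M_1(U,\beta)^{\mathbb{Z}/m\mathbb{Z}}$ at $\sigma_{\ast}\widetilde F$, so it is étale onto the fixed locus; combined with the free-orbit description of the previous paragraph this exhibits $\sigma_{\ast}$ as a covering of degree $m$, and the curve version (\ref{isom:para:st2}) follows by running the same argument with $\widetilde C$ and $C$ in place of $\widetilde U$ and $U$. I expect the main obstacle to be making the freeness of the deck action and the fibre count uniform over the whole fixed locus (and across families) rather than pointwise; this is precisely where the hypothesis $m\gg0$ enters, the key underlying fact being that $\chi=1$ is coprime to $m$, and I would expect $m\gg0$ to be relaxable with more care.
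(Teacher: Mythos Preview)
Your proof is correct and follows the same architecture as the paper's: construct $\sigma_*$, exhibit a free deck-transformation action on the source, show $\sigma_*$ induces a bijection between deck orbits and the fixed locus (adjunction for injectivity, rescaling $\phi_F$ via $\Aut(F)=\mathbb{C}^*$ for surjectivity), and promote to analytic spaces.

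The substantive difference is the freeness step. The paper argues geometrically: for $m\gg 0$ the compact connected support of $\widetilde F$ occupies only boundedly many components of $\widetilde C$ (since $\sigma_*\widetilde\beta=\beta$ is fixed), so $g_*\widetilde F$ has different support for $g\neq 0$. Your argument is sharper: if $g^*\widetilde F\cong\widetilde F$ with $g$ of order $k$, then after rescaling $\widetilde F$ descends along the intermediate $k$-fold \'etale cover, forcing $k\mid\chi(\widetilde F)=1$. This is correct and works for \emph{every} $m$, so your closing remark is off the mark: by your own argument $m\gg 0$ is not needed here, and the coprimality of $\chi=1$ with $m$ is vacuous since $1$ is coprime to everything. (In the paper $m\gg 0$ is genuinely used elsewhere, to identify the $\mathbb{Z}/m\mathbb{Z}$-fixed locus with the $\mathbb{C}^*$-fixed locus.)

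For the local-isomorphism step the paper takes the functor-of-points route, asserting that the orbit/fixed-locus bijection works over an arbitrary test space $S$; you instead match deformation--obstruction data via $\Ext^i_U(\sigma_*\widetilde F,\sigma_*\widetilde F)^{\mathbb{Z}/m\mathbb{Z}}\cong\Ext^i_{\widetilde U}(\widetilde F,\widetilde F)$. Both are valid; the paper's is more direct, yours makes the infinitesimal picture explicit.
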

\begin{proof}
It is enough to show the claim for (\ref{isom:para:st}). 
A proof similar to Lemma~\ref{lem:nat:comp}
shows the existence of the morphism (\ref{isom:para:st}). 
In order to show that (\ref{isom:para:st}) is a covering map, 
it is enough to show that there is a free 
$\mathbb{Z}/m\mathbb{Z}$-action on 
the LHS of (\ref{isom:para:st}) whose quotient space 
is isomorphic to the RHS of (\ref{isom:para:st}). 

Note that $\sigma \colon \widetilde{U} \to U$ is a covering 
map whose covering transformation group is $\mathbb{Z}/m\mathbb{Z}$. 
Hence $\mathbb{Z}/m\mathbb{Z}$ acts on 
the LHS of (\ref{isom:para:st})
by $F \mapsto g_{\ast} F$ for $g\in \mathbb{Z}/m\mathbb{Z}$. 
Note that the support of $F$ are connected, and 
if $m\gg 0$ and $g\neq 0$, then the support of $F$ and that of $g_{\ast}F$
are different. 
Hence $F$ and $g_{\ast}F$ are not isomorphic for 
$g\neq 0$, which implies that $\mathbb{Z}/m\mathbb{Z}$-action 
on the LHS of (\ref{isom:para:st}) is free. 

For two $\sigma^{\ast}\omega$-stable sheaves $\widetilde{F}_i$, $i=1, 2,$
corresponding to points in 
the LHS of (\ref{isom:para:st}), suppose that 
$\sigma_{\ast}\widetilde{F}_1$ and $\sigma_{\ast}\widetilde{F}_2$
are isomorphic. By adjunction, there is a non-trivial 
morphism, 
\begin{align*}
\sigma^{\ast}\sigma_{\ast}\widetilde{F}_2 
\cong \bigoplus_{g\in \mathbb{Z}/m\mathbb{Z}}
g_{\ast}\widetilde{F}_2 \to \widetilde{F}_1.
\end{align*}
Hence there are $g\in \mathbb{Z}/m\mathbb{Z}$
and a non-trivial morphism $g_{\ast} \widetilde{F}_2 \to \widetilde{F}_1$. 
Since both of $\widetilde{F}_1$ and $g_{\ast}\widetilde{F}_2$
are $\sigma^{\ast}\omega$-stable with 
$\mu_{\sigma^{\ast}\omega}(\widetilde{F}_1)
=\mu_{\sigma^{\ast}\omega}(g_{\ast}\widetilde{F}_2)$, 
we have 
$g_{\ast} \widetilde{F}_2 \cong \widetilde{F}_1$. 

Next we check that (\ref{isom:para:st}) is surjective. 
For an $\omega$-stable sheaf $F \in M_1(U, \beta)$, suppose that 
$F$ is $\mathbb{Z}/m\mathbb{Z}$-invariant. 
This implies that there is an isomorphism of sheaves, 
\begin{align*}
\phi_F \colon F \to F\otimes \lL. 
\end{align*}
The morphism $\phi_F$ may not satisfy
the condition (\ref{cond:cyc}). 
However since $F$ is $\omega$-stable, we have 
$\Aut(F)=\mathbb{C}^{\ast}$, so by 
replacing $\phi_F$ by a non-zero multiple, we 
can assume that $\phi_F$ satisfies (\ref{cond:cyc}). 
Hence $F$ is isomorphic to $\sigma_{\ast}\widetilde{F}$
for some sheaf $\widetilde{F} \in \Coh(\widetilde{U})$. 
The sheaf $\widetilde{F}$ must be 
$\sigma^{\ast}\omega$-stable since
$\sigma_{\ast} \colon \Coh(\widetilde{U}) \to \Coh(U)$
is an exact functor. This shows that 
(\ref{isom:para:st}) is surjective. 

The above argument shows that 
$\sigma_{\ast}$ induces a bijection between 
the quotient space of the LHS of (\ref{isom:para:st})
by the $\mathbb{Z}/m\mathbb{Z}$-action and the RHS of (\ref{isom:para:st}). 
Similarly to Lemma~\ref{lem:nat:comp}, Proposition~\ref{prop:isom:para},
the above bijection is an isomorphism between complex analytic 
spaces. Namely for a complex analytic space $S$, it is 
straightforward to generalize the above argument to the
bijection, 
\begin{align*}
\sigma_{\ast} \colon 
\left(
\coprod_{\sigma_{\ast} \widetilde{\beta}=\beta}
\Hom(S, M_1(\widetilde{U}, \widetilde{\beta})) 
 \right)/(\mathbb{Z}/m\mathbb{Z}) 
\stackrel{\cong}{\to} 
\Hom(S, M_1(U, \beta))^{\mathbb{Z}/m\mathbb{Z}}.
\end{align*}
Therefore we obtain the desired assertion. 
\end{proof}

\subsection{The formula for $\widehat{\DT}_{n, \gamma}^{\rm{par}}$
 under the cyclic covering}
In this subsection, we investigate the formula (\ref{log:form}) under the 
cyclic
covering $\sigma \colon \widetilde{U} \to U$. 
In the notation of previous subsections, we denote by 
\begin{align*}
\nu_{\widetilde{M}^{\rm{par}}}, \ 
\nu_{M^{\rm{par}}}, \ 
\nu_{\widetilde{M}}, \ \nu_{M},
\end{align*}
the Behrend functions on 
the spaces, 
\begin{align*}
M_n^{\rm{par}}(\widetilde{U}, \widetilde{\beta}), \ 
M_n^{\rm{par}}(U, \beta), \ 
M_1(\widetilde{U}, \widetilde{\beta}), \ 
M_1(U, \beta), 
\end{align*}
respectively. 
We need the following compatibility of 
the above Behrend functions on the 
morphisms discussed in Proposition~\ref{prop:isom:para} 
and Proposition~\ref{prop:sigma:stable}. 
The proof is postponed until Section~\ref{subsec:Behrend}. 
\begin{lem}\label{lem:identity}
If $m$ is a sufficiently big odd number, we have the 
following: 

(i) Under the morphism (\ref{isom:para}), we have the identity, 
\begin{align*}
(\sigma_{\ast})^{\ast}\nu_{M^{\rm{par}}}|_{M_n^{\rm{par}}(\widetilde{U}, \widetilde{\beta})}=(-1)^{\beta \cdot H -\widetilde{\beta} \cdot \widetilde{H}}
\nu_{\widetilde{M}^{\rm{par}}}.
\end{align*}
(ii) Under the morphism (\ref{isom:para:st}), we have the identity, 
\begin{align*}
(\sigma_{\ast})^{\ast} \nu_{M}=\nu_{\widetilde{M}}. 
\end{align*}
\end{lem}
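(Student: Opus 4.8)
**

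The key point is that the Behrend function is intrinsic to the *scheme* (or stack) structure, so I need to control how the schemes $M_n^{\rm par}(\widetilde U,\widetilde\beta)$ and $M_1(\widetilde U,\widetilde\beta)$ sit inside their downstairs counterparts, and how the symmetric obstruction theories (or the étale-local critical-chart structure) transform. My plan is to exploit the critical-locus description of these moduli spaces: by Conjecture~\ref{conj:crit} (legitimately available for $U$ since it is an open subset of the projective Calabi-Yau $X$, and inductively for the cyclic covers), both $M_n^{\rm par}$ and $M_1$ are étale-locally $\Crit(f)$ for a holomorphic function $f$ on a smooth analytic space, and the Behrend function is the pointwise Milnor-number formula $\nu(x)=(-1)^{\dim}(1-\chi(\mathrm{MF}_x(f)))$. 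So the whole question reduces to comparing critical charts upstairs and downstairs at corresponding points.

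First I would set up, at a point $(\widetilde F,\widetilde s)$ mapping to $(F,s)=\sigma_\ast(\widetilde F,\widetilde s)$, the comparison of deformation-obstruction data. Using Lemma~\ref{lem:forward}, a deformation of $(\widetilde F,\widetilde s)$ on $\widetilde U$ is the same as a deformation of the pair $(F,s)$ on $U$ together with the compatible $\phi_F$ satisfying \eqref{cond:cyc}; concretely $\sigma_\ast$ identifies the tangent-obstruction complex of $M_n^{\rm par}(\widetilde U,\widetilde\beta)$ at $(\widetilde F,\widetilde s)$ with a direct summand — the "$\phi_F$-equivariant part", or more precisely the part where $\phi_F$ acts with the specified finite order — of that of $M_n^{\rm par}(U,\beta)$ at $(F,s)$. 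Because the $\mathbb Z/m\mathbb Z$-action on $M_n^{\rm par}(U,\beta)^{\mathbb Z/m\mathbb Z}$ decomposes the local model into eigen-summands and the image of $\sigma_\ast$ is a connected component of the fixed locus (this is exactly Proposition~\ref{prop:isom:para}), the local critical chart $(V,f)$ for the upstairs space is obtained from the downstairs one by restricting $f$ to the $\mathbb Z/m\mathbb Z$-fixed subspace of $V$. For a finite-order automorphism of a hypersurface singularity, there is the standard statement (Thom-Sebastiani / A'Campo type, or just the fact that the Milnor fibre of $f|_{V^{G}}$ relates to that of $f$) controlling how the vanishing cohomology changes; here the cyclic group is of odd order $m$, which is why the sign issue is manageable.

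Second, the sign. The dimension shift between the two critical charts is $\dim V - \dim V^{\mathbb Z/m\mathbb Z}$, i.e. the dimension of the moving part of $\Ext^1(F,F)\oplus(\text{the }s\text{-directions})$ under the $\mathbb Z/m\mathbb Z$-action. By symmetry of the obstruction theory the moving part of $\Ext^1$ is paired with the moving part of $\Ext^2$, so its dimension contributes a sign $(-1)^{\text{even}}$; the only genuinely new contribution comes from the parabolic directions $F\otimes\oO_H$ versus $\widetilde F\otimes\oO_{\widetilde H}$, whose dimensions are $\beta\cdot H$ and $\widetilde\beta\cdot\widetilde H$ respectively, and by Lemma~\ref{lem:i+ii+iii}(ii)–(iii) the $m$ copies $\bigoplus_g g_\ast\oO_{\widetilde H}$ carry the $m$ distinct characters of $\mathbb Z/m\mathbb Z$, so the moving part there has dimension $\beta\cdot H-\widetilde\beta\cdot\widetilde H$. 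This yields exactly the factor $(-1)^{\beta\cdot H-\widetilde\beta\cdot\widetilde H}$ in (i). For (ii), on the moduli of stable sheaves there are no parabolic directions at all, so the dimension shift is purely in the (even-dimensional, by Serre-duality symmetry) moving part of $\Ext^1$, hence no sign; and one must also check that the Milnor-fibre Euler characteristics match, which again follows from the fact that restricting to a smooth fixed subspace of a critical chart produces the correct Behrend weight — here one uses that $m$ is large and odd so that the eigenvalue $1$ contributes all the relevant vanishing cycles and the extra eigenvalues contribute in conjugate pairs.

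The main obstacle I expect is making the "restrict the critical chart to the $\mathbb Z/m\mathbb Z$-fixed subspace, and track the Behrend weight" step rigorous: one needs a precise equivariant Darboux-type statement saying that a $G$-equivariant critical chart restricts to a critical chart on the fixed locus with $\nu_{V^G}=(\pm1)\,\nu_V|_{V^G}$, with the sign exactly $(-1)^{\mathrm{codim}\,V^G}$, when $G$ is cyclic of odd order. This is the analytic analogue of results on Behrend functions under group actions; I would either invoke such a statement from the literature on equivariant Joyce-Song theory (the reference \cite{Bergh} mentioned in the acknowledgements is plausibly relevant) or prove the needed special case by hand using that, for odd $m$, the non-trivial characters of $\mathbb Z/m\mathbb Z$ pair up $\chi\leftrightarrow\chi^{-1}$, so the moving part of $V$ decomposes into even-dimensional pieces on which the vanishing-cycle contributions cancel in conjugate pairs, leaving only the fixed part plus the explicitly-identified parabolic discrepancy. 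Everything else — the identification of tangent-obstruction theories via Lemma~\ref{lem:forward}, the bookkeeping of which $\Ext$-groups are "moving", and the computation $\dim(\text{moving parabolic part})=\beta\cdot H-\widetilde\beta\cdot\widetilde H$ — is routine given the structural results already established in the excerpt.
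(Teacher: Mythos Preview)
Your approach has a genuine gap, and in fact the paper explicitly explains why this route does not work. You propose to use critical charts for $M_n^{\rm par}(U,\beta)$ and $M_n^{\rm par}(\widetilde U,\widetilde\beta)$ and compare them under restriction to the $\mathbb Z/m\mathbb Z$-fixed locus. But the moduli space of parabolic stable pairs is \emph{not known} to carry a symmetric perfect obstruction theory or a local critical-chart description; the paper says this outright at the start of Section~\ref{subsec:Behrend}, citing \cite[Remark~2.4]{Todpara}. Your appeal to Conjecture~\ref{conj:crit} does not help: that conjecture concerns the stack $\mM_n(U',\beta')$ of semistable sheaves, not the parabolic moduli space, and in any case it is a \emph{conjecture} for $\widetilde U$ which the statement of the lemma does not assume. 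So the very first step of your plan---producing an equivariant critical chart for $M_n^{\rm par}$ on which to run the Milnor-fibre argument---is unavailable.

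The paper circumvents this by passing to an auxiliary space that \emph{does} carry a symmetric obstruction theory unconditionally. Concretely, it embeds a parabolic pair $(F,s)$ into a simple sheaf $E$ via the extension $0\to F\to E\to N_{H/X}\to 0$, and works with the moduli space $\mM$ of simple sheaves on $X$ (which has a symmetric perfect obstruction theory by \cite{HT2}) and the relative Quot scheme $\qQ\to\mM$. A deformation-theory computation (Lemma~\ref{lem:etale}) shows $\qQ\to\mM$ is \'etale at the relevant points, so $\nu_\qQ=\tau^\ast\nu_\mM$; and the known $\mathbb C^\ast$-localization of Behrend functions for symmetric obstruction theories (\cite{BBr}, \cite{WeiQin}) applies to $\mM$. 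One then transfers back to $M_n^{\rm par}$ via a smooth forgetful morphism $\qQ_U\to\mM\times\cC oh(X)$. The sign is computed by writing the tangent-space discrepancy as $\sum_{g\neq0}\{\mathrm{ext}^1(g_\ast\widetilde F,\widetilde F)-\mathrm{hom}(g_\ast\widetilde F,\widetilde F)\}$ and showing, via Riemann--Roch (vanishing of the Euler pairing for one-dimensional sheaves on a CY3) plus Serre duality, that each summand equals the $(-g)$-summand; for $m$ odd these pair off and the sum is even, leaving only the parabolic contribution $\beta\cdot H-\widetilde\beta\cdot\widetilde H$. Your instinct that odd $m$ forces a $\chi\leftrightarrow\chi^{-1}$ pairing is morally this same computation, but note it is the combination $\mathrm{ext}^1-\mathrm{hom}$ that is symmetric in $g\leftrightarrow-g$, not $\mathrm{ext}^1$ alone; your phrase ``the moving part of $\Ext^1$ is paired with the moving part of $\Ext^2$'' is not the right mechanism.
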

\begin{proof}
The proof will be given in Subsection~\ref{subsec:proof}. 
\end{proof}
As a corollary of 
Proposition~\ref{prop:sigma:stable}, Proposition~\ref{prop:isom:para}
and Lemma~\ref{lem:identity}, we have the following: 
\begin{cor}\label{cor:formula}
For $\gamma \in H_2(C, \mathbb{Z})$
and $n\in \mathbb{Z}$, we take a sufficiently 
big odd number $m$ and an $m$-fold cover
$\sigma \colon \widetilde{U} \to U$ 
as in (\ref{etale:C}). 
We have the formulas, 
\begin{align}\label{formula1}
\DT_{n, \gamma}^{\rm{par}}
&=\sum_{\sigma_{\ast}\widetilde{\gamma}=\gamma}
(-1)^{\gamma \cdot H -\widetilde{\gamma} \cdot \widetilde{H}}
\DT_{n, \widetilde{\gamma}}^{\rm{par}}(\widetilde{U}), \\
\label{formula2}
N_{1, \gamma} &=
\frac{1}{m}
\sum_{\sigma_{\ast}\widetilde{\gamma}=\gamma}
N_{1, \widetilde{\gamma}}(\widetilde{U}). 
\end{align}
\end{cor}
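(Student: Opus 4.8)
The plan is to deduce both identities from $\mathbb{C}^{\ast}$-localization of weighted Euler characteristics, using Proposition~\ref{prop:isom:para} and Proposition~\ref{prop:sigma:stable} to identify the relevant fixed loci and Lemma~\ref{lem:identity} to compare the Behrend weights. I would fix a loop $\alpha \subset \Gamma_C$ and, as in the statement, take $m$ to be a sufficiently large odd integer, so that simultaneously: $\sigma_{\ast}$ in (\ref{isom:para:st}) is a covering map of degree $m$ (Proposition~\ref{prop:sigma:stable}); the Behrend identities of Lemma~\ref{lem:identity} hold; and, for each of the two quasi-projective varieties $M_n^{\rm{par}}(C, \gamma)$ and $M_1(C, \gamma)$, the $\mathbb{Z}/m\mathbb{Z}$-fixed locus coincides with the $\mathbb{C}^{\ast}$-fixed locus. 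This last point is the usual finiteness fact: via a $\mathbb{C}^{\ast}$-equivariant projective embedding one sees that only finitely many nonzero weights occur in the local models, so once $m$ exceeds all of them no nonzero weight is divisible by $m$ and the two fixed loci agree. The $\mathbb{C}^{\ast}$- and $\mathbb{Z}/m\mathbb{Z}$-actions in play are those of (\ref{ctorus}) and (\ref{mPic2}), lifted to $\widehat{\Pic^0}(U)$ via (\ref{lift}); in particular they act by analytic automorphisms of $M_n^{\rm{par}}(U, \beta)$ and $M_1(U, \beta)$, hence preserve their canonical Behrend functions.

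For (\ref{formula1}) I would start from the definition (\ref{DTpar:loc}), $\DT_{n, \gamma}^{\rm{par}} = \int_{M_n^{\rm{par}}(C, \gamma)} \nu_{M^{\rm{par}}} \, d\chi$. By $\mathbb{C}^{\ast}$-invariance of $\nu_{M^{\rm{par}}}$ and $\mathbb{C}^{\ast}$-localization of weighted Euler characteristics (as in the proof of~\cite[Proposition~6.19]{JS} and of Lemma~\ref{lem:higher}) this equals $\int_{M_n^{\rm{par}}(C, \gamma)^{\mathbb{C}^{\ast}}} \nu_{M^{\rm{par}}} \, d\chi$, which by the choice of $m$ is $\int_{M_n^{\rm{par}}(C, \gamma)^{\mathbb{Z}/m\mathbb{Z}}} \nu_{M^{\rm{par}}} \, d\chi$. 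The restricted isomorphism (\ref{isom:para2}) of Proposition~\ref{prop:isom:para} identifies the domain of integration with the finite disjoint union $\coprod_{\sigma_{\ast}\widetilde{\gamma} = \gamma} M_n^{\rm{par}}(\widetilde{C}, \widetilde{\gamma})$ (finite since there are only finitely many effective $\widetilde{\gamma}$ on the compact curve $\widetilde{C}$ with $\sigma_{\ast}\widetilde{\gamma} = \gamma$), and Lemma~\ref{lem:identity}~(i) gives $(\sigma_{\ast})^{\ast}\nu_{M^{\rm{par}}} = (-1)^{\gamma \cdot H - \widetilde{\gamma} \cdot \widetilde{H}} \nu_{\widetilde{M}^{\rm{par}}}$ on each component (using $\beta \cdot H = \gamma \cdot H$ and $\widetilde{\beta}\cdot \widetilde{H} = \widetilde{\gamma}\cdot\widetilde{H}$). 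Summing the contributions and recognizing $\int_{M_n^{\rm{par}}(\widetilde{C}, \widetilde{\gamma})} \nu_{\widetilde{M}^{\rm{par}}} \, d\chi = \DT_{n, \widetilde{\gamma}}^{\rm{par}}(\widetilde{U})$ yields (\ref{formula1}).

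For (\ref{formula2}) the argument is identical up to the covering degree. By (\ref{loc:DT:Nng}) and the same localization, $N_{1, \gamma} = \int_{M_1(C, \gamma)} \nu_M \, d\chi = \int_{M_1(C, \gamma)^{\mathbb{Z}/m\mathbb{Z}}} \nu_M \, d\chi$. Now Proposition~\ref{prop:sigma:stable} says the restricted map (\ref{isom:para:st2}), $\sigma_{\ast} \colon \coprod_{\sigma_{\ast}\widetilde{\gamma} = \gamma} M_1(\widetilde{C}, \widetilde{\gamma}) \to M_1(C, \gamma)^{\mathbb{Z}/m\mathbb{Z}}$, is a covering map of degree $m$ — the extra factor $m$, absent in the parabolic case, reflecting that a stable sheaf has automorphism group $\mathbb{C}^{\ast}$, so its lift along $\sigma$ is determined only up to an $m$-th root of unity (equivalently, up to a deck transformation), whereas a parabolic stable pair is rigid and lifts uniquely. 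Multiplicativity of the weighted Euler characteristic under a finite covering map, together with $(\sigma_{\ast})^{\ast}\nu_M = \nu_{\widetilde{M}}$ from Lemma~\ref{lem:identity}~(ii), then gives $\sum_{\sigma_{\ast}\widetilde{\gamma} = \gamma} N_{1, \widetilde{\gamma}}(\widetilde{U}) = m \int_{M_1(C, \gamma)^{\mathbb{Z}/m\mathbb{Z}}} \nu_M \, d\chi = m\, N_{1, \gamma}$, and dividing by $m$ gives (\ref{formula2}).

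Since Proposition~\ref{prop:isom:para}, Proposition~\ref{prop:sigma:stable} and Lemma~\ref{lem:identity} are already available, the only point requiring genuine care is the localization step: checking that the canonical Behrend functions are invariant under the actions of (\ref{ctorus}), that the weighted Euler characteristic localizes onto the fixed locus, and that for $m$ large the $\mathbb{C}^{\ast}$-fixed locus may be replaced by the $\mathbb{Z}/m\mathbb{Z}$-fixed locus appearing in Propositions~\ref{prop:isom:para} and~\ref{prop:sigma:stable}. I expect this to be the main obstacle, though it is essentially standard; everything else is bookkeeping with the sign $(-1)^{\gamma \cdot H - \widetilde{\gamma} \cdot \widetilde{H}}$ and the covering degree $m$.
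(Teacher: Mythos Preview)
Your argument is correct and follows essentially the same route as the paper: the paper's proof also reduces both formulas to $\mathbb{C}^{\ast}$-localization, notes that $M_n^{\rm{par}}(C,\gamma)^{\mathbb{C}^{\ast}}=M_n^{\rm{par}}(C,\gamma)^{\mathbb{Z}/m\mathbb{Z}}$ and $M_1(C,\gamma)^{\mathbb{C}^{\ast}}=M_1(C,\gamma)^{\mathbb{Z}/m\mathbb{Z}}$ for $m\gg 0$, and then invokes Proposition~\ref{prop:isom:para}, Proposition~\ref{prop:sigma:stable} and Lemma~\ref{lem:identity} exactly as you do. Your write-up is simply more explicit about the bookkeeping (the sign, the covering degree, and why the fixed loci agree), but there is no substantive difference.
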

\begin{proof}
Let us consider $\mathbb{C}^{\ast}$-actions on 
$M_n^{\rm{par}}(C, \gamma)$, $M_1(C, \gamma)$, 
determined by the embeddings (\ref{lift}), (\ref{ctorus}) respectively. 
Then for $m\gg 0$, we have 
\begin{align}\label{Cast:loc}
M_n^{\rm{par}}(C, \gamma)^{\mathbb{C}^{\ast}}
&=M_n^{\rm{par}}(C, \gamma)^{\mathbb{Z}/m\mathbb{Z}}, \\
\label{Cast:loc2}
M_1(C, \gamma)^{\mathbb{C}^{\ast}}
&=M_1(C, \gamma)^{\mathbb{Z}/m\mathbb{Z}}. 
\end{align}
Therefore the formulas (\ref{formula1}), (\ref{formula2})
follow from Proposition~\ref{prop:isom:para}, 
Proposition~\ref{prop:sigma:stable}, Lemma~\ref{lem:identity}, 
(\ref{Cast:loc}), (\ref{Cast:loc2}) and the $\mathbb{C}^{\ast}$-localizations. 
\end{proof}
Furthermore we have the following proposition. 
\begin{prop}\label{prop:reduce:cov}
In the situation of Corollary~\ref{cor:formula}, 
suppose that the 
following formula holds on $\widetilde{U}$, 
\begin{align}\label{assum:form}
\widehat{\DT}_{n, \widetilde{\gamma}}^{\rm{par}}(\widetilde{U})
=\sum_{k\ge 1, k|(n, \widetilde{\gamma})}
\frac{(-1)^{\widetilde{\gamma} \cdot \widetilde{H}-1}}{k^2}
(\widetilde{\gamma} \cdot \widetilde{H})N_{1, \widetilde{\gamma}/k}(\widetilde{U}),
\end{align}
for any 
$\widetilde{\gamma} \in H_2(\widetilde{C}, \mathbb{Z})$ with $\sigma_{\ast} \widetilde{\gamma}=\gamma$. 
Then the formula (\ref{log:form}) holds. 
\end{prop}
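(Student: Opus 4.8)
The plan is to descend to $\widetilde U$ by means of Corollary~\ref{cor:formula}, apply the hypothesis (\ref{assum:form}) there, and then transport the answer back to $C$ using the deck group $\mathbb{Z}/m\mathbb{Z}$ of the covering $\sigma\colon\widetilde U\to U$. First I would rewrite $\widehat{\DT}^{\rm par}_{n,\gamma}$ in terms of the invariants on $\widetilde U$. Substituting the formula (\ref{formula1}) for each factor $\DT^{\rm par}_{n_i,\gamma_i}$ into the definition (\ref{DT:hat}) and expanding the product over all tuples of lifts $(\widetilde\gamma_1,\dots,\widetilde\gamma_l)$ with $\sigma_{*}\widetilde\gamma_i=\gamma_i$, the total sign is $(-1)^{\gamma\cdot H-\widetilde\gamma\cdot\widetilde H}$ with $\widetilde\gamma\cneq\sum_i\widetilde\gamma_i$ (since $\gamma\mapsto\gamma\cdot H$ and $\widetilde\gamma\mapsto\widetilde\gamma\cdot\widetilde H$ are additive and $\sum_i\gamma_i=\gamma$). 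A decomposition $\gamma=\sum_i\gamma_i$ together with a lift of each piece is the same datum as a lift $\widetilde\gamma$ of $\gamma$ together with a decomposition $\widetilde\gamma=\sum_i\widetilde\gamma_i$, and the slope condition transfers because $\sigma^{*}\omega\cdot\widetilde\gamma_i=\omega\cdot\gamma_i$ by the projection formula; re-summing and recognising the definition (\ref{DT:hat2}) of $\widehat{\DT}^{\rm par}_{n,\widetilde\gamma}(\widetilde U)$ gives
\begin{align*}
\widehat{\DT}^{\rm par}_{n,\gamma}=\sum_{\sigma_{*}\widetilde\gamma=\gamma}(-1)^{\gamma\cdot H-\widetilde\gamma\cdot\widetilde H}\,\widehat{\DT}^{\rm par}_{n,\widetilde\gamma}(\widetilde U).
\end{align*}
(All sums here are finite because $\DT^{\rm par}_{\bullet}(\widetilde U)$ vanishes on non-effective classes; equivalently, this records that the continuous ring homomorphism $q\mapsto q$, $t^{\widetilde\gamma}\mapsto(-1)^{\widetilde\gamma\cdot\widetilde H-\sigma_{*}\widetilde\gamma\cdot H}t^{\sigma_{*}\widetilde\gamma}$ carries $\DT^{\rm par}(\mu,\widetilde U)$ to $\DT^{\rm par}(\mu,C)$ and hence commutes with $\log$, by the same type of generating-series manipulation used for Proposition~\ref{prop:translate}.)

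Next I would insert the hypothesis (\ref{assum:form}). The two signs multiply to $(-1)^{\gamma\cdot H-\widetilde\gamma\cdot\widetilde H}(-1)^{\widetilde\gamma\cdot\widetilde H-1}=(-1)^{\gamma\cdot H-1}$, so
\begin{align*}
\widehat{\DT}^{\rm par}_{n,\gamma}=(-1)^{\gamma\cdot H-1}\sum_{\sigma_{*}\widetilde\gamma=\gamma}\ \sum_{k\ge1,\ k|(n,\widetilde\gamma)}\frac{1}{k^{2}}(\widetilde\gamma\cdot\widetilde H)\,N_{1,\widetilde\gamma/k}(\widetilde U).
\end{align*}
A lift $\widetilde\gamma$ of $\gamma$ with $k|\widetilde\gamma$ is precisely $k\widetilde\delta$ for a lift $\widetilde\delta$ of $\gamma/k$ (so such $\widetilde\gamma$ exist iff $k|\gamma$, and then $k|(n,\widetilde\gamma)$ iff $k|(n,\gamma)$), and $\widetilde\gamma\cdot\widetilde H=k(\widetilde\delta\cdot\widetilde H)$; re-indexing by $k$ then gives
\begin{align*}
\widehat{\DT}^{\rm par}_{n,\gamma}=(-1)^{\gamma\cdot H-1}\sum_{k\ge1,\ k|(n,\gamma)}\frac{1}{k}\sum_{\sigma_{*}\widetilde\delta=\gamma/k}(\widetilde\delta\cdot\widetilde H)\,N_{1,\widetilde\delta}(\widetilde U).
\end{align*}

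The heart of the matter is the identity
\begin{align*}
\sum_{\sigma_{*}\widetilde\delta=\delta}(\widetilde\delta\cdot\widetilde H)\,N_{1,\widetilde\delta}(\widetilde U)=(\delta\cdot H)\,N_{1,\delta},\qquad \delta\in H_2(C,\mathbb{Z}),
\end{align*}
which I would prove by averaging over $\mathbb{Z}/m\mathbb{Z}$. By Lemma~\ref{lem:i+ii+iii}~(ii), $\sigma^{*}\oO_H\cong\bigoplus_{g\in\mathbb{Z}/m\mathbb{Z}}g_{*}\oO_{\widetilde H}$, so $\sigma^{-1}(H)=\coprod_{g}g(\widetilde H)$ and $\sigma^{*}[H]=\sum_{g}[g(\widetilde H)]$; the projection formula yields $\sum_{g}(g\widetilde\delta\cdot\widetilde H)=\widetilde\delta\cdot\sigma^{*}[H]=\sigma_{*}\widetilde\delta\cdot[H]=\delta\cdot H$ for every lift $\widetilde\delta$. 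Each $g\in\mathbb{Z}/m\mathbb{Z}$ is a biholomorphism of $\widetilde U$ preserving $\widetilde C$, hence $N_{1,g\widetilde\delta}(\widetilde U)=N_{1,\widetilde\delta}(\widetilde U)$, and $g$ permutes the set of lifts of $\delta$. Evaluating $\sum_{g}\sum_{\sigma_{*}\widetilde\delta=\delta}(g\widetilde\delta\cdot\widetilde H)\,N_{1,g\widetilde\delta}(\widetilde U)$ in two ways then gives $m\sum_{\widetilde\delta}(\widetilde\delta\cdot\widetilde H)N_{1,\widetilde\delta}(\widetilde U)=(\delta\cdot H)\sum_{\widetilde\delta}N_{1,\widetilde\delta}(\widetilde U)$, and the right-hand sum is $m\,N_{1,\delta}$ by (\ref{formula2}), which is the asserted identity. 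Substituting it with $\delta=\gamma/k$ (so $\delta\cdot H=(\gamma\cdot H)/k$) collapses the displayed $k$-sum to $\sum_{k|(n,\gamma)}(\gamma\cdot H)k^{-2}N_{1,\gamma/k}$, giving precisely (\ref{log:form}).

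The only genuinely delicate point is the combinatorial bookkeeping in the first step: verifying that the passage between the nested sums over decompositions of $\gamma$ and over lifts is a bijection compatible with effectivity (non-effective lifts contribute zero because $\DT^{\rm par}_{\bullet}(\widetilde U)$ and $N_{1,\bullet}(\widetilde U)$ vanish on them, which is what legitimises summing over all lifts) and with the slope filtration, together with keeping the signs consistent throughout. The deck-group averaging in the last step is short once the disjointness $\sigma^{-1}(H)=\coprod_{g}g(\widetilde H)$ supplied by Lemma~\ref{lem:i+ii+iii}~(ii) is in hand, and no input beyond Corollary~\ref{cor:formula} is required.
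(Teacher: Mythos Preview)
Your proof is correct and follows essentially the same route as the paper's: substitute (\ref{formula1}) into (\ref{DT:hat}) and re-sum over lifts $\widetilde\gamma$, apply the hypothesis (\ref{assum:form}), then use the deck-group averaging together with (\ref{formula2}) to collapse to (\ref{log:form}). The only cosmetic difference is that you isolate the identity $\sum_{\sigma_{\ast}\widetilde\delta=\delta}(\widetilde\delta\cdot\widetilde H)N_{1,\widetilde\delta}(\widetilde U)=(\delta\cdot H)N_{1,\delta}$ as a standalone step and re-index via $\widetilde\gamma=k\widetilde\delta$ before averaging, whereas the paper performs the $\mathbb{Z}/m\mathbb{Z}$-average inline on $(\widetilde\gamma\cdot\widetilde H)N_{1,\widetilde\gamma/k}(\widetilde U)$ and re-indexes afterwards.
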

\begin{proof}
By Corollary~\ref{cor:formula}, 
the LHS of (\ref{log:form}) is 
\begin{align}\label{LHS1}
&\sum_{l\ge 1}\frac{(-1)^{l-1}}{l}
\sum_{\begin{subarray}{c}
\gamma_1 + \cdots +\gamma_l=\gamma, \\
n_1 + \cdots +n_l=n, \\
n_i/\omega \cdot \gamma_i =
n/\omega \cdot \gamma
\end{subarray}}
\prod_{i=1}^{l}
\left( \sum_{\sigma_{\ast}
\widetilde{\gamma}_i=\gamma_i}
(-1)^{\gamma_i \cdot H - \widetilde{\gamma}_i \cdot
\widetilde{H}}
\DT_{n_i, \widetilde{\gamma}_i}^{\rm{par}}(\widetilde{U})  \right) \\
\notag
&=\sum_{\sigma_{\ast} \widetilde{\gamma}=\gamma}
(-1)^{\gamma \cdot H - \widetilde{\gamma} \cdot \widetilde{H}}
\sum_{l\ge 1} \frac{(-1)^{l-1}}{l}
\sum_{\begin{subarray}{c}
\widetilde{\gamma}_1 + \cdots +
\widetilde{\gamma}_l=\widetilde{\gamma}, \\
n_1 + \cdots +n_l=n, \\
n_i/\sigma^{\ast}\omega \cdot \widetilde{\gamma}_i
= n/\sigma^{\ast}\omega \cdot \widetilde{\gamma} 
\end{subarray}}
\prod_{i=1}^{l} \DT_{n_i, \widetilde{\gamma}_i}^{\rm{par}}(\widetilde{U}) \\
\label{LHS2}
&= \sum_{\sigma_{\ast}\widetilde{\gamma}=\gamma}
\sum_{k\ge 1, k|(n, \widetilde{\gamma})}
\frac{(-1)^{\gamma \cdot H-1}}{k^2}
(\widetilde{\gamma} \cdot \widetilde{H})N_{1, \widetilde{\gamma}/k}(\widetilde{U}) \\
&\notag
=\sum_{k\ge 1, k|(n, \gamma)}
\frac{(-1)^{\gamma \cdot H -1}}{k^2}
\sum_{\sigma_{\ast} \widetilde{\gamma}=\gamma, \ k|\widetilde{\gamma}}
(\widetilde{\gamma} \cdot \widetilde{H})
N_{1, \widetilde{\gamma}/k}(\widetilde{U}) \\
\notag
&= \sum_{k\ge 1, k|(n, \gamma)}
\frac{(-1)^{\gamma \cdot H -1}}{k^2} \cdot
\frac{1}{m} \sum_{\begin{subarray}{c}
\sigma_{\ast}\widetilde{\gamma}=\gamma, \ k|\widetilde{\gamma} \\
g\in \mathbb{Z}/m\mathbb{Z}
\end{subarray}}
(g_{\ast}\widetilde{\gamma} \cdot \widetilde{H}) 
N_{1, g_{\ast}\widetilde{\gamma}/k}(\widetilde{U}) \\
\notag
&=\sum_{k\ge 1, k|(n, \gamma)}
\frac{(-1)^{\gamma \cdot H -1}}{k^2} \cdot
\frac{1}{m} \sum_{\sigma_{\ast}\widetilde{\gamma}=\gamma, 
\ k|\widetilde{\gamma}}
(\gamma \cdot H)N_{1, \widetilde{\gamma}/k}(\widetilde{U}) \\
\label{LHS3}
&= \sum_{k\ge 1, k|(n, \gamma)}
\frac{(-1)^{\gamma \cdot H -1}}{k^2}
(\gamma \cdot H)
N_{1, \gamma/k}. 
\end{align}
Here we have used (\ref{formula1}), (\ref{assum:form}), (\ref{formula2})
in (\ref{LHS1}), (\ref{LHS2}), (\ref{LHS3}) 
respectively. 
Therefore the formula (\ref{log:form}) holds. 
\end{proof}

\subsection{Reduction to trees of $\mathbb{P}^1$}
Now we show our main result. 
\begin{thm}\label{thm:main:cov}
Let $X$ be a smooth projective Calabi-Yau 3-fold over 
$\mathbb{C}$, $C \subset X$ a reduced
rational curve with at worst nodal singularities, 
and take 
 $\gamma \in H_2(C, \mathbb{Z})$. 
Suppose that for any cyclic neighborhood
$(C'\subset U') \stackrel{\sigma'}{\to} (C\subset X)$
with $C'$ a tree of $\mathbb{P}^1$,
the following conditions hold: 
\begin{itemize}
\item 
The cyclic neighborhood $C' \subset U'$ 
satisfies the condition of 
Conjecture~\ref{conj:crit}. 
\item For any $\gamma' \in H_2(C', \mathbb{Z})$ with 
$\sigma_{\ast}'\gamma'=\gamma$, the invariant 
$N_{n, \gamma'}(U')$ satisfies the 
formula (\ref{mult:U'}). 
\end{itemize}
Then $N_{n, \gamma}$ satisfies the formula (\ref{form:mult:loc}). 
\end{thm}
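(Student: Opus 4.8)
The plan is to prove Theorem~\ref{thm:main:cov} by a double induction: an outer induction on the arithmetic genus $g(C)$, and an inner induction on a suitable measure of $\gamma$ (for instance $\gamma\cdot H$, or equivalently $\sum a_i$ when $\gamma=\sum a_i[C_i]$). The base case of the outer induction is $g(C)=0$, i.e. $C$ is itself a tree of $\mathbb{P}^1$; here $(C\subset U)\to(C\subset X)$ is itself a (trivial) cyclic neighborhood, so the hypotheses of the theorem directly give the formula (\ref{mult:U'}) for $N_{n,\gamma}(U)=N_{n,\gamma}$, which is exactly (\ref{form:mult:loc}). For the inductive step, suppose $g(C)>0$. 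Then $\Gamma_C$ contains a non-trivial loop $\alpha$, and by Proposition~\ref{prop:reduce:cov} it suffices to verify the formula (\ref{log:form}) for $\widehat{\DT}^{\rm{par}}_{n,\gamma}$, which in turn will follow once we establish the analogue (\ref{assum:form}) on the cyclic cover $\widetilde{U}=\widetilde{U}_{\alpha,m}$ for a sufficiently big odd $m$, for all $\widetilde{\gamma}\in H_2(\widetilde{C},\mathbb{Z})$ with $\sigma_{\ast}\widetilde{\gamma}=\gamma$.

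The heart of the argument is that passing to the cyclic cover reduces the complexity of the curve. First I would observe that $\widetilde{C}=\widetilde{C}_{\alpha,m}$ need not be connected, but each connected component $\widetilde{C}^{(j)}$ of $\widetilde{C}$ is a nodal rational curve; moreover, the components relevant for $\widetilde{\gamma}$ with $\sigma_\ast\widetilde\gamma=\gamma$ are exactly those meeting the support of $\widetilde\gamma$. The key numerical point is that for the loop $\alpha$ we have cut, the graph $\Gamma_{\widetilde{C}}$ (restricted to any connected component carrying $\widetilde\gamma$) has strictly smaller first Betti number than $\Gamma_C$: unwinding one loop $\alpha$ of $\Gamma_C$ by an $m$-fold cyclic cover replaces it by a structure whose $b_1$ drops by one (this is the content of the explicit description (\ref{Cxi}) of $\widetilde{C}_{\alpha,m}$, cf.\ Figure~\ref{fig:two}). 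Hence $g(\widetilde{C}^{(j)})<g(C)$ for every relevant component, and I can apply the outer inductive hypothesis to the pair $(\widetilde{C}^{(j)}\subset U')$ for any cyclic neighborhood $U'$ of $\widetilde{C}^{(j)}$. Since a cyclic neighborhood of $\widetilde{C}^{(j)}\subset\widetilde{U}$ is again, by composition, a cyclic neighborhood of $C\subset X$ with the same curve $C'$, the trees-of-$\mathbb{P}^1$ hypotheses of the theorem are inherited verbatim by $\widetilde{C}^{(j)}\subset\widetilde{U}$. Therefore the inductive hypothesis yields the multiple cover formula (\ref{mult:U'}) for $N_{n,\widetilde\gamma}(\widetilde{U})$, and then Proposition~\ref{prop:similar:trans} (applicable because $\widetilde U$ satisfies Conjecture~\ref{conj:crit} — it does, being built over cyclic neighborhoods of trees of $\mathbb{P}^1$, or more precisely because Proposition~\ref{prop:similar:trans} only requires that condition) converts it into the formula (\ref{assum:form}) for $\widehat{\DT}^{\rm{par}}_{n,\widetilde\gamma}(\widetilde U)$.

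With (\ref{assum:form}) in hand for all $\widetilde{\gamma}$ with $\sigma_\ast\widetilde\gamma=\gamma$, Proposition~\ref{prop:reduce:cov} immediately gives the formula (\ref{log:form}) for $\widehat{\DT}^{\rm{par}}_{n,\gamma}$. By Proposition~\ref{prop:translate} (more precisely the single-class version leading to (\ref{log:form}), cf.\ the discussion in Subsection~\ref{subsec:multiple:via}), this is equivalent to the formula (\ref{form:mult:loc}) for $N_{n,\gamma}$, completing the induction. One subtlety to handle carefully: the reduction via Proposition~\ref{prop:reduce:cov} and Corollary~\ref{cor:formula} requires the cover $\sigma$ to be built from a single loop, but if $g(C)>1$ we only drop the genus by one per step; this is why the outer induction is on $g(C)$ rather than a single application — each step peels off one loop, and finitely many steps reach a tree of $\mathbb{P}^1$.

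The main obstacle is organizing the bookkeeping of the two technical inputs needed at each inductive stage: (a) that Conjecture~\ref{conj:crit} (the critical-locus condition needed to invoke Proposition~\ref{prop:similar:trans}) propagates correctly up the tower of cyclic neighborhoods, so that the hypothesis stated only for cyclic neighborhoods $C'\subset U'$ with $C'$ a tree of $\mathbb{P}^1$ actually suffices to run Proposition~\ref{prop:similar:trans} on every intermediate $\widetilde U$; and (b) verifying that Lemma~\ref{lem:identity} and Corollary~\ref{cor:formula} apply with the correct sign $(-1)^{\gamma\cdot H-\widetilde\gamma\cdot\widetilde H}$ at each level and that these signs telescope consistently through a multi-step composition of cyclic covers. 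The Betti-number bookkeeping — confirming that cutting one loop genuinely decreases $g$ of every relevant connected component and that no new loops are created — is routine from the explicit description in Subsection~\ref{subsec:cyclic}, but must be stated cleanly to make the induction well-founded. Once these are pinned down, the proof is a short formal deduction chaining Proposition~\ref{prop:reduce:cov}, Proposition~\ref{prop:similar:trans}, and the inductive hypothesis.
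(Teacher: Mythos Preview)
Your inductive scheme has a genuine gap: the claim that ``unwinding one loop $\alpha$ of $\Gamma_C$ by an $m$-fold cyclic cover replaces it by a structure whose $b_1$ drops by one,'' and hence $g(\widetilde C^{(j)})<g(C)$, is false. From the description (\ref{Cxi}), the full cover $\widetilde C_{\alpha,m}$ is built from $m$ copies of the partial normalization $C_x^\dag$ (which has genus $g(C)-1$) glued in a cycle; a direct count gives $g(\widetilde C_{\alpha,m})=m\bigl(g(C)-1\bigr)+1$, which for $g(C)\ge 2$ is strictly \emph{larger} than $g(C)$. Even if you restrict to the support $C_{\widetilde\gamma}$ of some $\widetilde\gamma$ with $\sigma_\ast\widetilde\gamma=\gamma$, the genus need not drop. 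For instance, if $C$ is irreducible with two self-nodes (so $g(C)=2$) and $\gamma=2[C]$, then $\widetilde\gamma=[C_{x,i}]+[C_{x,i+1}]$ has connected support of genus $2=g(C)$. What you call ``routine Betti-number bookkeeping'' is exactly where the argument breaks.

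The paper's proof handles this with a different well-ordering. It introduces $l(\gamma)$, the number of irreducible components in the support of $\gamma$, and shows the dichotomy: for any relevant $\widetilde\gamma$, either $l(\widetilde\gamma)>l(\gamma)$, or $l(\widetilde\gamma)=l(\gamma)$ and $g(C_{\widetilde\gamma})<g(C_\gamma)$. Since $l(\gamma_{(i)})\le d(\gamma_{(i)})=d(\gamma)$ is uniformly bounded by the original degree, the process of iterated cyclic covers terminates at a tree of $\mathbb{P}^1$ in finitely many steps. Your proposed inner induction on $\sum a_i=d(\gamma)$ does not help here, because $d$ is preserved under $\sigma_\ast$; the correct auxiliary quantity is $l$, which \emph{increases} along the tower until it hits the ceiling $d(\gamma)$. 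A second, related issue: you want to invoke Proposition~\ref{prop:similar:trans} on the intermediate $\widetilde U$ to pass between $N_{n,\widetilde\gamma}(\widetilde U)$ and $\widehat{\DT}^{\rm par}_{n,\widetilde\gamma}(\widetilde U)$, but Conjecture~\ref{conj:crit} is only assumed when $C'$ is a tree of $\mathbb{P}^1$. The paper sidesteps this by running the entire backward induction at the level of $\widehat{\DT}^{\rm par}$ via Proposition~\ref{prop:reduce:cov}, invoking Proposition~\ref{prop:similar:trans} only once, at the terminal tree-of-$\mathbb{P}^1$ stage.
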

\begin{proof}
An element $\gamma \in H_2(C, \mathbb{Z})_{>0}$
can be written as 
\begin{align*}
\gamma=\sum_{i=1}^{N} a_i [C_i],
\end{align*} 
for $a_i \in \mathbb{Z}_{\ge 0}$ where 
$C_1, \cdots, C_N$ are irreducible components of $C$. 
The support of $\gamma$, denoted by $C_{\gamma}$, 
is defined to be the reduced curve, 
\begin{align*}
C_{\gamma} \cneq \bigcup_{a_i>0} C_i 
\subset C. 
\end{align*}
We also set $d(\gamma)$ and $l(\gamma)$ to be
\begin{align*}
d(\gamma) \cneq \sum_{i=1}^{N} a_i, \quad
l(\gamma) \cneq \sharp\{ 1\le i\le N : a_i>0\}. 
\end{align*}
Note that we have $l(\gamma) \le d(\gamma)$. 

We note that, if $\widehat{\DT}_{n, \gamma}^{\rm{par}}$ or $N_{1, \gamma}$
is non-zero, then $C_{\gamma}$ is a connected curve. 
In fact, by~\cite[Equation (95)]{Todpara}, 
the invariant $\widehat{\DT}_{n, \gamma}^{\rm{par}}$ is a multiple of 
$N_{n, \gamma}$. And if $N_{n, \gamma}$ is non-zero, then 
the same argument of~\cite[Lemma~11.6]{TodK3} shows that 
$C_{\gamma}$ is connected. 
Hence we may assume that 
$C$ is connected and $C_{\gamma}=C$. 
Note that $l(\gamma)=N$ in this case. 

If $g(C)=0$, then the result follows from the 
assumption. Suppose that $g(C)>0$, and 
 take a
sufficiently small analytic neighborhood $C \subset U$
in $X$. 
Then we can take an 
 $m$-fold cover 
\begin{align*}
\sigma \colon \widetilde{U} \to U, \quad
\widetilde{C}=\sigma^{-1}(C),
\end{align*} as in Subsection~\ref{subsec:cyclic},
 for a sufficiently big odd number $m$. 
We take $\widetilde{\gamma} \in H_2(\widetilde{C}, \mathbb{Z})$
satisfying that $\sigma_{\ast}\widetilde{\gamma}=\gamma$, 
$C_{\widetilde{\gamma}}$ is connected and 
intersects with $\widetilde{H}$. 
Then either 
one of the following conditions hold:
\begin{align}
\label{oneof1}
&l(\widetilde{\gamma})>l(\gamma)=N, \mbox{ or } \\
\label{oneof2}
&l(\widetilde{\gamma})=
l(\gamma)=N, \ g(C_{\widetilde{\gamma}})<g(C_{\gamma})=g(C).
\end{align}
In fact, since $C_{\widetilde{\gamma}} \to C$
is surjective, we have 
$l(\widetilde{\gamma}) \ge l(\gamma)$. 
Suppose that $l(\widetilde{\gamma})=l(\gamma)$. 
Then for each irreducible component 
$C_j \subset C$, the preimage 
$\sigma^{-1}(C_j) \cap C_{\widetilde{\gamma}}$ is also irreducible. 
Because $C_{\widetilde{\gamma}}$ is connected, this 
easily implies that $C_{\widetilde{\gamma}}$ is written as 
\begin{align*}
C_{\widetilde{\gamma}}=A \cup \tau\overline{(C_{x, i} \setminus A)}, 
\end{align*}
where $A \subset C_{x, i}$
is a connected subcurve
for some $i\in \mathbb{Z}/m\mathbb{Z}$ in the notation of 
(\ref{Cxi}) and $\tau=1 \in \mathbb{Z}/m\mathbb{Z}$. 
The curves $A \subset C_{x, i}$ and $\tau\overline{(C_{x, i} \setminus A)} \subset C_{x, i+1}$ are connected at the node $x_{1, i}=x_{2, i+1}$. 
Since $g(C_{x, i})=g(C)-1$, 
it follows that 
\begin{align*}
g(C_{\widetilde{\gamma}})
&=g(C_{x, i}) -\sharp(A \cap \overline{(C_{x, i} \setminus A)}) +1 \\
&=g(C)- \sharp(A \cap \overline{(C_{x, i} \setminus A)}) \\
&<g(C).
\end{align*}
Therefore one of (\ref{oneof1}) or (\ref{oneof2}) holds.

Now we replace $\widetilde{U}$ by a small analytic neighborhood of 
$C_{\widetilde{\gamma}}$, 
say $U_{(1)}$, and set 
\begin{align*}
\gamma_{(1)} =\widetilde{\gamma}, \ 
C_{(1)}=C_{\widetilde{\gamma}}, \ H_{(1)}=\widetilde{H} \cap U_{(1)}.
\end{align*}
Repeating the same procedures, 
we obtain the sequence of local immersions,  
\begin{align}\label{seq:U}
\cdots \to U_{(i)} \stackrel{\sigma_{(i)}}{\to}
 U_{(i-1)} \to \cdots \stackrel{\sigma_{(2)}}{\to} U_{(1)} 
\stackrel{\sigma}{\to} U_{(0)}=U, 
\end{align}
and data, 
\begin{align*}
C_{(i)} \subset U_{(i)}, \ 
\gamma_{(i)} \in H_2(C_{(i)}, \mathbb{Z}), \
H_{(i)} \subset U_{(i)},
\end{align*}
where $C_{(i)}$ is a connected nodal curve, 
$\gamma_{(i)}$ satisfies $C_{\gamma_{(i)}}=C_{(i)}$
and $H_{(i)}$ is a lift of $H$. 
Note that for each $i$, $C_{(i)} \subset U_{(i)}$ is a
cyclic
neighborhood of $C\subset X$. 
Similarly as above, either one of the following conditions hold:
\begin{align*}
&l(\gamma_{(i)})<l(\gamma_{(i+1)}), \ \mbox{ or } \\
&l(\gamma_{(i)})=l(\gamma_{(i+1)}), \ 
g(C_{(i+1)})<g(C_{(i)}).
\end{align*} 
Because we have 
\begin{align*}
l(\gamma_{(i)}) \le d(\gamma_{(i)})=d(\gamma), 
\end{align*}
the sequence (\ref{seq:U}) terminates
at some $i$, say $i=R$.  
Then we have $g(C_{(R)})=0$, i.e. 
$C_{(R)}$ is a tree of $\mathbb{P}^1$.  

Below we say that the invariant 
$\widehat{\DT}_{n, \gamma_{(i)}}^{\rm{par}}(U_{(i)})$
satisfies (\ref{DThatU'})
if the formula (\ref{DThatU'}) holds for 
$U'=U_{(i)}$, $C'=C_{(i)}$
and $\gamma'=\gamma_{(i)}$. 
By the assumption and 
 Proposition~\ref{prop:similar:trans},
the invariant $\widehat{\DT}_{n, \gamma_{(i)}}^{\rm{par}}(U_{(i)})$
satisfies (\ref{DThatU'}) when $i=R$.  
Also it is straightforward to see that the 
 argument of Proposition~\ref{prop:reduce:cov}
for $C \subset U$ can be applied to
$C_{(i)} \subset U_{(i)}$. 
Therefore 
$\widehat{\DT}_{n, \gamma_{(i)}}^{\rm{par}}(U_{(i)})$
satisfies (\ref{DThatU'}) if the same formula 
holds on any
cyclic covering $\widetilde{U}_{(i)} \to U_{(i)}$. 
By the induction argument, 
it follows that the invariant
$\widehat{\DT}_{n, \gamma_{(i)}}^{\rm{par}}(U_{(i)})$
satisfies 
(\ref{DThatU'}) for all $i$.  
Hence (\ref{log:form}) holds, and 
the formula (\ref{form:mult:loc}) holds as well.  
\end{proof}

\subsection{Euler characteristic version}\label{subsec:Euler}
For $n\in \mathbb{Z}$ and $\beta \in H_2(X, \mathbb{Z})$, 
there is also the Euler characteristic 
version of the invariant $N_{n, \beta}$,
as discussed in~\cite{Tcurve1}, \cite{Tolim2}, \cite{TodK3}.   
Namely in the definition of $N_{n, \beta}$, we replace
the Behrend function $\nu$ by the identity function. 
The resulting invariant is denoted by 
\begin{align}\label{Nnb:chi}
N_{n, \beta}^{\chi} \in \mathbb{Q}. 
\end{align}
If $\gamma$ is a one cycle on $X$, the 
Euler characteristic version of the 
local invariant $N_{n, \gamma}$
can be similarly defined, 
\begin{align*}
N_{n, \gamma}^{\chi} \in \mathbb{Q}. 
\end{align*}
The argument of Theorem~\ref{thm:main:cov} can be also applied to
the invariant $N_{n, \gamma}$, which 
is easier since we do not have to take care of the Behrend functions at all. 
Also in this case, one may expect the formula, 
\begin{align}\label{mult:chi}
N_{n, \gamma}^{\chi} =\sum_{k\ge 1, k|(n, \gamma)}
\frac{1}{k^2} N_{1, \gamma/k}^{\chi}. 
\end{align}
However unfortunately, the above formula 
is known to be false
as the following example indicates: 
\begin{exam}\label{exam:counter}
Let $C\subset X$ be a smooth rational curve
whose normal bundle is $\oO(-1) \oplus \oO(-1)$. 
Then the same computation of $N_{0, m[C]}$
in~\cite[Example~4.14]{Todpara} shows that 
\begin{align*}
N_{0, m[C]}^{\chi}=\frac{(-1)^{m-1}}{m^2}.
\end{align*}
On the other hand, $N_{1, m[C]}^{\chi}=1$ if $m=1$
and $0$ if $m\ge 2$. Hence (\ref{mult:chi}) does not hold. 
\end{exam}
Although the formula (\ref{mult:chi}) is not true in general, 
there are some situations in which the formula (\ref{mult:chi})
should hold, as discussed in~\cite{TodK3}. 
Similarly to Theorem~\ref{thm:main:cov}, such a 
case can be reduced to the cases of trees of 
$\mathbb{P}^1$ on cyclic neighborhoods of 
$C\subset X$. 
Note that, for a cyclic neighborhood 
$C' \subset U'$ of $C\subset X$ and 
$\gamma' \in H_2(C', \mathbb{Z})$, we can also define the
Euler characteristic invariant, 
\begin{align*}
N_{n, \gamma'}^{\chi}(U') \in \mathbb{Q}, 
\end{align*}
by replacing the Behrend function by the identity 
function in the definition of $N_{n, \gamma'}(U')$. 
We have the following theorem: 
\begin{thm}\label{thm:cov:Eu}
Let $X$ be a smooth projective Calabi-Yau 3-fold over 
$\mathbb{C}$, $C \subset X$ a reduced rational
curve with at worst nodal singularities,
 and take $\gamma \in H_2(C, \mathbb{Z})$. 
Suppose that for any cyclic neighborhood
$(C'\subset U') \stackrel{\sigma'}{\to} (C\subset X)$
with $C'$ a tree of $\mathbb{P}^1$,
and an element $\gamma' \in H_2(C', \mathbb{Z})$
with $\sigma_{\ast}'\gamma'=\gamma$, 
the following equality holds: 
\begin{align}\label{mult:chi2}
N_{n, \gamma'}^{\chi}(U') =\sum_{k\ge 1, k|(n, \gamma')}
\frac{1}{k^2} N_{1, \gamma'/k}^{\chi}(U'). 
\end{align}
Then $N_{n, \gamma}^{\chi}$ satisfies the formula (\ref{mult:chi}). 
\end{thm}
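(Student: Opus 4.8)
The plan is to run the proof of Theorem~\ref{thm:main:cov} line by line, replacing the Behrend‑weighted Euler characteristics by ordinary topological Euler characteristics throughout. For a cyclic neighborhood $(C'\subset U')\stackrel{\sigma'}{\to}(C\subset X)$ with a lift $H'\subset U'$ of $H$, one first introduces the Euler‑characteristic parabolic invariant $\DT^{\mathrm{par},\chi}_{n,\gamma'}(U')\cneq\chi(M_n^{\mathrm{par}}(C',\gamma'))$, the generating series $\mathrm{DT}^{\mathrm{par},\chi}(\mu,C')\cneq 1+\sum \DT^{\mathrm{par},\chi}_{n,\gamma'}(U')\,q^nt^{\gamma'}$, and the coefficients $\widehat{\DT}^{\mathrm{par},\chi}_{n,\gamma'}(U')$ of its logarithm, given by the formula (\ref{DT:hat}) with every $\DT^{\mathrm{par}}$ replaced by $\DT^{\mathrm{par},\chi}$. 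The key preliminary step is the Euler‑characteristic analogue of Proposition~\ref{prop:similar:trans}: the formula (\ref{mult:chi2}) for $N_{n,\gamma'}^{\chi}(U')$ holds if and only if $\widehat{\DT}^{\mathrm{par},\chi}_{n,\gamma'}(U')$ satisfies the evident Euler‑characteristic analogue of (\ref{DThatU'}). This is precisely the place where Conjecture~\ref{conj:crit} disappears from the hypotheses: the $d$‑critical chart input of \cite[Theorem~3.16]{Todpara}, needed in \cite[Corollary~4.18]{Todpara} only in order to control the Behrend function in the wall‑crossing identities, plays no role once $\nu$ is replaced by the constant function $1$, so the argument of \cite[Corollary~4.18]{Todpara} should go through unconditionally in the complex‑analytic category.

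Next I would establish the Euler‑characteristic analogues of Corollary~\ref{cor:formula} and Proposition~\ref{prop:reduce:cov}. The moduli comparisons Proposition~\ref{prop:isom:para} and Proposition~\ref{prop:sigma:stable} are statements about complex analytic spaces and carry over verbatim; what changes is that the Behrend‑function compatibility of Lemma~\ref{lem:identity}, whose proof is postponed to a later section, is replaced by the elementary facts that $\chi$ is additive over disjoint unions, multiplicative by the covering degree under finite \'etale coverings, and invariant under $\mathbb{C}^{\ast}$‑localization, i.e.\ $\chi(M)=\chi(M^{\mathbb{C}^{\ast}})$. Combining these with (\ref{Cast:loc}), (\ref{Cast:loc2}), Proposition~\ref{prop:isom:para} and Proposition~\ref{prop:sigma:stable}, for an $m$‑fold cyclic cover $\sigma\colon\widetilde{U}\to U$ as in (\ref{etale:C}) with $m\gg 0$, yields
\begin{align*}
\DT^{\mathrm{par},\chi}_{n,\gamma}=\sum_{\sigma_{\ast}\widetilde{\gamma}=\gamma}\DT^{\mathrm{par},\chi}_{n,\widetilde{\gamma}}(\widetilde{U}),
\qquad
N^{\chi}_{1,\gamma}=\frac{1}{m}\sum_{\sigma_{\ast}\widetilde{\gamma}=\gamma}N^{\chi}_{1,\widetilde{\gamma}}(\widetilde{U}),
\end{align*}
the analogues of (\ref{formula1}) and (\ref{formula2}); note that the first now carries no sign, since the sign in (\ref{formula1}) came solely from Lemma~\ref{lem:identity}(i). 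Substituting these into the Euler‑characteristic version of the expansion (\ref{DT:hat}) and running the computation (\ref{LHS1})--(\ref{LHS3}) verbatim then shows that if the Euler‑characteristic analogue of (\ref{DThatU'}) holds on $\widetilde{U}$ for every $\widetilde{\gamma}$ with $\sigma_{\ast}\widetilde{\gamma}=\gamma$, then it holds for $\gamma$ on $U$.

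With these tools the proof concludes exactly as for Theorem~\ref{thm:main:cov}. Using that $\widehat{\DT}^{\mathrm{par},\chi}_{n,\gamma}$ is a nonzero rational multiple of $N^{\chi}_{n,\gamma}$ (the argument of \cite[Equation~(95)]{Todpara} with $\nu\equiv 1$) together with the Euler‑characteristic analogue of \cite[Lemma~11.6]{TodK3}, one reduces to the case where $C$ is connected and $C_\gamma=C$. If $g(C)=0$ the hypothesis of the theorem, via the Euler‑characteristic analogue of Proposition~\ref{prop:similar:trans}, gives the formula directly. If $g(C)>0$ one builds, exactly as in the proof of Theorem~\ref{thm:main:cov}, a tower of cyclic neighborhoods $\cdots\to U_{(i)}\to\cdots\to U_{(0)}=U$ along which either $l(\gamma_{(i+1)})>l(\gamma_{(i)})$, or $l(\gamma_{(i+1)})=l(\gamma_{(i)})$ and $g(C_{(i+1)})<g(C_{(i)})$; since $l(\gamma_{(i)})\le d(\gamma)$, the tower terminates at some $U_{(R)}$ with $C_{(R)}$ a tree of $\mathbb{P}^1$. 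The base case is then supplied by the hypothesis and the translation, and the Euler‑characteristic analogue of Proposition~\ref{prop:reduce:cov} propagates the formula back down to $i=0$, giving the Euler‑characteristic analogue of (\ref{log:form}) and hence (\ref{mult:chi}).

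The only genuinely new point, and the one requiring care, is the first step: everything downstream is, as remarked in the text, easier than in the Behrend setting because Lemma~\ref{lem:identity} becomes elementary and Conjecture~\ref{conj:crit} is not invoked, but one must verify carefully that the translation of \cite[Proposition~4.5, Corollary~4.18]{Todpara} between the parabolic and sheaf‑counting generating series really does survive the passage to the complex‑analytic category once the Behrend weighting is dropped, and pin down the correct signs in the Euler‑characteristic analogue of (\ref{DThatU'}) (which may differ from those in (\ref{DThatU'}), since a $\mathbb{P}^{\gamma'\cdot H'-1}$‑bundle contributes $\gamma'\cdot H'$ to the Euler characteristic but $(-1)^{\gamma'\cdot H'-1}(\gamma'\cdot H')$ to the Behrend‑weighted count). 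Once this is settled, the remainder of the proof is a routine transcription of the arguments of the preceding sections with $\nu$ replaced by $1$.
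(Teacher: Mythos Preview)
Your proposal is correct and follows essentially the same approach as the paper: run the proof of Theorem~\ref{thm:main:cov} with $\nu\equiv 1$, noting that the translation via \cite[Corollary~4.18]{Todpara} goes through without Conjecture~\ref{conj:crit} and that Lemma~\ref{lem:identity} becomes elementary. The paper confirms your expectation about the sign: the Euler-characteristic analogue of (\ref{DThatU'}) is exactly $\widehat{\DT}^{\mathrm{par},\chi}_{n,\gamma'}(U')=(\gamma'\cdot H')\sum_{k\ge 1,\,k|(n,\gamma')}\frac{1}{k^2}N^{\chi}_{1,\gamma'/k}(U')$, with the $(-1)^{\gamma'\cdot H'-1}$ dropped.
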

\begin{proof}
The same proof of Theorem~\ref{thm:main:cov} works. The only 
difference is that we do not have to take care of the Behrend 
functions in deducing the equivalence between the formula 
(\ref{mult:chi2}) and the formula for parabolic 
stable pair invariants. Namely for any 
cyclic neighborhood $C' \subset U'$ of $C\subset X$
with a lift $H' \subset U'$ of $H\subset X$,
and $\gamma'\in H_2(C', \mathbb{Z})$, we 
can define the 
Euler characteristic versions of 
parabolic stable pair invariants, 
\begin{align*}
\DT_{n, \gamma'}^{\rm{par}, \chi}(U') \in \mathbb{Z}, \quad 
\widehat{\DT}_{n, \gamma'}^{\rm{par}, \chi}(U') \in \mathbb{Q},
\end{align*}
by replacing the Behrend function by the 
identity function in the definitions of (\ref{inv:onU'}), (\ref{DT:hat2})
respectively. 
The same proof of~\cite[Corollary~4.18]{Todpara}
shows that the formula (\ref{mult:chi2}) is equivalent to the 
formula, (without assuming Conjecture~\ref{conj:crit},)
\begin{align*}
\widehat{\DT}_{n, \gamma'}^{\rm{par}, \chi}(U')
=(\gamma' \cdot H') \sum_{k\ge 1, k|(n, \gamma')} \frac{1}{k^2}
N_{1, \gamma'/k}^{\chi}(U'). 
\end{align*}
Then we can apply the same induction argument as in the proof of 
Theorem~\ref{thm:main:cov}, and conclude the assertion. 
\end{proof}

\section{Applications}\label{sec:apply}
In this section, we apply Theorem~\ref{thm:main:cov} to
prove the multiple cover formula 
under some situations. 
\subsection{$0$-super rigid and surface type neighborhoods}
Let $C \subset U\subset X$ be as in the previous sections. 
We study the local multiple cover formula in 
the following situations:
$C \subset U$ is $0$-super rigid or 
$C \subset U$ is of surface type. 
These concepts are defined as follows: 
\begin{defi}\label{def:rigid:surface}

(i) We say $C \subset U$ is $0$-super rigid if for any 
projective curve $C'$ of arithmetic genus $0$ and 
a local immersion $f \colon C' \to C$, we have 
\begin{align*}
H^0(C', f^{\ast}N_{C/U})=0.
\end{align*} 

(ii) We say $C \subset U$ is of surface type 
if there is a complex surface $U_0$
and an analytic neighborhood $\Delta$ of 
$0 \in \mathbb{C}$ such that 
\begin{align*}
U \cong U_0  \times \Delta, 
\end{align*}
and $C$ is
contained in $U_0 \times \{0\}$
\end{defi}
The $0$-super rigidity is
a genericity condition for the
pair $(C \subset U)$, and 
a concept adapted in~\cite{BPrig}. 
The following proposition shows that there are
some situations in which the assumptions in Theorem~\ref{thm:main:cov}
are satisfied. 
\begin{prop}\label{prop:compN}
Let $C' \subset U'$ be a cyclic neighborhood 
of $C\subset U$, with $C'$ a tree of $\mathbb{P}^1$. 

(i) Suppose that $C \subset U$ is $0$-super rigid 
and $C'$ is a chain of $\mathbb{P}^1$, 
say $C_1', \cdots, C_N'$. Then 
$C' \subset U'$ satisfies the condition of Conjecture~\ref{conj:crit}. 
Moreover, for any $n\in \mathbb{Z}$, 
and $a_1', \cdots, a_N' \in \mathbb{Z}_{\ge 1}$, we have 
\begin{align}\label{N:rigid}
N_{n, a_1'[C_1']+ \cdots +a_N'[C_N']}(U')
=\left\{\begin{array}{cc} 1/k^2, & a_1'= \cdots =a_N'=k, \ k|n,  \\
0, & \mbox{ otherwise. }    
\end{array} \right.
\end{align}
In particular, 
the invariant 
$N_{n, \gamma'}(U')$ satisfies the formula (\ref{mult:U'}). 

(ii) Suppose that $C \subset U$ is of surface type 
and the dual graph of $C'$ is of ADE type. 
Then $C' \subset U'$ satisfies the condition of 
Conjecture~\ref{conj:crit}. 
If $C'$ is a chain of $\mathbb{P}^1$, say 
$C_1', \cdots, C_N'$, then 
for any $n\in \mathbb{Z}$, 
and $a_1', \cdots, a_N' \in \mathbb{Z}_{\ge 1}$, we have 
\begin{align}\label{N:sur}
N_{n, a_1'[C_1']+ \cdots +a_N'[C_N']}(U')
=\left\{\begin{array}{cc} -1/k^2, & a_1'= \cdots =a_N', \ k|n, \\
0, & \mbox{ otherwise. }    
\end{array} \right.
\end{align}
In particular, 
the invariant 
$N_{n, \gamma'}(U')$ satisfies the formula (\ref{mult:U'}). 
\end{prop}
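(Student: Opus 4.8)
The plan is to verify the two hypotheses of Theorem~\ref{thm:main:cov} (equivalently Theorem~\ref{thm:main}): the critical-locus condition of Conjecture~\ref{conj:crit}, and the validity of the multiple cover formula (\ref{mult:U'}) on the relevant cyclic neighborhoods. For part (i), the $0$-super rigidity of $C\subset U$ controls deformations of the chain $C'$ inside $U'$: since $C'$ is a chain of $\mathbb{P}^1$'s and $U'$ is built by cyclic covers from $U$, the normal bundle $N_{C'/U'}$ is pulled back (up to twists) from $N_{C/U}$, and the vanishing $H^0(C',f^{\ast}N_{C/U})=0$ for local immersions $f$ forces every one dimensional stable sheaf $F'$ supported on $C'$ to be rigid, i.e. $\Ext^1(F',F')=0$ for stable $F'$ with primitive class, and more generally the deformation theory is governed by a finite-dimensional obstruction problem. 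I would argue that the moduli stack $\mM_n(U',\beta')$ is then \'etale-locally the critical locus of a holomorphic function on $\Ext^1(F',F')$ (the superpotential coming from the $A_\infty$ structure / Chern-Simons functional, which exists formally and converges analytically because the relevant $\Ext$ groups are finite-dimensional and the situation is local), which is exactly Conjecture~\ref{conj:crit}. The same works in part (ii): surface type means $U\cong U_0\times\Delta$, so sheaves supported on $C\subset U_0\times\{0\}$ are pushed forward from $U_0$, and $U_0$ being a surface makes the deformation-obstruction theory that of sheaves on a surface, again yielding the critical-locus description (here one uses that the ADE configuration $C'$ sits inside a local surface, e.g. an ADE resolution, so $N_{C'/U'}\cong \omega_{C'}\oplus\oO$ and the obstruction theory is symmetric of the expected shape).

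Next I would compute $N_{n,\gamma'}(U')$ explicitly in each case. For (i), when $C\subset U$ is $0$-super rigid and $C'=C_1'\cup\cdots\cup C_N'$ is a chain, a one dimensional semistable sheaf $F'$ with $[F']=\sum a_i'[C_i']$ that contributes must have all $a_i'$ equal (otherwise, using $0$-super rigidity one shows the moduli space fibers over a positive-dimensional space of deformations, or admits a free torus action, forcing the Behrend-weighted Euler characteristic to vanish — this is the same free-action/localization mechanism used in the proof of Lemma~\ref{lem:higher}). When $a_1'=\cdots=a_N'=k$, the sheaf $F'$ is (the pushforward of) $\oO_{C'}$-module structures, and the moduli space reduces to the rank one case twisted by $k$-torsion data; the standard computation — identical to the $\oO(-1)\oplus\oO(-1)$ curve case recalled in Example~\ref{exam:counter} but with the Behrend function, and as in~\cite{Todpara} — gives $N_{n,k\gamma_0'}(U')=1/k^2$ when $k\mid n$ and $0$ otherwise, where $\gamma_0'=[C_1']+\cdots+[C_N']$. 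This is precisely (\ref{N:rigid}), and plugging into (\ref{mult:U'}) the right-hand side is $\sum_{k\mid(n,\gamma')}\frac{1}{k^2}N_{1,\gamma'/k}(U')$, which matches since $N_{1,\gamma'/k}(U')$ is $1$ exactly when $\gamma'/k$ is an integral multiple of $\gamma_0'$. For (ii), the surface-type hypothesis changes the Behrend function by a global sign (the moduli space is, locally, that of sheaves on a surface times $\Delta$, contributing an extra factor $(-1)^{\dim}$), which flips $1/k^2$ to $-1/k^2$, giving (\ref{N:sur}); I would cross-check the sign against the known local $K3$ / ADE computations in~\cite{TodK3} and against Example~\ref{exam:counter}.

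The key steps in order: (1) describe $N_{C'/U'}$ from $N_{C/U}$ under cyclic covers and use $0$-super rigidity (resp. surface type) to bound the deformation theory; (2) deduce the critical-locus condition, Conjecture~\ref{conj:crit}, for $C'\subset U'$; (3) run the free-torus-action argument to kill all classes with unequal multiplicities; (4) compute $N_{n,k\gamma_0'}(U')$ in the balanced case via the rank-one-plus-torsion reduction, getting $\pm 1/k^2$; (5) verify (\ref{mult:U'}) by direct substitution. The main obstacle I expect is Step~(2): producing the holomorphic superpotential whose critical locus is the moduli stack, in the purely analytic setting of $U'$ where $X$ is no longer available. The rigidity hypotheses are exactly what make the relevant $\Ext$-groups finite-dimensional and the formal Chern-Simons/$A_\infty$ potential convergent, so the argument should go through, but one must be careful that the $G$-equivariance (for $G$ a maximal reductive subgroup of $\Aut(F')$) and the relative dimension count $\dim\Aut(F')-\dim G$ in Conjecture~\ref{conj:crit} come out correctly; this is where I would spend most of the care, likely by reducing to the chain/ADE local models where an explicit quiver-with-potential description is available.
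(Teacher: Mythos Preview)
Your plan has the right shape but two concrete gaps, and your fallback is in fact the paper's main route.

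\textbf{On the critical-locus condition.} Your primary proposal---bound $\Ext$-groups using rigidity and then argue that the formal Chern--Simons/$A_\infty$ potential converges---is not a known theorem and would be very hard to make rigorous in this analytic setting. What you mention at the end as a fallback, namely an explicit quiver-with-potential description, is exactly what the paper does: in case (i) one shows (using $0$-super rigidity as in \cite{BPrig}) that each $C_i'$ is a $(-1,-1)$-curve and that $C'$ contracts to a $cA_N$-point $0\in U''$; then Van den Bergh's non-commutative crepant resolution gives a derived equivalence $D^b\Coh_{C'}(U')\simeq D^b\mathrm{Mod}_{\rm nil}(A)$ with $A$ a Calabi--Yau $3$-algebra, whose completion is the Jacobi algebra of a quiver with potential \cite{Bergh}. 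The critical-locus description then drops out of the standard representation-variety picture. Case (ii) is similar: surface type forces $U'$ to be an analytic neighborhood of $C'$ inside $V\times\mathbb{C}$ with $V\to\mathbb{C}^2/G$ the minimal resolution, and the derived equivalence of \cite{GY} to a quiver with potential does the job. So the key step you should \emph{lead} with is the contraction plus NCCR, not an analytic convergence argument.

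\textbf{On the computation.} Your step (3), killing classes with unequal multiplicities by a ``free torus action'', does not have a torus available: $C'$ is a tree of $\mathbb{P}^1$'s, so $\Pic^0(C')$ is trivial and the mechanism of Lemma~\ref{lem:higher} does not apply. The paper's argument in (i) is instead a deformation: following \cite{BKL} one produces a family $U'_t$ with $U'_0=U'$ such that for $0<\varepsilon\ll 1$ the only curves in $U'_\varepsilon$ are disjoint $(-1,-1)$-curves, and only the full chain $C'$ (not proper subchains) deforms to a single such curve. Deformation invariance of $N_{n,\beta'}(U')$ (justified via the pair moduli of Remark~\ref{rmk:ncpt}) then reduces everything to the single $(-1,-1)$-curve case, where $N_{n,k[C'_\varepsilon]}(U'_\varepsilon)=1/k^2$ when $k\mid n$ and $0$ otherwise; classes with unequal $a_i'$ vanish because no curve in $U'_\varepsilon$ realizes them. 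In (ii) the pair moduli are non-compact, so the deformation argument is unavailable; the paper instead quotes the explicit PT computation of \cite{GY} on $V\times\mathbb{C}$ (in Gopakumar--Vafa form), which together with \cite[Theorem~6.4]{Tsurvey} yields (\ref{N:sur}) directly. Your sign heuristic for (ii) is morally right but not a proof; you should invoke \cite{GY}.
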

\begin{proof}
(i) The proof of~\cite[Lemma~3.1]{BPrig} shows that 
each irreducible component $C_i'$ 
is a $(-1, -1)$-curve in $U'$, and 
there is a bimeromorphic contraction, 
\begin{align*}
f\colon U' \to U'', 
\end{align*}
which contracts $C'$ to a $cA_N$-singularity $0 \in U''$. 
The argument of Van den Bergh~\cite{MVB} works in our situation, and 
we have the derived equivalence, 
\begin{align*}
\Phi \colon 
D^b \Coh_{C'}(U') \cong D^b \mathrm{Mod}_{\rm{nil}}(A),
\end{align*}
for some non-commutative $\oO_{U''}$-algebra $A$, 
such that $\Phi^{-1} \mathrm{Mod}_{\rm{nil}}(A)$
corresponds to Bridgeland's perverse
coherent sheaves with $0$-perversity~\cite{Br1}.  
Here $\Coh_{C'}(U')$ is the category of 
coherent sheaves on $U'$ supported on $C'$, 
and $\mathrm{Mod}_{\rm{nil}}(A)$ is 
the category of finite dimensional nilpotent
right $A$-modules.  
Let us take $\lL \in \Pic(U')$
such that $\lL|_{C'}$ is an ample line bundle. 
By the construction of perverse coherent sheaves
in~\cite{Br1}, 
an object $E\in \Coh_{C'}(U)$ satisfies 
$\Phi(E) \in \mathrm{Mod}_{\rm{nil}}(A)$
if and only if $R^1 f_{\ast} E=0$. The latter
condition is satisfied if we replace $E$ by 
$E\otimes \lL^{\otimes m}$ for $m\gg 0$. 
Since any object
$[E] \in \mM_n(U', \beta')$
is supported on $C'$, the above argument 
implies that 
the moduli stack $\mM_n(U', \beta')$ is regarded
as an 
analytic open substack of objects in 
$\Phi^{-1}\mathrm{Mod}_{\rm{nil}}(A) \otimes \lL^{\otimes -m}$
for $m\gg 0$. 
On the other hand, the algebra $A$ is a Calabi-Yau 3-algebra. 
Hence the completion $\widehat{A}$ of $A$ at $0\in U''$
is written as a completion of a path algebra of a
quiver $Q$ with a super potential $W$
by~\cite{Bergh}.
Since each component of the moduli stack of representations of 
$(Q, W)$ is written as a quotient stack of a 
critical locus of some holomorphic function on a finite 
dimensional vector space, (cf.~\cite[Subsection~7.2]{JS},)
we conclude that $\mM_n(U', \beta')$ 
satisfies the condition of Conjecture~\ref{conj:crit}. 

Next let us take $n\in \mathbb{Z}$ and
$a_1', \cdots, a_N' \ge 1$. 
By the argument in~\cite[Proposition~2.10]{BKL},
there is a family of complex manifolds 
$U_t'$ for $t\in \mathbb{C}$
such that $U_0'=U'$ and 
curves in 
$U_{\varepsilon}'$ for $0<\varepsilon \ll 1$
consist of only $(-1, -1)$-curves. 
A subcuve $C'' \subset C'$ deforms to 
a curve in $U_{\varepsilon}'$ if and only if 
$C''$ is a sub $\mathbb{P}^1$-chain of $C'$. 
Let $C_{\varepsilon}' \subset U_{\varepsilon}'$ be
a $(-1, -1)$-curve, obtained by deforming 
$C'$. For $(n, k) \in \mathbb{Z}^{\oplus 2}$
with $k\ge 1$, we have $N_{n, k[C_{\varepsilon}']} \neq 0$
only if $k|n$, and in this case we have
(cf.~\cite[Example~4.14]{Todpara},) 
\begin{align*}
N_{n, k[C_{\varepsilon}']}(U_{\varepsilon}')=\frac{1}{k^2}.
\end{align*}
 On the other hand, the invariant 
$N_{n, \beta'}(U')$ is invariant under deformation of 
$U'$ by~\cite[Corollary~5.28]{JS}. (See Remark~\ref{rmk:ncpt} below.)
Hence 
the LHS of (\ref{N:rigid}) 
is non-zero only if $a_1'=\cdots =a_N'(=k)$, 
$k|n$, and equal to $1/k^2$ in this case. 

(ii) 
If $C\subset U$ is of surface type, 
then the 
cyclic neighborhood $C' \subset U'$ 
is also of surface type:
there is a complex surface
$U_0'$ such that $U' \cong U_0' \times \Delta$
and $C' \subset U_0' \times \{0\}$. 
Then $C'$ is a 
tree of $\mathbb{P}^1$ of ADE type 
in the surface $U_0'$, hence 
there is a bimeromorphic morphism to 
a singular complex surface $U_0''$,
\begin{align*}
U_0' \to U_{0}'', 
\end{align*}
whose exceptional locus is $C'$. 
Since $U_0''$ is a small analytic neighborhood of $C'$, 
$U_0''$ is an analytic neighborhood of a singular point 
in $U_0''$, which is isomorphic to an analytic 
neighborhood of the quotient singularity 
$\mathbb{C}^2/G$
for a finite subgroup 
in $G \subset \SL(2, \mathbb{C})$. 

Let 
\begin{align}\label{minimal:W}
f \colon 
V \to \mathbb{C}^2 /G,
\end{align}
be the minimal resolution of singularities. 
Note that $C'$ is regarded as an exceptional locus 
of (\ref{minimal:W}). 
The above argument shows that 
$U'$ is isomorphic to an analytic neighborhood
of $C'$ in $V\times \mathbb{C}$, 
where $C'$ lies in $V\times \{0\}$. 
As explained in~\cite[Subsection~2.2]{GY}, 
we have the derived equivalence, 
\begin{align*}
\Phi \colon 
D^b \Coh(V\times \mathbb{C}) \cong 
D^b \mathrm{Rep}(Q, W),
\end{align*}
for a certain quiver $Q$ with a superpotential $W$, 
and $\Phi^{-1} \mathrm{Rep}(Q, W)$
is Bridgeland's perverse coherent sheaves. 
Then the same argument of (i) shows that 
$\mM_n(U', \beta')$ satisfies the condition of Conjecture~\ref{conj:crit}. 

Let us compute the LHS of (\ref{N:sur}) when 
$C'$ is a chain of $\mathbb{P}^1$. 
In the surface type case, the moduli space of 
stable pairs (\ref{pair:JS})
in Remark~\ref{rmk:ncpt} below is not compact, 
so the deformation argument is more subtle. 
Instead, we use the explicit result of 
the computation of DT type invariants on 
$V\times \mathbb{C}$ in~\cite{GY}.  
In~\cite{GY}, it is proved that the
 generating series of PT 
invariants is written as a Gopakumar-Vafa form, 
that is a local version of the conjecture in~\cite[Conjecture~6.2]{Tsurvey}.
By~\cite[Theorem~6.4]{Tsurvey}, Conjecture~\ref{conj:mult2} is equivalent 
to~\cite[Conjecture~6.2]{Tsurvey}, 
hence $N_{n, \gamma'}(U')$ satisfies (\ref{mult:U'}) 
for any $n\in \mathbb{Z}$ and $\gamma' \in H_2(C', \mathbb{Z})$. 
By~\cite[Corollary~1.6]{GY} and~\cite[Theorem~6.4]{Tsurvey},
for $a_1', \cdots, a_N' \in \mathbb{Z}_{\ge 1}$, we have 
\begin{align*}
N_{1, a_1'[C_1']+ \cdots +a_N'[C_N']}(U')
=\left\{\begin{array}{cc} -1, & a_1'= \cdots =a_N'=1, \\
0, & \mbox{ otherwise. }    
\end{array} \right.
\end{align*}
Therefore the result of (ii) holds. 
\end{proof}

\begin{rmk}\label{rmk:ncpt}
The argument of~\cite[Corollary~5.28]{JS}
works when the ambient space $U'$ is a 
projective Calabi-Yau 3-fold. In our case, 
$U'$ is an analytic small neighborhood of $C'$, 
so we need to modify the argument. 
Let $\lL$ be a line bundle on $U'$
such that $\lL|_{C'}$ is ample. 
Then we consider moduli space of pairs, 
\begin{align}\label{pair:JS}
(F, u), 
\end{align}
where $F$ is a compactly supported 
one dimensional coherent sheaf on $U'$, 
and $u\in H^0(F\otimes \lL^{\otimes m})$ for 
$m\gg 0$, satisfying the stability 
condition as in~\cite[Definition~5.20]{JS}. 
If $C' \subset U'$ is a chain of $(-1, -1)$-curves, 
then any sheaf $F$ as above is 
supported on $C'$, hence the moduli space
of pairs (\ref{pair:JS}) is a projective scheme. 
Let us consider the object, 
\begin{align*}
E=(\lL^{\otimes -m} \stackrel{u}{\to} F) \in D^b \Coh(U'). 
\end{align*}
Although $U'$ is non-compact, the groups 
$\Ext_{U'}^{i}(E, E)$ for $i=1, 2$ are finite dimensional, 
hence determine a symmetric perfect obstruction 
theory on the moduli space of pairs (\ref{pair:JS})
as in~\cite[Theorem~5.23]{JS}. 
Then the deformation invariance of $N_{n, \beta'}(U')$
follows from the same argument of~\cite[Corollary~5.28]{JS}. 
\end{rmk}

\subsection{Local generalized DT invariants on a 
nodal rational curve of type $I_N$}
In this subsection, using the results in the previous 
subsection, we compute some 
generalized DT invariants which have not been 
computed so far. 
Recall that 
a nodal curve $C$ is 
called type $I_N$ if $C$ is one of the following: 
\begin{itemize}
\item
$C$ is of type $I_1$ if $C$ is an irreducible 
rational curve with one node. 
\item $C$ is of type $I_N$ for $N\ge 2$ if 
$C$ is a circle of irreducible components 
$C_1, \cdots, C_N$
such that $C_i \cong \mathbb{P}^1$
for all $i$. 
\end{itemize}
Note that the above notation is used in
the Kodaira's 
classification of singular fibers of 
elliptic fibrations. 
(See Figure~\ref{fig:three}.)

\begin{figure}[htbp]
 \begin{center}
  \includegraphics[width=90mm]{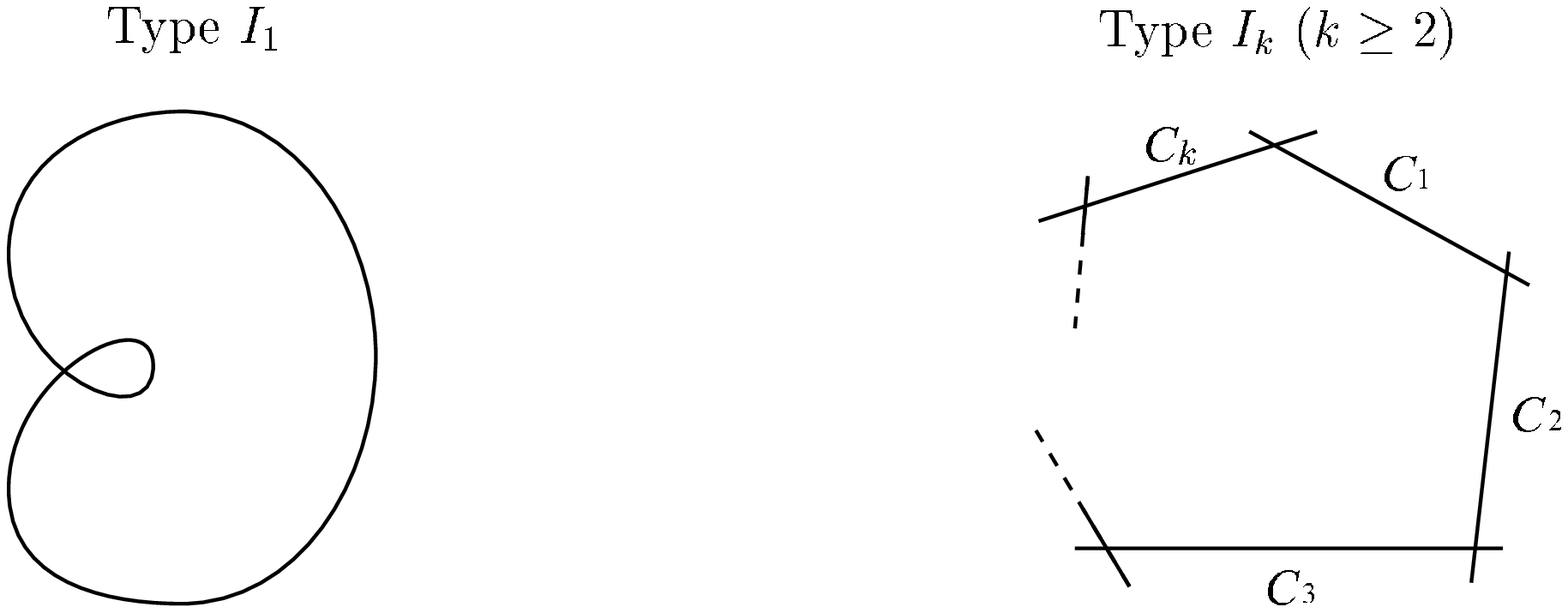}
 \end{center}
 \caption{}
 \label{fig:three}
\end{figure}
We have the following theorem. 
\begin{thm}\label{thm:typeI}
Let $C$ be a nodal curve of type $I_N$
with irreducible components $C_1, \cdots, C_N$, 
which is embedded into a Calabi-Yau 3-fold $X$. 
Suppose that an analytic neighborhood 
$C\subset U \subset X$ is either $0$-super rigid or 
of surface type. Then for $n\in \mathbb{Z}$
and $a_1, \cdots, a_N \in \mathbb{Z}_{\ge 1}$, 
the invariant $N_{n, a_1[C_1]+ \cdots +a_{N}[C_N]}$
is non-zero only if $a_1= \cdots =a_N$, say $m$. 
In this case, we have 
\begin{align*}
N_{n, m[C_1]+ \cdots +m[C_N]}
=\left\{ \begin{array}{cc}
\sum_{k\ge 1, k|(n, m)}
N/k^2, & 0\mbox{-super rigid case, } \\
-\sum_{k\ge 1, k|(n, m)} N/k^2, 
& \mbox{ surface type case. }
\end{array}   \right. 
\end{align*}
\end{thm}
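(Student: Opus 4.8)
The plan is to derive everything from Theorem~\ref{thm:main:cov}, Proposition~\ref{prop:compN}, and the cyclic covering formula of Corollary~\ref{cor:formula}.

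First I would verify the hypotheses of Theorem~\ref{thm:main:cov}. Since $C$ is of type $I_N$ its dual graph $\Gamma_C$ is an $N$-gon, so $g(C)=b_1(\Gamma_C)=1$. The key structural observation is that any cyclic covering of a connected nodal curve whose dual graph is a polygon, taken with respect to a nontrivial loop, again has a polygon as dual graph, and the only connected subcurves of such a curve that are trees of $\mathbb{P}^1$ are \emph{chains} of $\mathbb{P}^1$. Iterating along the tower (\ref{unrami}), every cyclic neighborhood $(C'\subset U')\stackrel{\sigma'}{\to}(C\subset X)$ with $C'$ a tree of $\mathbb{P}^1$ in fact has $C'$ a chain, hence has dual graph of type $A$, a fortiori of ADE type. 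Thus Proposition~\ref{prop:compN}(i) applies in the $0$-super rigid case and Proposition~\ref{prop:compN}(ii) in the surface type case: for every such $C'\subset U'$, Conjecture~\ref{conj:crit} holds, and for every $\gamma'$ with $\sigma'_{\ast}\gamma'=\gamma$ the invariant $N_{n,\gamma'}(U')$ satisfies the multiple cover formula (\ref{mult:U'}). By Theorem~\ref{thm:main:cov} we conclude
$$N_{n,\gamma}=\sum_{k\ge 1,\ k\mid(n,\gamma)}\frac{1}{k^{2}}\,N_{1,\gamma/k}.$$

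It then remains to evaluate $N_{1,\gamma'}$ for the classes $\gamma'=\sum_{j=1}^{N}a_j'[C_j]$, $a_j'\ge1$, appearing on the right (each $\gamma/k$ is again of this shape, since $k$ divides every $a_j$). Writing $\gamma_0\cneq[C_1]+\cdots+[C_N]$ for the reduced cycle, the claim is that $N_{1,\gamma'}$ equals $N$ in the $0$-super rigid case, resp. $-N$ in the surface type case, whenever $\gamma'$ is a positive integer multiple of $\gamma_0$, and vanishes otherwise. I would prove this by fixing a large odd $m$, passing to the associated $m$-fold cyclic cover $\sigma\colon\widetilde U\to U$ — so that $\widetilde C$ is of type $I_{mN}$ — and invoking (\ref{formula2}): $N_{1,\gamma'}=\tfrac1m\sum_{\sigma_{\ast}\widetilde\gamma=\gamma'}N_{1,\widetilde\gamma}(\widetilde U)$. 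Only classes $\widetilde\gamma$ with connected support contribute, and since $\sigma_{\ast}$ preserves total multiplicity the support of such a $\widetilde\gamma$ cannot be all of $\widetilde C$ once $m$ is large; hence it is a chain of $\mathbb{P}^1$ inside $I_{mN}$, to which Proposition~\ref{prop:compN} again applies, forcing all coefficients of $\widetilde\gamma$ to coincide and (for $\chi=1$) to equal $1$, with $N_{1,\widetilde\gamma}(\widetilde U)=\pm1$. The computation then reduces to enumerating, up to the free $\mathbb{Z}/m\mathbb{Z}$-action, the reduced subchains of $I_{mN}$ whose image one-cycle is $\gamma'$: a length-$L$ subchain covers each $C_j$ with multiplicity $\lceil L/N\rceil$ or $\lfloor L/N\rfloor$, so the count of contributing subchains, divided by $m$, should produce the value $\pm N$ in the balanced case and $0$ otherwise.

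The step I expect to be the main obstacle is precisely this last one: pinning down which subchains of $I_{mN}$ occur as the support of a \emph{contributing} $\widetilde\gamma$ and with what multiplicity, which amounts to understanding the $\Pic^0$-fixed (equivalently, cyclic-cover-pushforward) stable sheaves on the $I_N$-neighborhood and showing that their Behrend-weighted contributions kill all unbalanced $\gamma'$. Technically the delicate point is that not every chain of $\mathbb{P}^1$ inside a cyclic cover need satisfy the $0$-super rigidity (resp. ADE/surface-type) hypothesis of Proposition~\ref{prop:compN} — a chain that wraps around the cycle acquires deformations along its free ends — so the enumeration must be coupled with a check of when Proposition~\ref{prop:compN} genuinely applies, and it is this interplay that isolates the balanced locus. (For $N=1$ the condition ``$a_1=\dots=a_N$'' is vacuous and the statement follows at once from the first step together with $N_{1,m[C]}=\pm1$.) Once $N_{1,\gamma/k}$ is known for all $k\mid(n,m)$, substituting into the multiple cover formula of the first step and summing over $k$ yields $N_{n,m\gamma_0}=\pm\sum_{k\ge1,\ k\mid(n,m)}N/k^{2}$ with the stated sign, together with the vanishing $N_{n,\gamma}=0$ for unbalanced $\gamma$.
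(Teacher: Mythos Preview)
Your strategy matches the paper's exactly: observe that any tree-of-$\mathbb{P}^1$ cyclic neighborhood of an $I_N$ curve is necessarily a chain, invoke Proposition~\ref{prop:compN} to feed Theorem~\ref{thm:main:cov}, and then compute $N_{1,\gamma}$ from (\ref{formula2}) together with Proposition~\ref{prop:compN}. Your anticipated obstacle dissolves once you notice that the $0$-super rigidity (resp.\ surface-type) hypothesis in Proposition~\ref{prop:compN} is imposed on the \emph{original} pair $C\subset U$, not on $C'\subset U'$, so it holds for every chain cyclic neighborhood automatically and the enumeration of reduced subchains of $I_{mN}$ goes through without any extra check.
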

In particular, the invariant $N_{n, a_1[C_1]+\cdots +a_N[C_N]}$
satisfies the formula (\ref{form:mult:loc}). 
\begin{proof}
Let $(C', U')$ be a cyclic neighborhood of 
$C \subset U$. 
Then the construction of cyclic coverings 
in Subsection~\ref{subsec:cyclic} shows that, 
if $C'$ is a tree of $\mathbb{P}^1$, 
then it must be a chain of $\mathbb{P}^1$. 
By Proposition~\ref{prop:compN}, the assumptions in 
Theorem~\ref{thm:main:cov} are satisfied, hence 
the invariant $N_{n, a_1[C_1]+ \cdots +a_{N}[C_N]}$
satisfies the formula (\ref{form:mult:loc}). 
It is enough to compute 
$N_{1, a_1[C_1]+ \cdots +a_{N}[C_N]}$, 
which follows from 
the computation on cyclic neighborhoods in Proposition~\ref{prop:compN}
and the comparison formula under 
the covering (\ref{formula2}).  
For instance if $C\subset U$ is $0$-super rigid, then 
it easily follows that 
\begin{align*}
N_{1, a_1[C_1]+ \cdots +a_{N}[C_N]}
=\left\{ \begin{array}{cc} 
N, & a_1= \cdots =a_N=1, \\
0, & \mbox{ otherwise. }
\end{array}  \right. 
\end{align*}
The case of surface type is similar. 
\end{proof}

\subsection{Local generalized DT invariants on 
irreducible nodal rational curves}
In this subsection, using Lemma~\ref{N:primitive}, Proposition~\ref{prop:compN}
and Theorem~\ref{thm:main:cov}, we 
study the local multiple cover formula 
of generalized DT invariants on 
irreducible nodal curves. 
We have the following result:
\begin{thm}\label{prop:prime}
Let $C$ be an irreducible rational curve
with at worst nodal singularities, 
embedded into a Calabi-Yau 3-fold $X$. 
For a sufficiently small analytic 
neighborhood $C\subset U \subset X$, 
suppose that it is $0$-super rigid or 
of surface type. 
Moreover assume that 
any cyclic neighborhood $C' \subset U'$
with $C'$ tree of $\mathbb{P}^1$
satisfies the condition of Conjecture~\ref{conj:crit}. 
The for any prime number
$p$, the invariant $N_{n, p[C]}$
satisfies the formula (\ref{form:mult:loc}). 
\end{thm}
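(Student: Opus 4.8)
The plan is to apply Theorem~\ref{thm:main:cov} with $\gamma=p[C]$. So it suffices to verify that for every cyclic neighborhood $(C'\subset U')\stackrel{\sigma'}{\to}(C\subset X)$ with $C'$ a tree of $\mathbb{P}^1$ the two conditions of Theorem~\ref{thm:main:cov} hold: that $C'\subset U'$ satisfies the condition of Conjecture~\ref{conj:crit}, and that $N_{n,\gamma'}(U')$ satisfies the formula (\ref{mult:U'}) for every $\gamma'\in H_2(C',\mathbb{Z})$ with $\sigma'_{\ast}\gamma'=p[C]$. The first is exactly a hypothesis of the theorem, so all the work goes into the second.

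First I would pin down the possible classes $\gamma'$. Since $C$ is irreducible, an induction along the tower (\ref{unrami}) shows that every irreducible component of $C'$ maps birationally onto $C$ under $\sigma'$: each cyclic covering $\sigma_{(i)}$ restricts on the relevant curve to an \'etale cover whose components are copies of partial normalizations, as in (\ref{Cxi})--(\ref{etale:C}), and partial normalizations as well as open immersions preserve the property of mapping birationally onto $C$. Writing $\gamma'=\sum_{i=1}^{N}a_i[C'_i]$, this forces $\sum_i a_i=p$. As in the opening of the proof of Theorem~\ref{thm:main:cov}, we may assume $C_{\gamma'}$ is connected and, after shrinking $U'$ to a small neighborhood of $C_{\gamma'}$, that $C_{\gamma'}=C'$ --- otherwise $N_{1,\gamma'/k}(U')$, and hence both sides of (\ref{mult:U'}), vanish. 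Thus all $a_i\ge 1$.

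Now the primality of $p$ does the job. If $N=1$, then $C'$ is a single $\mathbb{P}^1$, so $C'\subset U'$ is in particular a chain of $\mathbb{P}^1$ and $\gamma'=p[C']$; Proposition~\ref{prop:compN} then gives directly that $N_{n,\gamma'}(U')$ satisfies (\ref{mult:U'}). If $N\ge 2$, then from $\sum_i a_i=p$ with all $a_i\ge 1$ we get $\gcd(a_1,\dots,a_N)=1$: any common divisor $d$ divides $p$, and $d=p$ is impossible since it would force one $a_i=p$ and the rest $0$. Hence $\gamma'$ is primitive, and Lemma~\ref{N:primitive}, applied to $C'\subset U'$ (its proof uses only the construction of twisted stability and the independence of the invariants on the stability parameter, which are available on cyclic neighborhoods under Conjecture~\ref{conj:crit}), shows that $N_{n,\gamma'}(U')$ does not depend on $n$; since $\gamma'$ is primitive the only $k$ with $k\mid(n,\gamma')$ is $k=1$, so (\ref{mult:U'}) reduces to the identity $N_{n,\gamma'}(U')=N_{1,\gamma'}(U')$. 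This verifies the second condition of Theorem~\ref{thm:main:cov}, and the theorem follows. (One can also check, as in the proof of Theorem~\ref{thm:typeI}, that when $C$ is irreducible every such $C'$ is already a chain of $\mathbb{P}^1$, so that Proposition~\ref{prop:compN} applies in all cases; the argument above avoids needing this.)

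The heart of the matter specific to prime $p$ is the observation that, because $C$ is irreducible and components of cyclic covers map birationally onto $C$, any class pushing forward to $p[C]$ on a tree with at least two components is automatically primitive --- so all the ``multiplicity $>1$'' content of the multiple cover formula is concentrated in the single-$\mathbb{P}^1$ case handled by Proposition~\ref{prop:compN}. The main technical point to be careful about is that the formal properties used freely on $X$ --- the vanishing of the invariants on disconnected supports and the stability-parameter-independence behind Lemma~\ref{N:primitive} --- transfer to the invariants $N_{n,\gamma'}(U')$ on cyclic neighborhoods; this is precisely where the assumed validity of Conjecture~\ref{conj:crit} is needed.
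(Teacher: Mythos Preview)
Your proof is correct and follows essentially the same route as the paper: reduce via Theorem~\ref{thm:main:cov}, observe that $\sigma'_{\ast}\gamma'=p[C]$ forces $\sum a_i'=p$, then split into the case of a single $\mathbb{P}^1$ (handled by Proposition~\ref{prop:compN}) and the case of at least two nonzero $a_i'$ (where primality forces $\gamma'$ primitive and Lemma~\ref{N:primitive} applies). Your parenthetical claim that $C'$ is always a chain when $C$ is irreducible is not obviously true once $C$ has more than one node, but as you note, the argument does not rely on it.
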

\begin{proof}
Let $(C'\subset U') \stackrel{\sigma'}{\to} (C\subset X)$ be a cyclic
neighborhood of $C\subset X$ with 
$C'$ tree of $\mathbb{P}^1$.
By Theorem~\ref{thm:main:cov}, 
it is enough to show that
the invariant $N_{n, \gamma'}(U')$ satisfies the formula (\ref{mult:U'})
for any $n\in \mathbb{Z}$ and $\gamma' \in H_2(C', \mathbb{Z})_{>0}$
with $\sigma'_{\ast}\gamma'=p[C]$. 
Let $C_1', \cdots, C_N'$ be the irreducible 
components of $C'$, and we write 
$\gamma' \in H_2(C', \mathbb{Z})_{>0}$
as $\gamma'=\sum_{i=1}^{N} a_i'[C_i']$
for $a_i' \in \mathbb{Z}_{\ge 0}$.  
Then $\sigma_{\ast}'\gamma'=p[C]$ is equivalent to that 
\begin{align}\label{a:prime}
a_1' + \cdots + a_N'=p.
\end{align}
Suppose that there are at least two 
$1\le i\le N$ with $a_i'>0$. Then, 
since $p$ is a prime number,
the equality (\ref{a:prime}) implies 
that  
\begin{align*}
\mathrm{g.c.d.}(a_1', \cdots, a_N')=1,
\end{align*} 
i.e. $\gamma' \in H_2(C', \mathbb{Z})$ is primitive. 
Then the invariant $N_{n, \gamma'}(U')$ satisfies 
(\ref{mult:U'}) by Lemma~\ref{N:primitive}. 
If there is only one $1\le i\le N$ with 
$a_i'>0$, then we can assume that $C'$ is a single 
$\mathbb{P}^1$. In this case, the invariant $N_{n, \gamma'}(U')$
satisfies the formula (\ref{mult:U'}) by Proposition~\ref{prop:compN}. 
\end{proof}
In the situation of the above 
theorem, we are not able to prove 
the condition of Conjecture~\ref{conj:crit} on 
cyclic neighborhoods. However we 
don't have to take care of this condition 
in the Euler characteristic version. 
We have the following:
\begin{thm}\label{prop:prime2}
Let $C$ be an irreducible rational curve
with at worst nodal singularities, 
embedded into a Calabi-Yau 3-fold $X$. 
For a sufficiently small analytic 
neighborhood $C\subset U \subset X$, 
suppose that it is 
of surface type. 
The for any prime number
$p$, the invariant $N_{n, p[C]}^{\chi}$
satisfies the formula (\ref{mult:chi}). 
\end{thm}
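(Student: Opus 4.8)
The plan is to run the proof of Theorem~\ref{prop:prime} essentially verbatim, with Theorem~\ref{thm:main:cov} replaced by its Euler characteristic counterpart Theorem~\ref{thm:cov:Eu}; a pleasant consequence is that the hypothesis involving Conjecture~\ref{conj:crit} disappears entirely, so nothing has to be assumed about cyclic neighborhoods. Concretely, by Theorem~\ref{thm:cov:Eu} it suffices to show that for every cyclic neighborhood $(C'\subset U')\stackrel{\sigma'}{\to}(C\subset X)$ with $C'$ a tree of $\mathbb{P}^1$ and every $\gamma'\in H_2(C',\mathbb{Z})_{>0}$ with $\sigma'_{\ast}\gamma'=p[C]$, the local Euler characteristic invariant $N^{\chi}_{n,\gamma'}(U')$ satisfies the multiple cover formula (\ref{mult:chi2}) on $U'$.

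First I would record that, by the explicit construction of cyclic coverings in Subsection~\ref{subsec:cyclic}, every irreducible component $C_i'$ of such a tree $C'$ maps birationally onto the irreducible curve $C$, so that $\sigma'_{\ast}[C_i']=[C]$ as one-cycles. Writing $\gamma'=\sum_{i=1}^{N}a_i'[C_i']$ with $a_i'\in\mathbb{Z}_{\ge 0}$, the condition $\sigma'_{\ast}\gamma'=p[C]$ is therefore equivalent to $\sum_{i=1}^{N}a_i'=p$. Since $p$ is prime, exactly one of two situations occurs. Either at least two of the $a_i'$ are positive, in which case $\gcd(a_1',\dots,a_N')=1$ and $\gamma'$ is a primitive class; then the Euler characteristic analogue of Lemma~\ref{N:primitive}, proved by the same twisted-stability argument (which invokes only the isomorphisms of moduli stacks $F\mapsto F\otimes\oO_{U'}(D)$ and the independence of the naive invariant on the stability parameters, and never refers to the Behrend function), shows that $N^{\chi}_{n,\gamma'}(U')$ is independent of $n$, so (\ref{mult:chi2}) holds trivially because the only divisor of a primitive class is $k=1$. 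Or exactly one $a_i'$ is positive, so $\gamma'=p[C_i']$; replacing $C'$ by the support $C_i'\cong\mathbb{P}^1$ and $U'$ by a small analytic neighborhood, the surface-type hypothesis on $C\subset U$ passes to $C_i'\subset U'$, and a single smooth rational curve in a surface-type neighborhood is a $(-2,0)$-curve, i.e. $U'$ is analytically a neighborhood of the exceptional $\mathbb{P}^1$ in $(\text{minimal resolution of }\mathbb{C}^2/\mathbb{Z}_2)\times\mathbb{C}$, which is of ADE ($A_1$) type. Here one invokes the Euler characteristic version of Proposition~\ref{prop:compN}(ii): the $A_N$-computations underlying~\cite{GY} and~\cite{TodK3} give $N^{\chi}_{n,m[C_i']}(U')=-1/m^2$ when $m\mid n$ and $0$ otherwise, which visibly satisfies (\ref{mult:chi2}). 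Note that, exactly as in the proof of Proposition~\ref{prop:prime}, Proposition~\ref{prop:compN}(ii) is needed only in this single-component case, so no hypothesis on whether the trees arising in the induction are ADE is required. Combining the two cases with Theorem~\ref{thm:cov:Eu} finishes the proof.

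The hard part is the last invocation: one genuinely must establish the Euler characteristic multiple cover formula for the $(-2,0)$-curve (equivalently, for chains of $(-2)$-curves in surface-type neighborhoods). This cannot be deduced from the Behrend-weighted Proposition~\ref{prop:compN}(ii) by any general argument --- indeed Example~\ref{exam:counter} shows that for a $(-1,-1)$-curve the naive formula fails, which is precisely why the present theorem excludes the $0$-super rigid case and retains only the surface-type hypothesis. The verification must instead pass through the preprojective-algebra / $A_N$-quiver description of sheaves on $(\text{ADE resolution})\times\mathbb{C}$ used in the proof of Proposition~\ref{prop:compN}(ii), checking that the relevant moduli spaces are stratified by locally closed pieces whose Euler characteristics reproduce the Gopakumar--Vafa form unweighted. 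The remaining ingredients --- the Euler characteristic analogue of Lemma~\ref{N:primitive} (which rests on the parameter-independence of the naive invariant, a statement already available from the wall-crossing machinery of~\cite{JS}), the deformation-invariance statements, and the stack-isomorphism bookkeeping along the cyclic covers --- are Behrend-function-free and go through routinely, exactly as the easier parts of Theorem~\ref{thm:cov:Eu}.
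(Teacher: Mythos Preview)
Your proposal is correct and follows essentially the same route as the paper: reduce via Theorem~\ref{thm:cov:Eu}, handle the primitive case by the Euler-characteristic analogue of Lemma~\ref{N:primitive}, and treat the single-component case by the $A_1$ surface-type computation. The only minor discrepancy is bibliographic: for the single-$\mathbb{P}^1$ verification the paper appeals to \cite[Theorem~1.3]{Tolim2} together with the Euler-characteristic version of \cite[Theorem~1.2]{GY}, rather than to \cite{TodK3}.
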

\begin{proof}
The same proof of Theorem~\ref{prop:prime}
works, using Theorem~\ref{thm:cov:Eu} instead of Theorem\ref{thm:main:cov}. 
In the notation of the proof of Theorem~\ref{prop:prime}, 
suppose that there are at least two $1\le i\le N$
with $a_i'>0$. Then 
the same proof of Lemma~\ref{N:primitive}
shows the equality $N_{n, \gamma'}^{\chi}(U')=N_{1, \gamma'}^{\chi}(U')$. 
Otherwise, we may assume that $C'$ is a single 
$\mathbb{P}^1$. In this case, 
the invariant $N_{n, p[C']}^{\chi}(U')$ can 
be checked to satisfy (\ref{mult:chi2})
by comparing the formula in~\cite[Theorem~1.3]{Tolim2}
and the Euler characteristic version of the 
formula in~\cite[Theorem~1.2]{GY}. 
\end{proof}

\begin{rmk}
The result of Theorem~\ref{prop:prime2} is not 
true for a $0$-super rigid case, 
as we discussed in Example~\ref{exam:counter}. 
\end{rmk}

In the situation of Theorem~\ref{prop:prime2}, we 
can say more for the invariants $N_{n, m[C]}^{\chi}$
when $m$ is small:
\begin{lem}\label{mult:small}
In the situation of Theorem~\ref{prop:prime2}, 
the invariant $N_{n, m[C]}^{\chi}$ 
satisfies the formula (\ref{mult:chi})
when $m\le 10$. 
\end{lem}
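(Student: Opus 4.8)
The plan is to feed $\gamma=m[C]$ into Theorem~\ref{thm:cov:Eu} and then reduce everything to a finite list of explicit computations on ADE configurations. By Theorem~\ref{thm:cov:Eu} it is enough to verify the formula (\ref{mult:chi2}) for every cyclic neighborhood $(C'\subset U')\stackrel{\sigma'}{\to}(C\subset X)$ with $C'$ a tree of $\mathbb{P}^1$ and every $\gamma'\in H_2(C',\mathbb{Z})$ with $\sigma'_{\ast}\gamma'=m[C]$. Write $C'=C_1'\cup\cdots\cup C_N'$ and $\gamma'=\sum_{i=1}^{N}a_i'[C_i']$; after shrinking $U'$ we may assume each $a_i'\ge 1$, and since each $C_i'$ maps birationally onto $C$ under $\sigma'$ (by the construction in Subsection~\ref{subsec:cyclic}) we get $\sum_{i=1}^{N}a_i'=m\le 10$, so in particular $N\le m\le 10$. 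Moreover a cyclic cover of a surface-type neighborhood is again of surface type, so $U'\cong U_0'\times\Delta$ with $C'\subset U_0'\times\{0\}$; since $\sigma'$ is a covering map its canonical bundle is trivial, hence so is that of $U_0'$, and adjunction then gives $(C_i')^2=-2$ in $U_0'$. Thus $C'$ is a connected tree of $(-2)$-curves, which (being contractible inside the small neighborhood $U_0'$) has negative definite intersection form, i.e.\ is of ADE type; for these $C'$ the condition of Conjecture~\ref{conj:crit} holds by Proposition~\ref{prop:compN}(ii) (although, as in the proof of Theorem~\ref{prop:prime2}, this is not even needed for the Euler-characteristic statement).

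Two of the three resulting cases are immediate. If $N=1$, then $C'$ is a single $(-2)$-curve with $\gamma'=m[C_1']$, and exactly as in the proof of Theorem~\ref{prop:prime2} one obtains (\ref{mult:chi2}) for \emph{every} degree by comparing \cite[Theorem~1.3]{Tolim2} with the Euler-characteristic version of \cite[Theorem~1.2]{GY}. If $N\ge 2$ and $\gcd(a_1',\dots,a_N')=1$, then $\gamma'$ is primitive, so the Euler-characteristic version of Lemma~\ref{N:primitive} (whose proof, as noted in the proof of Theorem~\ref{prop:prime2}, carries over verbatim) gives $N_{n,\gamma'}^{\chi}(U')=N_{1,\gamma'}^{\chi}(U')$; since $k\mid(n,\gamma')$ forces $k\mid\gcd(a_i')=1$, only the $k=1$ term survives on the right-hand side of (\ref{mult:chi2}) and the formula holds trivially.

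It remains to treat $N\ge 2$ with $d:=\gcd(a_1',\dots,a_N')\ge 2$. Here $m=\sum a_i'\ge Nd\ge 2d$, so $d\in\{2,3,4,5\}$ and $d$ is a proper divisor of $m$; together with $N\le m\le 10$ this leaves only finitely many possibilities for the ADE type of $C'$ (necessarily some $A_\ell$ or $D_\ell$ with $\ell\le 10$, or $E_6,E_7,E_8$) and for the class $\gamma'$ (with $\sum a_i'\le 10$). For each of these, $U'$ is an analytic neighborhood of $C'$ inside $V\times\mathbb{C}$ with $V$ the minimal resolution of a Kleinian singularity, and the Euler-characteristic invariants $N_{n,\gamma'}^{\chi}(U')$ together with the genus-zero invariants $N_{1,\gamma'/k}^{\chi}(U')$ are computed explicitly by the Euler-characteristic versions of the results of \cite{GY} (supplemented by \cite{Tolim2}); substituting these into the two sides of (\ref{mult:chi2}) and checking the numerical identity in each of the finitely many cases finishes the proof.

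The crux is this last step. One must extract from \cite{GY} the Euler-characteristic invariants of all the relevant ADE configurations of total degree at most $10$ — in particular for the non-primitive classes $\gamma'=d\gamma''$, where the Euler-characteristic multiple cover formula is genuinely nontrivial and, as Example~\ref{exam:counter} shows, fails outright for $(-1,-1)$-curves — together with the (routine but necessary) check that only ADE, and not affine, configurations arise in the range $m\le 10$; then verify the resulting finitely many numerical identities. It is precisely the bound $m\le 10$ that keeps the list of ADE data and curve classes finite and within the range covered by the cited computations; no new idea beyond Theorem~\ref{thm:cov:Eu} is required, only bookkeeping.
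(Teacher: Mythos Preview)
Your overall strategy matches the paper's: apply Theorem~\ref{thm:cov:Eu}, write $\gamma'=\sum a_i'[C_i']$ with $\sum a_i'=m$, dispose of the primitive case via Lemma~\ref{N:primitive}, and reduce the non-primitive case to explicit computations on configurations of $(-2)$-curves. The handling of $N=1$ and of the primitive case is fine.

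The gap is your claim that $C'$ is always of ADE type. Your argument for this is circular: you say $C'$ is ``contractible inside the small neighborhood $U_0'$'' and hence has negative definite intersection form, but Grauert's criterion runs the other way --- contractibility is a \emph{consequence} of negative definiteness, not a hypothesis you can invoke. Nothing in the construction of cyclic neighborhoods forces $C'$ to be contractible. And indeed your later parenthetical, that one must perform the ``routine but necessary check that only ADE, and not affine, configurations arise in the range $m\le 10$'', is precisely where the argument breaks: that check \emph{fails} at $m=10$. With $d=\gcd(a_i')=2$ and $N=5$ one can get the star with one central component meeting four leaves (the $\widetilde{D}_4$ tree), which is affine, not ADE; here $a_i'=2$ for all $i$ and $\sum a_i'=10$. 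For $m<10$ your dichotomy does work, since $d\ge 2$ forces $N\le 4$ and every tree on at most four vertices is of type $A$ or $D_4$.

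The paper treats this $\widetilde{D}_4$ exception at $m=10$ separately: one checks by Riemann--Roch that there are no stable sheaves $F'$ with $[F']=2[C']$, and then a direct computation in the style of \cite[Proposition~6.9]{TodK3} verifies (\ref{mult:chi2}) for $N_{n,2[C']}^{\chi}(U')$. You need to insert this case; the ADE machinery from \cite{GY} does not cover it.
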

\begin{proof}
First suppose that $m<10$. 
In the notation of the 
proof of Theorem~\ref{prop:prime}, 
suppose that 
$\gamma'=\sum_{i=1}^{N}a_i'[C_i']$ 
satisfies $\sigma_{\ast}\gamma'=m[C]$, 
i.e. 
\begin{align*}
a_1' + \cdots +a_N'=m.
\end{align*}
Then we
have either $\mathrm{g.c.d.}(a_1', \cdots, a_N')=1$
or $\gamma$ is supported on an
ADE configuration of $\mathbb{P}^1$. 
As in the proof of Theorem~\ref{prop:prime2}, 
using the results in~\cite{GY}, \cite{Tolim2}, 
it is straightforward to check 
the Euler characteristic version of 
the results in Proposition~\ref{prop:compN} (ii).
Therefore the result follows from
the Euler characteristic 
version of Lemma~\ref{N:primitive}
and Theorem~\ref{thm:cov:Eu}. 
When $m=10$, we have the  
following exceptional case: 
$N=5$, 
$C'_1, \cdots, C_5'$ satisfy
\begin{align*}
C_1' \cdot C_i'=1, \ (i\ge 2), \quad
C_i' \cdot C_j'=0, \ (i, j \ge 2). 
\end{align*} 
and $\gamma'=2[C']$. 
In this case, $C_1', \cdots, C_5'$ is not 
an ADE configuration.
However we can check that $N_{n, 2[C']}^{\chi}(U')$
satisfies (\ref{mult:chi2})
by a direct calculation as in
~\cite[Proposition~6.9]{TodK3}. 
In fact, 
by the Riemann-Roch theorem, 
one can show that there is no stable 
sheaf $F'$ on $U'$ with $[F']=\gamma'$. 
Then a computation similar to~\cite[Proposition~6.9]{TodK3}
works, whose detail is
left to the readers.    
\end{proof}

\begin{rmk}\label{rmk:small}
The result of Lemma~\ref{mult:small} can be 
generalized as follows. 
Let $C$ be a reduced (not necessary irreducible)
rational
 curve
with at worst nodal singularities, 
 which is embedded into a 
Calabi-Yau 3-fold $X$,
 Suppose that a sufficiently 
small analytic neighborhood $C\subset U\subset X$
is of surface type. Let $C_1, \cdots, C_N$
be the irreducible components of $C$. Then 
the invariant $N_{n, a_1[C_1]+\cdots +a_N[C_N]}^{\chi}$
satisfies the formula (\ref{mult:chi})
if $a_1, \cdots, a_N$ satisfies
\begin{align*}
a_1 + \cdots +a_N \le 10.
\end{align*}
The proof is same as in Lemma~\ref{mult:small}. 
\end{rmk}

\subsection{Euler characteristic 
invariants on K3 surfaces}
Let $S$ be a smooth projective 
K3 surface over $\mathbb{C}$, 
i.e.
\begin{align*}
K_S=\oO_S, \quad H^1(S, \oO_S)=0, 
\end{align*}
and 
$X$ is the total space of the canonical 
line bundle on $S$, i.e. 
\begin{align*}
X=S\times \mathbb{C}.
\end{align*}
In~\cite{TodK3}, we established a formula 
relating Euler characteristic of 
the moduli space of 
PT stable pairs and Joyce type 
Euler characteristic invariants 
counting semistable sheaves on the fibers of the projection 
\begin{align*}
X=S\times \mathbb{C} \to \mathbb{C}.
\end{align*}
For a vector 
\begin{align*}
v=(r, \beta, n) \in H^0(S, \mathbb{Z}) 
\oplus H^2(S, \mathbb{Z}) \oplus H^4(S, \mathbb{Z}),
\end{align*}
the latter invariant was denoted by 
\begin{align*}
J(r, \beta, n) \in \mathbb{Q}. 
\end{align*}
The invariant $J(v)$ is the Euler characteristic version of 
DT type invariants on $X$, counting 
$p^{\ast}\omega$-semistable sheaves $F \in \Coh(X)$, 
with compact supports, and satisfies 
\begin{align*}
\ch(p_{\ast}F) \sqrt{\td_S} =v. 
\end{align*}
Here $p\colon X=S\times \mathbb{C} \to S$ is the first
projection, and $\omega$ is an ample divisor on $S$. 
In the notation of Subsection~\ref{subsec:Euler}, 
after identifying $H^2(S, \mathbb{Z})$
with $H_2(X, \mathbb{Z})$, we have 
\begin{align}\label{J=N}
J(0, \beta, n)=N_{n, \beta}^{\chi}. 
\end{align}
If $v\in H^{\ast}(S, \mathbb{Z})$ is 
a primitive algebraic class, then 
$J(v)$ is written as 
\begin{align}\label{J:primitive}
J(v)=\chi(\Hilb^{(v, v)/2 +1}(S)). 
\end{align}
(cf.~\cite[Equation~(65)]{TodK3}.)
Here $\Hilb^m(S)$ is the Hilbert scheme of $m$-points in 
$S$ and $(\ast, \ast)$ is the Mukai inner product, 
\begin{align*}
((r_1, \beta_1, n_1), (r_2, \beta_2, n_2))
=\beta_1 \cdot \beta_2 -r_1 n_2 -r_2 n_1.
\end{align*}
The RHS of (\ref{Hilb}) is determined 
by the G$\ddot{\rm{o}}$ttsche's formula~\cite{Got},
\begin{align*}
\sum_{m\ge 0} \chi(\Hilb^{m}(S))q^d 
= \prod_{m\ge 1} \frac{1}{(1-q^m)^{24}}. 
\end{align*}
If $v$ is not necessary primitive, 
we proposed the following multiple 
cover conjecture
in~\cite[Conjecture~1.3]{TodK3}:
\begin{conj}{\bf (\cite[Conjecture~1.3]{TodK3})}
If $v\in H^{\ast}(S, \mathbb{Z})$
is an algebraic class, 
we have the equality, 
\begin{align}\label{Hilb}
J(v)=\sum_{k\ge 1, k|v}
\frac{1}{k^2} \chi(\Hilb^{(v/k, v/k)/2 +1}(S)). 
\end{align}
\end{conj}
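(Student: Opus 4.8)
The plan is to deduce the formula (\ref{Hilb}) from the Euler characteristic multiple cover formula (\ref{mult:chi}) for local invariants on $X = S \times \mathbb{C}$, so that (\ref{Hilb}) holds in precisely those cases where (\ref{mult:chi}) has already been verified above; in particular whenever the divisibility of $v$ is a prime number or is at most $10$, using Theorem~\ref{prop:prime2}, Lemma~\ref{mult:small} and Remark~\ref{rmk:small}.

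First I would reduce to the case $v = (0, \beta, n)$. Recall from~\cite{TodK3} that $J(v)$ is invariant under the derived autoequivalences of $D^b\Coh(S)$ (Fourier--Mukai transforms), which act on $H^{\ast}(S,\mathbb{Z})$ as isometries of the Mukai lattice and hence preserve both $(v,v)$ and the divisibility of $v$; since the right hand side of (\ref{Hilb}) is, by the G\"ottsche formula~\cite{Got}, a function of $(v,v)$ and the divisibility alone, and since any algebraic class can be moved by such a transform to one of rank zero, it suffices to prove (\ref{Hilb}) for $v = (0,\beta,n)$. By (\ref{J=N}) this is the equality
\begin{align*}
N_{n,\beta}^{\chi} = \sum_{k\ge 1,\ k|(n,\beta)} \frac{1}{k^2}\,\chi\bigl(\Hilb^{\beta^2/(2k^2)+1}(S)\bigr).
\end{align*}
Since $N_{n,\beta}^{\chi} = J(0,\beta,n)$ depends only on $\beta^2$ and the divisibility $\ell$ of $(n,\beta)$ — again using the deformation invariance and Mukai lattice symmetries of $J$ — I would then pick a K3 surface $S$ together with a primitive class $\beta_0$ realised by an irreducible rational curve $C \subset S$ with at worst nodal singularities (which exists, with $\beta_0^2/2+1$ nodes, by the theory of rational curves on K3 surfaces), and write $\beta = \ell\beta_0 = \ell[C]$.

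Viewing $C \subset S\times\{0\} \subset X = S\times\mathbb{C}$, a sufficiently small analytic neighbourhood $C \subset U \subset X$ is of surface type in the sense of Definition~\ref{def:rigid:surface}; therefore, when $\ell$ is a prime number Theorem~\ref{prop:prime2} applies, and when $\ell \le 10$ Lemma~\ref{mult:small} applies (both via Theorem~\ref{thm:cov:Eu}), giving the formula (\ref{mult:chi}) for $N_{n,\ell[C]}^{\chi}$. Finally, $N_{1,\beta/k}^{\chi} = J(0,(\ell/k)\beta_0,1)$ by (\ref{J=N}), and the vector $(0,(\ell/k)\beta_0,1)$ is primitive with Mukai square $(\ell/k)^2\beta_0^2 = \beta^2/k^2$, so (\ref{J:primitive}) identifies this invariant with $\chi(\Hilb^{\beta^2/(2k^2)+1}(S))$. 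Since $k\mid(n,\beta) \Leftrightarrow k\mid(n,\ell) \Leftrightarrow k\mid v$ (because $\beta_0$ is primitive), substituting these identifications into $N_{n,\beta}^{\chi} = \sum_k k^{-2}N_{1,\beta/k}^{\chi}$ yields exactly (\ref{Hilb}) for such $v$, and then for all $v$ of the same $(v,v)$ and divisibility by the reduction above.

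The main obstacle is that the Euler characteristic multiple cover formula (\ref{mult:chi}) is itself open beyond the cases already treated, so this argument proves (\ref{Hilb}) only for $v$ of prime or small divisibility, not in general; enlarging the range of validity of (\ref{mult:chi}) on trees of $\mathbb{P}^1$, which is the hypothesis fed into Theorem~\ref{thm:cov:Eu}, would immediately enlarge the range in (\ref{Hilb}). A secondary point requiring care is the reduction step: one must check that the Euler characteristic invariants $N_{n,\beta}^{\chi}$ of compactly supported one-dimensional sheaves on the non-compact threefold $S\times\mathbb{C}$ are genuinely constant in each deformation class of $(S,\beta)$ — so that passing to a K3 carrying a nodal rational curve of class $\beta_0$ is legitimate — and that, by Lemma~\ref{lem:higher} and the purity of semistable sheaves, only sheaves supported near such a curve contribute, so that the surface-type hypothesis is genuinely available; for primitive $\beta_0$ this follows from the wall-crossing arguments of~\cite{TodK3}, but the non-primitive multiples $\ell[C]$ need a little extra attention.
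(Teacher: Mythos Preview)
This statement is a conjecture, not a theorem; the paper does not prove it. What the paper establishes is Theorem~\ref{thm:K3}, which verifies (\ref{Hilb}) for $v=(0,mc_1(L),n)$ on a generic polarised K3 surface when $m\le 10$ or $m$ is prime. Your proposal correctly identifies this restricted range as the realistic target, but the argument you give for it has a genuine gap.

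The gap is the passage from the local invariant $N_{n,\ell[C]}^{\chi}$ attached to a \emph{single} irreducible nodal rational curve $C$ to the global invariant $N_{n,\beta}^{\chi}=J(0,\beta,n)$. These are different objects, and Lemma~\ref{lem:glo:loc} (in its Euler characteristic form, as used in the proof of Theorem~\ref{thm:K3}) requires the local formula (\ref{mult:chi}) for \emph{every} one-cycle $\gamma$ with $i_\ast\gamma=\beta$, not just for $\gamma=\ell[C]$. On a K3 with $\Pic(S)=\mathbb{Z}\cdot\beta_0$ there are many rational curves in the primitive class $\beta_0$ (their weighted count is precisely $\chi(\Hilb^{\beta_0^2/2+1}(S))$), and there may also be irreducible rational curves in the classes $k\beta_0$ for $k\ge 2$. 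The relevant cycles are therefore of the form $\gamma=\sum a_i[C_i]$ with $C_i\in\lvert m_iL\rvert$ and $\sum a_im_i=m$. This is exactly what the proof of Theorem~\ref{thm:K3} handles: its three-way case split (according to whether $\sum a_i\le 10$, or $\gcd(a_1,\ldots,a_N)=1$, or $N=1$ with $a_1=m$) is how each such $\gamma$ gets routed to Remark~\ref{rmk:small}, Lemma~\ref{N:primitive}, or Theorem~\ref{prop:prime2} respectively. Your choice of a single $C$ skips this step, and the identity $N_{1,\beta/k}^{\chi}=\chi(\Hilb^{\beta^2/(2k^2)+1}(S))$ you invoke at the end is a statement about the \emph{global} invariant, which cannot be read off from the local formula for one curve alone.

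A secondary point: your reduction to rank zero assumes that every algebraic Mukai vector can be moved to one of the form $(0,\beta,n)$ by a Hodge isometry. The paper does not claim this; the remark following Theorem~\ref{thm:K3} only observes that (\ref{Hilb}) for the classes $(0,mc_1(L),n)$ propagates to their orbit under Hodge isometries via (\ref{Jgv=Jv}), without asserting that this orbit contains all algebraic classes.
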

Using the results in this paper, we can 
give some evidence of the conjecture. 
Below we say a (not necessary reduced)
curve $C \subset S$ is 
\textit{rational}, or has
\textit{at worst nodal singularities} 
if the reduced curve $C^{\rm{red}}$
satisfies the corresponding property. 
We have the following theorem:
\begin{thm}\label{thm:K3}
Let $S$ be a smooth projective K3 surface
over $\mathbb{C}$ and $X=S\times \mathbb{C}$. 
Suppose that $\Pic(S)$ is generated by an 
ample line bundle $L$ on $S$, such that 
any rational member in $\lvert L \rvert$ 
has at worst nodal singularities. 
Then the invariant $J(0, m c_1(L), n)$
satisfies the formula (\ref{Hilb}) 
if $m\le 10$ or $m$ is a prime number. 
In particular, if $L^2=2d-2$
for $d\in \mathbb{Z}$ and $p$ is a prime number, 
we have 
\begin{align}\label{Jformula:pd}
J(0, p c_1(L), 0)
= \chi(\Hilb^{(d-1)p^2 +1}(S))
+\frac{1}{p^2} \chi(\Hilb^{d}(S)). 
\end{align}
\end{thm}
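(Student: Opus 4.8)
The strategy is to translate the statement into the local Euler-characteristic multiple cover formula for surface-type neighbourhoods and then to feed it into Theorem~\ref{thm:cov:Eu}.

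First I would set up the dictionary. By (\ref{J=N}) one has $J(0,mc_1(L),n)=N^{\chi}_{n,mc_1(L)}$, the Euler-characteristic invariant on $X=S\times\mathbb{C}$. Since $\Pic(S)=\mathbb{Z}L$, for every $\ell\ge 1$ the Mukai vector $(0,\ell c_1(L),1)$ is primitive, so (\ref{J:primitive}) gives $N^{\chi}_{1,\ell c_1(L)}=J(0,\ell c_1(L),1)=\chi(\Hilb^{\ell^2(d-1)+1}(S))$ when $L^2=2d-2$; likewise, because $c_1(L)$ is primitive, both divisibility conditions ``$k\mid v$'' and ``$k\mid(n,mc_1(L))$'' amount to $k\mid\gcd(m,n)$. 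Hence the claimed identity (\ref{Hilb}) for $v=(0,mc_1(L),n)$ is exactly the multiple cover formula (\ref{mult:chi}) for the one-cycle $mc_1(L)$ on $X$, and (\ref{Jformula:pd}) is its specialization to $n=0$, $m=p$, where only the $k=1$ and $k=p$ terms survive.

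Second, I would localize to support curves. A $p^{\ast}\omega$-semistable sheaf on $X$ with class $mc_1(L)$ is supported on $C\times\{t\}$ for an effective one-cycle $C$ of class $mc_1(L)$ on $S$ and a point $t\in\mathbb{C}$; stratifying the moduli stack by the support cycle, and using that in the Euler-characteristic setting the generalized DT invariants are compatible with this stratification (as in \cite{TodK3}, where the Behrend function plays no role), one expresses $N^{\chi}_{n,mc_1(L)}$ as an integral over the Chow variety of $mc_1(L)$ whose value at a cycle $\gamma$ is the local invariant $N^{\chi}_{n,\gamma}$ on an analytic neighbourhood $C^{\mathrm{red}}\subset U\subset X$; the analogous decomposition holds for $N^{\chi}_{1,\ell c_1(L)}$ with $\ell\mid m$, and the locus of $k$-divisible cycles of class $mc_1(L)$ is identified with the Chow variety of $(m/k)c_1(L)$ via $\gamma\mapsto\gamma/k$. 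Consequently it suffices to prove the \emph{pointwise} formula $N^{\chi}_{n,\gamma}=\sum_{k\mid(n,\gamma)}k^{-2}N^{\chi}_{1,\gamma/k}$ for every one-cycle $\gamma$ of class $mc_1(L)$; re-integrating over the Chow variety then yields (\ref{mult:chi}).

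Third, I would dispatch the pointwise formula. The neighbourhood $C^{\mathrm{red}}\subset U\subset X=S\times\mathbb{C}$ is of surface type (Definition~\ref{def:rigid:surface}(ii)), with $U_0$ a neighbourhood of $C^{\mathrm{red}}$ in $S$. If some irreducible component of $C^{\mathrm{red}}$ has geometric genus $\ge 1$, the Euler-characteristic analogue of Lemma~\ref{lem:higher} forces $N^{\chi}_{n,\gamma}=N^{\chi}_{1,\gamma/k}=0$ and the formula is trivial. Otherwise $C^{\mathrm{red}}$ is rational, and, since $\Pic(S)=\mathbb{Z}L$, its irreducible components lie in the systems $|\ell L|$. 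When $m=p$ is prime the analysis is clean: any cycle $\gamma$ of class $pc_1(L)$ producing a nontrivial correction must be exactly $p$-divisible, hence $\gamma=p[C']$ with $C'$ an integral member of $|L|$, which by hypothesis is a nodal rational curve, so Theorem~\ref{prop:prime2} gives the pointwise formula; every other $\gamma$ of class $pc_1(L)$ is primitive and contributes no correction by the Euler-characteristic analogue of Lemma~\ref{N:primitive}. For $m\le 10$ one writes $\gamma=\sum a_i[C_i]$ with $\sum a_i\le m\le 10$; the components $C_i$ lying in $|L|$ are nodal rational by hypothesis, and --- this is the combinatorial heart of the argument, carried out as in \cite{TodK3} --- one checks that in this range every contributing $\gamma$ is supported on a reduced nodal rational curve, so that Remark~\ref{rmk:small} (applicable since $\sum a_i\le 10$), together with Theorem~\ref{thm:cov:Eu} and Proposition~\ref{prop:compN}(ii), settles the pointwise formula, the single exceptional dual graph at $m=10$ being handled by the direct computation already used in Lemma~\ref{mult:small}. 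The main obstacle is precisely this last verification: guaranteeing that, under only the stated hypothesis on $|L|$, no support curve with worse-than-nodal singularities ever produces a nontrivial correction term in the admissible range of $m$.
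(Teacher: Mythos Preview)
Your approach coincides with the paper's: reduce to the local Euler-characteristic formula (\ref{mult:chi}) for each one-cycle $\gamma$ of class $mc_1(L)$, handle it by the trichotomy $\gcd(a_i)=1$ (Lemma~\ref{N:primitive}), $N=1,\ a_1=m,\ m_1=1$ (Theorem~\ref{prop:prime2}), or $\sum a_i\le 10$ (Remark~\ref{rmk:small}), and then globalize via the Euler-characteristic analogue of Lemma~\ref{lem:glo:loc} together with (\ref{J:primitive}). The one divergence is your ``main obstacle'': the paper does not treat nodality of the support as requiring any further work---after invoking Lemma~\ref{lem:higher} to reduce to rational components it simply asserts ``by the assumption, the reduced curve $C=\cup_{i}C_i$ has at worst nodal singularities'' and proceeds---so your caution there, while not unreasonable, goes beyond what the paper itself supplies.
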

\begin{proof}
Let $\gamma$ be a one cycle on 
$X$ whose support is compact. 
We show that $N_{n, \gamma}^{\chi}$
satisfies the formula (\ref{mult:chi})
when the homology class of $\gamma$
coincides with $m c_1(L)$
under the isomorphism 
$H_2(X, \mathbb{Z}) \cong H^2(S, \mathbb{Z})$. 

As in the proof of Theorem~\ref{thm:main:cov}, we may 
assume that $\gamma$ is connected. 
Then $\gamma$ is supported on 
$S\times \{t\}$ for some $t \in \mathbb{C}$, hence 
regarded as $\gamma \in \lvert mL \rvert$.
We write $\gamma$
as 
\begin{align*}
\gamma=a_1 [C_1] +\cdots +a_N[C_N], 
\end{align*}
where $C_1, \cdots, C_N$ are irreducible 
curves in $S \times \{t\}$. 
By Lemma~\ref{lem:higher}, we may assume that all 
$C_i$ have geometric genus zero. 
By the assumption, the reduced curve 
$C=\cup_{i=1}^{N}C_i$ has at worst nodal singularities,  
and there is an analytic neighborhood $C \subset U \subset X$
of surface type. 
Note that $C_i \in \lvert m_i L \rvert$
for some $m_i \in \mathbb{Z}_{\ge 1}$, and we have 
\begin{align*}
a_1 m_1 + \cdots + a_N m_N=m.
\end{align*}
Since $m\le 10$ or $m$ is a prime number, 
either one of the following conditions hold: 

(i) $a_1 + \cdots + a_N \le 10$. 

(ii) $\mathrm{g.c.d.}(a_1, \cdots, a_N)=1$.

(iii) $N=1$, $a_1=m$, $m_1=1$.

In these cases, the formula (\ref{mult:chi}) 
follows from Remark~\ref{rmk:small}, Lemma~\ref{N:primitive}
and Theorem~\ref{prop:prime2}. 

The proof of Lemma~\ref{lem:glo:loc}
in~\cite[Corollary~4.11]{Todpara}
is also applied to the Euler characteristic version
in our situation. 
Consequently, we have the global 
multiple cover formula, 
\begin{align}\label{global:K3}
N_{n, mc_1(L)}^{\chi} =
\sum_{k\ge 1, k|(n, m)}
\frac{1}{k^2} N_{1, mc_1(L)/k}^{\chi}. 
\end{align}
On the other hand, since 
$v=(0, a c_1(L), 1) \in H^{\ast}(X, \mathbb{Z})$ is primitive
for any $a \in \mathbb{Z}$, we have 
\begin{align*}
N_{1, ac_1(L)}^{\chi}
=\chi(\Hilb^{(v, v)/2+1}(S)),
\end{align*}
by (\ref{J:primitive}) and (\ref{J=N}).
Also noting (\ref{J=N}) in the
LHS of (\ref{global:K3}), 
the invariant 
$J(0, m c_1(L), n)$ satisfies the formula (\ref{Hilb}). 
\end{proof}
\begin{rmk}
The assumption of the K3 surface $S$
in Theorem~\ref{thm:K3}
is a genericity condition
for polarized K3 surfaces. 
Namely for $d\in \mathbb{Z}$, let us consider
the moduli space of polarized K3 surfaces
$(S, L)$ with $L^2=2d-2$. 
Then $(S, L)$ satisfies the assumption in Theorem~\ref{thm:K3}
when $(S, L)$ is a general point of the above 
moduli space. 
\end{rmk}

\begin{rmk}
In~\cite[Proposition~6.9]{TodK3}, 
we proved the 
formula (\ref{Jformula:pd})
for $d=2$ and $p=2$. 
Even in this case, 
it is not obvious to compute the
LHS of (\ref{Jformula:pd}) directly. 
Indeed in~\cite[Proposition~6.9]{TodK3}, 
we used the result by Mozgovoy~\cite{Moz}. 
The proof of Theorem~\ref{thm:K3}
does not require the result of~\cite{Moz}, 
and can be applied to more general cases. 
\end{rmk}
\begin{rmk}
Note that there is a weight two Hodge structure on 
$H^{\ast}(S, \mathbb{C})$ by 
\begin{align*}
H^{\ast 2, 0}=H^{2, 0}, \ H^{\ast 0, 2}=H^{0, 2}, \\
H^{\ast 1, 1}=H^{0, 0} \oplus H^{1, 1} \oplus H^{2, 2}. 
\end{align*}
Let $G$
be the group of Hodge isometries of 
$H^{\ast}(S, \mathbb{Z})$. 
Then for any $g\in G$
and $v\in H^{\ast}(S, \mathbb{Z})$, 
it is proved in~\cite[Theorem~4.21]{TodK3} that  
\begin{align}\label{Jgv=Jv}
J(gv)=J(v). 
\end{align}
Let $v=(0, mc_1(L), n) \in H^{\ast}(S, \mathbb{Z})$
be as in the statement of Theorem~\ref{thm:K3}. 
The result of Theorem~\ref{thm:K3}
and the formula (\ref{Jgv=Jv}) imply that 
$J(gv)$ also satisfies the 
formula (\ref{Hilb}) for any $g\in G$. 
\end{rmk}

\section{Identity of Behrend functions}\label{subsec:Behrend}
In this section, we give a proof of Lemma~\ref{lem:identity}.
In principle, the result is a consequence of 
$\mathbb{C}^{\ast}$-localizations
of the Behrend functions as in~\cite[Proposition~3.3]{BBr}, 
\cite[Theorem~C]{WeiQin}, together
with the results in Proposition~\ref{prop:isom:para}. 
However, as we discussed in~\cite[Remark~2.4]{Todpara},
we are unable to find a symmetric perfect obstruction
theory on the moduli space of parabolic stable pairs, 
which prevents us to use the 
$\mathbb{C}^{\ast}$-localizations 
on the Behrend functions directly. 
Instead, we consider some
other moduli spaces which admit
 $\mathbb{C}^{\ast}$-equivariant symmetric perfect obstruction theories,
and apply $\mathbb{C}^{\ast}$-localizations to the Behrend 
functions on them. 
Then the result follows by comparing 
their Behrend functions with those on 
the moduli 
spaces of parabolic stable pairs.

\subsection{Deformations of sheaves}
In this subsection, we recall 
a result on deformations of sheaves on 
algebraic varieties given by Huybrechts-Thomas~\cite{HT2}, 
which will be used in the next subsection. 
Let $X$ be a smooth projective variety 
and $T$ an affine scheme with a closed point 
$0 \in T$. 
Suppose that we are given a $T$-flat 
coherent sheaf on $X\times T$, 
\begin{align*}
A \in \Coh(X\times T). 
\end{align*}
We would like to extend $A$ to a square zero 
extension $j \colon T \hookrightarrow \overline{T}$, i.e. 
there is an ideal $J\subset \oO_{\overline{T}}$
such that 
\begin{align}\label{square:zero}
\oO_T \cong \oO_{\overline{T}}/J, \quad 
J^2=0. 
\end{align}
Let us take 
the distinguished triangle in $D^b \Coh(X\times T)$, 
\begin{align}\label{dist:Q}
Q_A \to \dL j^{\ast}j_{\ast} A \to A. 
\end{align}
Here the right arrow is the adjunction, 
and we have denoted $\id_X \times j$ just by $j$
for simplicity. Following~\cite{HT2}, we construct
the morphism 
\begin{align*}
\pi_{A} \colon 
Q_{A} \to A \otimes_{\oO_T} J[1],
\end{align*}
in the following way. 
Let $h$ be the embedding 
\begin{align*}
h\colon 
X\times T \times X \times T \hookrightarrow
X\times T \times 
 X \times \overline{T},
\end{align*}
and $H$ the object
\begin{align*}
H \cneq \dL h^{\ast} h_{\ast}\Delta_{\ast}\oO_{X\times T},
\end{align*}
where $\Delta$ is the diagonal embedding
of $X\times T$. 
By~\cite[Subsection~3.1]{HT2}, there are distinguished
triangles on $X\times T$, 
\begin{align}\label{dist:H}
&\tau^{\le -1}H \to H \to \Delta_{\ast}\oO_{X\times T}, \\
\notag
&\tau^{\le -2}H \to \tau^{\le -1}H \stackrel{\pi_H}{\to} \Delta_{\ast}J[1].
\end{align} 
Applying Fourier-Mukai transforms of
$A$ for the triangle 
(\ref{dist:H}), we obtain 
the triangle (\ref{dist:Q}). 
Then the morphism $\pi_A$ is obtained by 
taking the Fourier-Mukai transform of $A$ with 
for the morphism $\pi_H$. 

On the other hand, suppose that the following morphism
on $X\times \overline{T}$ is given, 
\begin{align*}
e_{A} \colon j_{\ast}A \to j_{\ast}(A\otimes J)[1]. 
\end{align*}
Then we construct the morphism $\Psi_{e_A}$
to be the composition, 
\begin{align*}
\Psi_{e_A} \colon 
Q_A \to \dL j^{\ast} j_{\ast}A \stackrel{\dL j^{\ast} e_A}{\to}
\dL j^{\ast} j_{\ast}(A\otimes J)[1] \to A \otimes J[1]. 
\end{align*}
Here the left arrow is given by the left arrow of 
(\ref{dist:Q}), and the right arrow is given by the 
adjunction. 
Let us take the cone of $e_A$, 
\begin{align*}
j_{\ast}(A\otimes J) \to 
\overline{A} \to 
j_{\ast}A \stackrel{e_{A}}{\to} j_{\ast}(A\otimes J)[1].
\end{align*}
Note that $\overline{A}$ is a coherent sheaf
on $X\times \overline{T}$. 
By~\cite[Theorem~3.3]{HT2}, 
we have the following 
criteion for an object $\overline{A}$ to be 
a deformation of $A$: 
\begin{thm}{\bf(\cite[Theorem~3.3]{HT2}) }\label{thm:deform}
The sheaf $\overline{A} \in \Coh(X\times \overline{T})$
is flat over $\overline{T}$ with 
$\overline{A}|_{X\times T} \cong A$
if and only if we have the equality, 
\begin{align}\label{equal:Psi}
\pi_{A} =\Psi_{e_A}. 
\end{align}
\end{thm}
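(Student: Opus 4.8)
The statement is~\cite[Theorem~3.3]{HT2}, so the plan is to recall the structure of that proof. First I would observe that the object $\overline{A}=\Cone(e_A)[-1]$ is automatically a coherent sheaf on $X\times\overline{T}$ restricting to $A$, so that the only real content is flatness over $\overline{T}$. Indeed, since $j\colon T\hookrightarrow\overline{T}$ is a closed immersion, $j_{\ast}A$ and $j_{\ast}(A\otimes_{\oO_T}J)$ are honest sheaves, and $\overline{A}$, being an extension of $j_{\ast}A$ by $j_{\ast}(A\otimes_{\oO_T}J)$, is a sheaf fitting into the short exact sequence
\begin{align}\label{eq:HTses}
0\to j_{\ast}(A\otimes_{\oO_T}J)\stackrel{\iota_A}{\longrightarrow}\overline{A}\longrightarrow j_{\ast}A\to 0.
\end{align}
Applying $j^{\ast}$ to~(\ref{eq:HTses}), using $j^{\ast}j_{\ast}N\cong N$ for a sheaf $N$ on $X\times T$ and the fact that $J\cdot\overline{A}$ lies in the kernel of $\overline{A}\to A$, one gets $\overline{A}|_{X\times T}\cong A$ independently of the choice of $e_A$.

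Next I would reduce flatness to an equality of two maps $A\otimes_{\oO_T}J\to\overline{A}$. Since $J^2=0$, the local criterion of flatness says $\overline{A}$ is flat over $\overline{T}$ if and only if $\mathrm{Tor}^{\oO_{\overline{T}}}_1(\overline{A},\oO_T)=0$. Now $J\cdot\overline{A}$ is annihilated by $J$, hence is an $\oO_T$-module, and multiplication gives a canonical surjection $\mu_A\colon A\otimes_{\oO_T}J\twoheadrightarrow J\cdot\overline{A}\subset\overline{A}$; the vanishing of $\mathrm{Tor}^{\oO_{\overline{T}}}_1(\overline{A},\oO_T)$ is equivalent to the injectivity of $\mu_A$, that is, to $\mu_A$ and the structural inclusion $\iota_A$ of~(\ref{eq:HTses}) having the same image. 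So the task becomes to identify this comparison of $\mu_A$ with $\iota_A$ with the cohomological identity~(\ref{equal:Psi}).

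For that I would pass to the diagonal object $H=\dL h^{\ast}h_{\ast}\Delta_{\ast}\oO_{X\times T}$ and its truncation triangles~(\ref{dist:H}), and use that everything in sight is obtained by applying the Fourier--Mukai transform with kernel $A$. On the one hand $\pi_A$ is the image of $\pi_H$ under this transform, and unwinding~(\ref{dist:H}) it encodes the truncated relative Atiyah/Kodaira--Spencer class of $A$, i.e.\ precisely the intrinsic multiplication by $J$, hence the map $\mu_A$. On the other hand $\Psi_{e_A}$ is obtained from $e_A$, viewed via adjunction as an element of $\Hom_{X\times T}(\dL j^{\ast}j_{\ast}A,\,A\otimes_{\oO_T}J[1])$, by precomposing with the arrow $Q_A\to\dL j^{\ast}j_{\ast}A$ of~(\ref{dist:Q}); it encodes the Kodaira--Spencer class of the family attached to $e_A$, hence the map $\iota_A$. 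Applying $\dL j^{\ast}$ to the defining triangle $j_{\ast}(A\otimes_{\oO_T}J)\to\overline{A}\to j_{\ast}A\stackrel{e_A}{\to}j_{\ast}(A\otimes_{\oO_T}J)[1]$ and comparing cohomology in degree $-1$, the condition $\mathrm{Tor}^{\oO_{\overline{T}}}_1(\overline{A},\oO_T)=0$ turns into the statement that these two Kodaira--Spencer classes agree, which is exactly $\pi_A=\Psi_{e_A}$.

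The hard part will be the derived-categorical bookkeeping in the last step: one must work with the truncation $Q_A=\tau^{\le-1}(\dL j^{\ast}j_{\ast}A)$ rather than a naive kernel (since $\dL j^{\ast}j_{\ast}A\ne A$), verify that the Fourier--Mukai transform with kernel $A$ genuinely carries $\pi_H$ to the map governing $\mathrm{Tor}_1$, and check the compatibility of all the adjunction isomorphisms used to define $\Psi_{e_A}$. These verifications are the substance of~\cite[Section~3]{HT2}, to which I would refer for the complete argument.
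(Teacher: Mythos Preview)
The paper does not give its own proof of this theorem: it is stated exactly as a citation of \cite[Theorem~3.3]{HT2} and used as a black box in the following subsection. Your proposal, which sketches the structure of the argument in \cite{HT2} and defers the detailed derived-categorical bookkeeping to \cite[Section~3]{HT2}, is therefore consistent with (and in fact more detailed than) what the paper itself provides.
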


\subsection{Moduli spaces of simple sheaves and relative 
Quot schemes}
Let $X$ be a smooth projective Calabi-Yau 3-fold 
over $\mathbb{C}$, 
and $H\subset X$ a smooth 
and connected divisor. 
Recall that, giving a parabolic stable pair 
$(F, s)$ on $X$ is equivalent to giving a pair, 
\begin{align*}
N_{H/X}[-1] \to F, 
\end{align*}
where $N_{H/X}$ is the normal bundle of 
$H$ in $X$, 
satisfying a certain stability condition. 
(cf.~\cite[Proposition~3.9]{Todpara}.)
By taking the cone, we obtain the exact sequence of sheaves, 
\begin{align}\label{FEN}
0 \to F \to E \to N_{H/X} \to 0. 
\end{align}
Note that 
$N_{H/X} \cong \oO_H(H)$ is a simple sheaf, i.e. 
$\End(N_{H/X})=\mathbb{C}$. 
Together with the parabolic stability
of $(F, s)$, the sheaf $E$ is also a simple sheaf. 
(cf.~\cite[Corollary~3.10]{Todpara}.)

We consider the moduli space of simple sheaves on 
$X$, which we denote by $\mM$. The space $\mM$
is known to be an algebraic space locally of 
finite type over $\mathbb{C}$. 
(cf.~\cite{Inaba}.)
The universal sheaf is denoted by
\begin{align}\label{Univ:sheaf}
\eE \in \Coh(X \times \mM). 
\end{align}
Let
\begin{align}\label{tau}
\tau \colon 
\qQ \to \mM, 
\end{align}
be the algebraic space representing 
the relative 
Quot-functor for the family of 
simple sheaves (\ref{Univ:sheaf}).
Namely for each $[E] \in \mM$ the fiber of 
$\qQ \to \mM$ is Grothendieck's Quot-scheme
parameterizing quotient 
sheaves $E \twoheadrightarrow E'$.
We have the morphism, 
\begin{align}\label{iota}
\iota \colon 
M_n^{\rm{par}}(U, \beta) \to \qQ, 
\end{align} 
sending a parabolic stable 
pair $(F, s)$ to the surjection 
$E \twoheadrightarrow N_{H/X}$
given by the sequence (\ref{FEN}). 
We have the following lemma. 
\begin{lem}\label{lem:etale}
The morphism $\tau$ is \'{e}tale 
at any point in the image of $\iota$. 
\end{lem}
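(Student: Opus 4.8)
The plan is to show that the deformation theory of a point $E \twoheadrightarrow N_{H/X}$ in $\qQ$ lying in the image of $\iota$ maps isomorphically (on tangent spaces and obstruction spaces) to the deformation theory of $E$ as a simple sheaf, so that $\tau$ is étale there by the infinitesimal criterion. First I would recall that the relative Quot scheme $\qQ \to \mM$ has, at a point $[E \twoheadrightarrow E']$ with kernel $F$, relative tangent space $\Hom(F,E')$ and relative obstruction space $\Ext^1(F,E')$. In our situation $E' = N_{H/X}$ and $F$ is the one-dimensional semistable sheaf of the parabolic stable pair, so the key vanishing to establish is
\begin{align*}
\Hom(F, N_{H/X}) = 0, \quad \Ext^1(F, N_{H/X}) = 0.
\end{align*}
The first vanishing is immediate: the one cycle $[F]$ intersects $H$ transversally, so $F$ is supported away from $H$ in codimension behaviour forcing $\HOM(F,N_{H/X})=0$ (indeed $F \otimes \oO_H$ is a finite-length sheaf and any nonzero map $F \to N_{H/X}$ would have to vanish since $N_{H/X}$ is pure of dimension two while $F$ is pure of dimension one with support meeting $H$ in dimension zero). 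For $\Ext^1(F, N_{H/X})$, I would use the local-to-global spectral sequence together with the fact that $N_{H/X} = \oO_H(H)$ is locally free on the smooth surface $H$, so $\EXT^i(F, N_{H/X})$ is supported on $\Supp(F) \cap H$, which is zero-dimensional; combined with the dimension count $\dim X = 3$ and Serre duality ($K_X = \oO_X$), one gets $\Ext^1(F, N_{H/X}) \cong \Ext^2(N_{H/X}, F)^{\vee}$, and this vanishes because $N_{H/X}$ and $F$ have disjoint supports in the relevant range (any such extension class would be locally supported on a zero-dimensional scheme, and the $\Ext^2$ against it vanishes for support-dimension reasons).

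Granting these two vanishings, the relative cotangent complex of $\tau$ at the point in question has vanishing $H^0$ and $H^1$, hence $\tau$ is étale (in fact an isomorphism on formal neighbourhoods) at that point. To make this rigorous and uniform over the image of $\iota$ — rather than pointwise — I would phrase it via the obstruction theory: a square-zero extension $j\colon T \hookrightarrow \overline{T}$ with ideal $J$ and a $T$-point of $\qQ$, i.e. a flat family of surjections $\eE_T \twoheadrightarrow (N_{H/X})_T$ with kernel $\fF_T$, deforms to $\overline{T}$ if and only if the corresponding family $\eE_T$ of simple sheaves deforms, because the obstruction to deforming the surjection lives in $\Ext^1(\fF_T, (N_{H/X})_T \otimes J)$, which vanishes fibrewise by the above, hence vanishes after the standard base-change/cohomology-and-base-change argument (the relevant $\EXT$-sheaves being zero). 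Here the Huybrechts--Thomas machinery recalled in Subsection~\ref{subsec:Behrend} (Theorem~\ref{thm:deform}) gives the clean way to compare: the obstruction class $\pi_A - \Psi_{e_A}$ for deforming the kernel, the total sheaf, and the quotient fit into a compatible triangle, and since the quotient $N_{H/X}$ and the comparison term $\Ext^*(F,N_{H/X})$ contribute nothing, the obstruction for $E$ and for $(E \twoheadrightarrow N_{H/X})$ coincide. The same triangle shows the tangent spaces agree.

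The main obstacle I expect is the bookkeeping for the $\Ext^1$ (and the auxiliary $\Ext^2$) vanishing — one must be careful that $F$ need not be scheme-theoretically disjoint from $H$ at the level of the ambient $X$ (they meet set-theoretically nowhere, since $[F]$ meets $H$ transversally in dimension zero means the supports are disjoint, but one should double-check that transversality here is being used as "$\Supp F \cap H = \emptyset$" or as "$\dim(\Supp F \cap H) = 0$"; the Definition~\ref{defi:para} reading is $\dim H \cap [F] = 0$, and since $H$ is a surface and $\Supp F$ a curve in a threefold, a zero-dimensional intersection is possible and nonempty). If $\Supp F \cap H$ is genuinely a nonempty finite set, then $F \otimes \oO_H$ is the nonzero finite-length sheaf receiving $s$, and one needs the sharper statement that $\EXT^i_X(F, \oO_H(H))$ still vanishes for $i \le 1$ — this follows because $F$ is a pure one-dimensional sheaf, $\oO_H(H)$ is pure two-dimensional, their supports meet in dimension zero, and local $\Ext$ between sheaves whose supports meet in too small a dimension vanishes in low degrees (one can check this on completed local rings, reducing to a Koszul/Tor computation). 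I would carry out this local computation explicitly as the one genuinely technical point, then assemble the global statement via the local-to-global spectral sequence and conclude étaleness from the vanishing of $H^0$ and $H^1$ of the relative cotangent complex at every point of $\iota(M_n^{\rm par}(U,\beta))$.
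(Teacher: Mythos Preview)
Your proposal is correct and follows essentially the same route as the paper: both arguments reduce \'etaleness of $\tau$ at a point $(E\twoheadrightarrow N_{H/X})$ with kernel $F$ to the two vanishings $\Hom(F,N_{H/X})=\Ext^1(F,N_{H/X})=0$, and both invoke the Huybrechts--Thomas criterion (Theorem~\ref{thm:deform}) to turn a deformation of $E$ into a unique deformation of the short exact sequence. The paper simply asserts the vanishings and then carries out the infinitesimal lifting by hand (building $e_{\fF},e_{\nN}$ from $e_{\eE}$ via the diagram of triangles), which is exactly the ``rigorous and uniform'' version you sketch in your second paragraph.

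One remark on the point you flag as the main obstacle: the vanishings are less delicate than you fear, and your local--to--global approach already settles them once you add the observation that $N_{H/X}$ has projective dimension~$1$ on $X$ (from $0\to\oO_X\to\oO_X(H)\to N_{H/X}\to 0$), so $\EXT^i_X(N_{H/X},F)=0$ for $i\ge 2$; the remaining local $\EXT$'s are supported on the finite set $\Supp F\cap H$ and thus have no higher cohomology, giving $\Ext^2(N_{H/X},F)=\Ext^3(N_{H/X},F)=0$ and hence the required vanishings by Serre duality. There is no need for a separate Koszul computation on completed local rings.
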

\begin{proof}
Let $(F, s)$ be a parabolic stable pair 
on $X$ and $E \twoheadrightarrow N_{H/X}$ a
point of $\qQ$ determined by (\ref{FEN}). 
Let $T$ be an affine scheme, $0 \in T$ a closed 
point, $j \colon T \hookrightarrow \overline{T}$ a square 
zero extension with an ideal $J\subset \oO_{\overline{T}}$
as in (\ref{square:zero}). 
Suppose that there is a commutative diagram, 
\begin{align}\label{diag:MQ}
\xymatrix{
T \ar[r]^{f} \ar[d]_{j} & \qQ \ar[d]^{\tau} \\
\overline{T} \ar[r]^{h} & \mM,
}
\end{align}
such that $f$ sends $0 \in T$ to 
$(E \twoheadrightarrow N_{H/X}) \in \qQ$. 
It is enough to show that, after replacing 
$T$ by an affine open neighborhood of $0 \in T$, 
the morphism $f$ uniquely extends to 
$\overline{f} \colon \overline{T} \to \qQ$
which commutes with all the arrows in (\ref{diag:MQ}). 

By pulling back $\eE \in \Coh(X\times \mM)$
to $T$ by the composition
$\tau \circ f \colon T \to \mM$, we obtain 
the sheaf
$\eE_T \in \Coh(X\times T)$, which is 
flat over $T$, and restricts to 
the sheaf $E$ on $X\times \{0\}$. 
The morphism $f$ corresponds to the exact 
sequence of sheaves on $X\times T$, 
\begin{align}\label{FEN2}
0 \to \fF_{T} \to \eE_T 
\to \nN_T \to 0, 
\end{align}
which restrict to the exact sequence (\ref{FEN})
on $X\times \{0\}$. 
Also the morphism $h$ corresponds to 
a $\overline{T}$-flat sheaf on $X\times \overline{T}$, 
\begin{align*}
\eE_{\overline{T}} \in \Coh(X\times \overline{T}), 
\end{align*}
which restricts to $\eE_{T}$ on $X\times T$. 
The existence of unique $\overline{f}$ is equivalent to the 
 existence of unique (up to isomorphism) exact sequence of 
sheaves on $X\times \overline{T}$, 
\begin{align}\label{FEN3}
0 \to \fF_{\overline{T}} \to \eE_{\overline{T}} \to 
\nN_{\overline{T}} \to 0, 
\end{align}
which restricts to the exact sequence (\ref{FEN2}) 
on $X\times T$. 

Let us consider the distinguished triangle 
on $X\times \overline{T}$, 
\begin{align*}
j_{\ast}(\eE_{T}\otimes J) \to \eE_{\overline{T}} \to j_{\ast}\eE_{T} 
\stackrel{e_{\eE}}{\to} j_{\ast}(\eE_{T} \otimes J)[1].  
\end{align*}
Since $\eE_{\overline{T}}$ is a deformation of 
$\eE_{T}$ to $X\times \overline{T}$, 
we have 
\begin{align}\label{deform:E}
\pi_{\eE_T}=\Psi_{e_{\eE}},
\end{align}
in the notation of the previous subsection, 
by Theorem~\ref{thm:deform}. 
Also we have the distinguished triangles
on $X\times \overline{T}$, 
\begin{align}\label{tri:jast}
\xymatrix{
j_{\ast} \fF_{T} \ar[r]^{}  & j_{\ast} \eE_{T} \ar[r]^{} \ar[d]^{e_{\eE}}
& j_{\ast} \nN_{T} \\
j_{\ast}(\fF_{T} \otimes J)[1] \ar[r] &
j_{\ast}(\eE_{T} \otimes J)[1] \ar[r] & 
j_{\ast}(\nN_{T} \otimes J)[1].
}
\end{align}
Since $\Hom(F, N_{H/X}[i])=0$ for $i=0, 1$, 
we have (after shrinking $T$ if necessary)
\begin{align}\label{vanish:FNJ}
\Hom_{X\times T}(\fF_{T}, \nN_{T} \otimes J[i])=0,
\end{align}
 for $i=0, 1$. 
By the distinguished triangle, 
\begin{align*}
\fF_{T} \otimes J[1] \to \dL j^{\ast} j_{\ast} \fF_{T}
\to \fF_{T}, 
\end{align*}
and the vanishing (\ref{vanish:FNJ}), 
we see that 
\begin{align*}
\Hom_{X\times \overline{T}}
(j_{\ast}\fF_{T}, j_{\ast}(\nN_T \otimes J)[i]) 
&\cong \Hom_{X\times T}(\dL j^{\ast} j_{\ast} \fF_{T}, \nN_{T}
\otimes J[i]) \\
&=0,
\end{align*}
for $i=0, 1$. 
Therefore there are unique morphisms, 
\begin{align*}
e_{\fF} &\colon j_{\ast}\fF_{T} \to j_{\ast} (\fF_{T} \otimes J)[1], \\
e_{\nN} &\colon j_{\ast} \nN_{T} \to j_{\ast}(\nN_{T} \otimes J)[1], 
\end{align*}
which make the diagram (\ref{tri:jast}) commutative. 
We need to show that $e_{\fF}$ and $e_{\nN}$ 
determine deformations of $\fF_{T}$ and $\nN_{T}$
to $X\times \overline{T}$. 
In order to see these, we consider the commutative diagram,
\begin{align*}
\xymatrix{
Q_{\fF_{T}} \ar[r] \ar[d]_{\pi_{\fF}-\Psi_{e_{\fF}}} & 
Q_{\eE_{T}} \ar[r] \ar[d]_{\pi_{\eE}-\Psi_{e_{\eE}}} &
Q_{\nN_{T}} \ar[d]_{\pi_{\nN}-\Psi_{e_{\nN}}} \\
\fF_{T}\otimes J[1] \ar[r] & \eE_{T} \otimes J[1] \ar[r]
& \nN_{T} \otimes J[1].
}
\end{align*}  
The middle arrow is zero by (\ref{deform:E}). 
Also we have 
$\Hom(Q_{\fF_{T}}, \nN_{T} \otimes J)=0$
since $\hH^i(Q_{\fF_T})=0$ for $i\ge 0$. 
Therefore by the above commutative diagram, 
we have 
\begin{align*}
\pi_{\fF}=\Psi_{e_{\fF}},
\end{align*}
which implies that 
$e_{\fF}$ determines a deformation of 
$\fF_{T}$ to $\fF_{\overline{T}}$
by Theorem~\ref{thm:deform}.
A similar argument shows that $e_{\nN}$
determines a deformation of $\nN_{T}$ to $\nN_{\overline{T}}$. 

By taking the cones $e_{\ast}$ for 
$\ast=\fF, \eE, \nN$ in the diagram (\ref{tri:jast}), 
 we obtain the 
exact sequence of sheaves, 
\begin{align*}
0 \to \fF_{\overline{T}} \to \eE_{\overline{T}} \to 
\nN_{\overline{T}} \to 0, 
\end{align*}
where $\fF_{\overline{T}}$ and $\nN_{\overline{T}}$
are deformations of $\fF_{T}$, $\nN_{\overline{T}}$
determined by $e_{\fF}$, $e_{\nN}$ respectively. 
It is straightforward to check that the above extension of
(\ref{FEN2}) to $X\times \overline{T}$ is unique 
up to isomorphisms, and we leave the readers to check the 
detail. 
\end{proof}

\subsection{Some identities of Behrend functions}
Let $C \subset U \subset X$ 
and $H\subset X$ be
as in the previous sections. 
Let 
\begin{align*}
\qQ_{U} \subset \qQ, \quad 
\mM_{U} \subset \mM,
\end{align*}
 be sufficiently small 
analytic neighborhoods of the images of 
$\iota$, $\tau \circ \iota$ respectively.
Here $\tau$, $\iota$
 are defined in (\ref{tau}), (\ref{iota}).
By Lemma~\ref{lem:etale}, the morphism $\tau$ restricts to a 
local immersion, 
\begin{align}\label{tau2}
\tau \colon \qQ_{U} \to \mM_U. 
\end{align} 
The arguments similar to Subsection~\ref{subsec:Jact}
and Subsection~\ref{subsec:compare:para}
show that $\mM_U$ and $\qQ_U$ admit 
$\mathbb{C}^{\ast}$-actions, 
where $\mathbb{C}^{\ast}$ is the subtorus (\ref{ctorus}),
 so that 
the morphisms (\ref{iota}) and (\ref{tau2})
are $\mathbb{C}^{\ast}$-equivariant. 
Let 
\begin{align*}
\nu_{\qQ}, \ \nu_{\qQ^{\mathbb{C}^{\ast}}}, \ 
\nu_{\mM}, \ \nu_{\mM^{\mathbb{C}^{\ast}}},
\end{align*}
be the Behrend functions on 
$\qQ$, $\qQ_U^{\mathbb{C}^{\ast}}$, 
$\mM$, $\mM_U^{\mathbb{C}^{\ast}}$ respectively. 
We have the following lemma:
\begin{lem}
For $(F, s) \in M_n^{\rm{par}}(U, \beta)^{\mathbb{C}^{\ast}}$
and the associated element 
$p=\iota(F, s) \in \qQ_U^{\mathbb{C}^{\ast}}$, we have 
\begin{align}\label{Beh3.2}
\nu_{\qQ}(p)=(-1)^{\dim T_p \qQ - \dim T_p \qQ_U^{\mathbb{C}^{\ast}}}
\nu_{\qQ^{\mathbb{C}^{\ast}}}(p). 
\end{align}
\end{lem}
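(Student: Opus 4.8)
The plan is to deduce the identity \eqref{Beh3.2} from the general $\mathbb{C}^{\ast}$-localization statement for Behrend functions, applied not to $M_n^{\rm{par}}(U,\beta)$ directly but to the relative Quot scheme $\qQ$, which we have arranged to carry a $\mathbb{C}^{\ast}$-equivariant symmetric perfect obstruction theory. First I would recall that $\qQ$ is the moduli space of pairs $(E, E\twoheadrightarrow N_{H/X})$ with $E$ simple; such pairs are rigidified objects in $D^b\Coh(X)$, namely two-term complexes $E^{\bullet}=(E\to N_{H/X})$, and by the standard argument (as in the stable pairs case of Pandharipande--Thomas, or Joyce--Song) the deformation--obstruction theory of $\qQ$ at such a point is governed by $\Ext^{\bullet}(E^{\bullet},E^{\bullet})$, which is symmetric by Serre duality on the Calabi--Yau $3$-fold $X$ and self-dual of amplitude $[1,2]$ after removing the (trivial) $\Hom$ and $\Ext^3$ terms. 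Hence $\qQ$, and a fortiori the analytic open piece $\qQ_U$, carries a symmetric perfect obstruction theory, and since the $\mathbb{C}^{\ast}$-action coming from the subtorus \eqref{ctorus} (the $\Pic^0$-twisting action) lifts to this obstruction theory, the obstruction theory is $\mathbb{C}^{\ast}$-equivariant.

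Next I would invoke the $\mathbb{C}^{\ast}$-localization formula for the Behrend function of a scheme with a $\mathbb{C}^{\ast}$-equivariant symmetric obstruction theory: for a point $p$ fixed by $\mathbb{C}^{\ast}$ one has $\nu_{\qQ}(p)=(-1)^{d_p}\nu_{\qQ^{\mathbb{C}^{\ast}}}(p)$, where $d_p$ is the dimension of the moving part of the obstruction-theory complex at $p$, equivalently $d_p=\dim T_p\qQ-\dim T_p\qQ^{\mathbb{C}^{\ast}}$ since the symmetry forces the moving parts of $\Ext^1$ and $\Ext^2$ to pair up into a nondegenerate form, so the sign is controlled by the tangent space alone. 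This is precisely the content of the cited results of Behrend--Fantechi-type localization, in the form used in \cite{BBr} and \cite{WeiQin}; applying it at $p=\iota(F,s)$ and identifying the local analytic $\mathbb{C}^{\ast}$-fixed neighborhood of $p$ in $\qQ$ with $\qQ_U^{\mathbb{C}^{\ast}}$ (which is legitimate because $\qQ_U$ is a small analytic neighborhood of the image of $\iota$, and the Behrend function is local), one obtains \eqref{Beh3.2} verbatim. The point where one must be slightly careful is the passage between the algebraic space $\qQ$ and its analytic restriction $\qQ_U$, and between $\qQ_U^{\mathbb{C}^{\ast}}$ and the $\mathbb{C}^{\ast}$-fixed locus of the formal/analytic neighborhood of $p$ used in the localization theorem; since the Behrend function and the obstruction theory are both local and the $\mathbb{C}^{\ast}$-action is compatible with the local immersion $\tau$, these identifications are harmless, but I would spell out that $T_p\qQ\cong\Ext^1(E^{\bullet},E^{\bullet})$ and that its $\mathbb{C}^{\ast}$-fixed part is $T_p\qQ_U^{\mathbb{C}^{\ast}}$.

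I expect the main obstacle to be verifying cleanly that the $\mathbb{C}^{\ast}$-action lifts to the obstruction theory and that the obstruction theory is genuinely symmetric in the $\mathbb{C}^{\ast}$-equivariant sense, so that the moving parts of $\Ext^1$ and $\Ext^2$ are dual with opposite weights and the localization sign reduces to $\dim T_p\qQ-\dim T_p\qQ_U^{\mathbb{C}^{\ast}}$ rather than involving $\Ext^2$ separately. Concretely: one has to check that $\End(E^{\bullet})=\mathbb{C}$ (which follows from simplicity of $E$ established via \cite[Corollary~3.10]{Todpara}) and that the trace-type argument identifying $\Ext^{\bullet}(E^{\bullet},E^{\bullet})$ with the fixed-determinant part is compatible with the $\Pic^0$-twisting action; this is exactly the kind of bookkeeping carried out in \cite{HT2} together with the deformation criterion Theorem~\ref{thm:deform}, and in fact Lemma~\ref{lem:etale}, which exhibits $\tau$ as étale along the image of $\iota$, lets us transport the obstruction theory of $\mM$ at $[E]$ to $\qQ$ at $p$. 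Once that compatibility is in hand, the remaining steps are formal applications of the cited localization theorems, and the analogous identities for $\nu_{\mM}$ and for the parabolic moduli spaces $M_n^{\rm{par}}$ will follow by comparing them through the local immersions $\iota$ and $\tau$, which is presumably how the proof of Lemma~\ref{lem:identity} is then completed in the subsequent subsection.
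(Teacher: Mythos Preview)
Your main proposed route has a genuine gap. The two-term complex $E^{\bullet}=(E\to N_{H/X})$ you introduce is a surjection of sheaves with kernel $F$, so in $D^b\Coh(X)$ it is quasi-isomorphic to $F$ itself; hence $\Ext^i(E^{\bullet},E^{\bullet})\cong\Ext^i(F,F)$ and in particular $\End(E^{\bullet})\cong\End(F)$. Since $F$ is only $\omega$-semistable (it is the parabolic structure, not $F$ alone, that is rigid), $\End(F)$ need not be $\mathbb{C}$, and the Pandharipande--Thomas/Joyce--Song argument does not produce a symmetric perfect obstruction theory on $\qQ$ from this complex. This is precisely why, as noted at the beginning of Section~\ref{subsec:Behrend}, one cannot localize directly on $M_n^{\rm par}(U,\beta)$ or on $\qQ$.

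The paper instead does exactly what you mention as a fallback in your last paragraph: it uses Lemma~\ref{lem:etale} to identify Behrend functions and tangent-space dimensions along the \'etale map $\tau\colon\qQ_U\to\mM_U$, and applies the $\mathbb{C}^{\ast}$-localization of Behrend functions (\cite{BBr},~\cite{WeiQin}) on $\mM$, which carries the Huybrechts--Thomas symmetric obstruction theory for simple sheaves. Concretely, $\nu_{\qQ}(p)=\nu_{\mM}([E])$ and $\nu_{\qQ^{\mathbb{C}^{\ast}}}(p)=\nu_{\mM^{\mathbb{C}^{\ast}}}([E])$, the localization gives $\nu_{\mM}([E])=(-1)^{\dim T_{[E]}\mM-\dim T_{[E]}\mM^{\mathbb{C}^{\ast}}}\nu_{\mM^{\mathbb{C}^{\ast}}}([E])$, and \'etaleness matches the tangent dimensions on both sides. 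So your instinct to pass through $\mM$ via $\tau$ is the correct argument; the direct obstruction-theory-on-$\qQ$ route should be dropped.
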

\begin{proof}
Let us write 
$p=(E \twoheadrightarrow N_{H/X}) \in \qQ_U^{\mathbb{C}^{\ast}}$. 
Then by Lemma~\ref{lem:etale}, 
we have 
\begin{align}\label{nueq:1}
\nu_{\qQ}(p)=\nu_{\mM}([E]),
\quad \nu_{\qQ^{\mathbb{C}^{\ast}}}(p)=
\nu_{\mM^{\mathbb{C}^{\ast}}}([E]). 
\end{align}
Next note that, 
by~\cite{HT2}, the algebraic space 
$\mM$ admits a symmetric 
perfect obstruction theory determined by the universal 
sheaf (\ref{Univ:sheaf}). 
It is easy to check that the 
symmetric perfect obstruction theory on $\mM$, 
restricted to $\mM_{U}$, is 
$\mathbb{C}^{\ast}$-equivariant. 
Therefore the $\mathbb{C}^{\ast}$-localizations 
of the Behrend functions given 
in~\cite[Proposition~3.3]{BBr}, ~\cite[Theorem~C]{WeiQin}
are applied. 
The result is  
\begin{align}\label{nueq:2}
\nu_{\mM}(E)= (-1)^{\dim T_{[E]}\mM - \dim T_{[E]} \mM^{\mathbb{C}^{\ast}}}
\cdot 
\nu_{\mM^{\mathbb{C}^{\ast}}}(E). 
\end{align}
Again by Lemma~\ref{lem:etale}, we have
\begin{align}\label{eq:tangent}
\dim T_{p} \qQ =\dim T_{[E]} \mM, \quad 
\dim T_{p} \qQ_U^{\mathbb{C}^{\ast}}
= \dim T_{[E]}\mM^{\mathbb{C}^{\ast}}.  
\end{align}
The equality (\ref{Beh3.2}) 
follows from (\ref{nueq:1}), (\ref{nueq:2})
and (\ref{eq:tangent}). 
\end{proof}
Next we compare the Behrend functions on 
$\nu_{\qQ}$ and $\nu_{M^{\rm{par}}}$
under the morphism (\ref{iota}). 
(Recall that $\nu_{M^{\rm{par}}}$ is 
the Behrend function on $M_n(U, \beta)$.)
In what follows, for $E_1, E_2 \in \Coh(X)$, we write
\begin{align*}
\mathrm{hom}(E_1, E_2) &\cneq 
\dim \Hom(E_1, E_2), \\
\mathrm{ext}^1(E_1, E_2) &\cneq 
\dim \Ext_X^1(E_1, E_2).
\end{align*}
We have the following lemma:
\begin{lem}
For $(F, s) \in M_n^{\rm{par}}(U, \beta)$
with $p=\iota(F, s) \in \qQ$, we have the equality, 
\begin{align}\label{Beh3}
\nu_{\qQ}(p)=
(-1)^{\mathrm{ext}^1(N_{H/X}, N_{H/X})}
\cdot \nu_{M^{\rm{par}}}(F, s). 
\end{align}
\end{lem}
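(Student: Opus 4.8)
The plan is to compare the two moduli spaces via the morphism $\iota \colon M_n^{\rm{par}}(U, \beta) \to \qQ$ of (\ref{iota}), which sends a parabolic stable pair $(F,s)$ to the surjection $E \twoheadrightarrow N_{H/X}$ coming from the exact sequence (\ref{FEN}). Recall from~\cite[Corollary~3.10]{Todpara} that $E$ is a simple sheaf, so the target of $\iota$ near the image indeed lives inside the Quot scheme over the moduli of simple sheaves $\mM$. The first step is to understand $\iota$ scheme-theoretically: I would show that $\iota$ is a local immersion, in fact that it identifies $M_n^{\rm{par}}(U, \beta)$ with a locally closed analytic subspace of $\qQ$. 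The key point is that giving $(F,s)$ is equivalent to giving the extension (\ref{FEN}) of the fixed sheaf $N_{H/X}$ by $F$ together with the parabolic stability, and such an extension in turn determines and is determined by the quotient $E \twoheadrightarrow N_{H/X}$; conversely a point of $\qQ$ near the image has kernel $F$ a one dimensional semistable sheaf (by openness of semistability) and the datum of the quotient recovers $s$ up to scalar, which is rigidified because $\End(N_{H/X}) = \mathbb{C}$.

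Next I would carry out the Behrend function comparison along a local immersion. The general principle is that if $i \colon Z \hookrightarrow Y$ is a closed (or locally closed) immersion of schemes cut out so that $Y$ is, locally, $Z$ times an affine space (more precisely, if at a point $p$ the pair $(Y,Z)$ is smooth-locally a product), then $\nu_Y(p) = (-1)^{\dim_p Y - \dim_p Z}\nu_Z(p)$. Here the ``extra directions'' of $\qQ$ over $M_n^{\rm{par}}(U,\beta)$ at $p = \iota(F,s)$ should be exactly the deformations of the quotient $E \twoheadrightarrow N_{H/X}$ that move $N_{H/X}$ itself, i.e. $\Ext^1(N_{H/X}, N_{H/X})$ — note $N_{H/X} \cong \oO_H(H)$, so this group is genuinely nonzero. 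Concretely, I would produce, analytically-locally near $p$, a splitting $\qQ_U \cong M_n^{\rm{par}}(U,\beta) \times \kura(N_{H/X})$ (deformations of $N_{H/X}$ as a sheaf on $X$), compatible with the obstruction theories, using the fact that $N_{H/X}$ is simple and unobstructed-to-first-order in the relevant directions, or at any rate that the extra factor is a critical locus of an explicit function whose sign contribution is $(-1)^{\mathrm{ext}^1(N_{H/X}, N_{H/X})}$.

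The main obstacle will be making the ``product'' statement precise enough to read off the sign, since $M_n^{\rm{par}}(U, \beta)$ itself carries no known symmetric perfect obstruction theory (as noted in the remark before this section), so I cannot simply invoke a localization formula or a behaviour-of-$\nu$-under-products formula directly on it. The way around this is to do everything on the ambient $\qQ$ (equivalently $\mM$, which does carry a symmetric perfect obstruction theory by Huybrechts--Thomas~\cite{HT2}): I would express $\nu_\qQ(p)$ in terms of the local model of $\mM$ as a critical locus, use Lemma~\ref{lem:etale} to transport this to $\qQ$, and then show the local model factors as (local model of $M_n^{\rm{par}}(U,\beta)$) $\times$ (local model of deformations of $N_{H/X}$). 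The Behrend function of a product of critical loci multiplies, and the factor coming from $N_{H/X}$, being the vanishing locus of the Chern--Simons type function on $\Ext^1(N_{H/X}, N_{H/X})$ with $\Ext^2(N_{H/X},N_{H/X}) \cong \Ext^1(N_{H/X}, N_{H/X})^\vee$ by Serre duality on the Calabi--Yau threefold $X$, contributes exactly the sign $(-1)^{\mathrm{ext}^1(N_{H/X}, N_{H/X})}$ — this is the standard ``$\nu$ of the critical locus of a nondegenerate quadratic form in $r$ variables is $(-1)^r$'' computation. Assembling these identities gives (\ref{Beh3}). The remaining care is in checking $\mathbb{C}^{\ast}$-equivariance of all the local models so that this lemma dovetails with (\ref{Beh3.2}) in the proof of Lemma~\ref{lem:identity}.
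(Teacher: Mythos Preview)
Your overall strategy is in the right direction, but the paper takes a cleaner route and your reasoning about the $N_{H/X}$ factor contains an error. Rather than attempting a direct product decomposition $\qQ_U \cong M_n^{\rm{par}}(U,\beta) \times \kura(N_{H/X})$, the paper routes both Behrend functions through the stack $\cC oh(X)$. Concretely, near $p$ there is a 1-morphism $\qQ_U \to \mM \times \cC oh(X)$ sending an exact sequence $0 \to F' \to E' \to N' \to 0$ to $(N', F')$; its fiber is open in $\Ext^1_X(N', F') \cong \mathbb{C}^{\beta \cdot H}$, so this map is smooth of relative dimension $\beta \cdot H$. Likewise the forgetting map $M_n^{\rm{par}}(U,\beta) \to \cC oh(X)$, $(F,s) \mapsto F$, is smooth of the same relative dimension. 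Applying the behaviour of Behrend functions under smooth morphisms and products~\cite[Proposition~1.5]{Beh} gives
\begin{align*}
\nu_{\qQ}(p) &= (-1)^{\beta \cdot H}\,\nu_{\mM}(N_{H/X})\,\nu_{\cC}(F),\\
\nu_{M^{\rm{par}}}(F,s) &= (-1)^{\beta \cdot H}\,\nu_{\cC}(F),
\end{align*}
and comparing these yields the lemma. This avoids having to justify a local splitting of $\qQ_U$ over $M_n^{\rm{par}}$ scheme-theoretically, which is the fiddly part of your plan.

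Your computation of the $N_{H/X}$ contribution is where the argument goes wrong. You describe it as ``the critical locus of a nondegenerate quadratic form in $r$ variables'', but a nondegenerate quadratic form has an \emph{isolated} critical point; that is not what happens here. The point is rather that the deformation space of $N_{H/X} \cong \oO_H(H)$ is \emph{smooth} of dimension $\mathrm{ext}^1(N_{H/X}, N_{H/X})$: every small deformation of $\oO_H(H)$ is again a line bundle on a nearby divisor in $X$, so the obstruction map vanishes identically even though $\Ext^2(N_{H/X}, N_{H/X}) \neq 0$. Hence $\nu_{\mM}(N_{H/X}) = (-1)^{\mathrm{ext}^1(N_{H/X},N_{H/X})}$ simply because $\mM$ is smooth there. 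You land on the correct sign, but via an incorrect mechanism; in particular, your proposed product decomposition would have a smooth second factor, not a critical-locus model requiring Milnor-fiber analysis.
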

\begin{proof}
Let us write $p=(E \twoheadrightarrow N_{H/X}) \in \qQ$. 
We first note that, a point
of $\qQ_U$ near $p \in \qQ$
is represented by 
an exact sequence
\begin{align}\label{FEN'}
0 \to F' \to E' \to N' \to 0,
\end{align}
where $F'$, $E'$, $N'$ are small 
deformations of sheaves $F$, $E$, $N_{H/X}$
in (\ref{FEN}). 
Hence near $p\in \qQ_U$, 
we have the 1-morphism, 
\begin{align}\label{mor:QC}
\qQ_{U} \to \mM \times \cC oh(X), 
\end{align}
which sends the sequence (\ref{FEN'})
to $(N', F')$. 
Here $\cC oh(X)$ is the stack of 
all the objects in $\Coh(X)$, as in 
Subsection~\ref{subsec:Genera}. 
The fiber of the above 1-morphism 
at $(N', F')$ is an open subset of 
$\Ext_{X}^{1}(N', F')$. Since we have 
\begin{align*}
\Ext_{X}^i(N_{H/X}, F)
\cong \left\{ \begin{array}{cc}
\mathbb{C}^{\beta \cdot H}, & i=1, \\
0, & i\neq 1,
\end{array}  \right. 
\end{align*}
it follows that 
\begin{align*}
\Ext_{X}^1(N', F') \cong \mathbb{C}^{\beta \cdot H}.
\end{align*}
Therefore the morphism (\ref{mor:QC})
is a smooth morphism of relative dimension
$\beta \cdot H$. 

Let us consider the Behrend function on 
the RHS of (\ref{mor:QC}). 
It is easy to see that any 
small deformation of 
$N_{H/X} \cong \oO_{H}(H)$ is obtained as a line bundle 
on a divisor in $X$. Hence 
we see that the 
algebraic space 
$\mM$ is smooth of 
dimension $\mathrm{ext}^1(N_{H/X}, N_{H/X})$
at $[N_{H/X}] \in \mM$. 
It follows that we have 
\begin{align*}
\nu_{\mM}(N_{H/X})=(-1)^{\mathrm{ext}^1(N_{H/X}, N_{H/X})}.
\end{align*}
If we denote by 
$\nu_{\cC}$ the Behrend function on 
$\cC oh(X)$, the above arguments imply 
\begin{align}\label{Beh1}
\nu_{\qQ}(p) 
&=\nu_{\mM}(N_{H/X}) \cdot \nu_{\cC}(F) 
\cdot (-1)^{\beta \cdot H}, \\
\label{Beh2}
 &= (-1)^{\beta \cdot H + \mathrm{ext}^1(N_{H/X}, N_{H/X})} \cdot \nu_{\cC}(F).
\end{align}
In (\ref{Beh1}), 
 we have used the property of the
Behrend function under smooth morphisms
and products~\cite[Proposition~1.5]{Beh}.

On the other hand, there is a forgetting 1-morphism, 
\begin{align*}
M_n^{\rm{par}}(U, \beta) \to \cC oh(X), 
\end{align*}
sending $(F, s)$ to $F$. The fiber of the above morphism 
at $[F]$ is an open subset of 
$F \otimes \oO_{H} \cong \mathbb{C}^{\beta \cdot H}$, 
hence it is a smooth morphism of relative 
dimension $\beta \cdot H$. 
Therefore we have 
\begin{align}\label{Behpara}
\nu_{M^{\rm{par}}}(F, s)=(-1)^{\beta \cdot H} \cdot \nu_{\cC}(F). 
\end{align}
Combined with (\ref{Beh2}) and (\ref{Behpara}), we obtain the 
desired equality (\ref{Beh3}). 
\end{proof}

\subsection{Proof of Lemma~\ref{lem:identity}}\label{subsec:proof}
Finally in this section, we give a proof of 
Lemma~\ref{lem:identity}. 
\begin{proof}
Let $\sigma \colon 
\widetilde{U} \to U$ be the $m$-fold 
cyclic cover considered in the statement of Lemma~\ref{lem:identity}. 
Then for $m\gg 0$, we have 
\begin{align*}
M_n^{\rm{par}}(U, \beta)^{\mathbb{Z}/m\mathbb{Z}}
= M_n^{\rm{par}}(U, \beta)^{\mathbb{C}^{\ast}}. 
\end{align*}
Hence by Proposition~\ref{prop:isom:para}, 
for $(F, s) \in M_n^{\rm{par}}(U, \beta)^{\mathbb{C}^{\ast}}$, there is 
unique $(\widetilde{F}, \widetilde{s}) \in M_n^{\rm{par}}(\widetilde{U}, 
\widetilde{\beta})$ such that 
$(F, s)=\sigma_{\ast}(\widetilde{F}, \widetilde{s})$. 
Similarly to (\ref{FEN}), the pair $(\widetilde{F}, \widetilde{s})$
associates the exact sequence of 
sheaves on $\widetilde{U}$, 
\begin{align*}
0 \to \widetilde{F} \to \widetilde{E} \to 
N_{\widetilde{H}/\widetilde{U}} \to 0. 
\end{align*} 
Let $N'$ be a coherent sheaf on $X$, 
which is a
small deformation of $N_{H/X}$. 
Then we can uniquely lift 
$N'|_{U}$ to a sheaf $\widetilde{N}'$
on $\widetilde{U}$
so that $\widetilde{N}'$ is a
small deformation of the 
sheaf $N_{\widetilde{H}/\widetilde{U}}$ on 
$\widetilde{U}$. 
Let $\widetilde{\qQ}$ be the analytic 
local moduli space
parameterizing  
small deformations of surjections 
$\widetilde{E} \twoheadrightarrow N_{\widetilde{H}/\widetilde{U}}$,
\begin{align}\label{E'N'}
\widetilde{E}' \twoheadrightarrow \widetilde{N}', 
\end{align}
where (\ref{E'N'}) is a surjection of coherent
sheaves on $\widetilde{U}$, and 
$\widetilde{N}'$ is 
a lift of a small deformation of 
$N_{H/X}$ restricted to $U$ as above. 
An argument similar to the 
 proof of 
Lemma~\ref{lem:nat:comp}
shows that 
there is a natural morphism, 
\begin{align}\label{sigma:ast:Q}
\sigma_{\ast} \colon 
\widetilde{\qQ}
\to \qQ_{U}^{\mathbb{Z}/m\mathbb{Z}}
\end{align}
satisfying that 
\begin{align*}
\sigma_{\ast}(\widetilde{E} \twoheadrightarrow 
N_{\widetilde{H}/\widetilde{U}}) =(E \twoheadrightarrow N_{H/X}). 
\end{align*}
Also a proof similar to Proposition~\ref{prop:isom:para}
shows that the morphism (\ref{sigma:ast:Q})
is an isomorphism onto 
connected components of $\qQ_{U}^{\mathbb{Z}/m\mathbb{Z}}$. 
Since we have 
$\qQ_{U}^{\mathbb{Z}/m\mathbb{Z}}=\qQ_{U}^{\mathbb{C}^{\ast}}$
for $m\gg 0$, it follows that 
\begin{align}\label{Beh3.5}
\nu_{\widetilde{\qQ}}(\widetilde{E} \twoheadrightarrow 
N_{\widetilde{H}/\widetilde{U}})
=\nu_{\qQ^{\mathbb{C}^{\ast}}}(E\twoheadrightarrow N_{H/X}). 
\end{align}
Also similarly to (\ref{Beh3}), we have the equality, 
\begin{align}\label{Beh4}
\nu_{\widetilde{\qQ}}(\widetilde{E}
\twoheadrightarrow N_{\widetilde{H}/\widetilde{X}})=
(-1)^{\mathrm{ext}^1(N_{H/X}, N_{H/X})}
\cdot \nu_{\widetilde{M}^{\rm{par}}}(\widetilde{F}, \widetilde{s}). 
\end{align} 
By (\ref{Beh3}), (\ref{Beh3.2}),
 (\ref{Beh3.5}) and (\ref{Beh4}), we obtain 
\begin{align}\label{Beh:para}
\nu_{\widetilde{M}^{\rm{par}}}(\widetilde{F}, \widetilde{s})
=(-1)^{\dim T_{\widetilde{p}}\widetilde{\qQ} - \dim T_{p} \qQ}
\cdot 
\nu_{M^{\rm{par}}}(F, s),
\end{align}
where $\widetilde{p}=(\widetilde{E}
\twoheadrightarrow N_{\widetilde{H}/\widetilde{U}}) \in \widetilde{\qQ}$. 
 
Let us evaluate 
$\dim T_{\widetilde{p}}\widetilde{\qQ} - \dim T_{p} \qQ$. 
Since the morphism (\ref{mor:QC}) is a smooth morphism of 
relative dimension $\beta \cdot H$, we have 
\begin{align}\label{TQ1}
\dim T_{p} \qQ = \mathrm{ext}^1(F, F)- \mathrm{hom}(F, F)
+ \mathrm{ext}^1(N_{H/X}, N_{H/X}) + \beta \cdot H. 
\end{align}
Similarly we have 
\begin{align}\label{TQ2}
\dim T_{\widetilde{p}} \widetilde{\qQ}
=\mathrm{ext}^1(\widetilde{F}, \widetilde{F})- 
\mathrm{hom}(\widetilde{F}, \widetilde{F})
+ \mathrm{ext}^1(N_{H/X}, N_{H/X}) + \widetilde{\beta} \cdot \widetilde{H}.
\end{align}
We have 
\begin{align}\notag
&\mathrm{ext}^1(F, F)- \mathrm{hom}(F, F)-
\mathrm{ext}^1(\widetilde{F}, \widetilde{F})+
\mathrm{hom}(\widetilde{F}, \widetilde{F}) \\
\notag
&=\mathrm{ext}^1(\sigma_{\ast}\widetilde{F}, \sigma_{\ast}\widetilde{F})- 
\mathrm{hom}(\sigma_{\ast}\widetilde{F}, \sigma_{\ast}\widetilde{F})-
\mathrm{ext}^1(\widetilde{F}, \widetilde{F})+
\mathrm{hom}(\widetilde{F}, \widetilde{F}) \\
\label{ext:difference}
&=\sum_{0\neq g \in \mathbb{Z}/m\mathbb{Z}}
\{ 
\mathrm{ext}^1(g_{\ast}\widetilde{F}, \widetilde{F})- \mathrm{hom}(g_{\ast}\widetilde{F}, \widetilde{F}) \}
\end{align}
By the Riemann-Roch theorem and the Serre duality, we have 
\begin{align*}
\mathrm{ext}^1(g_{\ast}\widetilde{F}, \widetilde{F})- \mathrm{hom}(g_{\ast}\widetilde{F}, \widetilde{F})
&= \mathrm{ext}^1(\widetilde{F}, g_{\ast}\widetilde{F})- \mathrm{hom}(\widetilde{F}, g_{\ast}\widetilde{F}) \\
&= \mathrm{ext}^1((-g)_{\ast}\widetilde{F}, \widetilde{F})- 
\mathrm{hom}((-g)_{\ast}\widetilde{F}, \widetilde{F}).
\end{align*}
Therefore (\ref{ext:difference}) is an even integer if $m$ is an odd integer. 
By (\ref{TQ1}), (\ref{TQ2}), we have 
\begin{align*}
\dim T_{\widetilde{p}}\widetilde{\qQ} - \dim T_{p} \qQ
\equiv \widetilde{\beta} \cdot \widetilde{H} -
\beta \cdot H, \quad (\mathrm{mod} \ 2). 
\end{align*} 
Combined with (\ref{Beh:para}), we obtain 
(i) of Lemma~\ref{lem:identity}. 
The result of (ii) follows from (i) and (\ref{Behpara}).
\end{proof}

Institute for the Physics and 
Mathematics of the Universe, 

Todai Institute for Advanced Studies (TODIAS), 
University of Tokyo,

5-1-5 Kashiwanoha, Kashiwa, 277-8583, Japan.

\textit{E-mail address}: yukinobu.toda@ipmu.jp

\end{document}